\newcommand{\C}{\mathbb{C}}
\newcommand{\F}{\mathbb{F}}
\def\graphScale{0.29}
\def\largeGraphScale{0.5}
\def\specialVertexColor{white}
\newcommand{\hamSandwich}{
\begin{tikzpicture}[scale=\graphScale]
    \draw (0, 0) -- (1, 0);
    \draw (0, 0) -- (0, 1);
    \draw (1, 0) -- (1, 1);
    \draw (1, 0) -- (0, 1);
    \draw (0, 1) -- (1, 1);
    \draw[fill=black] (0, 0) circle (0.125);
    \draw[fill=black] (0, 1) circle (0.125);
    \draw[fill=black] (1, 0) circle (0.125);
    \draw[fill=black] (1, 1) circle (0.125);
\end{tikzpicture}
}
\newcommand{\trianglePlusIsolated}{
\begin{tikzpicture}[scale=\graphScale]
    \draw (0, 0) -- (1, 0);
    \draw (0, 0) -- (1, 1);
    \draw (1, 0) -- (1, 1);
    \draw[fill=black] (0, 0) circle (0.125);
    \draw[fill=black] (0, 1) circle (0.125);
    \draw[fill=black] (1, 0) circle (0.125);
    \draw[fill=black] (1, 1) circle (0.125);
\end{tikzpicture}
}
\newcommand{\SuzukiThirtyTwoElusiveGraph}{
\begin{tikzpicture}[scale=\graphScale]
    \draw (0, 0) -- (1, 0);
    \draw (0, 0) -- (0, 1);
    \draw (0, 0) -- (1, 1);
    \draw (1, 0) -- (1, 1);
    \draw[fill=black] (0, 0) circle (0.125);
    \draw[fill=black] (0, 1) circle (0.125);
    \draw[fill=black] (1, 0) circle (0.125);
    \draw[fill=black] (1, 1) circle (0.125);
\end{tikzpicture}
}
\newcommand{\PSLhouse}{
\begin{tikzpicture}[scale=\largeGraphScale]
    \draw (0, 0) -- (1, 0);
    \draw (0, 0) -- (1, 1);
    \draw (0, 0) -- (0, 1);
    \draw (1, 0) -- (1, 1);
    \draw (0, 1) -- (1, 1);
    \draw (1.5, 0.5) -- (1, 0);
    \draw (1.5, 0.5) -- (1, 1);
    \draw[fill=black] (0, 0) circle (0.125);
    \draw[fill=black] (0, 1) circle (0.125);
    \draw[fill=black] (1, 0) circle (0.125);
    \draw[fill=black] (1, 1) circle (0.125);
    \draw[fill=\specialVertexColor] (1.5, 0.5) circle (0.125);
\end{tikzpicture}
}
\newcommand{\balloon}{
\begin{tikzpicture}[scale=\largeGraphScale]
    \draw (0, 0) -- (1, 1);
    \draw (0, 0) -- (0, 1);
    \draw (0, 1) -- (1, 1);
    \draw (1.5, 0.5) -- (1, 0);
    \draw (1.5, 0.5) -- (1, 1);
    \draw[fill=black] (0, 0) circle (0.125);
    \draw[fill=black] (0, 1) circle (0.125);
    \draw[fill=black] (1, 0) circle (0.125);
    \draw[fill=black] (1, 1) circle (0.125);
    \draw[fill=\specialVertexColor] (1.5, 0.5) circle (0.125);
\end{tikzpicture}
}
\newcommand{\codart}{
\begin{tikzpicture}[scale=\largeGraphScale]
    \draw (0, 0) -- (1, 1);
    \draw (0, 0) -- (0, 1);
    \draw (0, 1) -- (1, 1);
    \draw (1.5, 0.5) -- (1, 1);
    \draw[fill=black] (0, 0) circle (0.125);
    \draw[fill=black] (0, 1) circle (0.125);
    \draw[fill=black] (1, 0) circle (0.125);
    \draw[fill=black] (1, 1) circle (0.125);
    \draw[fill=\specialVertexColor] (1.5, 0.5) circle (0.125);
\end{tikzpicture}
}
\newcommand{\northStar}{
\begin{tikzpicture}[scale=\largeGraphScale]
    \draw (0, 0) -- (1, 0);
    \draw (0, 0) -- (1, 1);
    \draw (1, 0) -- (1, 1);
    \draw (1.5, 0.5) -- (1, 0);
    \draw (1.5, 0.5) -- (1, 1);
    \draw[fill=black] (0, 0) circle (0.125);
    \draw[fill=black] (0, 1) circle (0.125);
    \draw[fill=black] (1, 0) circle (0.125);
    \draw[fill=black] (1, 1) circle (0.125);
    \draw[fill=\specialVertexColor] (1.5, 0.5) circle (0.125);
\end{tikzpicture}
}
\newcommand{\dart}{
\begin{tikzpicture}[scale=\largeGraphScale]
    \draw (0, 0) -- (1, 0);
    \draw (0, 0) -- (1, 1);
    \draw (1, 0) -- (1, 1);
    \draw (0, 1) -- (1, 1);
    \draw (1.5, 0.5) -- (1, 0);
    \draw (1.5, 0.5) -- (1, 1);
    \draw[fill=black] (0, 0) circle (0.125);
    \draw[fill=black] (0, 1) circle (0.125);
    \draw[fill=black] (1, 0) circle (0.125);
    \draw[fill=black] (1, 1) circle (0.125);
    \draw[fill=\specialVertexColor] (1.5, 0.5) circle (0.125);
\end{tikzpicture}
}
\newcommand{\happyFace}{
\begin{tikzpicture}[scale=\largeGraphScale]
    \draw (1, 0) -- (1, 1);
    \draw (1.5, 0.5) -- (1, 0);
    \draw (1.5, 0.5) -- (1, 1);
    \draw[fill=black] (0, 0) circle (0.125);
    \draw[fill=black] (0, 1) circle (0.125);
    \draw[fill=black] (1, 0) circle (0.125);
    \draw[fill=black] (1, 1) circle (0.125);
    \draw[fill=\specialVertexColor] (1.5, 0.5) circle (0.125);
\end{tikzpicture}
}
\newcommand{\misshapenhouse}{
\begin{tikzpicture}[scale=\largeGraphScale]
    \draw (1, 0) -- (0, 1);
    \draw (1.5, 0.5) -- (1, 0);
    \draw (1.5, 0.5) -- (1, 1);
    \draw (0,1) -- (1,1);
    \draw (0,0) -- (1,0);
    \draw (0,0) -- (0,1);
    \draw[fill=black] (0, 0) circle (0.125);
    \draw[fill=black] (0, 1) circle (0.125);
    \draw[fill=black] (1, 0) circle (0.125);
    \draw[fill=black] (1, 1) circle (0.125);
    \draw[fill=\specialVertexColor] (1.5, 0.5) circle (0.125);
\end{tikzpicture}
}
\newcommand{\trianglestick}{
\begin{tikzpicture}[scale=\largeGraphScale]
    \draw (1, 0) -- (1, 1);
    \draw (1.5, 0.5) -- (1, 0);
    \draw (1.5, 0.5) -- (1, 1);
    \draw (0,1) -- (0,0);
    \draw[fill=black] (0, 0) circle (0.125);
    \draw[fill=black] (0, 1) circle (0.125);
    \draw[fill=black] (1, 0) circle (0.125);
    \draw[fill=black] (1, 1) circle (0.125);
    \draw[fill=\specialVertexColor] (1.5, 0.5) circle (0.125);
\end{tikzpicture}
}
\newcommand{\bull}{
\begin{tikzpicture}[scale=\largeGraphScale]
    \draw (1, 0) -- (1, 1);
    \draw (1.5, 0.5) -- (1, 0);
    \draw (1.5, 0.5) -- (1, 1);
    \draw (1,0) -- (0,0);
    \draw (0,1) -- (1,1);
    \draw[fill=black] (0, 0) circle (0.125);
    \draw[fill=black] (0, 1) circle (0.125);
    \draw[fill=black] (1, 0) circle (0.125);
    \draw[fill=black] (1, 1) circle (0.125);
    \draw[fill=\specialVertexColor] (1.5, 0.5) circle (0.125);
\end{tikzpicture}
}
\newcommand{\bowtiebruh}{
\begin{tikzpicture}[scale=\largeGraphScale]
    \draw (1, 0) -- (1, 1);
    \draw (1.5, 0.5) -- (1, 0);
    \draw (1.5, 0.5) -- (1, 1);
    \draw (1,1) -- (0,0);
    \draw (0,1) -- (1,1);
    \draw (0,0) -- (0,1);
    \draw[fill=black] (0, 0) circle (0.125);
    \draw[fill=black] (0, 1) circle (0.125);
    \draw[fill=black] (1, 0) circle (0.125);
    \draw[fill=black] (1, 1) circle (0.125);
    \draw[fill=\specialVertexColor] (1.5, 0.5) circle (0.125);
\end{tikzpicture}
}
\newcommand{\cricket}{
\begin{tikzpicture}[scale=\largeGraphScale]
    \draw (1, 0) -- (1, 1);
    \draw (1.5, 0.5) -- (1, 0);
    \draw (1.5, 0.5) -- (1, 1);
    \draw (1,0) -- (0,0);
    \draw (0,1) -- (1,0);
    \draw[fill=black] (0, 0) circle (0.125);
    \draw[fill=black] (0, 1) circle (0.125);
    \draw[fill=black] (1, 0) circle (0.125);
    \draw[fill=black] (1, 1) circle (0.125);
    \draw[fill=\specialVertexColor] (1.5, 0.5) circle (0.125);
\end{tikzpicture}
}
\newcommand{\bullcaseone}{
\begin{tikzpicture}[scale=\largeGraphScale]
    \draw (0, 0) -- (0, 1);
    \draw (0, 1) -- (1, 0);
    \draw (1, 0) -- (1, 1);
    \draw (0, 1) -- (1, 1);
    \draw (1, 1) -- (1.5, 0.5);
    \draw[fill=black] (0, 0) circle (0.125);
    \draw[fill=black] (0, 1) circle (0.125);
    \draw[fill=black] (1, 0) circle (0.125);
    \draw[fill=black] (1, 1) circle (0.125);
    \draw[fill=\specialVertexColor] (1.5, 0.5) circle (0.125);
\end{tikzpicture}
}
\newcommand{\housenobrim}{
\begin{tikzpicture}[scale=\largeGraphScale]
    \draw (0, 0) -- (1, 1);
    \draw (0, 0) -- (1, 0);
    \draw (0, 0) -- (0, 1);
    \draw (0, 1) -- (1, 1);
    \draw (0, 1) -- (1, 0);
    \draw (1, 1) -- (1.5, 0.5);
    \draw (1, 0) -- (1.5, 0.5);
    \draw[fill=black] (0, 0) circle (0.125);
    \draw[fill=black] (0, 1) circle (0.125);
    \draw[fill=black] (1, 0) circle (0.125);
    \draw[fill=black] (1, 1) circle (0.125);
    \draw[fill=\specialVertexColor] (1.5, 0.5) circle (0.125);
\end{tikzpicture}
}
\newcommand{\nobutt}{
\begin{tikzpicture}[scale=\largeGraphScale]
    \draw (0, 0) -- (1, 1);
    \draw (0, 0) -- (1, 0);
    \draw (0, 1) -- (1, 1);
    \draw (0, 1) -- (1, 0);
    \draw (1,1) -- (1,0);
    \draw (1, 1) -- (1.5, 0.5);
    \draw (1, 0) -- (1.5, 0.5);
    \draw[fill=black] (0, 0) circle (0.125);
    \draw[fill=black] (0, 1) circle (0.125);
    \draw[fill=black] (1, 0) circle (0.125);
    \draw[fill=black] (1, 1) circle (0.125);
    \draw[fill=\specialVertexColor] (1.5, 0.5) circle (0.125);
\end{tikzpicture}
}
\newcommand{\hamsandwichwitharm}{
\begin{tikzpicture}[scale=\largeGraphScale]
    \draw (0, 0) -- (1, 0);
    \draw (0, 0) -- (0, 1);
    \draw (0, 1) -- (1, 1);
    \draw (1,0) -- (1,1);
    \draw (0, 1) -- (1, 0);
    \draw (1, 1) -- (1.5, 0.5);
    \draw[fill=black] (0, 0) circle (0.125);
    \draw[fill=black] (0, 1) circle (0.125);
    \draw[fill=black] (1, 0) circle (0.125);
    \draw[fill=black] (1, 1) circle (0.125);
    \draw[fill=\specialVertexColor] (1.5, 0.5) circle (0.125);
\end{tikzpicture}
}
\newcommand{\hamSandwichWithOtherArm}{
\begin{tikzpicture}[scale=\largeGraphScale]
    \draw (0, 0) -- (1, 0);
    \draw (0, 0) -- (0, 1);
    \draw (0, 1) -- (1, 1);
    \draw (1,0) -- (1,1);
    \draw (0, 1) -- (1, 0);
    \draw (1, 0) -- (1.5, 0.5);
    \draw[fill=black] (0, 0) circle (0.125);
    \draw[fill=black] (0, 1) circle (0.125);
    \draw[fill=black] (1, 0) circle (0.125);
    \draw[fill=black] (1, 1) circle (0.125);
    \draw[fill=\specialVertexColor] (1.5, 0.5) circle (0.125);
\end{tikzpicture}
}
\newcommand{\emptyhouse}{
\begin{tikzpicture}[scale=\largeGraphScale]
    \draw (0, 0) -- (1, 0);
    \draw (0, 0) -- (0, 1);
    \draw (0, 1) -- (1, 1);
    \draw (1,0) -- (1,1);
    \draw (1, 1) -- (1.5, 0.5);
    \draw (1, 0) -- (1.5, 0.5);
    \draw[fill=black] (0, 0) circle (0.125);
    \draw[fill=black] (0, 1) circle (0.125);
    \draw[fill=black] (1, 0) circle (0.125);
    \draw[fill=black] (1, 1) circle (0.125);
    \draw[fill=\specialVertexColor] (1.5, 0.5) circle (0.125);
\end{tikzpicture}
}
\newcommand{\triangleIsolatedEdge}{
\begin{tikzpicture}[scale=\largeGraphScale]
    \draw (0, 0) -- (1, 0);
    \draw (0, 0) -- (0, 1);
    \draw (1, 0) -- (0, 1);
    \draw (1.5, 0.5) -- (1, 1);
    \draw[fill=black] (0, 0) circle (0.125);
    \draw[fill=black] (0, 1) circle (0.125);
    \draw[fill=black] (1, 0) circle (0.125);
    \draw[fill=black] (1, 1) circle (0.125);
    \draw[fill=\specialVertexColor] (1.5, 0.5) circle (0.125);
\end{tikzpicture}
}
\newcommand{\triangleWithTail}{
\begin{tikzpicture}[scale=\largeGraphScale]
    \draw (0, 0) -- (0, 1);
    \draw (0, 1) -- (1, 1);
    \draw (1,0) -- (1,1);
    \draw (1, 1) -- (1.5, 0.5);
    \draw (1, 0) -- (1.5, 0.5);
    \draw[fill=black] (0, 0) circle (0.125);
    \draw[fill=black] (0, 1) circle (0.125);
    \draw[fill=black] (1, 0) circle (0.125);
    \draw[fill=black] (1, 1) circle (0.125);
    \draw[fill=\specialVertexColor] (1.5, 0.5) circle (0.125);
\end{tikzpicture}
}
\newcommand{\triangleWithShortTail}{
\begin{tikzpicture}[scale=\largeGraphScale]
    \draw (0, 1) -- (1, 1);
    \draw (1,0) -- (1,1);
    \draw (1, 1) -- (1.5, 0.5);
    \draw (1, 0) -- (1.5, 0.5);
    \draw[fill=black] (0, 0) circle (0.125);
    \draw[fill=black] (0, 1) circle (0.125);
    \draw[fill=black] (1, 0) circle (0.125);
    \draw[fill=black] (1, 1) circle (0.125);
    \draw[fill=\specialVertexColor] (1.5, 0.5) circle (0.125);
\end{tikzpicture}
}
\newcommand{\zigZag}{
\begin{tikzpicture}[scale=\largeGraphScale]
    \draw (1, 0) -- (1, 1);
    \draw (1.5, 0.5) -- (1, 0);
    \draw (1,1) -- (0,0);
    \draw (0,1) -- (1,1);
    \draw (0,0) -- (0,1);
    \draw[fill=black] (0, 0) circle (0.125);
    \draw[fill=black] (0, 1) circle (0.125);
    \draw[fill=black] (1, 0) circle (0.125);
    \draw[fill=black] (1, 1) circle (0.125);
    \draw[fill=\specialVertexColor] (1.5, 0.5) circle (0.125);
\end{tikzpicture}
}
\newcommand{\triangleWithTwoTails}{
\begin{tikzpicture}[scale=\largeGraphScale]
    \draw (1, 0) -- (1, 1);
    \draw (1.5, 0.5) -- (1, 1);
    \draw (1,1) -- (0,0);
    \draw (0,1) -- (1,1);
    \draw (0,0) -- (0,1);
    \draw[fill=black] (0, 0) circle (0.125);
    \draw[fill=black] (0, 1) circle (0.125);
    \draw[fill=black] (1, 0) circle (0.125);
    \draw[fill=black] (1, 1) circle (0.125);
    \draw[fill=\specialVertexColor] (1.5, 0.5) circle (0.125);
\end{tikzpicture}
}
\newcommand{\badDart}{
\begin{tikzpicture}[scale=\largeGraphScale]
    \draw (0, 0) -- (0, 1);
    \draw (1, 0) -- (1, 1);
    \draw (1, 0) -- (0, 1);
    \draw (0, 1) -- (1, 1);
    \draw (1.5, 0.5) -- (1, 0);
    \draw (1.5, 0.5) -- (1, 1);
    \draw[fill=black] (0, 0) circle (0.125);
    \draw[fill=black] (0, 1) circle (0.125);
    \draw[fill=black] (1, 0) circle (0.125);
    \draw[fill=black] (1, 1) circle (0.125);
    \draw[fill=\specialVertexColor] (1.5, 0.5) circle (0.125);
\end{tikzpicture}
}
\newcommand{\hamSandwichWhite}{
\begin{tikzpicture}[scale=\graphScale]
    \draw (0, 0) -- (1, 0);
    \draw (0, 0) -- (0, 1);
    \draw (1, 0) -- (1, 1);
    \draw (1, 0) -- (0, 1);
    \draw (0, 1) -- (1, 1);
    \draw[fill=black] (0, 0) circle (0.125);
    \draw[fill=black] (0, 1) circle (0.125);
    \draw[fill=white] (1, 0) circle (0.125);
    \draw[fill=black] (1, 1) circle (0.125);
\end{tikzpicture}
}
\newcommand{\notIsolatedVertexWhite}{
\begin{tikzpicture}[scale=\graphScale]
    \draw (0, 0) -- (1, 0);
    \draw (0, 0) -- (0, 1);
    \draw (1, 0) -- (0, 1);
    \draw[fill=black] (0, 0) circle (0.125);
    \draw[fill=black] (0, 1) circle (0.125);
    \draw[fill=white] (1, 0) circle (0.125);
    \draw[fill=black] (1, 1) circle (0.125);
\end{tikzpicture}
}
\newcommand{\notIsolatedVertexWhiteGrey}{
\begin{tikzpicture}[scale=\graphScale]
    \draw (0, 0) -- (1, 0);
    \draw (0, 0) -- (0, 1);
    \draw (1, 0) -- (0, 1);
    \draw[fill=black] (0, 0) circle (0.125);
    \draw[fill=black] (0, 1) circle (0.125);
    \draw[fill=white] (1, 0) circle (0.125);
    \draw[fill=black] (1, 1) circle (0.125);
\end{tikzpicture}
}
\newcommand{\SuzukiThirtyTwoElusiveGraphWhiteBadb}{
\begin{tikzpicture}[scale=\graphScale]
    \draw (0, 0) -- (1, 0);
    \draw (0, 0) -- (0, 1);
    \draw (0, 0) -- (1, 1);
    \draw (1, 0) -- (1, 1);
    \draw[fill=black] (0, 0) circle (0.125);
    \draw[fill=black] (0, 1) circle (0.125);
    \draw[fill=white] (1, 0) circle (0.125);
    \draw[fill=black] (1, 1) circle (0.125);
\end{tikzpicture}
}
\newcommand{\notIsolatedVertexWhiteBad}{
\begin{tikzpicture}[scale=\graphScale]
    \draw (0, 0) -- (1, 0);
    \draw (0, 0) -- (0, 1);
    \draw (1, 0) -- (0, 1);
    \draw[fill=black] (0, 0) circle (0.125);
    \draw[fill=black] (0, 1) circle (0.125);
    \draw[fill=black] (1, 0) circle (0.125);
    \draw[fill=white] (1, 1) circle (0.125);
\end{tikzpicture}
}
\newcommand{\fletchingWhite}{
\begin{tikzpicture}[scale=\graphScale]
    \draw (0, 0) -- (1, 0);
    \draw (1, 0) -- (1, 1);
    \draw (1, 0) -- (0, 1);
    \draw[fill=black] (0, 0) circle (0.125);
    \draw[fill=black] (0, 1) circle (0.125);
    \draw[fill=white] (1, 0) circle (0.125);
    \draw[fill=black] (1, 1) circle (0.125);
\end{tikzpicture}
}
\newcommand{\szunrecognizei}{
\begin{tikzpicture}[scale=\graphScale]
    \draw (0, 0) -- (1, 0);
    \draw (0, 1) -- (1,1);
    \draw (1, 0) -- (1, 1);
    \draw (1, 0) -- (0, 1);
    \draw[fill=black] (0, 0) circle (0.125);
    \draw[fill=black] (0, 1) circle (0.125);
    \draw[fill=white] (1, 0) circle (0.125);
    \draw[fill=black] (1, 1) circle (0.125);
\end{tikzpicture}
}
\newcommand{\szunrecognizeii}{
\begin{tikzpicture}[scale=\graphScale]
    \draw (0, 0) -- (1, 0);
    \draw (1, 0) -- (1, 1);
    \draw[fill=black] (0, 0) circle (0.125);
    \draw[fill=black] (0, 1) circle (0.125);
    \draw[fill=white] (1, 0) circle (0.125);
    \draw[fill=black] (1, 1) circle (0.125);
\end{tikzpicture}
}
\newcommand{\trianglePlusIsolatedExtra}{
\begin{tikzpicture}[scale=\graphScale]
    \draw (1, 0) -- (0, 1);
    \draw (1, 0) -- (1, 1);
    \draw (0, 1) -- (1, 1);
    \draw[fill=black] (0, 0) circle (0.125);
    \draw[fill=black] (0, 1) circle (0.125);
    \draw[fill=white] (1, 0) circle (0.125);
    \draw[fill=black] (1, 1) circle (0.125);
\end{tikzpicture}
}
\newcommand{\szunrecognizeiii}{
\begin{tikzpicture}[scale=\graphScale]
    \draw (1, 0) -- (0, 1);
    \draw (1, 0) -- (0, 0);
    \draw (0, 1) -- (0, 0);
    \draw (1, 0) -- (1, 1);
    \draw[fill=black] (0, 0) circle (0.125);
    \draw[fill=black] (0, 1) circle (0.125);
    \draw[fill=white] (1, 0) circle (0.125);
    \draw[fill=black] (1, 1) circle (0.125);
\end{tikzpicture}
}
\newcommand{\brokenFrameIsolatediii}{
\begin{tikzpicture}[scale=\graphScale]
    \draw (1, 1) -- (0, 1);
    \draw (0, 1) -- (0, 0);
    \draw[fill=black] (0, 0) circle (0.125);
    \draw[fill=black] (0, 1) circle (0.125);
    \draw[fill=white] (1, 0) circle (0.125);
    \draw[fill=black] (1, 1) circle (0.125);
\end{tikzpicture}
}
\newcommand{\psltwosixteenimpossiblei}{
\begin{tikzpicture}[scale=\graphScale]
    \draw (1, 1) -- (0, 1);
    \draw (1, 1) -- (0, 0);
    \draw (0, 1) -- (0, 0);
    \draw[fill=black] (0, 0) circle (0.125);
    \draw[fill=black] (0, 1) circle (0.125);
    \draw[fill=white] (1, 0) circle (0.125);
    \draw[fill=black] (1, 1) circle (0.125);
\end{tikzpicture}
}
\newcommand{\psltwosixteenimpossibleii}{
\begin{tikzpicture}[scale=\graphScale]
    \draw (1, 1) -- (0, 1);
    \draw (1, 1) -- (0, 0);
    \draw (1, 1) -- (1, 0);
    \draw (0, 1) -- (0, 0);
    \draw[fill=black] (0, 0) circle (0.125);
    \draw[fill=black] (0, 1) circle (0.125);
    \draw[fill=white] (1, 0) circle (0.125);
    \draw[fill=black] (1, 1) circle (0.125);
\end{tikzpicture}
}
\newcommand{\psltwosixteenimpossibleiii}{
\begin{tikzpicture}[scale=\graphScale]
    \draw (1, 1) -- (0, 1);
    \draw (1, 1) -- (0, 0);
    \draw (1, 1) -- (1, 0);
    \draw (0, 0) -- (1, 0);
    \draw (0, 1) -- (0, 0);
    \draw[fill=black] (0, 0) circle (0.125);
    \draw[fill=black] (0, 1) circle (0.125);
    \draw[fill=white] (1, 0) circle (0.125);
    \draw[fill=black] (1, 1) circle (0.125);
\end{tikzpicture}
}
\newcommand{\szthirtytwoTrickyGraph}{
\begin{tikzpicture}[scale=\graphScale]
    \draw (0, 0) -- (1, 0);
    \draw (0, 0) -- (0, 1);
    \draw (0, 1) -- (1, 0);
    \draw (1, 0) -- (1, 1);
    \draw[fill=black] (0, 0) circle (0.125);
    \draw[fill=black] (0, 1) circle (0.125);
    \draw[fill=black] (1, 0) circle (0.125);
    \draw[fill=black] (1, 1) circle (0.125);
\end{tikzpicture}
}
\newcommand{\twoIsolatedpsl}{
\begin{tikzpicture}[scale=\graphScale]
    \draw (0, 0) -- (1, 0);
    \draw (0, 0) -- (0, 1);
    \draw (0, 1) -- (1, 0);
    \draw[fill=black] (0, 0) circle (0.125);
    \draw[fill=black] (0, 1) circle (0.125);
    \draw[fill=black] (1, 0) circle (0.125);
    \draw[fill=black] (1, 1) circle (0.125);
\end{tikzpicture}
}
\let\@fnsymbol\@arabic
\title{Criteria for Classifying Prime Graphs of $\PSL(2, q)-$Solvable Groups}
\author{Thomas Michael Keller\thanks{Thomas Michael Keller: keller@txstate.edu; Department of Mathematics, Texas State University, 601 University Drive, San Marcos, TX, 78666, USA.}, Zachary Martin\footnote{Zachary Martin: zvmartin@willamette.edu; Department of Mathematics, Willamette University, 3406 North 26th Street, Tacoma, WA, 98407, USA.}, Alexa Renner\footnote{Corresponding Author: Alexa Renner: renneram@rose-hulman.edu; Department of Mathematics, Rose-Hulman Institute of Technology, 5500 Wabash Ave., Terre Haute, IN 47803, USA.}, Gabriel Roca\footnote{Gabriel Roca: gabriel.roca@ucf.edu; Department of Mathematics, University of Central Florida, 4000 Central Florida Blvd, Orlando, FL 32816, USA.}, Eric Yu\footnote{Eric Yu: ericyu25@sas.upenn.edu; Department of Mathematics, University of Pennsylvania, 209 S 33rd St, Philadelphia, PA 19104, USA}}
\date{October 24, 2025}
\DeclareMathOperator{\Aut}{Aut}
\DeclareMathOperator{\pg}{\Gamma}
\DeclareMathOperator{\pgc}{\overline{\Gamma}}
\DeclareMathOperator{\GL}{GL}
\DeclareMathOperator{\Sz}{Sz}
\DeclareMathOperator{\PSL}{PSL}
\newtheorem{counter}{counter}[section]
\theoremstyle{definition}
\newtheorem{definition}[counter]{Definition}
\newtheorem*{definition*}{Definition}
\newtheorem{fact}[counter]{Fact}
\newtheorem{notation}[counter]{Notation}
\theoremstyle{plain}
\newtheorem{theorem}[counter]{Theorem}
\newtheorem{lemma}[counter]{Lemma}
\newtheorem{corollary}[counter]{Corollary}
\newtheorem{proposition}[counter]{Proposition}
\newtheorem{conjecture}[counter]{Conjecture}
\theoremstyle{remark}
\newtheorem{remark}[counter]{Remark}
\begin{document}
\maketitle
\begin{abstract}
    For a finite group $G$, the prime graph $\Gamma(G)$ (also known as Gruenberg-Kegel graph) is defined to be the graph where the vertices are the primes that divide $|G|$ such that two vertices $p$ and $q$ share an edge if and only if  there is an element of order $pq$ in $G$. The prime graphs of solvable groups have been classified. The prime graphs of groups whose noncyclic composition factors are isomorphic to a single nonabelian simple group $T$ where $|T|$ is divisible by three or four distinct primes have been classified except for the cases where $T = \operatorname{PSL}(2,q)$ for $q\neq 2^5$ and $|\operatorname{PSL}(2,q)|$ is divisible by exactly four primes.
    In this paper, we provide criteria for general classification results for certain classes of $T$, and then use them to classify the prime graphs of some $T$-solvable groups for $T$ a suitably small $\PSL(2, q)$-group. We also provide general results on the prime graphs of $T$-solvable groups where $T$ is a member of the possibly infinite family of groups $\PSL(2, 2^f)$ such that $f\geq 5, f$ is prime, and $|\PSL(2, 2^f)|$ is divisible by exactly four primes. This is the first paper to prove general results about the prime graphs of $T$-solvable groups where $T$ belongs to a large (probably infinite) family of groups.
\end{abstract}

\hfill\break
\textbf{Keywords:} Gruenberg-Kegel graph, prime graph, $K_4$-group, fixed points, Projective Special Linear Group, Brauer character

\hfill\break
\textbf{Mathematics Subject Classification:} 20D60 and 05C25
\section*{Introduction}
In this paper, we study the prime graphs of finite groups. The prime graph of a finite group $G$ is the graph with vertex set the prime divisors of $|G|$ in which two such divisors $p, q$ are connected by an edge if and only if $G$ contains an element of order $pq$. This graph is also known as the Gruenberg-Kegel graph of $G$. We denote the prime graph of $G$ by $\Gamma(G)$, and the set of prime divisors of $|G|$ by $\pi(G)$. Prime graphs have been studied extensively from the 1970s to the present, and our paper is a contribution towards the complete classification of prime graphs of a large family of finite groups.

Before we can proceed, we must define a few terms. The first is a natural generalization of the idea of a solvable group:
\begin{definition}
    Let $T$ be a nonabelian simple group, and let $G$ be a group. We say that $G$ is $T$-solvable if all of its composition factors are either cyclic or isomorphic to $T$. We say that $G$ is strictly $T$-solvable if it is $T$-solvable and at least one of its composition factors is isomorphic to $T$.
\end{definition}
\begin{definition}
    A $K_n$ group is a finite simple group $G$ such that $|G|$ is divisible by exactly $n$ primes.
\end{definition}
\begin{definition}
    We refer to the class of $T$-solvable groups where $T$ is a $K_4$-group as {\it single $K_4$-solvable groups}.
\end{definition}
In 2015, the authors of \cite{2015REU} produced the following result:
\begin{theorem} \thlabel{solvclass}
    \cite[Theorem 2.10]{2015REU} An unlabeled graph $\Xi$ is isomorphic to the prime graph complement of a solvable group if and only if it is 3-colorable and triangle-free.
\end{theorem}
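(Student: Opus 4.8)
The plan is to prove the two implications separately: necessity, that the prime graph complement of a solvable group is always $3$-colorable and triangle-free, and sufficiency, that every such graph is realized. I would dispose of the triangle-free part of necessity first, since it is comparatively soft. Triangle-freeness of $\overline{\Gamma(G)}$ says exactly that among any three primes $p,q,r$ dividing $|G|$, at least two are joined in $\Gamma(G)$. Since $G$ is solvable it possesses a Hall $\{p,q,r\}$-subgroup $H$, and $\Gamma(H)$ is a subgraph of $\Gamma(G)$ on the vertex set $\{p,q,r\}$, so it suffices to rule out a solvable group $H$ with $\pi(H)=\{p,q,r\}$ whose prime graph is edgeless. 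But such an $H$ has a prime graph with three connected components, contradicting the classical theorem (Gruenberg--Kegel, refined in the solvable case to ``Frobenius or $2$-Frobenius'') that the prime graph of a solvable group has at most two components.

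The substantive part of necessity is $3$-colorability, i.e.\ writing $\pi(G)$ as a union of three cliques of $\Gamma(G)$; note this genuinely uses solvability, since triangle-free graphs can have arbitrarily large chromatic number. I would take the first clique to be $\pi(F(G))$: the Fitting subgroup is nilpotent, hence the direct product of its Sylow subgroups, so it contains an element whose order is the product of all primes dividing $|F(G)|$, and therefore $\pi(F(G))$ is a clique of $\Gamma(F(G))\subseteq\Gamma(G)$. To partition the remaining primes into two further cliques I would induct on $|G|$: pass to $G/N$ for $N$ a minimal normal subgroup (an elementary abelian $p$-group), where a $3$-coloring of $\overline{\Gamma(G/N)}$ exists by induction and its color classes are still cliques in $\Gamma(G)$. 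If $p\in\pi(G/N)$ this coloring already works for $G$; the only real case is when $N$ is a normal Sylow $p$-subgroup, so that $G=N\rtimes(G/N)$ and $p$ is a genuinely new vertex. One must then show $p$ can be absorbed into one of the three existing classes, using that $G/F(G)$ acts faithfully on $F(G)/\Phi(F(G))$ and analysing fixed points: a prime acting without fixed points on $N$ forces a Frobenius configuration, which constrains how the primes of $G/N$ are linked and lets one exhibit a class all of whose primes are joined to $p$ (possibly after re-choosing the coloring of $G/N$).

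For sufficiency I would argue by explicit construction. Given a triangle-free, $3$-colorable graph $\Xi$ with color classes $A,B,C$, assign to its vertices actual primes chosen to satisfy whatever congruences the construction demands (this flexibility is essential) and build $G$ as an iterated semidirect product of elementary abelian groups by cyclic (or nilpotent) groups. Within each color class, the assigned primes are placed in a common nilpotent section so that they become pairwise adjacent in $\Gamma(G)$ --- these are the three cliques. For each edge of $\Xi$, which must turn into a \emph{non}-edge of $\Gamma(G)$, one installs a fixed-point-free (Frobenius-type) action between the corresponding sections so that $G$ has no element of that mixed order; the existence of modules with exactly the prescribed fixed-point behavior is supplied by modular representation theory and Brauer-character computations (choosing, say, a $q$-element to act with or without fixed points on an $\mathbb{F}_p$-module according to divisibility conditions on $p$). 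Triangle-freeness of $\Xi$ is precisely what makes all of these local requirements simultaneously consistent: one never needs to kill all three pairwise orders on a triple of primes, which the necessity direction shows is impossible for solvable groups.

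I expect the main obstacle to lie, on the necessity side, in the $3$-colorability argument --- specifically in controlling fixed points through a potentially long Fitting series and showing the peeled-off Sylow prime always falls into one of only three (indeed, only two beyond $\pi(F(G))$) cliques --- and, on the sufficiency side, in verifying that the constructed group's prime graph is \emph{exactly} $\overline{\Xi}$: one must rule out spurious edges produced by unanticipated fixed points or by interaction between the layers, and confirm that every intended non-edge survives, which forces the choices of primes and of modules to be made carefully and in concert.
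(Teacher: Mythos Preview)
The paper does not prove this theorem; it is quoted as \cite[Theorem 2.10]{2015REU} and used as a black box throughout. So there is no ``paper's own proof'' to compare against directly. That said, the paper repeatedly invokes the machinery from \cite{2015REU} that underlies the original proof, and your proposal diverges from it in a way worth flagging.

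Your triangle-freeness argument is the standard one and is fine. Your sufficiency sketch is also broadly in line with the construction in \cite{2015REU} (cf.\ the method behind \cite[Theorem 2.8]{2015REU}, which the present paper reuses in \thref{236Improved}): assign primes to the three color classes, build a solvable group layer by layer with Frobenius actions inserted exactly along the edges of $\Xi$, and use Dirichlet to make the congruence conditions compatible.

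Where you depart is in the $3$-colorability half of necessity. The proof in \cite{2015REU} does not go through $\pi(F(G))$ and induction on $|G|$. Instead it introduces the \emph{Frobenius digraph} (see \thref{frobDigraph} in this paper): each edge $p\!-\!q$ of $\overline{\Gamma(G)}$ is oriented according to whether the Hall $\{p,q\}$-subgroup is Frobenius of type $(p,q)$ or $2$-Frobenius of type $(p,q,p)$, and the key lemma (\cite[Corollary 2.7]{2015REU}, invoked in the present paper inside the proof of \thref{psl2113color}) is that this digraph contains no directed path of length $3$. The $3$-coloring then falls out immediately by classifying vertices as pure sources, pure sinks, or intermediate. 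Your inductive scheme via a minimal normal subgroup $N$ is not obviously wrong, but the step you yourself identify as the obstacle --- showing that a new Sylow prime $p$ can always be absorbed into one of the three inherited cliques of $G/N$ --- is doing all the work, and you have not indicated why it should succeed without essentially rediscovering the Frobenius-complement constraints that drive the digraph argument. The digraph approach packages those constraints once and for all and avoids the inductive bookkeeping.
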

Since then, there have been several papers classifying the prime graphs of $T$-solvable groups where $T$ is a nonabelian simple group. In 2022, the authors of \cite{2022REU} completely classified the prime graphs of all $T$-solvable groups such that $T$ is a $K_3$ group. The 2025 paper \cite{2023REU} gives complete classifications of the prime graphs of all $T$-solvable groups such that $T$ is a $K_4$-group and $T\neq\operatorname{Sz}(32), T\neq\operatorname{Sz}(8),$ and $T\neq \PSL(2, q)$ for a prime power $q$. In late 2024, in \cite{Suz} we classified the prime graph complements, and thus the prime graphs, of $T$-solvable groups where $T = \operatorname{Sz}(32), T = \operatorname{Sz}(8),$ and $T = \PSL(2, 2^5)$. The classifications for $T = \operatorname{Sz}(8)$ and $T = \PSL(2, 2^5)$ were the first ones for groups in which $\pi(\operatorname{Aut}(T))\neq \pi(T)$. There are many more $K_4$-groups which also have this property, for example, $\PSL(2, 2^f)$ where $f\geq 5$ is prime.

At this point, to finish the classification of the prime graphs of single $K_4$-solvable groups, all that remains to do is to classify the cases where $T = \PSL(2, q)$ for a prime power $q$ such that $\PSL(2, q)$ is a $K_4-$group, $q\neq 2^5$. This is not a trivial task, as this family may be infinite. As of this writing, whether or not this family is infinite is an open question. Additionally, the modular character tables -- which we used extensively in our classifications of $\Sz(8)-, \Sz(32)-,$ and $\PSL(2, 2^5)$-solvable groups in \cite{Suz} -- are only available through \cite{GAP} for a few relatively small members of the family.

In this paper, we provide general classifications of the prime graph complements of $T$-solvable groups for nonabelian simple groups $T$ that meet certain criteria in \ref{sec:PSL216General}, \ref{sec:General}, \ref{sec:generalT211}, and \ref{sec:PSL225General}. To demonstrate the utility of these results and to gain insight into how the prime graphs of $T$-solvable groups for $T$ in various subfamilies of the family of $K_4$-$\PSL(2, q)$ groups for $q$ a prime power can be classified, we show that some members of the family $\PSL(2, q)$ satisfy these criteria. Specifically, we show this for $q = 2^4$ in Section \ref{sec:PSL216new}; $q = 3^3$ and $q = 7^2$ in Section \ref{sec:PSL249}; $q = 11, q = 19;$ and $q = 23$ in Section \ref{sec:PSLPrimesGeneral}; and $q = 5^2$ and $q = 3^4$ in Section \ref{sec:PSL225}; thus classifying the corresponding prime graphs. In addition, we provide general partial results on the prime graphs of $T$-solvable groups where $T$ is a member of the subfamily $\PSL(2, 2^f)$ for $f\geq 5$ prime such that $T$ is a $K_4$-group in Section \ref{sec:GeneralPSL}. This is the first instance where prime graphs of a general, potentially infinite family of simple groups are
studied (using generic character tables), and we obtain a nearly complete classification, just being unable to decide for a few small graphs whether they can occur.\\

None of the criteria needed to show that a group $T$ satisfies a general classification in \ref{sec:PSL216General}, \ref{sec:General}, \ref{sec:generalT211}, or \ref{sec:PSL225General} depend on modular characters, and we anticipate that these results will be useful even when modular character tables are unavailable. These results should reduce the problem of classifying prime graphs of a certain class of $\PSL(2, q)$ groups to a mostly computational problem of computing Sylow subgroups, Schur multipliers, realizations of a few four-vertex graphs, and character tables. Additionally, our results inform the conjecture in Conjecture \ref{conjectureLast}-- if this conjecture is true, then the classification theorems for a large class of $K_4$ $\PSL(2, q)$ groups would follow from it and our results.\\
Finally we remark that even though $\PSL(2, 13)$ is relatively small, we do not classify the prime graph complements of $\PSL(2, 13)-$solvable groups in this paper as that would have required computing fixed point information of the representation $(\mathbb{F}_3)^{36}\rtimes2.\PSL(2, 13)$, which exceeded the available computational power. (However there is work by Alland, Fridman, and Keller forthcoming addressing $\PSL(2, 13)$ without the need for the fixed point information of
such large groups.)

We now introduce some terminology and notation that will be used frequently throughout the rest of the paper. Let $G$ be a group, and let $\pi_0$ be a set of primes.
\begin{itemize}
    \item If $\pi_0$ is a set of primes, We say that $G$ is a $\pi_0$-group if $\pi(G) \subseteq \pi_0$. We say that $G$ is a strict $\pi_0$-group if $\pi(G) = \pi_0$.
    \item We write $\pg(G)$ to denote the prime graph of $G$, the graph whose vertex set is $\pi(G)$ and where an edge between the primes $p$ and $r$ exists if and only if $G$ has an element of order $pr$. We write $p-r$ to denote the edge $\{p,r\}$. We write $\pgc(G)$ to denote the graph-theoretic complement of $\pg(G)$.
    \item If $\pi_0 \subseteq \pi(G)$, we write $\pgc(G)[\pi_0]$ to denote the subgraph of $\pgc(G)$ induced by the vertices $\pi_0$.
    \item We write normal series in ATLAS notation. This means that we will write $G = X_1.X_2. \cdots .X_k$ if there exists a normal series $G = N_k \trianglerighteq N_{k-1} \trianglerighteq \cdots \trianglerighteq N_1 \trianglerighteq N_0 = \{1\}$ such that $X_i \cong N_i/N_{i-1}$ for $i = 1, \dots, k$.
    \item We sometimes use the notation $= $ to signify a variable being defined as some quantity.
\end{itemize}
We now introduce the Frobenius Criterion, which we will use throughout this paper:
\begin{definition}
    \cite[Definition 2.2.1]{2023REU} Let $P$ be a $p$-group. $P$ is said to satisfy the \emph{Frobenius Criterion} if $P$ is cyclic, dihedral, Klein-4, or generalized quaternion. Note that every quotient of a generalized quaternion or cyclic group satisfies the Frobenius Criterion.
\end{definition}
We now present an orientation of $\pgc(G)$ that will be helpful in many of the classification proofs. First, we present relevant terminology quoted from \cite{2022REU}:
\begin{definition}
    A group $G = QP$ is \emph{Frobenius of type (p, q)} if it is a Frobenius group where the Frobenius complement $P$ is a $p$-group and the Frobenius kernel $Q$ is a $q$-group.
\end{definition}
\begin{definition}
    A group $G = P_1QP_2$ is called \emph{2-Frobenius of type (p, q, p)} if it is a 2-Frobenius group where the subgroup $P_1Q$ is Frobenius of type $(q, p)$ and the quotient group $QP_2$ is Frobenius of type $(p, q)$.
\end{definition}
We present the following definition as it appears in \cite[Definition 2.3.3]{2023REU}. It was first defined in \cite{2015REU}.
\begin{definition}\thlabel{frobDigraph}
    \cite[Definition 2.3.3]{2023REU} Let $G$ be a solvable group. The \emph{Frobenius Digraph} of $G$, denoted $\overrightarrow{\Gamma}(G)$, is the orientation of $\pgc(G)$ where $p\rightarrow q$ if the Hall $\{p, q\}$-subgroup of $G$ is Frobenius of type $(p, q)$ or 2-Frobenius of type $(p, q, p)$.
\end{definition}
We include the definition of a rooted graph and a rooted graph isomorphism:
\begin{definition}
    A rooted graph is a graph in which one of the vertices is distinguished to be the root. Two rooted graphs $A$ and $B$ are isomorphic if there is a graph isomorphism which identifies the root of $A$ with the root of $B$.
\end{definition}
Throughout this paper, the definition of a ``graph realizable by a $T$-solvable group" will depend on $T$, so we include the appropriate definitions in the relevant sections.

As in the \cite{Suz} paper, we classify the prime graph complements of $T$-solvable groups in place of the prime graphs. Throughout this paper, all groups are assumed to be finite unless otherwise stated.
\section*{Preliminaries}
As we continue the work of \cite{2015REU}, \cite{2022REU}, \cite{2023REU}, and \cite{Suz} in this paper. We use several results from these works, so for the reader's convenience, we dedicate this section to stating a few of the results which we use most often.

The following result allows us to take subgroups of a certain form. It is useful in proving that there do not exist edges of the form $r-p$ for $r\in \pi(G)\setminus\pi(T)$ and some $p\in\pi(T)$:
\begin{lemma}\thlabel{lemma212inPaper}
    (\cite[Lemma 2.1.2]{2023REU}): Let $T$ be a nonabelian simple group satisfying $\pi(T ) = \pi(\Aut(T ))$, and let $G$ be a strictly $T$-solvable group. Then $G$ has a subgroup $K \cong N.T$ with $N$ solvable and $\pi(G) = \pi(K)$.
\end{lemma}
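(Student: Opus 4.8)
The plan is to put $G$ into a structural normal form and then construct $K$ explicitly; no induction is needed. First I would reduce to the case $R(G) = 1$, where $R(G)$ denotes the solvable radical: it suffices to establish the lemma for $G/R(G)$, which is again strictly $T$-solvable but now has trivial solvable radical, since if $\overline{K} \cong \overline{N}.T \le G/R(G)$ with $\overline{N}$ solvable and $\pi(\overline{K}) = \pi(G/R(G))$, then pulling back along $G \twoheadrightarrow G/R(G)$ the preimages $N \trianglelefteq K$ of $\overline{N} \trianglelefteq \overline{K}$ satisfy: $N$ is solvable (an extension of $R(G)$ by $\overline{N}$), $K/N \cong T$, and $\pi(K) = \pi(R(G)) \cup \pi(G/R(G)) = \pi(G)$. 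Assuming now $R(G) = 1$, we have $F(G) = 1$, so $F^*(G) = \operatorname{soc}(G) = T_1 \times \cdots \times T_n$ with each $T_i \cong T$ and $n \ge 1$ (there are no abelian minimal normal subgroups and every non-abelian composition factor is $\cong T$), and $C_G(\operatorname{soc}(G)) = 1$, so $G$ embeds into $\Aut(\operatorname{soc}(G)) = \Aut(T) \wr S_n$.

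Next I would identify the primes of $G$ that lie outside $\pi(T)$. Let $\sigma \colon G \to S_n$ be the permutation action on $\{T_1, \dots, T_n\}$; its kernel $\ker\sigma = G \cap \Aut(T)^n$ contains $\operatorname{soc}(G) = T^n$ and is contained in $\Aut(T)^n$, so — using the hypothesis for the first time — $\pi(\ker\sigma) = \pi(\Aut(T)) = \pi(T)$, and therefore $\pi(G) = \pi(T) \cup \pi_0$ where $\pi_0 := \pi(G) \setminus \pi(T) \subseteq \pi(\sigma(G))$. Since every composition factor of $G$ is cyclic or $\cong T$ and $T$ is a $\pi_0'$-group, $G$ is $\pi_0$-separable and its Hall $\pi_0$-subgroups are solvable; I would fix one such subgroup $H_0$, so that $\pi(H_0) = \pi_0$ and $\gcd(|H_0|, |T|) = \gcd(|H_0|, |\ker\sigma|) = 1$, whence $H_0 \cap \operatorname{soc}(G) = 1$.

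Finally I would build $K$ from a ``twisted diagonal''. The group $H_0$ acts coprimely on $\operatorname{soc}(G) = T_1 \times \cdots \times T_n$, and the key claim is that the fixed-point subgroup $C := C_{\operatorname{soc}(G)}(H_0)$ contains a copy of $T$. To see this, fix a $\sigma(H_0)$-orbit $O \subseteq \{1, \dots, n\}$ and a base point $i_0 \in O$; the stabilizer $\operatorname{Stab}_{H_0}(i_0)$ induces on $T_{i_0}$ a group of automorphisms whose order divides both $|H_0|$ and $|\Aut(T)|$, hence is trivial (second use of the hypothesis), and a direct computation then identifies the $H_0$-fixed tuples supported on $O$ with a single twisted-diagonal subgroup of $\prod_{i \in O} T_i$ isomorphic to $T$; thus $C$ is a direct product of copies of $T$, one for each orbit, and in particular $C \ne 1$. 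Choosing $\Delta \le C$ with $\Delta \cong T$ and using that $H_0$ centralizes $\Delta$ while $\Delta \cap H_0 \le \operatorname{soc}(G) \cap H_0 = 1$, the product $K := \Delta H_0 = \Delta \times H_0$ is a subgroup of $G$ with $H_0 \trianglelefteq K$ solvable, $K/H_0 \cong T$, and $\pi(K) = \pi(T) \cup \pi_0 = \pi(G)$; pulling this back through $R(G)$ as in the first step completes the proof. The hard part is exactly the key claim — that the coprime fixed points of $H_0$ on $\operatorname{soc}(G)$ assemble into a full diagonal copy of $T$ — which is precisely where the equality $\pi(T) = \pi(\Aut(T))$ is essential; the reduction to $R(G) = 1$ and the prime-count $\pi(G) = \pi(T) \cup \pi_0$ are routine.
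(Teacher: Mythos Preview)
The paper does not prove this lemma; it is quoted as a preliminary from \cite[Lemma~2.1.2]{2023REU}, so there is no in-paper argument to compare against directly.

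Your proof is correct. The reduction modulo the solvable radical is routine, the socle analysis when $R(G)=1$ is standard, and the twisted-diagonal computation is carried out properly: the hypothesis $\pi(\Aut(T))=\pi(T)$ enters exactly twice, first to show $\pi(\ker\sigma)=\pi(T)$ so that every prime of $\pi_0$ lives in the permutation quotient $\sigma(G)$, and second to force $\operatorname{Stab}_{H_0}(i_0)$ to act trivially on $T_{i_0}$, which is precisely what makes the $H_0$-fixed points on each orbit assemble into a full diagonal copy of $T$ rather than a proper subgroup. The final product $K=\Delta\times H_0$ then has the required shape.

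One small point you assert without justification is that the Hall $\pi_0$-subgroup $H_0$ is solvable. This does not follow from $\pi_0$-separability alone, and subgroups of $T$-solvable groups need not be $T$-solvable (e.g.\ $A_5\le A_6$). The correct reason is that in a $\pi_0$-separable group a Hall $\pi_0$-subgroup covers exactly the $\pi_0$-chief factors of $G$, and here those are all elementary abelian since the only nonabelian chief factors of $G$ are powers of $T$, which is a $\pi_0'$-group. With that remark filled in, the argument is complete.
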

The following lemma follows from the proof of \cite[Proposition 2.2.2]{2023REU}:
\begin{proposition}\thlabel{frobprop}
   (\cite[Proposition 2.2.2]{2023REU}): Let $T$ be a nonabelian simple group, and suppose $G\cong N.T$ for some solvable $N$. Fix $r\in\pi(T)$. If the Sylow $r$-subgroups of $T$ do not satisfy the Frobenius criterion, then for all $p\in\pi(N)$ we have $r-p\not\in\pgc(G)$.
\end{proposition}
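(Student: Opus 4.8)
The plan is to argue by contradiction: suppose some $p\in\pi(N)$ has $r-p\in\pgc(G)$, i.e. $G$ has no element of order $rp$ (so in particular $r\neq p$), and from this I would produce a Sylow $r$-subgroup of $T$ satisfying the Frobenius criterion, contradicting the hypothesis. The first move is to pass to a more convenient subgroup. Choose a Sylow $p$-subgroup $P$ of $N$; since $p\mid|N|$ we have $P\neq 1$, and since $N\trianglelefteq G$ the Frattini argument gives $G=N\cdot N_G(P)$. Writing $H:=N_G(P)$, this yields $H/(H\cap N)\cong G/N\cong T$ with $H\cap N=N_N(P)$ solvable and, the point of the whole move, $P\trianglelefteq H$.

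Next I would examine the conjugation action of a Sylow $r$-subgroup $R$ of $H$ on the normal subgroup $P$. The crucial observation is that no nontrivial $r$-element $y$ of $H$ can centralize a nontrivial element of $P$: otherwise, picking $1\neq z\in C_P(y)$, the commuting elements $y^{|y|/r}$ (of order $r$) and a suitable power of $z$ (of order $p$) would give an element of order $rp$, contrary to assumption. Every nonidentity element of $R$ is an $r$-element, so $C_P(y)=1$ for all $1\neq y\in R$; since $P\neq 1$ this also forces $C_R(P)=1$, and hence $PR=P\rtimes R$ is a Frobenius group with kernel $P$ and complement $R$. As every Sylow subgroup of a Frobenius complement is cyclic or generalized quaternion and $R$ is an $r$-group, $R$ itself is cyclic or generalized quaternion.

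Finally I would transfer this down to $T$. Because $R$ is a Sylow $r$-subgroup of $H$ and $H\cap N\trianglelefteq H$, the image $\overline{R}:=R(H\cap N)/(H\cap N)\cong RN/N$ is a Sylow $r$-subgroup of $H/(H\cap N)\cong T$. But $\overline{R}$ is a homomorphic image of $R$, and every quotient of a cyclic or generalized quaternion group satisfies the Frobenius criterion. Thus $T$ has a Sylow $r$-subgroup satisfying the Frobenius criterion, which is the desired contradiction.

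I do not expect a genuine obstacle: the argument is short and uses only Sylow theory, the Frattini argument, and the classical structure of Frobenius complements. The two points deserving care are (i) realizing that one should first replace $G$ by $N_G(P)$, which makes $P$ normal and lets the fixed-point-free-action argument apply to the \emph{entire} Sylow $r$-subgroup at once, thereby sidestepping any Frobenius/$2$-Frobenius case analysis on a Hall $\{p,r\}$-subgroup of the (non-solvable) group $G$; and (ii) the routine index computation confirming that the image of $R$ in $T$ really is a full Sylow $r$-subgroup of $T$, so that the Frobenius-criterion hypothesis on $T$ can be invoked.
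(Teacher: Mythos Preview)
Your argument is correct. The paper does not actually prove this proposition---it is quoted from \cite{2023REU}---but it does prove the generalization \thref{generalize222}, whose proof is the natural point of comparison. That proof takes a different route: it refines $N$ by a chief series, isolates the topmost elementary abelian $p$-section $V$ so that the quotient above $V$ is a $p'$-group extended by $T$, and then argues that a Sylow $r$-subgroup of this quotient cannot act Frobeniusly on $V$ because it has a quotient (a Sylow $r$-subgroup of $T$) failing the Frobenius criterion. Your Frattini-argument approach is more direct for the case at hand: by normalizing a full Sylow $p$-subgroup $P$ of $N$ you avoid passing to sections entirely, and the no-$rp$-element hypothesis immediately forces the Sylow $r$-subgroup $R$ of $N_G(P)$ to be a Frobenius complement, hence cyclic or generalized quaternion. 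The trade-off is that the chief-series argument adapts verbatim to the generalization where $T$ is replaced by an arbitrary $S$ with $\pi(S)=\pi(T)$, since it never needs $G/N$ to act transitively on Sylow $p$-subgroups of $N$; your argument also generalizes (the Frattini argument works for any $G$ with $N\trianglelefteq G$), but one must observe this. Both approaches ultimately rest on the same fact---quotients of cyclic or generalized quaternion groups satisfy the Frobenius criterion---applied in opposite directions.
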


We state a corollary which allows us to eliminate edges between $p\in\pi(G)\setminus\pi(T)$ and $r\in\pi(T)$ using ordinary characters:
\begin{corollary}\thlabel{corollaryLemma}
    (\cite[Corollary 2.2.6]{2023REU}): Let $r \in \pi(T )$ be an odd prime 
    which is coprime to the Schur multiplier of $T$. 
    Suppose that in every complex irreducible representation of a perfect central extension of $T$, some element of order $r$ has fixed points. Then $r - p \notin \pgc(G) \text{ for all } p \in \pi(K) \setminus \pi(T )$.

\end{corollary}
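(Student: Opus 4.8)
The plan is to derive this from Proposition \ref{frobprop} together with a fixed-point argument that rules out the Frobenius and 2-Frobenius structures witnessing an edge $r-p$ in $\pgc(G)$. First I would note that by Lemma \ref{lemma212inPaper} it suffices to work with the subgroup $K \cong N.T$ with $N$ solvable, since $\pi(G) = \pi(K)$; so fix $p \in \pi(K) \setminus \pi(T) = \pi(N)$ and suppose toward a contradiction that $r - p \in \pgc(G)$, i.e.\ that $K$ has no element of order $rp$. Since $r \in \pi(T)$ and $p \in \pi(N)$, the Hall $\{p,r\}$-subgroup $H$ of $K$ (which exists, $K$ being $\{p,r\}$-separable in the relevant sense because $N$ is solvable and the Sylow $r$-subgroup lies in a single section) witnesses this non-edge; and because $r-p \in \pgc(G)$, the Frobenius Digraph orientation tells us $H$ is Frobenius of type $(p,r)$ or $(r,p)$, or $2$-Frobenius of type $(p,r,p)$ or $(r,p,r)$.

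Next I would extract the key structural consequence: in every one of these cases there is a normal $p$-subgroup $Q \trianglelefteq H$ acted on by a cyclic group $\langle x \rangle$ of order $r$ (a Sylow $r$-subgroup of $T$ is cyclic here because $r$ is odd and coprime to $|N|$, so the relevant section of $K$ has cyclic Sylow $r$; more carefully, one uses that $r \nmid |N|$, so a Sylow $r$ of $K$ embeds into $T$, and one can pick an element $x$ of order $r$ whose conjugation action on $Q$ is fixed-point-free — this fixed-point-freeness is exactly what "Frobenius" or "$2$-Frobenius of type $(p,r,p)$" buys us). Then I would pass to a faithful irreducible $\F_p[\langle x\rangle]$-submodule $V$ of $Q/\Phi(Q)$ (or of an appropriate chief factor), on which $x$ still acts without nonzero fixed points. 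Now I would invoke the hypothesis on ordinary characters: lifting $V$ to characteristic zero via Brauer characters / the relation between the permutation-type action and complex representations of a perfect central extension $\hat T$ of $T$ — here is where "coprime to the Schur multiplier" enters, letting us identify the $p$-modular situation over $\hat T$ with an honest complex representation of $\hat T$ — we find that some element of order $r$ in $\hat T$ has a fixed point on the corresponding complex module, contradicting fixed-point-freeness of the $\langle x \rangle$-action after reduction mod $p$. The contradiction forces $r - p \notin \pgc(G)$.

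The main obstacle — and the step I'd be most careful about — is the bridge between the $\F_p$-action of the order-$r$ element on the chief factor $V$ and the complex representation theory of the perfect central extension $\hat T$: one must (i) promote the $\langle x\rangle$-action to a module for a larger group that surjects onto $T$ (so that "complex irreducible representation of a perfect central extension of $T$" is the right category), (ii) use $\gcd(r, |\text{Schur mult}(T)|) = 1$ and $r$ odd to lift faithfully and to avoid pathologies with the center, and (iii) translate "has fixed points" between the modular reduction and the ordinary character via the fact that for $r$ coprime to $p$ the Brauer character value at an element of order $r$ equals the ordinary character value, so $\langle \chi\downarrow_{\langle x\rangle}, 1\rangle \neq 0$ in one setting iff in the other. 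Everything else — the case analysis on Frobenius versus $2$-Frobenius, the reduction to a chief factor, and the appeal to Proposition \ref{frobprop} to handle the sub-case where the Sylow $r$ fails the Frobenius criterion outright — is routine given the machinery already set up in the Preliminaries.
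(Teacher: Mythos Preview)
Note first that the paper does not prove this statement at all: it is quoted verbatim from \cite[Corollary 2.2.6]{2023REU} in the Preliminaries, so there is no in-paper proof to compare against. What follows is a comparison of your sketch with the argument underlying the cited result.

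Your overall strategy is sound, but the step you flag as ``the main obstacle'' is a genuine gap as written, not just a delicate detail. Working inside a Hall $\{p,r\}$-subgroup $H$ of $K$ gives you a fixed-point-free action of a cyclic group $\langle x\rangle$ of order $r$ on a $p$-section, but that alone carries no information about $T$: the hypothesis speaks of complex irreducible representations of perfect central extensions of $T$, so you need the $p$-module to be a module for such an extension, not merely for $\langle x\rangle$. Your step (i), ``promote the $\langle x\rangle$-action to a module for a larger group that surjects onto $T$'', is exactly the content of \cite[Proposition 2.2.5]{2023REU} (used elsewhere in this paper, e.g.\ in the proof of \thref{PSL225Classification}): from the edge $r-p\in\pgc(K)$ one extracts a section of $K$ of the form $V.E$ with $V$ a nontrivial elementary abelian $p$-group and $E$ a perfect central extension of $T$, on which an element of order $r$ in $E$ acts without nonzero fixed vectors. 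Once you have that, your steps (ii) and (iii) are correct: since $p\notin\pi(T)$ and $r$ is coprime to $p$, the Brauer character of $V$ at order-$r$ elements agrees with an ordinary character, and decomposing into complex irreducibles of $E$ yields one in which the order-$r$ element has no fixed points, contradicting the hypothesis (the coprimality of $r$ to the Schur multiplier ensures the Sylow $r$-subgroups of $E$ and $T$ coincide, so ``some element of order $r$'' transfers cleanly).

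Two minor corrections: the Sylow $r$-subgroups of $T$ need not be cyclic under the stated hypotheses, and the Frobenius/2-Frobenius dichotomy for the $\{p,r\}$-section comes from Williams \cite{Williams}, not from the Frobenius digraph (which is defined only for solvable groups).
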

The following lemma is a corrected and slightly generalized version of \cite[Lemma 2.3.6]{2023REU}, although the proof is almost identical to that of the original. We include it for completeness:
\begin{lemma}\thlabel{236Improved}
   Let $T$ be a nonabelian simple group, $F$ a strictly $T$-solvable group, $\Xi$ a graph, and $X$ a set of $|\pi(F)|$ vertices of $\Xi$. Suppose $\Xi\setminus X$ is triangle-free and $\Xi$ has a $3$-coloring $\{\mathcal{O}, \mathcal{D}, \mathcal{I}\}$ such that vertices in $N(X)\setminus X$ are all colored $\mathcal{I}$. Further suppose that:
   \begin{enumerate}
       \item There exists a graph isomorphism $\varphi$ from the subgraph of $\Xi$ induced by $X$ to $\pgc(F)$.
       \item Given $v\in \Xi\setminus X$, there exists a complex irreducible representation $V$ of $F$ such that for all $x\in X$, $x-v\in \Xi$ if and only if elements of order $\varphi(x)$ of $F$ act without fixed points in $V$.
   \end{enumerate}
   Then there exists a group $G$ such that $\Xi\cong \pgc(G)$, and $G$ is $T$-solvable of the form $J\rtimes(F\times K)$ for suitable solvable groups $J$ and $K$.
\end{lemma}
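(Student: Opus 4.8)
The plan is to construct $G$ explicitly as a layered semidirect product, in the spirit of the realizability constructions of \cite{2015REU} and \cite{2022REU}. First identify each $x\in X$ with the prime $\varphi(x)\in\pi(F)$, and assign to each $v\in\Xi\setminus X$ a distinct new prime $p_v\notin\pi(F)$ (chosen large enough and subject to whatever congruence conditions the modules below require); put $\pi_0=\{p_v:v\in\Xi\setminus X\}$, let $B=N(X)\setminus X$ be the ``boundary'' (entirely $\mathcal I$-colored, by hypothesis) and $I=(\Xi\setminus X)\setminus B$ the ``interior''. Since $\Xi\setminus X$ is triangle-free and $3$-colorable, so is the induced subgraph $\Xi[I]$, and \thref{solvclass} yields a solvable group $K$ with $\pgc(K)\cong\Xi[I]$ and $\pi(K)=\{p_w:w\in I\}$. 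I would use the explicit form of that construction, since I also need, for each $v\in B$, an $\F_{p_v}[K]$-module $B_v$ on which a Sylow $p_w$-subgroup of $K$ acts fixed-point-freely exactly when $v-w\in\Xi$; the neighbors in $I$ of a boundary vertex form an independent set by triangle-freeness, and this, together with the placement of $B$ in the innermost color class $\mathcal I$, is precisely what makes $K$ and the modules $B_v$ simultaneously available. This step is essentially the solvable realizability toolkit, and it is where the coloring hypothesis on $N(X)\setminus X$ is used.

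Next I build the modules carrying the ``$F$ part'' of the graph. For each $v\in B$, take the complex irreducible representation $V$ of $F$ given by hypothesis~(2) and let $\overline V_v$ be a reduction of $V$ modulo $p_v$. Since $p_v\nmid|F|$, the algebra $\F_{p_v}[F]$ is semisimple and the Brauer character of $\overline V_v$ coincides with the ordinary character of $V$, so for every $x\in X$ some element of order $\varphi(x)$ fixes a nonzero vector of $\overline V_v$ iff some such element does so on $V$, i.e.\ (by~(2)) iff $x-v\notin\Xi$. Put $M_v=\overline V_v\otimes_{\F_{p_v}}B_v$, an $\F_{p_v}[F\times K]$-module with $F$ acting through the first factor and $K$ through the second; Maschke's theorem (applied with the relevant prime orders, all coprime to $p_v$) gives $\mathrm{fix}_{(g,1)}(M_v)=\mathrm{fix}_g(\overline V_v)\otimes B_v$ and $\mathrm{fix}_{(1,k)}(M_v)=\overline V_v\otimes\mathrm{fix}_k(B_v)$. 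Hence some element of order $\varphi(x)$ of $F$ fixes a nonzero vector of $M_v$ iff $x-v\notin\Xi$, and some element of order $p_w$ of $K$ ($w\in I$) fixes a nonzero vector of $M_v$ iff $v-w\notin\Xi$.

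Now set $J=\prod_{v\in B}M_v$ and $G=J\rtimes(F\times K)$. Then $G$ is $T$-solvable of the required shape, $\pi(G)=\pi(F)\cup\pi_0$ is (via $\varphi$ and $v\mapsto p_v$) the vertex set of $\Xi$, and each $M_v$ is normal in $G$ and is its unique Sylow $p_v$-subgroup. It remains to match $\pgc(G)$ with $\Xi$ one pair of vertices at a time, using throughout that $J$ is a $\pi_0$-group, $F$ a $\pi(F)$-group and $K$ a $\{p_w:w\in I\}$-group, so that any element of squarefree composite order splits, after conjugation into suitable Sylow subgroups, compatibly with this normal structure. Passing to $G/J\cong F\times K$ gives $\pgc(G)[X]\cong\pgc(F)$ via $\varphi$ and $\pgc(G)[I]\cong\pgc(K)\cong\Xi[I]$; for $x\in X$ and $w\in I$ a commuting pair consisting of a $\varphi(x)$-element of $F$ and a $p_w$-element of $K$ produces an element of order $\varphi(x)p_w$, matching $x-w\notin\Xi$; for $x\in X$ and $v\in B$ (respectively $w\in I$ and $v\in B$) an element of order $\varphi(x)p_v$ (respectively $p_wp_v$) exists iff some element of order $\varphi(x)$ of $F$ (respectively of order $p_w$ of $K$) fixes a nonzero vector of $M_v$, hence by the previous paragraph iff $x-v\notin\Xi$ (respectively $v-w\notin\Xi$); and for distinct $v,v'\in B$ the abelian subgroup $M_v\times M_{v'}\le J$ contains an element of order $p_vp_{v'}$, which is consistent since $v$ and $v'$ share the color $\mathcal I$ and so are non-adjacent in $\Xi$.

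The main obstacle is the first step together with the ``negative'' half of the verification: one must extract the solvable group $K$ and the auxiliary modules $B_v$ from the tripartite triangle-free graph $\Xi\setminus X$, and then check that a fixed-point-free action genuinely precludes the unwanted elements of composite order, i.e.\ that each relevant Hall $\{p,r\}$-subgroup of $G$ is Frobenius of type $(p,r)$ or $2$-Frobenius of type $(p,r,p)$. This is exactly what makes essential use of the triangle-freeness of $\Xi\setminus X$ and the placement of $N(X)\setminus X$ in the color class $\mathcal I$, and it proceeds just as in the proof of \cite[Lemma 2.3.6]{2023REU}.
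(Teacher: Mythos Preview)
Your proposal is correct and follows the same overall strategy as the paper --- build $G=J\rtimes(F\times K)$ with $K$ solvable and $J$ a product of modules --- but with two technical differences worth noting.

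First, the paper puts \emph{all} $\mathcal{I}$-colored vertices of $\Xi\setminus X$ into $J$, so that $K=Q\rtimes P$ is a product of cyclic groups handling only the $\mathcal{O}$- and $\mathcal{D}$-vertices. You instead put only the boundary set $B=N(X)\setminus X$ into $J$ and let $K$ realize all of $\Xi[I]$, including interior $\mathcal{I}$-vertices. This makes your $K$ structurally more complicated, and you then need each $K$-module $B_v$ to be acted on trivially by the interior-$\mathcal{I}$ part of $K$; you gesture at this (``the explicit form of that construction'') but do not spell it out. It is easily arranged by letting $K$ act on $B_v$ through the quotient by its interior-$\mathcal{I}$ normal subgroup, which recovers exactly the paper's $Q\rtimes P$.

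Second, for the modules themselves the paper invokes \cite[Lemma~3.5]{236} to produce each $R_k$ as an $F\times B_k$-module with the prescribed fixed-point pattern in one stroke, whereas you construct $M_v=\overline V_v\otimes_{\F_{p_v}}B_v$ as a tensor product of a mod-$p_v$ reduction of the given complex $F$-representation with a separately built $K$-module. Your tensor-product route is more explicit and self-contained (the fixed-point computation via Maschke is clean); the paper's route keeps $K$ in a simpler normal form at the cost of appealing to an external lemma. Either way the verification of $\pgc(G)\cong\Xi$ proceeds identically.
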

\begin{proof}
    We define a partial orientation of $\Xi$ as follows. In $\Xi\setminus X$, direct edges according to color: $\mathcal{O}\to\mathcal{D}$, $\mathcal{O}\to\mathcal{I}$, and $\mathcal{D}\to \mathcal{I}$. For all edges $u-v\in\Xi$ where $u\in X$ and $v\in\Xi\setminus X$, define the orientation as $u\to v$. Now let $n_o = |\mathcal{O}|, n_i = |\mathcal{I}|, n_d = |\mathcal{D}|$.

    Choose $n_0$ distinct primes $p_1, \cdots, p_{n_o}\not\in \pi(F)$. Define $p = \prod_{j=0}^{n_o}p_j$. Using Dirichlet's theorem on arithmetic progressions, pick a set of distinct primes $q_1, \dots, q_{n_d}$ other than those in $\pi(F)$ such that $q_i\equiv 1 (\operatorname{mod } p_i)$ for all $i$. Identify each vertex in $\mathcal{O}$ with one of the $p_i$ and identify each vertex in $\mathcal{D}$ with one of the $q_i$. Define groups
    \begin{align*}
        P = C_{p_1}\times\cdots\times C_{p_{n_o}}\text{ and } Q = C_{q_1}\times\cdots\times C_{q_{n_d}}.
    \end{align*}
    For all indices $i, j$ if $p_j\to q_i$ is an edge in $\overrightarrow{\Xi}$, then let $C_{p_j}$ act Frobeniusly on $C_{q_i}$. This is possible because $q_i\equiv 1(\text{mod } p_i)$. Otherwise, if $p_j$ and $q_i$ are not adjacent in $\Xi$, let $C_{p_j}$ act trivially on $C_{q_j}$. This defines a group action of $P$ on $Q$, so we obtain the induced semidirect product $K = Q\rtimes P$. Note that $K$ is solvable.

    Now let $v_1,\cdots, v_{n_i}$ be the vertices in $\mathcal{I}$. For each $k\in\{1, \cdots, n_i\}$, let $N^1(v_k), N^2(v_k)$ denote the set of primes in $\Xi\setminus X$ with in-distance 1 and 2 to $v_k$, respectively. If $N^1(v_k)$ is nonempty, let $B_k$ be the Hall $(N^1(v_k)\cup N^2(v_k))$-subgroup of $K$. By our definition of $K$, $\operatorname{Fit}(B_k)$ is a Hall $N^1(v_k)$-subgroup of $K$. If $N^1(v_k)$ is empty, set $B_k = 1$. Now we divide into two cases for each $k$.

    If $v_k$ is not adjacent to any vertex in $X$, consider the trivial complex representation of $F$ and pick a prime $r_k$ such that $|F\times B_k|\mid (r_k - 1)$. By \cite[Lemma 3.5]{236} there exists a modular representation $R_k$ of $F\times B_k$ over a finite field of characteristic $r_k$ such that $\operatorname{Fit}(B_k)$ acts Frobeniusly on $R_k$ and $F$ acts trivially on $R_k$.

    If $v_k$ is adjacent to some vertex in $X$, let $V$ be the associated irreducible representation granted by the hypothesis. Applying Dirichlet's theorem on arithmetic progressions, take a prime $r_k$ such that $|F\times B_k|\mid (r_k - 1)$. According to \cite[Lemma 3.5]{236}, there exists a representation $R_k$ of $F\times B_k$ over a finite field of characteristic $r_k$ such that $\operatorname{Fit}(B_k)$ acts Frobeniusly on $R_k$, and elements of $F$ act without fixed points on $R_k$ if and only if they act without fixed points in $V$. In other words, for all $x\in X$, we have $x-p\in\pgc(G)$ if and only if order $\varphi(x)$ elements of $F$ act without fixed points on $R_k$.

    Let $J = R_1\times\cdots\times R_{n_i}$. Note that by Dirichlet's theorem on arithmetic progressions, we can require each of the $r_k$ to be distinct from the $p_j$ and $q_i$ and additionally be mutually distinct. Thus, we have defined an action of each $F\times B_k$ on $R_k$. This induces an action of $F\times K$ on $J$, which induces the $T$-solvable semidirect product $G = J\rtimes(F\times K)$. By this construction, $\pgc(G)\cong \Xi$.
\end{proof}

Finally, we re-state the result which relates Brauer characters to prime graph complements:
\begin{theorem}\thlabel{GeneralizedModuleExtensionsCited}
        (\cite[Theorem 3.7]{Suz}): Let $T$ be a finite simple group, and let $p \in \pi(T)$. For each $\chi \in \operatorname{IBr}_p(T)$, let $B_\chi$ be the set of edges
        $\{p-q \mid  \exists \ g\in T \textnormal{ s.t. } \operatorname{o}(g) = q \textnormal{ and } \frac{1}{\operatorname{o}(g)}\sum_{x \in \langle g \rangle}\chi(x) > 0\}$. Then given some graph $\Lambda$, we have that $\Lambda$ is realizable as the prime graph complement of a group of the form $N.T$ where $N$ is a $p$-group if and only if there is some subset $Y\subseteq \operatorname{IBr}_p(T)$ such that $\Lambda = \pgc(T)\setminus\mleft(\bigcup_{\chi\in Y} B_\chi\mright)$.
\end{theorem}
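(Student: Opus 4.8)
The statement packages a structural computation with a realization construction, so the plan is: first, for an \emph{arbitrary} group $G = N.T$ with $N$ a $p$-group, compute $\pgc(G)$ in terms of the Brauer characters arising as $G$-chief factors inside $N$; then read off ``only if'' directly, and obtain ``if'' by exhibiting an appropriate $G$. Note at the outset that $\pi(G) = \pi(T)$ since $p \in \pi(T)$. If $q, r \in \pi(T)\setminus\{p\}$ and $g\in G$ has order $qr$, then $\langle g\rangle \cap N = 1$ (as $\gcd(qr,|N|)=1$), so $g$ maps to an element of order $qr$ in $T$; conversely, any element of order $qr$ in $T$ lifts to $G$ by Schur--Zassenhaus applied to the preimage of the cyclic subgroup it generates. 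Hence $\pgc(G)$ and $\pgc(T)$ induce the same graph on $\pi(T)\setminus\{p\}$, and since $\bigcup_\chi B_\chi$ consists only of edges incident to $p$, both sides of the claimed identity already agree away from $p$. Everything reduces to the edges $p - q$ with $q \in \pi(T)\setminus\{p\}$.

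Fix such a $q$, choose $\bar h \in T$ of order $q$, and lift it to an element $h \in G$ of order $q$ (Schur--Zassenhaus again; any two such lifts are $N$-conjugate). Trivially $C_G(h) \cap N = C_N(h)$, while coprime action of $\langle h\rangle$ on the normal $p$-subgroup $N$ gives $C_G(h)N/N = C_T(\bar h)$; hence $|C_G(h)| = |C_N(h)|\cdot|C_T(\bar h)|$. Now $G$ has an element of order $pq$ iff some element of order $q$ in $G$ has centralizer of order divisible by $p$; every element of order $q$ in $G$ projects to one of order $q$ in $T$, and the statement ``$p \mid |C_T(\bar h)|$ for some $\bar h$ of order $q$'' is precisely $pq \in \pg(T)$. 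Combining these,
\[ pq \in \pg(G) \iff pq \in \pg(T) \ \text{ or }\ C_N(h) \neq 1 \text{ for some lift } h \text{ of some } \bar h \in T \text{ of order } q. \]

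It remains to translate ``$C_N(h) \neq 1$'' into Brauer characters. Refine $N$ by a $G$-chief series; the $p$-group $N$ acts trivially on each chief factor $V$ (Clifford theory, since $\mathbb{F}_p N$ is local), so $V$ is an irreducible $\mathbb{F}_p T$-module, whose constituents over a splitting field form a single Galois orbit in $\operatorname{IBr}_p(T)$. Since taking $\langle h\rangle$-fixed points is exact along this filtration (coprime action), $C_N(h) \neq 1$ iff $C_V(h) \neq 0$ for some chief factor $V$; and because $h$ is $p$-regular, $\dim C_V(h) = \tfrac{1}{q}\sum_{x\in\langle h\rangle}\chi_V(x)$ for the Brauer character $\chi_V$ of $V$. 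Letting $Y$ be the set of Brauer characters occurring among the chief factors of $N$, we conclude that, for $q\neq p$, $p-q \notin \pg(G)$ iff $p - q \in \pgc(T)$ and $p-q \notin B_\chi$ for all $\chi \in Y$; equivalently $p - q \in \pgc(T)\setminus\bigcup_{\chi\in Y}B_\chi$ (deleting an edge not present in $\pgc(T)$ changes nothing). Together with the comparison off $p$ this yields $\pgc(G) = \pgc(T)\setminus\bigcup_{\chi\in Y}B_\chi$, which is the ``only if'' direction.

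For ``if'', given $Y\subseteq\operatorname{IBr}_p(T)$, realize each $\chi\in Y$ over a finite field of characteristic $p$, let $N_\chi$ be the underlying finite elementary abelian $p$-group with its $T$-action, put $N = \prod_{\chi\in Y}N_\chi$ and $G = N\rtimes T$. Then $G$ is $T$-solvable with $N$ a $p$-group, and by the computation above $\pgc(G) = \pgc(T)\setminus\bigcup_{\psi\in Y'}B_\psi$, where $Y'$ is the set of chief-factor Brauer characters of $N$ --- namely the union of the Galois orbits of the members of $Y$. Galois-conjugate modules have equal fixed-point dimensions on every $p$-regular element, so $B_\psi = B_\chi$ whenever $\psi$ and $\chi$ are Galois-conjugate; therefore $\bigcup_{\psi\in Y'}B_\psi = \bigcup_{\chi\in Y}B_\chi$, and the proof is complete. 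I expect the main obstacle to be the bookkeeping in the last two paragraphs: verifying carefully that nontriviality of $C_N(h)$ is detected on the chief factors via coprime-action exactness, and that restriction of scalars between $\mathbb{F}_p$ and a splitting field only introduces Galois conjugates, which are invisible to the sets $B_\chi$. The group-theoretic core --- the factorization $|C_G(h)| = |C_N(h)|\cdot|C_T(\bar h)|$ --- is routine coprime-action theory.
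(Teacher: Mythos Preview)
The paper does not prove this statement; it merely cites it as \cite[Theorem 3.7]{Suz} in the Preliminaries, so there is no in-paper proof to compare against. Your argument is correct and self-contained: the centralizer factorization $|C_G(h)| = |C_N(h)|\cdot|C_T(\bar h)|$ (which follows from Schur--Zassenhaus conjugacy, since $N$ is solvable and $\gcd(o(h),|N|)=1$) cleanly isolates the contribution of $N$ to the edges at $p$, and the passage to chief factors via exactness of fixed points under coprime action, together with the Galois-orbit bookkeeping, is handled accurately. One small point of precision: when you write ``let $Y$ be the set of Brauer characters occurring among the chief factors of $N$,'' you should make explicit that you mean the absolutely irreducible constituents (so that $Y\subseteq\operatorname{IBr}_p(T)$ as required), but your final paragraph shows you are aware of this distinction.
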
 

\section{Lemmas for the Classifications of Prime Graph Complements of $\PSL(2, q)$-Solvable Groups}\label{sec:UsefulLemmas}
This section contains results which we use often in the classifications prime graph complements of $\PSL(2, q)$-solvable groups. The lemmas are fairly general and can be applied in many situations where $T$ is a finite nonabelian simple group. The first lemma generalizes \cite[Lemma 3.1.3]{2023REU}.

\begin{lemma} \thlabel{isolate2psl211}\thlabel{noworry2}
    Let $T$ be a nonabelian simple group and let $G$ be a strictly $T$-solvable group. Suppose $T$ satisfies all of the following:
    \begin{itemize}
        \item $\pi(T) = \pi(\Aut(T))$.
        \item There exists a perfect central extension $2.T$.
    \end{itemize}
    Then if $2-p \in \pgc(G)$ for some $p \in \pi(G) \setminus \pi(T)$, both of the following hold: 
    \begin{enumerate}
        \item There exists some subgroup $K\leq G$ isomorphic to $L.2.T$ with $\pi(K) = \pi(G)$ where $L$ is a solvable group of odd order and $2.T$ is a perfect central extension.
        \item $2-q \notin \pgc(G)$ for all $q \in \pi(T)$.
    \end{enumerate}
\end{lemma}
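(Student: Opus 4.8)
The plan: Part~2 will follow quickly from Part~1, so I concentrate on Part~1. Granting Part~1, suppose $K\le G$ with $K\cong L.2.T$ and $L$ of odd order; then $K/L\cong 2.T$ has a central involution $z$, and for any odd $q\in\pi(T)=\pi(2.T)$ one picks $g\in 2.T$ of order $q$, so $zg$ has order $2q$. Lifting $zg$ to $x\in K$ and passing to the $\{2,q\}$-part $x_0$ of $x$ — legitimate since $|L|$ is coprime to $2q$, so the image of $x_0$ in $K/L$ is still $zg$ — one gets $2q\mid o(x_0)$, and as $o(x_0)$ is a $\{2,q\}$-number a suitable power of $x_0$ has order exactly $2q$; hence $G$ has an element of order $2q$, i.e.\ $2-q\notin\pgc(G)$ (the case $q=2$ being vacuous). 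That disposes of Part~2.

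For Part~1 I would first apply \thref{lemma212inPaper} to obtain a subgroup $K_1\cong N.T$ of $G$ with $N$ solvable and $\pi(K_1)=\pi(G)$, and induct on $|G|$ to assume $G=N.T$. Since $p\notin\pi(T)$, every $p$-element of $G$ lies in $N$ and $p\in\pi(N)$. The hypothesis says $G$ has no element of order $2p$; equivalently, no involution of $G$ centralizes a $p$-element, equivalently every nontrivial $p$-element of $N$ acts without fixed points on any $2$-subgroup it normalizes (and has centralizer of odd order in $G$). The first substantive step is to show $2\in\pi(N)$: if $N$ were odd, a Sylow $2$-subgroup $S$ of $G$ would be isomorphic to one of $T$, hence (for the nonabelian simple group $T$, by Burnside and Brauer--Suzuki, not cyclic and not generalized quaternion — or directly by \thref{frobprop}, which even forces $S$ dihedral or Klein-four) $S$ contains a Klein four-subgroup $E$; $E$ acts coprimely on $N$, so normalizes some $P_0\in\operatorname{Syl}_p(N)$ with $P_0\neq1$, and each of the three involutions $t\in E$ centralizes nothing in $P_0$ (its centralizer in $G$ contains no $p$-element), so the coprime-action identity $P_0=\langle C_{P_0}(t):1\neq t\in E\rangle$ forces $P_0=1$, a contradiction.

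Next I would pass to $\overline G:=G/O_{2'}(N)$, an extension $1\to\overline N\to\overline G\to T\to1$ with $\overline N=N/O_{2'}(N)$ satisfying $O_{2'}(\overline N)=1$ and $2\mid|\overline N|$. The target is to locate inside $\overline G$ a perfect subgroup $\overline K$ with $\overline K\,\overline N=\overline G$ and $\overline K\cap\overline N$ central of order $2$ — a copy of a perfect central extension $2.T$. Then the full preimage $K$ of $\overline K$ in $G$ has $O_{2'}(K)=O_{2'}(N)=:L$ odd and $K/L\cong 2.T$, and — once one knows $\overline N$ is a $2$-group — $\pi(K)=\pi(L)\cup\pi(T)=\pi(N)\cup\pi(T)=\pi(G)$, so $K\cong L.2.T$ is the desired group. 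To carry this out one studies the Fitting subgroup: $\overline N$ being solvable with $O_{2'}(\overline N)=1$, $F(\overline N)=O_2(\overline N)$ is self-centralizing; using fixed-point-freeness of $p$-elements on $O_2(\overline N)$, the fact that a nonabelian simple group has no fixed-point-free involution on a nontrivial odd-characteristic module, and the restrictions on its Sylow $2$-subgroups, one should be able to show $\overline N$ is a $2$-group and then that the action of $T$ on it forces the extension class of $\overline G$ over $T$ to be carried by the $2$-part of the Schur multiplier $M(T)$ — which, by the hypothesis that $2.T$ exists, is nontrivial and yields exactly a $2.T$ inside $\overline G$.

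The hard part will be this last step: pinning down the $2$-local and (generalized) Fitting structure of $\overline N$ tightly enough to conclude that $\overline G\to T$ must split off a genuine, non-split, perfect $2.T$, and not merely a split $C_2\times T$ or a quaternion-type configuration containing no $2.T$. Both hypotheses on $T$ enter precisely here and in the reduction: $\pi(T)=\pi(\Aut T)$ is what lets \thref{lemma212inPaper} reduce us to $G=N.T$, while the existence of $2.T$ is what makes the configuration dictated by fixed-point-freeness both realizable and recognizably a perfect central extension.
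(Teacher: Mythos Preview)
Your argument for Part~2, once Part~1 is in hand, is correct and is essentially the paper's argument as well.

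For Part~1 your reduction to $G=N.T$ and your Klein-four/coprime-action argument that $2\in\pi(N)$ are fine. But after that you diverge from the paper and, as you acknowledge yourself, leave the crucial step unproved: you only assert that ``one should be able to show $\overline N$ is a $2$-group'' and that the extension over $T$ is carried by a genuine $2.T$, without actually establishing either. A Fitting-subgroup analysis of $\overline N=N/O_{2'}(N)$ does not by itself force $\overline N$ to be a $2$-group (solvable with $O_{2'}=1$ only gives $F(\overline N)=O_2(\overline N)$ self-centralizing), and you give no mechanism to exclude a split $C_2\times T$ or to produce the non-split $2.T$ inside $\overline G$. So the proposal has a real gap at exactly the point you flag as ``the hard part.''

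The paper bypasses this entirely with a much more direct idea: instead of working globally with $N$, take a Sylow $2$-subgroup $Q$ of $G$ and a Hall $\{2,p\}$-subgroup $H$ of $NQ$. By Williams' theorem $H$ is Frobenius or $2$-Frobenius of type involving $2$ and $p$; since $H/(H\cap N)$ is a Sylow $2$-subgroup of $T$ and hence non-cyclic, the $2$-Frobenius case is impossible (the top complement there is cyclic). Thus $Q$ is a Frobenius complement, so $Q$ is generalized quaternion. Now $Q_0:=Q\cap N$ is a normal subgroup of a generalized quaternion group of index $\ge4$, hence cyclic; Cayley's normal $2$-complement theorem gives $N=L.Q_0$ with $L$ odd; $T$ acts trivially on $Q_0$ (as $\Aut(Q_0)$ is a $2$-group), so $Q_0\le Z(Q)$ has order~$2$; and finally $Q$ generalized quaternion does not split over its center, forcing $Q_0.T$ to be the non-split perfect extension $2.T$. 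This Hall-$\{2,p\}$/Frobenius-complement argument is the key device your outline is missing.
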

\begin{proof}
    This proof follows similarly to \cite[Lemma 3.1.3]{2023REU}. By \thref{lemma212inPaper}, we may assume $G$ is of the form $N.T$ where $N$ is solvable. Because $2-p\in\pgc(G)$ for $p\in\pi(G)\setminus\pi(T)$, the Sylow 2-subgroups of $T$ must satisfy the Frobenius criterion by \thref{lemma212inPaper} and \thref{frobprop}. By the odd-order theorem, $T$ must have even order. The Sylow 2-subgroups of $T$ cannot be cyclic by Cayley's normal 2-complement theorem; and by the Brauer-Suzuki theorem, the Sylow 2-subgroups of $T$ cannot be generalized quaternion groups. 
    
     Let $Q$ be a Sylow 2-subgroup of $G$, let $H$ be a Hall $\{2,p\}$-subgroup of $NQ$ containing $Q$, and let $P$ be a Sylow $p$-subgroup of $H$. By \cite{Williams}, we have that $H$ is Frobenius of type $(2,p)$, or 2-Frobenius of type $(2,p, 2)$. $H$ has a normal series 
    \begin{equation}
        H \trianglerighteq H \cap N \trianglerighteq \{1\}.
    \end{equation}
    We observe that $H \cap N$ is a Hall $\{2,p\}$-subgroup of $N$, and $H/(H\cap N)$ is isomorphic to a Sylow 2-subgroup of $T$ and is thus not cyclic. The topmost Frobenius complement of $H$ is an extension of $H/(H\cap N)$, meaning it must be a non-cyclic 2-group. By \cite[Lemma 2.1]{2015REU}, the topmost Frobenius complement of a 2-Frobenius group is cyclic, so $H$ cannot be 2-Frobenius. Thus, $H \cong P.Q$, with $Q$ acting Frobeniusly on $P$. Since $Q$ is a non-cyclic Frobenius complement of prime power order, it must be a generalized quaternion group \cite[Corollary 6.17]{IsaacsFiniteGroups}. Let $Q_0 = Q \cap N$. We must have that $Q_0$ is nontrivial, or else $Q\cong Q/Q_0 \in \operatorname{Syl}_2(T)$ would be generalized quaternion, a contradiction. Additionally, we must have that $[Q:Q_0] \geq 4$, or else $Q/Q_0 \in \operatorname{Syl}_2(T)$ would be cyclic, a contradiction. Since all normal subgroups of generalized quaternion groups of index at least 4 are cyclic (see e.g. \cite[Corollary 4.5]{conrad2}, so $Q_0$ must be cyclic.

    By Cayley's normal 2-complement theorem we may write $N = L.Q_0$ where $L$ is a normal Hall $2'$-subgroup of $N$. However, $L$ is characteristic in $N$ which implies it is normal in $G$. Recall that $N/L\cong Q_0$ is abelian, thus $(G/L)/(N/L)\cong T$ acts on $Q_0$ by conjugation. But $\Aut(Q_0)$ has order a power of 2, meaning the action has nontrivial kernel. Since $T$ is simple, we conclude that it must act trivially on $Q_0$. Thus, all of $G/L\cong Q_0.T$ acts trivially on $Q_0$ by conjugation, meaning $Q_0$ is contained in the center of $G/L$. In particular, it is contained in the center of $Q$. We know that $Q$ is generalized quaternion, so the center has order 2. Since $Q_0$ must be nontrivial it follows that $Q_0 \cong C_2$. It is well known that generalized quaternion groups do not split over $C_2$, therefore $Q_0.T$ does not split over $Q_0$. Any nonsplit extension of a simple group by $C_2$ is a perfect central extension, meaning $Q_0.T \cong 2.T$. Thus we see that $G \cong L.2.T$, so (1) is satisfied.

    
    For (2), notice that an element of order 2 is in the center of $Q_0.T$, meaning all elements in $T$ commute with an element of order 2, thus there must be elements of order $2q$ for every $q \in \pi(T)$.
\end{proof}
The following lemma generalizes \thref{frobprop}, and its proof is very similar to that of \thref{frobprop}:
\begin{lemma}\thlabel{generalize222}
    Let $T$ be a nonabelian simple group and $G\cong N.S$ for $N$ solvable and $S$ a group with a section isomorphic to $T$ such that $\pi(S) = \pi(T)$. Suppose the Sylow $r$-groups of $S$ do not satisfy the Frobenius Criterion for some $r\in\pi(T)$. Then, for all $p\in\pi(N), p-r\not\in\pgc(G)$.
\end{lemma}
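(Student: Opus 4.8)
The plan is to follow the proof strategy of \thref{frobprop} almost verbatim, adapting it to the slightly weaker hypothesis that $S$ merely has a section isomorphic to $T$ (rather than being an extension $N.T$ with $S = T$ on top). First I would invoke \thref{lemma212inPaper}-style reasoning or simply work directly with $G \cong N.S$: suppose for contradiction that $p - r \in \pgc(G)$ for some $p \in \pi(N)$. Then by \cite{Williams} the Hall $\{p, r\}$-subgroup $H$ of $G$ is either Frobenius of type $(p, r)$, Frobenius of type $(r, p)$, or $2$-Frobenius of one of the two corresponding types. In each case the relevant Frobenius complement is a group of prime power order, hence either cyclic or generalized quaternion, and in the $2$-Frobenius case the topmost complement is cyclic by \cite[Lemma 2.1]{2015REU}.

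Next I would extract a contradiction by tracking the Sylow $r$-subgroup through the normal series $H \trianglerighteq H \cap N \trianglerighteq \{1\}$. Since $N$ is solvable and $S$ has a section isomorphic to $T$, the quotient $H/(H \cap N)$ is (isomorphic to) a Hall $\{p, r\}$-subgroup of $S$ (or of an appropriate section), and its Sylow $r$-part contains an isomorphic copy of a Sylow $r$-subgroup of $T$ as a section --- here I would use that $\pi(S) = \pi(T)$ to ensure no ``extra'' structure appears and that the Sylow $r$-subgroup of $S$ genuinely has a Sylow-$r$-of-$T$ section. The hypothesis that the Sylow $r$-subgroups of $S$ do not satisfy the Frobenius Criterion means they are not cyclic, dihedral, Klein-four, or generalized quaternion. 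Since a Frobenius complement of prime power order must be cyclic or generalized quaternion, and since subgroups and quotients of such groups are again of that restricted type, the Sylow $r$-subgroup of $H$ cannot sit inside a complement; so $r$ must divide the order of the Frobenius \emph{kernel} in whichever Frobenius (or $2$-Frobenius) structure $H$ has. Combined with the fact that $p \in \pi(N)$ and $N$ is solvable (so the Sylow $p$-subgroup behaves like a complement piece in the kernel/complement dichotomy forced on $H$), this pins down that $H$ must be Frobenius or $2$-Frobenius with $r$-kernel --- i.e., of type $(p, r)$ or $(p, r, p)$ --- and then the complement is a non-cyclic $r$-group, forcing it to be generalized quaternion, which contradicts that the Sylow $r$-subgroup of $S$ is not generalized quaternion (via the section argument).

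The main obstacle I anticipate is the bookkeeping around the phrase ``$S$ has a section isomorphic to $T$'': in \thref{frobprop} one had $G \cong N.T$ outright, so $H/(H\cap N)$ was literally a Hall $\{p,r\}$-subgroup of $T$ and its Sylow $r$-subgroup was literally a Sylow $r$-subgroup of $T$. Here I must instead argue that a Sylow $r$-subgroup of $S$ has a section isomorphic to a Sylow $r$-subgroup of $T$ (standard, since sections of $S$ restrict to sections of Sylow subgroups up to conjugacy), and then that the relevant non-Frobenius-Criterion property is inherited: a group all of whose sections are cyclic/dihedral/Klein-four/generalized-quaternion would have a Sylow $r$-subgroup of $T$ of that type, contradicting the hypothesis. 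More precisely, I would phrase the key structural input as: if the Sylow $r$-subgroup of $S$ is not cyclic and not generalized quaternion (both of which are implied by failing the Frobenius Criterion), then whatever quotient of it appears as the topmost Frobenius complement of $H$ cannot be cyclic, and if it is a non-cyclic $r$-group it cannot be generalized quaternion either, giving the contradiction. The rest --- replaying the Williams dichotomy and the $2$-Frobenius-complement-is-cyclic lemma --- is routine and parallels \thref{frobprop} line by line, so I would keep it terse and refer the reader to that proof for the shared details.
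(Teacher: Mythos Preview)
Your approach via the Williams Frobenius/2-Frobenius dichotomy is genuinely different from the paper's, and as written it has real gaps.

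First, you invoke ``the Hall $\{p,r\}$-subgroup $H$ of $G$'' without justification; since $G$ is not solvable, such a subgroup need not exist. This is reparable---take a Sylow $r$-subgroup $Q$ of $G$, note that $NQ$ is solvable, and work with a Hall $\{p,r\}$-subgroup of $NQ$---but you did not do so. Second, and more seriously, your case analysis is internally inconsistent. You conclude that $H$ must be ``of type $(p,r)$ or $(p,r,p)$'' and then that ``the complement is a non-cyclic $r$-group''; but in those types the complements are $p$-groups. The cases that yield an immediate contradiction are $(r,p)$ and $(p,r,p)$, where the Sylow $r$-subgroup of $H$ appears as (or inside) a Frobenius complement and is therefore cyclic or generalized quaternion, forcing its quotient $R\in\operatorname{Syl}_r(S)$ to satisfy the Frobenius Criterion. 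The remaining cases $(p,r)$ and $(r,p,r)$ are not handled by what you wrote: there the Sylow $r$-subgroup of $H$ lies in (or partly in) a Frobenius kernel, and one needs an additional argument---for instance, showing $O^{r}(H)\leq H\cap N$ and computing $O^{r}(H)$ in each case to force $R$ trivial or cyclic. This can be made to work, but it is not the ``routine'' bookkeeping you describe.

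The paper's proof avoids all of this. It refines $N$ by a chief series to extract a section $L\cong V.W.S$ of $G$ with $V$ a nontrivial elementary abelian $p$-group and $W$ a $p'$-group. A Sylow $r$-subgroup $M$ of $L/V\cong W.S$ maps onto $R\in\operatorname{Syl}_r(S)$, so $M$ is neither cyclic nor generalized quaternion; hence $M$ cannot act fixed-point-freely on $V$, which directly produces an element of order $pr$ in the quotient $L$ of $G$. No Hall subgroup of $G$ and no Williams-type dichotomy is needed.
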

\begin{proof}
    By \cite[Lemma 5.8]{Suz}, $G$ has a normal series $1 = N_0\unlhd N_1\unlhd\cdots\unlhd N_k\unlhd G$ such that $N_k = N$ and $\frac{N_i}{N_{i-1}}$ is elementary abelian; we may assume that $\frac{N_i}{N_{i-1}}$ is a $p_i$-group where $p_i$ is a orine for all $i$. Let $j$ be the least $i$ such that $p\nmid |N/N_i|$. Let $V = \frac{N_j}{N_{j-1}}$, $W = \frac{N}{N_j}$, and notice that $V$ is a nontrivial elementary abelian $p$-group and $W$ is a $p'$-group. Consider the section $L\cong V.W.S$. Then $L/V\cong W.S$ acts on $V$ by conjugation as $V$ is abelian. 
    
    Let $M$ be a Sylow $r$-group of $L/V$ and $R$ be a Sylow $r$-group of $S$, and notice $M/(M\cap W)\cong R$. By hypothesis $R$ does not satisfy the Frobenius criterion, so $M$ is not cyclic or generalized quaternion. Thus, $M$ cannot act Frobeniusly on $V$ by \cite[Corollary 6.17]{IsaacsFiniteGroups}. Hence $pr\not\in\pgc(G)$, as desired.
    
\end{proof}
The next result builds on \thref{236Improved}.

\begin{lemma}\thlabel{NewPGCFromOld}
    Let $T$ be a nonabelian simple group, $F$ a strictly $T$-solvable group, $\Xi$ a graph, and $X$ a set of $|\pi(F)|$ vertices of $\Xi$. Suppose $\Xi\setminus X$ is triangle-free and has a 3-coloring $\{\mathcal{O}, \mathcal{D}, \mathcal{I}\}$ such that vertices in $N(X)\setminus X$ are all colored $\mathcal{I}$. Further suppose that there exists some $E = F\ltimes L$ for some solvable $L$ such that $\pi(L) \subseteq \pi(T)$ and:
    \begin{itemize}
        \item There exists a graph isomorphism $\phi$ from $\Xi[X]$ to $\pgc(E)$
        \item Given $v\in\Xi\setminus X$ there exists a complex irreducible representation $V$ of $F$ such that for all $x\in X$, $x-v\in \Xi$ if and only if elements of order $\varphi(x)$ of $F$ act without fixed points in $V$
        \item $N[\pi(L)]\subseteq\pi(T)$ in all prime graph complements of strictly $T$-solvable groups.
    \end{itemize}
    Then there exists a solvable group $N$ such that $G = F\ltimes(N\times L)$ and $\pgc(G)\cong\Xi$.
\end{lemma}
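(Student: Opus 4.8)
The plan is to reduce this statement to Lemma \ref{236Improved} by absorbing the group $L$ into the construction. The key observation is that Lemma \ref{236Improved} already produces a $T$-solvable group of the form $J\rtimes(F\times K)$ realizing $\Xi$, provided we feed it a strictly $T$-solvable group $F'$ whose prime graph complement matches $\Xi[X]$ and whose complex irreducible representations record the correct fixed-point data on order-$\varphi(x)$ elements. Here we do not have $\pgc(F)\cong\Xi[X]$; instead we have $\pgc(E)\cong\Xi[X]$ where $E=F\ltimes L$. So the first step is to check that we may legitimately apply Lemma \ref{236Improved} with the strictly $T$-solvable group $E$ in place of $F$. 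We have $|\pi(E)|=|\pi(F)|$ since $\pi(L)\subseteq\pi(T)\subseteq\pi(F)$, so $X$ still has the right cardinality; the isomorphism $\phi\colon\Xi[X]\to\pgc(E)$ is hypothesis one; and $\Xi\setminus X$ is triangle-free with the stated $3$-coloring by hypothesis. The only nontrivial point is hypothesis (2) of Lemma \ref{236Improved}: for each $v\in\Xi\setminus X$ we need a complex irreducible representation $V'$ of $E$ such that for all $x\in X$, $x-v\in\Xi$ iff elements of order $\phi(x)$ act fixed-point-freely on $V'$.

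The second step supplies this representation. Given $v\in\Xi\setminus X$, hypothesis two of our lemma gives a complex irreducible representation $V$ of $F$ recording the fixed-point data for elements of order $\varphi(x)$. We inflate $V$ along the quotient map $E=F\ltimes L\twoheadrightarrow F$ to get a representation $V'$ of $E$; it remains irreducible since inflation of irreducibles along a quotient is irreducible. For $x\in X$ with $\phi(x)=\varphi(x)$ — note that $\phi$ and $\varphi$ are the "same" vertex-labeling in the sense that $X\subseteq\pi(F)=\pi(E)$ under the natural identification — an element $g\in E$ of order $\varphi(x)$ that does not lie in $L$ maps to an element of $F$ of order $\varphi(x)$ (up to the $L$-part, which one must handle: if $\varphi(x)\mid|L|$ this needs a small argument, otherwise it is automatic since $\gcd(\varphi(x),|L|)=1$ forces the element into a complement). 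In any case the fixed-point behaviour of $V'$ on such an element is exactly that of $V$ on its image, and conversely for elements of order $\varphi(x)$ living inside $L$, the third hypothesis ($N[\pi(L)]\subseteq\pi(T)$ in all prime graph complements of strictly $T$-solvable groups) will be used to argue that the edge $x-v$ behaviour is consistent. This is the step I expect to be the main obstacle: carefully checking that the third hypothesis about neighborhoods of $\pi(L)$ exactly compensates for the presence of $L$, so that the Brauer/complex character bookkeeping inside $E$ agrees with the intended edge set of $\Xi$ at vertices $v$ adjacent to $X$.

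Once hypotheses (1) and (2) of Lemma \ref{236Improved} are verified for $E$, we invoke it to obtain a group $G_0$ with $\Xi\cong\pgc(G_0)$ of the form $J\rtimes(E\times K)$ for suitable solvable $J,K$. Substituting $E=F\ltimes L$ and regrouping the semidirect product, $G_0\cong J\rtimes((F\ltimes L)\times K)\cong F\ltimes\bigl((J\times L)\rtimes(\text{stuff})\bigr)$; the final step is to rewrite this in the required shape $G=F\ltimes(N\times L)$ by collecting $J$, $K$, and the Frobenius-kernel machinery of Lemma \ref{236Improved}'s construction into a single solvable normal $p'$-complement-type factor $N$ with $\pi(N)\cap\pi(L)=\emptyset$, using that the primes $p_j,q_i,r_k$ chosen in the proof of Lemma \ref{236Improved} are all outside $\pi(F)\supseteq\pi(L)$, so that $N$ and $L$ commute and intersect trivially. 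Then $\pgc(G)=\pgc(G_0)\cong\Xi$, completing the proof. The bulk of the work, and the only genuinely delicate part, is the second step: reconciling the representation theory of $E$ with that of $F$ at the cross-edges, where the third hypothesis does the essential work.
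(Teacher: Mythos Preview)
Your approach differs substantively from the paper's, and the point you flag as ``the main obstacle'' is a genuine gap rather than a routine check.

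The paper does \emph{not} apply Lemma~\ref{236Improved} to $E$. Instead it modifies the graph: let $\Lambda$ be $\Xi$ with extra edges added inside $X$ so that $\Lambda[X]\cong\pgc(F)$. Then Lemma~\ref{236Improved} applies directly to the pair $(F,\Lambda)$, because hypothesis~(2) for $F$ is literally the second bullet of the present lemma. This yields $H=J\rtimes(F\times K)$ with $\pgc(H)\cong\Lambda$; one checks that the primes in $J$ and $K$ are coprime to $|F|$, so Schur--Zassenhaus gives $H=F\ltimes N$ with $N$ solvable. Now set $G=F\ltimes(N\times L)$ with $F$ acting on $L$ as in $E$. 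The verification $\pgc(G)\cong\Xi$ is where the third hypothesis enters: it is applied to the already-constructed strictly $T$-solvable groups $G$ and $H$ to show that adjoining $L$ deletes exactly the extra edges inside $X$ (turning $\pgc(F)$ into $\pgc(E)$) while leaving all cross-edges $X\leftrightarrow(\Xi\setminus X)$ unchanged.

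Your route---applying Lemma~\ref{236Improved} to $E$ with inflated representations---breaks at the point you suspect. If $V'$ is inflated along $E\twoheadrightarrow F$, then every element of $L$ acts trivially on $V'$; hence for any $x$ with $\phi(x)\in\pi(L)$ there is an order-$\phi(x)$ element of $E$ with fixed points, so hypothesis~(2) of Lemma~\ref{236Improved} forces $x-v\notin\Xi$ for every $v\notin X$. Nothing in the stated hypotheses says this about $\Xi$ directly: the third bullet constrains prime graph complements of strictly $T$-solvable groups, not the abstract input graph $\Xi$. To deduce that $\Xi$ itself has no such edges you would need to first produce a strictly $T$-solvable group realizing those cross-edges and then invoke the third bullet---which is exactly the paper's construction with $(F,\Lambda)$. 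So your argument is circular as written, and the clean resolution collapses to the paper's proof. Your regrouping step~3 can be made to work once step~2 is fixed (since $L$ acts trivially on $J$ and commutes with $K$), but the essential content is already in the paper's more direct route.
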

\begin{proof}
    Let $\Lambda$ be the graph obtained from $\Xi$ by possibly adding edges among vertices in $X$ such that $\Lambda[X]\cong\pgc(F)$. In particular, 
    $E(\Xi)\subseteq E(\Lambda)$. By our assumptions and \thref{236Improved}, there exists an $F$-solvable group $H$ such that $\pgc(H)\cong\Lambda$ and $\pgc(H)[\pi(F)]\cong \pgc(F)$. Then, $H = (K\times F)\ltimes J$ for solvable groups $K$ and $J$ by the construction in \thref{236Improved}. $|K|$ is coprime to $|F|$ by construction. $J$ is defined as the direct product of $R_k$s, where each $R_k$ is a module corresponding to a representation of $F\times B_k$, for $B_k$ defined in the proof of \thref{236Improved}, over a field of characteristic $r_k$ where $r_k$ is a prime such that $|F\times B_k|$ divides $r_k - 1$. Thus, $r_k$ and $|F|$ are coprime, and we can also see that each $R_k$ has order a power of $r_k$ (each $R_k$ is a module over a field, and thus a vector space). Then, $\pi(J)\cap\pi(F) = \varnothing$. As such, $(\pi(J)\cup\pi(K))\cap\pi(F) = \varnothing$. We have $H/J\cong F\times K$, so there exists a $N\unlhd H$ such that $N/J\cong K$. Thus, $H/N\cong F$. As such, we have  $|N| = |K||J|$, which is coprime to $|F|$. By the Schur-Zassenhaus theorem, this means that $H = F\ltimes N$, and $N$ must be solvable because $H$ is $T$-solvable and $|N|$ is coprime to $|T|$. Consider $G = F\ltimes(N\times L)$, where $F$ acts on $N$ as in $H$ and $F$ acts on $L$ as in $E$. For all $r\in \pi(G)\setminus\pi(T)$ and $q\in\pi(L)$, $q-r\not\in\pgc(G)$ and by our assumptions, $\pgc(G)[\pi(F)]$ is $\pgc(E)$. Notice $\pgc(H)[\pi(N)] = \pgc(G)[\pi(N)]$ and that $p-r\in \pgc(G)$ for $p\in\pi(F), r\in\pi(G)\setminus\pi(F)$ if and only if $p-r\in\pgc(H)$ for $p\in\pi(F), r\in\pi(H)\setminus\pi(F)$. This follows from the fact that $N[\pi(L)]\subseteq\pi(T)$ in all prime graph complements of strictly $T$-solvable groups. Thus, labeling the vertices of $\Xi$ by labeling $x\in X$ with the $p\in \pi(E)$ that corresponds to it under the isomorphism between $\Xi[X]$ and $\pgc(E)$ and labeling $y\in\Xi\setminus X$ with the $p\in\pi(G)\setminus\pi(E) = \pi(H)\setminus\pi(F)$ which corresponds to $y$ in the isomorphism $\Lambda\cong \pgc(H)$, we see $\Xi\cong \pgc(G)$.
\end{proof}

\begin{lemma}\thlabel{ConjugacyClassLanding}
    Let $p$ be a prime. Let $G$ be a group such that its Sylow $p$-subgroups are cyclic of order $p$. Let $U$ be such a Sylow subgroup. Suppose that there are $n$ conjugacy classes in $G$ of elements of order $p$. Then there are $\frac{p - 1}{n}$ elements in each conjugacy class of $U$ of elements of order $p$. 
\end{lemma}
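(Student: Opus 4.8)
The plan is to reduce the statement to a fusion argument inside $N_G(U)$. Since $U$ is cyclic of prime order $p$, its non-identity elements are exactly the $p-1$ generators of $U$, and each has order $p$; write $\Omega$ for this set. First I would observe, via Sylow's theorem, that every element of order $p$ in $G$ lies in some Sylow $p$-subgroup, and every Sylow $p$-subgroup is conjugate to $U$; hence every $G$-conjugacy class of elements of order $p$ meets $U$, i.e. meets $\Omega$. So the $n$ classes in question are precisely the classes meeting $\Omega$, and it suffices to show each such class meets $U$ in exactly $(p-1)/n$ elements.

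The key step is to show that two elements of $U$ that are conjugate in $G$ are already conjugate in $N_G(U)$. This is elementary here because $U$ is abelian: if $x,y\in U$ and $g^{-1}xg=y$, then both $U$ and $g^{-1}Ug$ centralize $y$ (being abelian groups containing $y$), so both are Sylow $p$-subgroups of $C_G(y)$; by Sylow's theorem there is $c\in C_G(y)$ with $c^{-1}(g^{-1}Ug)c=U$, and then $gc\in N_G(U)$ with $(gc)^{-1}x(gc)=c^{-1}yc=y$. Consequently the intersection of any $G$-conjugacy class with $U$ is a single $N_G(U)$-orbit on $\Omega$, and the $n$ classes correspond bijectively to the $N_G(U)$-orbits on $\Omega$.

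Finally I would analyze the action of $N_G(U)$ on $U$ (equivalently on $\Omega$). It factors through $N_G(U)/C_G(U)$, which embeds into $\Aut(U)\cong(\Z/p\Z)^{\times}$, a group of order $p-1$ whose action on $\Omega$ is simply transitive (the regular action of $(\Z/p\Z)^{\times}$ on itself). Hence the image $H\le\Aut(U)$ of $N_G(U)$ acts \emph{freely} on $\Omega$, so every $N_G(U)$-orbit on $\Omega$ has size exactly $|H|$, and there are $(p-1)/|H|$ of them. Comparing with the previous paragraph gives $n=(p-1)/|H|$, so $|H|=(p-1)/n$, and therefore each $G$-conjugacy class of elements of order $p$ meets $U$ in exactly $(p-1)/n$ elements, as claimed.

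There is no serious obstacle in this argument; the points that need care are (i) stating the fusion step in the correct direction ($G$-conjugate elements of $U$ are already $N_G(U)$-conjugate), and (ii) using the hypothesis $|U|=p$ precisely where it matters, namely to know $\Aut(U)$ is cyclic of order $p-1$ acting regularly on the generators, which is what forces all orbits to have equal size.
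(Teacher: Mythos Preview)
Your proposal is correct and follows essentially the same approach as the paper: reduce fusion of elements of $U$ to $N_G(U)$, observe that $N_G(U)/C_G(U)$ acts freely on the nonidentity elements of $U$, and conclude that all orbits have size $|N_G(U)/C_G(U)|=(p-1)/n$. Your write-up is simply more detailed, supplying the Burnside-style fusion argument and the description of $\Aut(U)$ explicitly where the paper just asserts these facts.
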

\begin{proof}
    Note that $x, y\in U$ are conjugate in $G$ if and only if they are conjugate in $N_G(U)$. Also note that $N_G(U)/C_G(U)$ acts Frobeniusly on $U$. Since all Sylow $p$-subgroups of $G$ are conjugate, it follows that $n$ equals the number of orbits of $N_G(U)/C_G(U)$ on the nontrivial elements of $U$, and each such orbit has $|N_G(U)/C_G(U)|$ elements. Thus $n=(p-1)/|N_G(U)/C_G(U)|$ and the proof is complete.
\end{proof}

\section{$T$-solvable groups with $T$ similar to $\PSL(2, 2^4)$}\label{sec:PSL216}
In Section \ref{sec:PSL216General} we will prove a general result which classifies the prime graph complements of $T$-solvable groups where $T$ is a $K_4$ group that satisfies certain criteria. In Section \ref{sec:PSL216new} we will then show that $T = \PSL(2, 2^4)$ satisfies these criteria and apply the result from Section \ref{sec:PSL216General} to classify the prime graph complements of $\PSL(2, 2^4)$-solvable groups.

\subsection{General $T$}\label{sec:PSL216General}
Here we are interested in $T$-solvable groups for non-abelian simple groups $T$ whose prime graph complement looks as follows. 
\begin{figure}[H]
    \centering
    \begin{minipage}{\textwidth}
        \centering
        \begin{tikzpicture}
            \coordinate (A) at (0, 1);   
            \coordinate (B) at (1, 1);   
            \coordinate (C) at (1, 0);   
            \coordinate (D) at (0, 0);   
            \draw (B) -- (C);
            \draw (C) -- (D);
            \draw (A) -- (D);
            \draw (A) -- (B);
            \draw (A) -- (C);
            \foreach \point in {A, B, C, D}
                \draw[draw, fill=white] (\point) circle [radius=0.2cm];
            \foreach \point/\name in {(A)/2, (B)/c, (C)/d, (D)/a}
                \node at \point {\scriptsize\name};
        \end{tikzpicture}
        \caption*{$\pgc(T)$}
    \end{minipage}%
\end{figure}
We first introduce some notation which we just need in this section.
\begin{notation} \thlabel{rootedGraphIsomorphismPSL216}
    Given a group $G$ with $\pi(G) = \pi(\Aut(T))$ and a rooted graph $\Lambda$ on four vertices, it is understood that when we say $\pgc(G) \cong \Lambda$, we require the isomorphism to map the vertex $d$ to the root.
\end{notation}
\begin{notation}\thlabel{realizableDefPSL216}
    Given a set of rooted graphs $\mathcal{H}$ on four vertices, we say that $\mathcal{H}$ is \emph{realizable} if for each $\Lambda \in \mathcal{H}$, there exists a $T$-solvable $\pi(T)$-group $G$ such that $\pgc(G) \cong \Lambda$ in the sense of \thref{rootedGraphIsomorphismPSL216}.
\end{notation}
Our goal is to classify the prime graph complements of $T$-solvable groups where $T$ is a nonabelian simple group such that $\pgc(T)$ is the above graph, $\pi(T) = \pi(\Aut(T))$, and each of the following criteria is satisfied:
\begin{itemize}
    \item[1.] The Schur multiplier of $T$ is 1
    \item[2.] The fixed point information for $T$ matches \thref{psl16fpinfo} under some bijection between $\pi(T)$ and $\{2, a, c, d\}$ such that 2 is mapped to itself.
    \item[3.] The (rooted) graph $\szunrecognizei$ (where the white vertex indicates the root) is realizable (in the sense of Definition \ref{rootedGraphIsomorphismPSL216}) by $T\ltimes P$ where $P$ is a $p$-group for some $p\in\{2, a, c\}$.
    \item[4.] The graph $\SuzukiThirtyTwoElusiveGraphWhiteBadb$ is not realizable by a $T$-solvable group. 
    \item[5.] The Sylow $2$-subgroups of $T$ do not satisfy the Frobenius criterion.
\end{itemize}

We will refer to these criteria as Criteria 1-5 in the remainder of Section \ref{sec:PSL216}.\\
Also, for the remainder of Section \ref{sec:PSL216General}, let $T$ be a nonabelian simple group with the above five properties.

\begin{fact}\thlabel{psl16fpinfo}
    \begin{tabular}[h]{|c|c|}
    \hline
        Fixed Point Information &  \\
        \hline
        $T$ & $[[2, a, c], [2,a,c,d]]$ \\
        \hline
    \end{tabular}\\
    Each list in the list corresponds to some irreducible, complex  character of $T$ and contains all primes $p$ for which there exist elements of order $p$ in $T$ which have fixed points in the representation associated with that character
    $T$such that elements $T$. There are different lists within the list because there are irreducible characters with different fixed point information. The data was obtained with GAP as in, for instance, \cite{2023REU}. 
\end{fact}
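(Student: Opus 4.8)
Within Section~\ref{sec:PSL216General} this Fact is really part of the standing hypothesis on $T$ --- it is exactly the data that Criterion~2 refers to --- so in that generality there is nothing to prove; what has to be checked, in each concrete application, is that the particular $T$ genuinely has this profile. So my plan is to explain how that verification goes. The first step is the usual translation: if $\chi\in\operatorname{Irr}(T)$ is afforded by a module $V$ and $g\in T$, then $\frac{1}{\operatorname{o}(g)}\sum_{x\in\langle g\rangle}\chi(x)$ is the multiplicity of the trivial constituent of $\chi|_{\langle g\rangle}$, i.e.\ $\dim V^{\langle g\rangle}$, so ``some element of order $p$ has fixed points in $V$'' means $\sum_{x\in\langle g\rangle}\chi(x)>0$ for some $g$ of order $p$. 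Criterion~1 makes the Schur multiplier of $T$ trivial, so every complex irreducible representation of a perfect central extension of $T$ is inflated from $T$, and it suffices to let $\chi$ range over $\operatorname{Irr}(T)$ and $p$ over $\pi(T)$, compute these inner products, and sort the characters by the resulting set of primes; the Fact is the assertion that only the two sets $\{2,a,c\}$ and $\{2,a,c,d\}$ occur.

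For the group driving this section, $T=\PSL(2,q)$ with $q=2^f$, I would do this from the classical character table (Jordan--Schur; see e.g.\ Dornhoff): the irreducible characters are the trivial one, the Steinberg character of degree $q$, the $(q-2)/2$ principal-series characters of degree $q+1$, and the $q/2$ discrete-series characters of degree $q-1$; every element order is $1$, $2$, a nontrivial divisor of $q-1$ (the split torus, supplying the primes $a,c$), or a nontrivial divisor of $q+1$ (the nonsplit torus, supplying $d$). The trivial character gives $1>0$ everywhere, hence the profile $\{2,a,c,d\}$. For the rest one substitutes the explicit values $\chi(1)$, $\chi(u)$, $\chi(\mathrm{split})$, $\chi(\mathrm{nonsplit})$ into $\frac{1}{\operatorname{o}(g)}\sum_{x\in\langle g\rangle}\chi(x)$ and uses orthogonality of characters on the relevant cyclic torus to collapse each sum: the Steinberg character fixes elements of order $2$ and of order dividing $q-1$, while its sum over an element of order $d$ works out to $(q+1-d)/d$, which vanishes exactly when $q+1=d$ --- the situation for the $T$ in scope --- so its profile is $\{2,a,c\}$; and every principal- and discrete-series character has strictly positive inner product at all four primes (the values $(q+2)/2$, $(q-2)/2$, $(q\pm1)/p$, $(q-1)/p+1$, and the like are positive for $q\ge4$), so each of those carries the profile $\{2,a,c,d\}$. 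This exhausts $\operatorname{Irr}(T)$, so exactly the two profiles arise. In practice one would instead simply feed the stored character table of the specific $T$ from \cite{GAP} into the routine evaluating $\frac{1}{\operatorname{o}(g)}\sum_{x\in\langle g\rangle}\chi(x)$, exactly as in \cite{2023REU}; the hand computation above just shows why the answer comes out this way.

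The only real content is exhaustiveness: one must be sure that no further prime profile slips through, which means going over every family of irreducible characters --- in the $\PSL(2,q)$ case both series and every choice of torus-character parameter --- and deciding the ``strictly positive versus zero'' dichotomy correctly in each case. The delicate spot is the prime dividing $q+1$, where the Steinberg sum is the one that vanishes while the series sums do not; away from that, every inner product is comfortably positive and the check is routine. For a $T$ handled purely through GAP there is essentially no obstacle, the computation being finite and mechanical.
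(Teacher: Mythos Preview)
Your reading is exactly right: in Section~\ref{sec:PSL216General} this Fact is the content of Criterion~2 and hence a hypothesis on $T$, not something to be proved; the paper's only justification is the GAP computation recorded as \thref{psl16fpinfoactual} for the concrete case $T=\PSL(2,2^4)$. Your argument via the generic Jordan--Schur character table for $\PSL(2,q)$ with $q$ even therefore goes beyond what the paper offers here --- you actually explain \emph{why} only the two profiles $\{2,a,c\}$ and $\{2,a,c,d\}$ arise, isolating the Steinberg character as the sole source of $\{2,a,c\}$ via the vanishing $(q+1-d)/d=0$ at $d=q+1$, and checking positivity for the principal and discrete series. This is precisely the style of hand computation the paper \emph{does} carry out later, in the proof of \thref{generalDisconnect} for the family $\PSL(2,2^f)$ with $f\geq5$ prime, so your approach is entirely in its spirit; it simply is not written out for this particular Fact. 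The GAP route buys brevity and applies to any $T$ with a stored table; your route buys a conceptual explanation and shows the conclusion holds uniformly for $\PSL(2,q)$ with $q$ even and $q+1$ prime, not just $q=16$.
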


\begin{theorem}\thlabel{StructureofPSL16SolvableGraphs}
    Let $G$ be a strictly $T$-solvable group. Then $\pgc(G)$ satisfies both of the following:
    \begin{enumerate}
    \item There are no edges $r-p$ for $r\in \pi(T)\setminus\{d\}$, $p\in \pi(G) \setminus (T)$, and $\pgc(G)$ has a three-coloring such that $N(\pi(T))\setminus\pi(T)$ shares one color.
        \item All triangles of $\pgc(G)$ are contained in $\pgc(G)[\pi(T)]$.
    \end{enumerate}
\end{theorem}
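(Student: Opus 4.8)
The plan is to prove the two parts in sequence, using the structural lemmas from Section~\ref{sec:UsefulLemmas} together with Criteria 1--5. Throughout, by \thref{lemma212inPaper} (which applies since $\pi(T) = \pi(\Aut(T))$) we may replace $G$ by a subgroup $K \cong N.T$ with $N$ solvable and $\pi(K) = \pi(G)$, so it suffices to establish both claims for $G = N.T$. Write $\pi(T) = \{2, a, c, d\}$ with $\pgc(T)$ as drawn (so $d$ is the vertex not adjacent to~$2$, and the edges are $2-c$, $2-a$, $c-d$, $a-c$).

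\emph{Part (1), no edges $r-p$ with $r \in \pi(T)\setminus\{d\}$ and $p \in \pi(G)\setminus\pi(T)$.} For $r = 2$: by Criterion~5 the Sylow $2$-subgroups of $T$ fail the Frobenius criterion, so \thref{frobprop} immediately gives $2-p \notin \pgc(G)$ for all $p \in \pi(N) = \pi(G)\setminus\pi(T)$. For $r \in \{a, c\}$: here I would use Criterion~2 together with \thref{corollaryLemma}. Since the Schur multiplier of $T$ is~$1$ (Criterion~1), the only perfect central extension of $T$ is $T$ itself, so \thref{corollaryLemma} applies verbatim once we check that in every complex irreducible representation of $T$ some element of order $r$ has fixed points. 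But that is exactly what the fixed point data of \thref{psl16fpinfo} records: both character-entries in the list $[[2,a,c],[2,a,c,d]]$ contain $a$ and contain $c$, meaning that for the primes $a$ and $c$, \emph{every} irreducible character of $T$ has an element of that order with a fixed point. Hence $a - p \notin \pgc(G)$ and $c - p \notin \pgc(G)$ for all $p \in \pi(G)\setminus\pi(T)$, as needed. (The prime $d$ is genuinely excluded: the second character has $d$ in its list but the first does not, so \thref{corollaryLemma} does not apply to $d$, which is consistent with the statement.) The $3$-coloring claim then follows: color $\pi(T)$ with (at most) three colors realizing $\pgc(T)$ restricted to $\pi(T)$ — this is possible since $\pgc(T)$ on four vertices is easily $3$-colorable — and observe that every vertex of $N(\pi(T))\setminus\pi(T)$ is, by what we just proved, adjacent only to $d$ within $\pi(T)$; so we may give all of $N(\pi(T))\setminus\pi(T)$ a single color distinct from the color of $d$, and extend arbitrarily to the rest of $\pi(G)\setminus\pi(T)$.

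\emph{Part (2), all triangles of $\pgc(G)$ lie in $\pgc(G)[\pi(T)]$.} Suppose for contradiction that $\pgc(G)$ contains a triangle with a vertex $p \in \pi(G)\setminus\pi(T)$. By Part~(1), the only vertex of $\pi(T)$ that $p$ can be adjacent to is~$d$, so the other two vertices of the triangle, call them $u$ and $v$, cannot both lie in $\pi(T)$; and in fact at most one of $\{u,v\}$ equals $d$, so at least one of them, say $v$, lies in $\pi(G)\setminus\pi(T)$. The remaining work is to rule out such a configuration. The natural route is \thref{GeneralizedModuleExtensionsCited} together with Criteria~3 and~4: a triangle on $\{p, d, v\}$ or $\{p, u, v\}$ with $p,v \notin \pi(T)$ forces, after restricting attention to an appropriate Hall subgroup and invoking \thref{lemma212inPaper}/\thref{frobprop}, the realizability by some $T\ltimes P$ (with $P$ a $q$-group for $q\in\{2,a,c\}$) of the four-vertex configuration appearing in Criterion~3, namely $\szunrecognizei$, and simultaneously the realizability by a $T$-solvable group of the forbidden graph $\SuzukiThirtyTwoElusiveGraphWhiteBadb$ of Criterion~4 — a contradiction. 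The main obstacle I anticipate is exactly this bookkeeping step: carefully identifying which induced four-vertex subgraph of $\pgc(G)$ on $\{2,a,c,d,p\}$-related vertices one obtains from a putative "bad" triangle, and matching it against the two small graphs named in Criteria~3--4 so that one of those criteria is violated. Handling the sub-case where $d$ is one of the triangle vertices separately from the sub-case where all of $u,v,p$ avoid $\{2,a,c\}$-adjacency will likely require invoking \thref{236Improved} or \thref{NewPGCFromOld} to promote a local obstruction to a global $T$-solvable realization. Once the triangle-freeness of the "cross" part is established, combined with Part~(1) it follows that every triangle of $\pgc(G)$ is supported on $\pi(T)$, completing the proof.
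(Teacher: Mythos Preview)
Your argument for the ``no edges'' half of Part~(1) is correct and matches the paper exactly: Criterion~5 with \thref{frobprop} handles $r=2$, and Criteria~1--2 with \thref{corollaryLemma} handle $r\in\{a,c\}$.

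However, both the $3$-coloring claim in Part~(1) and all of Part~(2) go astray. For the coloring, you propose to color $\pi(T)$, give $N(\pi(T))\setminus\pi(T)$ a single color, and then ``extend arbitrarily to the rest of $\pi(G)\setminus\pi(T)$.'' This does not work: the induced subgraph on $\pi(G)\setminus\pi(T)$ has its own edges, and an arbitrary extension need not be a proper $3$-coloring. You need a structural result guaranteeing that the part outside $\pi(T)$ can be $3$-colored compatibly with the constraint on $N(\pi(T))\setminus\pi(T)$. The paper obtains both this coloring and Part~(2) in one stroke by invoking \cite[Lemma~2.3.5]{2023REU}, which applies because $\pgc(G)[\pi(T)]\subseteq\pgc(T)$ is $3$-colorable; that lemma directly yields a $3$-coloring of $\pgc(G)$ with $N(\pi(T))\setminus\pi(T)$ monochromatic, and simultaneously gives that all triangles of $\pgc(G)$ lie in $\pgc(G)[\pi(T)]$.

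Your proposed route for Part~(2) via Criteria~3 and~4 is both unnecessary and unlikely to close. Criteria~3 and~4 concern which four-vertex graphs on $\pi(T)$ are realizable; they say nothing about triangles involving primes outside $\pi(T)$. The contradiction you sketch---that a bad triangle would force realizability of $\SuzukiThirtyTwoElusiveGraphWhiteBadb$---has no clear mechanism, and you yourself flag the bookkeeping as the main obstacle. The correct argument needs only the observation (which you already have) that the sole vertex of $\pi(T)$ reachable from outside is $d$, together with the cited lemma from \cite{2023REU}.
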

\begin{proof}
    To prove (1), first recall that by Criterion 1, the Schur multiplier of $T$ is 1. Then, we may apply \thref{corollaryLemma} to \thref{psl16fpinfo} and (1) follows. Thus, there are no edges of the form $r-p$ for $r\in \pi(T)\setminus\{d, 2\}$ and $p\in \pi(G) \setminus (\pi(T))$. Note that the Sylow $2$-subgroups of $T$ do not satisfy the Frobenius criterion by Criterion 5. Then by \cite[Proposition 2.2.2]{2023REU} for any $p\in\pi(G)\setminus\pi(T)$, $2-p\notin\pgc(G)$. Because $\pgc(G)[\pi(T)]\subseteq\hamSandwich$, $\pgc(G)[\pi(T)]$ is three-colorable, so we may apply \cite[Lemma 2.3.5]{2023REU}. By \cite[Lemma 2.3.5]{2023REU}, $\pgc(G)$ has a three-coloring for which all vertices adjacent to $d$ not in $\pi(T)$ have the same color. Therefore, all vertices in $N(\pi(T))\setminus\pi(T)$ share one color under this coloring, so (1) is satisfied. (2) follows from \cite[Lemma 2.3.5]{2023REU}.
\end{proof}
\begin{corollary}\thlabel{trianglefree3colors}
    Let $G$ be a $T$-solvable group. If $\pgc(G)[\pi(T)]$ is triangle-free, $\pgc(G)$ is triangle-free and three-colorable.
\end{corollary}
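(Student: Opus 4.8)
The statement to prove is Corollary \ref{trianglefree3colors}: if $G$ is a $T$-solvable group and $\pgc(G)[\pi(T)]$ is triangle-free, then $\pgc(G)$ is triangle-free and three-colorable. The plan is to reduce immediately to the strictly $T$-solvable case and then invoke \thref{StructureofPSL16SolvableGraphs}. First I would handle the degenerate case: if $G$ is not strictly $T$-solvable, then $G$ is solvable, and $\pgc(G)$ is triangle-free and three-colorable by \thref{solvclass} (the solvable classification of \cite{2015REU}); also if $\pi(T) \not\subseteq \pi(G)$ one argues that the relevant induced subgraph behaves correctly, but in fact for $T$-solvable non-strictly-$T$-solvable groups the solvable classification suffices directly, so this case is immediate.

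\textbf{Main case.} Assume $G$ is strictly $T$-solvable. By part (2) of \thref{StructureofPSL16SolvableGraphs}, every triangle of $\pgc(G)$ lies inside $\pgc(G)[\pi(T)]$; since $\pgc(G)[\pi(T)]$ is triangle-free by hypothesis, $\pgc(G)$ is triangle-free. For three-colorability, I would use part (1) of \thref{StructureofPSL16SolvableGraphs}: there are no edges $r-p$ with $r \in \pi(T)\setminus\{d\}$ and $p \in \pi(G)\setminus\pi(T)$, and $\pgc(G)$ admits a three-coloring in which all vertices of $N(\pi(T))\setminus\pi(T)$ receive a single common color. That three-coloring statement is already exactly what is needed — \thref{StructureofPSL16SolvableGraphs}(1) asserts the existence of a valid three-coloring of $\pgc(G)$ outright (its proof goes through \cite[Lemma 2.3.5]{2023REU}, whose hypothesis $\pgc(G)[\pi(T)] \subseteq \hamSandwich$ holds here since $T$ has the prescribed $\pgc(T)$ and any $\pgc(G)[\pi(T)]$ is a subgraph of $\pgc(T)$). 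So the three-coloring is handed to us directly, and the corollary follows by combining the two parts.

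\textbf{Expected obstacle.} Honestly, there is very little obstacle here — the corollary is essentially a repackaging of \thref{StructureofPSL16SolvableGraphs}, whose two conclusions give triangle-freeness (part 2, plus the hypothesis) and three-colorability (part 1) respectively. The only point requiring a moment's care is the non-strict case: one must be sure that "$T$-solvable but not strictly $T$-solvable" implies solvable (true by the definition, since then no composition factor is isomorphic to $T$, so all are cyclic), and then apply \thref{solvclass} which yields both triangle-freeness and $3$-colorability (even $3$-colorability of the complement, which is the stronger direction needed). Thus the proof is a two-line case split followed by a citation of the already-established theorem.

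\begin{proof}
If $G$ is not strictly $T$-solvable, then every composition factor of $G$ is cyclic, so $G$ is solvable, and $\pgc(G)$ is triangle-free and $3$-colorable by \thref{solvclass}. So assume $G$ is strictly $T$-solvable. By \thref{StructureofPSL16SolvableGraphs}(2), every triangle of $\pgc(G)$ is contained in $\pgc(G)[\pi(T)]$; since the latter is triangle-free by hypothesis, $\pgc(G)$ is triangle-free. By \thref{StructureofPSL16SolvableGraphs}(1), $\pgc(G)$ has a three-coloring. Hence $\pgc(G)$ is triangle-free and three-colorable.
\end{proof}
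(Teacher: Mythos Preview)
The proposal is correct and follows essentially the same approach as the paper: split into the solvable case (handled by \thref{solvclass}) and the strictly $T$-solvable case, then invoke parts (1) and (2) of \thref{StructureofPSL16SolvableGraphs} for three-colorability and triangle-freeness respectively. Your write-up is, if anything, slightly cleaner in spelling out why ``not strictly $T$-solvable'' forces $G$ to be solvable.
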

\begin{proof}
    We consider two cases: The case where $G$ is not strictly $T$-solvable and the case where $G$ is strictly $T$-solvable. If $G$ is not strictly $T$-solvable, it is solvable, and the result follows from \cite{2015REU}. If $G$ is strictly $T$-solvable, note that $\pgc(G)$ is three-colorable and that $\pgc(G)\setminus\pi(T)$ is triangle-free by \thref{StructureofPSL16SolvableGraphs}. By \thref{StructureofPSL16SolvableGraphs} (2), all triangles in $\pgc(G)$ are contained in $\pgc(G)[\pi(T)]$, so $\pgc(G)$ is triangle-free and three-colorable. 
\end{proof}

\begin{fact}\thlabel{factPSL16}
    The graphs listed below can be realized via $T$-solvable groups where $d$ is the white vertex.
    \begin{itemize}
        \item Any triangle free and 3-colorable graph can be realized via some solvable group by methods in \cite{2015REU}.
        \item The rooted graph \hamSandwichWhite can be realized by $T$.
        \item  \notIsolatedVertexWhite can be realized via $C_a\times T$.
        \item $\szunrecognizei$ can be realized by $T\ltimes P$ for some $p\in\{2, a, c\}$ by Criterion 3.
    \end{itemize}
\end{fact}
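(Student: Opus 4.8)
The plan is to check the four bullets one at a time; each reduces either to a result already in the excerpt or to a one-line prime-graph computation.

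For the first bullet, I would invoke Theorem~\thref{solvclass}: an unlabeled graph is the prime graph complement of a solvable group precisely when it is triangle-free and $3$-colorable, and every solvable group is (vacuously) $T$-solvable. The clause ``where $d$ is the white vertex'' is harmless here: since a solvable realizing group is built from scratch, the prime placed at any prescribed vertex may be called $d$, so a rooted version follows from the construction of \cite{2015REU} as well.

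For the second bullet, I would take $G = T$ itself, which is strictly $T$-solvable and a $\pi(T)$-group. By hypothesis $\pgc(T)$ is the pictured five-edge graph on $\{2,a,c,d\}$, in which $d$ is one of the two degree-$3$ vertices and $a$--$c$ is the unique non-edge; sending $d$ to the white (degree-$3$) vertex of \hamSandwichWhite, $2$ to the other degree-$3$ vertex, and $\{a,c\}$ to the two degree-$2$ vertices gives a rooted-graph isomorphism, as one checks on the five edges. For the third bullet, I would take $G = C_a\times T$; this is strictly $T$-solvable and, as $a\in\pi(T)$, a $\pi(T)$-group. Since $C_a$ is central and $T$ has an element of order $r$ for every $r\in\pi(T)$, the group $C_a\times T$ has an element of order $ar$ for every $r\in\pi(T)\setminus\{a\}$, so $a$ is isolated in $\pgc(C_a\times T)$; moreover $\pgc(C_a\times T)[\{2,c,d\}]=\pgc(T)[\{2,c,d\}]$, which from the shape of $\pgc(T)$ is the triangle on $\{2,c,d\}$. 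Hence $\pgc(C_a\times T)$ is that triangle together with the isolated vertex $a$, and the isomorphism sending $d$ to the white (triangle) vertex of \notIsolatedVertexWhite and $a$ to its isolated vertex is a rooted-graph isomorphism. For the fourth bullet there is nothing to prove: it is exactly the statement of Criterion~3, and $\{2,a,c\}\subseteq\pi(T)$ guarantees $T\ltimes P$ is a $\pi(T)$-group.

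There is no genuine obstacle. The only care required is bookkeeping of the rooted structure — matching the distinguished white vertex of each pictured four-vertex graph with the vertex $d$ and verifying the edge sets coincide — together with the elementary observation about element orders in $C_a\times T$; none of this involves anything beyond the hypotheses on $T$ and the results already quoted.
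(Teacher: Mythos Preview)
Your proposal is correct and matches the paper's approach: the paper states this as a \emph{Fact} with no proof, simply listing the realizing groups, and your verification of each bullet (via \thref{solvclass}, the shape of $\pgc(T)$, the element-order computation in $C_a\times T$, and Criterion~3) is exactly the implicit reasoning behind those assertions. Your added detail on the rooted structure is sound and more explicit than what the paper provides.
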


\begin{theorem}\thlabel{generalClassificationPSL216}
    A graph $\Xi$ is isomorphic to the prime graph complement of some $T$-solvable group if and only if one of the following is true: 
\begin{enumerate}
    \item $\Xi$ is triangle-free and 3-colorable.
    \item $\Xi$ contains a subset $X = \{w, x, y, z\} \subseteq V(\Xi)$ such that for all $p-q\in \Xi$ with $p\in X$ and $q\in\Xi\setminus X$, we have that $p = z$. Moreover, in some 3-coloring of $\Xi$ the closed neighborhood $N(X) \setminus X$ shares one color; and $\Xi[X] =\hamSandwichWhite$, \szunrecognizei, or \notIsolatedVertexWhite where $z$ is the white vertex. 
\end{enumerate}
\end{theorem}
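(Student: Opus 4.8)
The plan is to prove the two directions separately, using the structural results already established. For the ``if'' direction, suppose first that $\Xi$ is triangle-free and $3$-colorable; then by \thref{solvclass} it is the prime graph complement of some solvable group, and every solvable group is trivially $T$-solvable, so we are done. Now suppose $\Xi$ satisfies condition (2). The idea is to apply \thref{236Improved} with $F = T$ (or an appropriate $T\ltimes P$). First I would set $X$ to be the designated four-vertex set, note that $\Xi \setminus X$ is triangle-free (this needs to be deduced from the hypotheses of (2): since all edges from $X$ to the rest land at $z$, and $\Xi$ has the stated $3$-coloring with $N(X)\setminus X$ monochromatic; one should check that any triangle of $\Xi$ either lies in $\Xi[X]$ — impossible since $\Xi[X]$ is one of three triangle-free graphs — or uses at most one vertex of $X$, hence lies in $\Xi \setminus X$ together with possibly $z$; the monochromatic condition on $N(X)\setminus X$ rules out a triangle through $z$). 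Then I would choose, according to which of the three graphs $\Xi[X]$ is, the realizing group $F$ from \thref{factPSL16}: $F = T$ for \hamSandwichWhite, $F = C_a \times T$ for \notIsolatedVertexWhite, and $F = T \ltimes P$ (Criterion 3) for \szunrecognizei. In each case there is a graph isomorphism $\varphi$ from $\Xi[X]$ to $\pgc(F)$ sending $z$ to $d$ (condition (1) of \thref{236Improved}).

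The remaining hypothesis of \thref{236Improved} — condition (2), the fixed-point/representation requirement — is where the fixed-point information of \thref{psl16fpinfo} enters. For each vertex $v \in \Xi \setminus X$, the set of $x \in X$ with $x - v \in \Xi$ is either empty or equals $\{z\}$ (by the defining property of (2)). If it is empty, the trivial representation of $F$ works (every element has fixed points). If it is $\{z\}$, then by \thref{psl16fpinfo} there is an irreducible complex character of $T$ (the one labeled $[2,a,c]$, missing $d$) in which elements of order $d = \varphi(z)$ act fixed-point-freely while elements of orders $2, a, c$ have fixed points — this is exactly what is needed; one must also verify that this lifts appropriately when $F = C_a\times T$ or $F = T\ltimes P$, which is routine since the extra factor can act trivially. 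Having verified both hypotheses, \thref{236Improved} produces a $T$-solvable group $G$ with $\pgc(G) \cong \Xi$. Actually, since the excerpt provides the more refined \thref{NewPGCFromOld}, it may be cleaner to invoke that instead when $\Xi[X] = \szunrecognizei$, taking $E = T\ltimes P$ and using Criterion 4 plus \thref{StructureofPSL16SolvableGraphs}(1) to guarantee $N[\pi(L)] \subseteq \pi(T)$; I would decide between the two based on which gives the cleanest bookkeeping.

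For the ``only if'' direction, let $G$ be a $T$-solvable group and set $\Xi = \pgc(G)$. If $G$ is not strictly $T$-solvable it is solvable, so $\Xi$ is triangle-free and $3$-colorable by \thref{solvclass}, giving (1). So assume $G$ is strictly $T$-solvable. By \thref{StructureofPSL16SolvableGraphs}, $\pgc(G)$ has a $3$-coloring in which $N(\pi(T))\setminus\pi(T)$ is monochromatic and all triangles lie in $\pgc(G)[\pi(T)]$; in particular there are no edges $r - p$ with $r \in \pi(T)\setminus\{d\}$ and $p \notin \pi(T)$. Set $X = \pi(T)$; this is a four-element set since $T$ is a $K_4$-group. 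If $\pgc(G)[\pi(T)]$ is triangle-free then by \thref{trianglefree3colors} the whole graph $\Xi$ is triangle-free and $3$-colorable, giving (1). Otherwise $\pgc(G)[\pi(T)]$ contains a triangle; since $\pgc(G)[\pi(T)] \subseteq \pgc(T) = \hamSandwich$ and any induced subgraph of a group's prime graph complement that contains a triangle must — by \thref{GeneralizedModuleExtensionsCited} together with the fact that edges of $\pgc(T)$ can only be removed, never added — be one of \hamSandwich, \szunrecognizei\ (viewed with $d$ as root), \notIsolatedVertexWhite, and NOT $\SuzukiThirtyTwoElusiveGraphWhiteBadb$ by Criterion 4, we conclude $\pgc(G)[\pi(T)]$ is one of the three graphs in case (2). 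Here the main work is the classification of which triangle-containing induced subgraphs of $\pgc(T)$ on $\pi(T)$ actually arise as $\pgc(N.T)$ for solvable $N$: one lists the subgraphs of $\hamSandwich$ containing a triangle (up to the rooted isomorphism fixing $d$, these are $\hamSandwich$ itself, $\szunrecognizei$, $\notIsolatedVertexWhite$, and $\SuzukiThirtyTwoElusiveGraphWhiteBadb$), then uses Criterion 4 to eliminate the last; the other three are realizable by \thref{factPSL16}. Combined with the no-edge and monochromaticity statements from \thref{StructureofPSL16SolvableGraphs}, this shows $\Xi$ satisfies (2).

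\textbf{Main obstacle.} I expect the crux to be the ``only if'' direction's case analysis of which induced subgraphs on $\pi(T)$ can occur — i.e., showing the list is exactly $\{\hamSandwich, \szunrecognizei, \notIsolatedVertexWhite\}$. The realizability of these three is handed to us by \thref{factPSL16}, and Criterion 4 disposes of $\SuzukiThirtyTwoElusiveGraphWhiteBadb$; but one must also confirm there are no other triangle-containing subgraphs of $\hamSandwich$ up to rooted isomorphism, and that the rooted structure (which vertex plays the role of $d$) is forced correctly — this requires care because $\pgc(T) = \hamSandwich$ has nontrivial automorphisms, and one needs Criterion 2's fixed-point data to pin down that $d$ is the vertex of degree $2$ whose incident-triangle structure is as drawn. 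A secondary subtlety is checking the triangle-freeness of $\Xi \setminus X$ in the ``if'' direction carefully from the combinatorial hypotheses of (2), and ensuring the representation in condition (2) of \thref{236Improved} behaves correctly for the non-$T$ factors when $F \neq T$.
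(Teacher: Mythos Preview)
Your overall strategy matches the paper's: the ``only if'' direction via \thref{StructureofPSL16SolvableGraphs} and \thref{trianglefree3colors}, the enumeration of triangle-containing rooted subgraphs of $\pgc(T)$ with Criterion~4 eliminating \SuzukiThirtyTwoElusiveGraphWhiteBadb, and the ``if'' direction via \thref{236Improved}/\thref{NewPGCFromOld}. Two points deserve comment.

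First, a factual slip: the three graphs \hamSandwichWhite, \szunrecognizei, \notIsolatedVertexWhite\ are \emph{not} triangle-free---each contains a triangle (indeed, this is the whole point of case~(2)). Your parenthetical ``impossible since $\Xi[X]$ is one of three triangle-free graphs'' is simply wrong. Fortunately it is also irrelevant: a triangle lying in $\Xi[X]$ is not in $\Xi\setminus X$, so it poses no obstacle to the hypothesis of \thref{236Improved}. More seriously, your argument for triangle-freeness of $\Xi\setminus X$ only handles triangles meeting $X$ (one or two vertices in $X$ are ruled out by the ``all edges through $z$'' and monochromaticity conditions), but says nothing about a triangle with \emph{zero} vertices in $X$. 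Condition~(2) as stated does not exclude such a triangle, and the paper's proof glosses over this too---it invokes \thref{236Improved} without checking. Compare the parallel theorems \thref{generalThm} and \thref{TPSL211General}, which explicitly add ``all triangles of $\Xi$ are contained in $\Xi[X]$'' to their condition~(2); that clause appears to be missing here.

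Second, on the ``if'' direction for $\Xi[X]\cong\notIsolatedVertexWhite$ or $\szunrecognizei$: the paper does not apply \thref{236Improved} with $F=C_a\times T$ or $F=T\ltimes P$ as you propose. Instead it takes $F=T$ throughout and invokes \thref{NewPGCFromOld} with $E=C_a\times T$ or $E=T\ltimes P$. The advantage is that \thref{NewPGCFromOld} only requires irreducible representations of $T$ (supplied directly by Criterion~2), whereas your route requires producing an irreducible of the larger group $F$ with the correct fixed-point profile---doable by inflation along $F\twoheadrightarrow T$, but an extra verification you would need to write out. Your instinct that \thref{NewPGCFromOld} ``may be cleaner'' is exactly what the paper acts on.
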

\begin{proof}
Let $G$ be a $T$-solvable group. If $G$ is solvable, $\pgc(G)$ satisfies (1) by \cite{2015REU}. Now consider the case where $G$ is strictly $T$-solvable. By \thref{StructureofPSL16SolvableGraphs}, $\pgc(G)$ is 3-colorable. There are two cases: Either $\pgc(G)[\pi(T)]$ is triangle-free or $\pgc(G)[\pi(T)]$ is not triangle-free. First consider the case where $\pgc(G)[\pi(T)]$ is triangle-free. By \thref{trianglefree3colors}, (1) is satisfied. If $\pgc(G)[\pi(T)]$ is not triangle-free, let $X = \pi(T)$ and let $z = d$. By the fact that $\pgc(G)[\pi(T)]\subseteq\pgc(T)$ and by Criterion 4, $\pgc(G)[\pi(T)]$ is not isomorphic to \psltwosixteenimpossiblei, \psltwosixteenimpossibleii, \SuzukiThirtyTwoElusiveGraphWhiteBadb, or \psltwosixteenimpossibleiii. Thus, $\pgc(G)[\pi(T)]$ realizes a graph listed in (2). By \thref{StructureofPSL16SolvableGraphs}, all edges between $p\in X$ and $q\in\pi(G)\setminus\pi(T)$ have $p = z$, so $N(z) = N(X)$. Also by \thref{StructureofPSL16SolvableGraphs}, we have that there is a three-coloring of $\pgc(G)$ such that $N(X)\setminus X$ shares one color. Thus, (2) is satisfied.

We now turn to the backwards direction. Suppose we have some graph $\Xi$ such that $\Xi$ satisfies (1) or (2). If $\Xi$ satisfies (1), there exists some solvable group $G$ such that $\pgc(G)\cong \Xi$ by \cite{2015REU}. If $\Xi$ satisfies (2), we split into cases. If $\Xi[X]\cong\hamSandwichWhite$, there exists a $T$-solvable group $G$ such that $\pgc(G)\cong\Xi$ by Criterion 2 and \thref{236Improved}. If $\Xi[X]\cong\notIsolatedVertexWhite$, notice there is a graph isomorphism from $C_a\times T$ to $\Xi[X]$, given by assigning $2\to w, a\to x, c\to y$, and $d\to z$. By \thref{NewPGCFromOld}, \thref{StructureofPSL16SolvableGraphs}, and Criterion 2, there exists a $T$-solvable group $G$ such that $\pgc(G)\cong \Xi$.
Now suppose $\Xi[X]\cong \szunrecognizei$. Then by Criterion 3, $\pgc(T\ltimes P)\cong \szunrecognizei$ where $d$ is the white vertex and $P$ a suitable $p$-group with $p\in\{2, a, c\}$ . Let $E = T\ltimes P$. By Criterion 2 and \thref{StructureofPSL16SolvableGraphs}, we may apply \thref{NewPGCFromOld} to get a group $G$ such that $\pgc(G)\cong \Xi$.



\end{proof}
\subsection{$\PSL(2, 2^4)$}\label{sec:PSL216new}
\begin{figure}[H]
    \centering
    \begin{minipage}{.3\textwidth}
        \centering
        \begin{tikzpicture}
            \coordinate (A) at (0, 1);   
            \coordinate (B) at (1, 1);   
            \coordinate (C) at (1, 0);   
            \coordinate (D) at (0, 0);   
            \draw (B) -- (C);
            \draw (C) -- (D);
            \draw (A) -- (D);
            \draw (A) -- (B);
            \draw (A) -- (C);
            \foreach \point in {A, B, C, D}
                \draw[draw, fill=white] (\point) circle [radius=0.2cm];
            \foreach \point/\name in {(A)/2, (B)/5, (C)/17, (D)/3}
                \node at \point {\scriptsize\name};
        \end{tikzpicture}
        \caption*{$\pgc\left(\PSL(2,2^4)\right)$}
    \end{minipage}%
    \begin{minipage}{.3\textwidth}
        \centering
        \begin{tikzpicture}
            \coordinate (A) at (0, 1);   
            \coordinate (B) at (1, 1);   
            \coordinate (C) at (1, 0);   
            \coordinate (D) at (0, 0);   
            \draw (B) -- (C);
            \draw (C) -- (D);
            \draw (A) -- (C);
            \foreach \point in {A, B, C, D}
                \draw[draw, fill=white] (\point) circle [radius=0.2cm];
            \foreach \point/\name in {(A)/2, (B)/5, (C)/17, (D)/3}
                \node at \point {\scriptsize\name};
        \end{tikzpicture}
        \caption*{$\pgc\left(\PSL(2,2^4):C_2\right)$}
    \end{minipage}%
   \begin{minipage}{.3\textwidth}
        \centering
        \begin{tikzpicture}
            \coordinate (A) at (0, 1);   
            \coordinate (B) at (1, 1);   
            \coordinate (C) at (1, 0);   
            \coordinate (D) at (0, 0);   
            \draw (B) -- (C);
            \draw (C) -- (D);
            \draw (A) -- (C);
            \foreach \point in {A, B, C, D}
                \draw[draw, fill=white] (\point) circle [radius=0.2cm];
            \foreach \point/\name in {(A)/2, (B)/5, (C)/17, (D)/3}
                \node at \point {\scriptsize\name};
        \end{tikzpicture}
        \caption*{$\pgc\left(\Aut\left(\PSL(2,2^4)\right)\right)$}
    \end{minipage}%
\end{figure}
In this subsection, we show that the group $\PSL(2, 2^4)$ meets the criteria given in \ref{sec:PSL216General}. The group $\PSL(2, 2^4)$ has order $4080 = 2^4 \cdot3\cdot5\cdot17$ and its automorphism group has order $16320 = 2^6\cdot3\cdot5\cdot17$. The above figures were computed with \cite{GAP} and \thref{psl16fact} was found via \cite{GAP}. \thref{subgroupsOfAutPSL216} was found via \cite{ATLAS}

\begin{fact}\thlabel{subgroupsOfAutPSL216}
     $\PSL(2, 2^4), \PSL(2, 2^4):C_2, $ and $\Aut\left(\PSL(2, 2^4)\right)$ are the only subgroups of $\Aut(\PSL(2, 2^4))$ containing $\PSL(2, 2^4)$.
\end{fact}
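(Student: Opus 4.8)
The plan is to invoke the correspondence (lattice isomorphism) theorem together with the standard description of $\operatorname{Out}(\PSL(2, q))$. Since $\PSL(2, 2^4)$ has trivial center, it is isomorphic to its own inner automorphism group and therefore is a normal subgroup of $A := \Aut(\PSL(2, 2^4))$. By the lattice isomorphism theorem, the subgroups of $A$ that contain $\PSL(2, 2^4)$ are in inclusion-preserving bijection with the subgroups of the quotient $A/\PSL(2, 2^4) \cong \operatorname{Out}(\PSL(2, 2^4))$, so it suffices to identify this outer automorphism group and list its subgroups.

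Next I would recall that for a prime power $q = p^f$ one has $\operatorname{Out}(\PSL(2, q)) \cong C_{\gcd(2,\,q-1)} \times C_f$, where the first factor is generated by a diagonal automorphism and the second by the Frobenius field automorphism; equivalently, in even characteristic $\PSL = \operatorname{PGL}$, so $\Aut(\PSL(2, 2^f)) = \PSL(2, 2^f):C_f$. With $q = 2^4$, i.e. $p = 2$ and $f = 4$, we get $\gcd(2, q - 1) = \gcd(2, 15) = 1$, hence $\operatorname{Out}(\PSL(2, 2^4)) \cong C_4$. As a consistency check this yields $|A| = 4080 \cdot 4 = 16320$, in agreement with the order recorded above.

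Finally I would enumerate the subgroups of $C_4$: there are exactly three, namely the trivial group, the unique subgroup of order $2$, and $C_4$ itself. Pulling each of these back along the quotient map $A \to A / \PSL(2, 2^4)$ produces exactly three subgroups of $A$ containing $\PSL(2, 2^4)$, of orders $4080$, $8160$, and $16320$; these are, respectively, $\PSL(2, 2^4)$, $\PSL(2, 2^4):C_2$, and $\Aut(\PSL(2, 2^4))$, and the middle one is unambiguously determined precisely because $C_4$ has a unique subgroup of order $2$. There is essentially no obstacle here: the substance of the statement is the well-known computation of $\operatorname{Out}(\PSL(2, q))$ (which is exactly what is being cited), and the only point that genuinely needs the lattice isomorphism theorem rather than an ad hoc search is the exclusion of any ``exotic'' subgroup lying strictly between $\PSL(2, 2^4)$ and $A$ that is not of the form (preimage of a subgroup of $\operatorname{Out}$) --- and that exclusion is automatic. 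If a fully self-contained argument were wanted, one could instead derive the structure of $\operatorname{Out}(\PSL(2, q))$ from the action of the semilinear group on $\PSL(2, q)$, or from Dickson's classification of the subgroups of $\PSL(2, q)$.
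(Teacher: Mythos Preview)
Your argument is correct. The lattice isomorphism theorem reduces the question to listing the subgroups of $\operatorname{Out}(\PSL(2,2^4))$, and your identification of this group as $C_4$ (since in even characteristic $\PSL=\operatorname{PGL}$ and the field automorphism group of $\mathbb{F}_{2^4}$ is cyclic of order $4$) is standard and accurate. The three subgroups of $C_4$ then pull back exactly as you describe.

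The paper does not actually prove this statement: it records it as a \emph{Fact} and attributes it to a lookup in the ATLAS. Your approach is therefore genuinely different in that it supplies a conceptual proof rather than a citation. What your argument buys is self-containment and an explanation of \emph{why} there are exactly three such overgroups (namely because $C_4$ has a unique proper nontrivial subgroup); what the paper's citation buys is brevity, since for its purposes the structure of $\operatorname{Out}$ is incidental. Either route is perfectly adequate for a fact of this size.
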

\begin{fact}\thlabel{psl16fact}
    The Schur multiplier of $\PSL(2,2^4)$ is 1. The Sylow $2$-subgroups of $\PSL(2, 2^4)$ are isomorphic to $(C_2)^4$ and do not satisfy the Frobenius Criterion (\cite[Definition 2.2.1]{2023REU}). Also, $\pi\left(\PSL(2,2^4)\right)=\pi\left(\Aut\left(\PSL(2,2^4)\right)\right)$.
\end{fact}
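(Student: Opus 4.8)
The statement collects three standard facts about $T = \PSL(2, 2^4)$, and since the authors remark that it was verified directly in \cite{GAP}, my plan is mainly to indicate why each piece holds; each also admits a short structural argument, which I sketch.

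First I would handle the Schur multiplier. Since $q = 16$ is even, $\gcd(2, q-1) = 1$, so $\PSL(2, q) = \mathrm{SL}(2, q)$, and it is classical (see the ATLAS \cite{ATLAS} or Gorenstein--Lyons--Solomon) that the Schur multiplier of $\PSL(2, 2^f)$ is trivial for all $f$ with $2^f \neq 4$; the sole even-characteristic exception, $\PSL(2, 4) \cong A_5$, has multiplier $C_2$. As $16 \neq 4$, the multiplier of $T$ is $1$.

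Next I would determine the Sylow $2$-subgroups. Computing $|T| = \tfrac{1}{\gcd(2,\, q-1)}\, q(q-1)(q+1) = 16 \cdot 15 \cdot 17 = 4080 = 2^4 \cdot 3 \cdot 5 \cdot 17$ shows that a Sylow $2$-subgroup has order $16$. In $\mathrm{SL}(2, q)$ for $q$ a power of $2$, the subgroup of upper-triangular unipotent matrices is isomorphic to the additive group $(\F_q, +)$; for $q = 16$ this is $(C_2)^4$, and since it already has order $2^4 = |T|_2$ it is a full Sylow $2$-subgroup, so $\mathrm{Syl}_2(T) \cong (C_2)^4$. An elementary abelian group of order $16$ is not cyclic, not a Klein four-group (which has order $4$), and not dihedral or generalized quaternion (both nonabelian in order $16$), so it fails the Frobenius Criterion of \cite[Definition 2.2.1]{2023REU}.

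Finally, for the prime sets: because $q$ is even, $\mathrm{PGL}(2, q) = \PSL(2, q)$, so $\Aut(T) = \mathrm{P\Gamma L}(2, q) = T \rtimes \mathrm{Gal}(\F_{16}/\F_2) \cong T \rtimes C_4$ has order $4 \cdot 4080 = 16320 = 2^6 \cdot 3 \cdot 5 \cdot 17$; hence $\pi(\Aut(T)) = \{2, 3, 5, 17\} = \pi(T)$. I expect no genuine obstacle here: the one point to watch is the Schur multiplier, where one must recall that $q = 4$ is the unique even-characteristic exception and that $q = 16$ sits safely outside it.
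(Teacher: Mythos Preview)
Your argument is correct. Each of the three claims is handled cleanly: the Schur multiplier via the classical result that $\PSL(2,2^f)$ has trivial multiplier once $2^f\neq 4$; the Sylow $2$-structure via the unipotent subgroup $\{\left(\begin{smallmatrix}1&a\\0&1\end{smallmatrix}\right):a\in\F_{16}\}\cong(\F_{16},+)\cong(C_2)^4$, which visibly fails the Frobenius Criterion; and the equality of prime sets via $\Aut(T)=T\rtimes C_4$.

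The paper itself does not give a proof of this Fact at all: it simply records the statement as having been verified in GAP. Your approach is therefore genuinely different in that it supplies short structural arguments in place of a computer check. What this buys you is independence from software and a clearer picture of why each claim holds (and why, say, $q=4$ is the lone exception for the multiplier); what the GAP route buys the authors is brevity and uniformity with the many other small verifications scattered through the paper. Both are entirely adequate here.
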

\begin{fact}\thlabel{psl16fpinfoactual}
    \begin{tabular}[h]{|c|c|}
    \hline
        Fixed Point Information &  \\
        \hline
        $\PSL(2,2^4)$ & $[[2,3,5], [2,3,5,17]]$ \\
        \hline
    \end{tabular}\\
    The list refers to all prime element orders where elements of given prime order in the list have fixed points in some representation, found through \cite{GAP}. There are different lists within the list because there are different irreducible representations have different fixed points.
\end{fact}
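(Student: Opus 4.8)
The plan is to read off \thref{psl16fpinfoactual} from the ordinary character table of $T = \PSL(2,2^4)$. By \thref{psl16fact} the Schur multiplier of $T$ is $1$, so the only perfect central extension of $T$ is $T$ itself, and it suffices to work with the $17$ ordinary irreducible characters of $T$. For an irreducible character $\chi$ and an element $g \in T$ of prime order $p$, the dimension of the subspace fixed by $g$ in the module affording $\chi$ is $\frac{1}{p}\sum_{j=0}^{p-1}\chi(g^j)$, and $p$ is recorded in the list attached to $\chi$ exactly when this quantity is positive for some conjugacy class of elements of order $p$. Thus the Fact is equivalent to the assertions that (i) in every irreducible character, elements of order $2$, $3$ and $5$ have fixed points, and (ii) elements of order $17$ have fixed points in every irreducible character except the Steinberg character.

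I would prove this from the classical character table of $\PSL(2,q)$ for even $q$, which consists of the trivial character, the Steinberg character of degree $q$, the $(q-2)/2$ principal-series characters of degree $q+1$ (indexed by the nontrivial characters of the split torus $C_{q-1}$ modulo inversion), and the $q/2$ cuspidal characters of degree $q-1$ (indexed by the nontrivial characters of the nonsplit torus $C_{q+1}$ modulo inversion); for $q = 16$ this is $1 + 1 + 7 + 8 = 17$ characters, matching the $q+1$ conjugacy classes. Here an element of order $2$ is unipotent, elements of order $3$ and $5$ lie in the split torus $C_{15}$, and an element of order $17$ generates the nonsplit torus $C_{17}$. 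Substituting the standard character values into the averaging formula, for $p \in \{2,3,5\}$ the fixed-space dimension is strictly positive for each of the four families --- this is a short calculation using that the Steinberg character has value $0$ on a unipotent element and $1$ on a split semisimple element, that a principal-series character has value $1$ on a unipotent element and $\alpha(t) + \alpha(t^{-1})$ on a split semisimple element $t$, that a cuspidal character has value $-1$ on a unipotent element and $0$ on a split semisimple element, and (trivially) that the trivial character has a one-dimensional fixed space under every element. For $p = 17$, writing $g$ for a generator of the nonsplit torus, the fixed-space dimension is $\frac{1}{17}\sum_{j=0}^{16}\chi(g^j)$, which equals $1$ for the trivial character, $1$ for a principal-series character (which vanishes on every nonidentity power of $g$), $1$ for a cuspidal character (where $\sum_{j=1}^{16}\chi(g^j) = 2$, since $\sum_{j=1}^{16}\beta(g^j) = -1$), but $\frac{1}{17}(16 - 16) = 0$ for the Steinberg character (which is $-1$ on every nonidentity power of $g$). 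This yields exactly the two lists in \thref{psl16fpinfoactual}, with the Steinberg character the unique one omitting $17$.

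In practice this computation is most easily carried out by extracting the character table of $\PSL(2,2^4)$ from the \cite{GAP} character-table library and evaluating the class functions $g \mapsto \frac{1}{\operatorname{o}(g)}\sum_{x \in \langle g\rangle}\chi(x)$ directly, exactly as in \cite{2023REU}; this is what ``found through \cite{GAP}'' refers to, and there is no genuine obstacle here. The only point worth a moment's care is that some prime orders (namely $5$ and $17$) correspond to more than one conjugacy class, so strictly one should evaluate the averaging formula on a representative of each class of a given order before deciding whether to record the prime --- but, as the computation above shows, the sign of the fixed-space dimension is constant over all classes of each fixed prime order in $T$, so the bookkeeping collapses and (i) and (ii) follow.
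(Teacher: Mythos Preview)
Your argument is correct: using the classical description of the ordinary characters of $\PSL(2,q)$ for even $q$, you correctly identify the four families (trivial, Steinberg, principal series, cuspidal), evaluate the fixed-space averages on the unipotent class, the split-torus classes of orders $3$ and $5$, and the nonsplit-torus classes of order $17$, and conclude that every irreducible character has positive fixed-space dimension at $2,3,5$, while only the Steinberg character annihilates order-$17$ elements. The bookkeeping about multiple conjugacy classes of orders $5$ and $17$ is also handled correctly.

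The paper, by contrast, gives no argument at all: \thref{psl16fpinfoactual} is recorded as a \emph{Fact} and the entire justification is the phrase ``found through \cite{GAP}'', i.e.\ a direct machine evaluation of the averaging formula on the stored character table. Your route is genuinely different in that it derives the data from the generic $\PSL(2,q)$ table rather than by ad hoc lookup; this has the advantage of being uniform in $q$ (and indeed is essentially the same calculation the paper later carries out for the family $\PSL(2,2^f)$ in \thref{generalDisconnect}), at the cost of being longer than simply citing a computer check. Either approach suffices here.
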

\begin{lemma}\thlabel{psl16brauer}
    Let $p \in \pi(\PSL(2, 2^4))$, let $N$ be a nontrivial $p$-group, and suppose $G \cong N.\PSL(2, 2^4)$. Then we have the following restrictions on $\pgc(G)$, where the white vertex denotes 17 and the other vertices correspond to the labeling at the beginning of this subsection:
    \begin{itemize}
        \item If $p  = 2$ then $\pgc(G)$ realizes one of \hamSandwichWhite, \szunrecognizei, \fletchingWhite, or \szunrecognizeii.
        \item If $p = 3$ then $\pgc(G)$ is  \szunrecognizei or \trianglePlusIsolatedExtra.
         \item If $p = 5$ then $\pgc(G)$ is \szunrecognizeiii or \notIsolatedVertexWhite.
        \item If $p= 17$ then $\pgc(G)$ is \brokenFrameIsolatediii.
    \end{itemize}
\end{lemma}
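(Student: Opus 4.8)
The plan is to invoke \thref{GeneralizedModuleExtensionsCited} for each prime $p\in\{2,3,5,17\}$ in turn. For a fixed $p$, the theorem says that $\pgc(G)$ for $G\cong N.\PSL(2,2^4)$ with $N$ a $p$-group is exactly $\pgc(\PSL(2,2^4))\setminus\bigl(\bigcup_{\chi\in Y}B_\chi\bigr)$ for some $Y\subseteq\operatorname{IBr}_p(\PSL(2,2^4))$, where each $B_\chi$ is the set of edges $p-q$ such that some element of order $q$ has a positive average value of $\chi$ over its cyclic group. So the whole computation reduces to: (i) write down $\pgc(\PSL(2,2^4))$ (given at the start of the subsection: it is $\hamSandwich$ with the three non-$p$ vertices among $\{2,3,5,17\}$), and (ii) for each $p$, compute from the $p$-modular Brauer character table of $\PSL(2,2^4)$ which edges incident to $p$ lie in each $B_\chi$, then enumerate the possible unions $\bigcup_{\chi\in Y}B_\chi$. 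Since $B_\chi$ only ever contains edges incident to $p$, and $p$ has at most three neighbors in $\pgc(\PSL(2,2^4))$, there are only finitely many (at most $2^3=8$, usually far fewer) candidate resulting graphs, and we just check which ones actually arise as a union of the $B_\chi$'s.

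Concretely, I would proceed prime by prime. For each $p$, I list $\operatorname{IBr}_p(\PSL(2,2^4))$ (available in \cite{GAP}), and for each Brauer character $\chi$ compute $\frac{1}{o(g)}\sum_{x\in\langle g\rangle}\chi(x)$ for a representative $g$ of each class of prime order $q\ne p$ in $\pi(\PSL(2,2^4))$; this tells us whether the edge $p-q$ is in $B_\chi$ (it is iff the sum is $>0$, i.e. iff order-$q$ elements have fixed points in the corresponding module). The edge $p-q$ survives in $\pgc(G)$ precisely when it survives the removal of $\bigcup_{\chi\in Y}B_\chi$, i.e. when no chosen $\chi$ has order-$q$ elements acting with fixed points. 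Collecting the possible subsets of edges-incident-to-$p$ that can be simultaneously removed (equivalently: the possible sets of neighbors of $p$ that can remain), and noting that the rest of $\pgc(\PSL(2,2^4))$ (the edges not touching $p$) is untouched, yields the finite list of graphs in the statement. For $p=17$ the Sylow subgroup is cyclic of order $17$, so one expects essentially one Brauer-character-driven outcome; for $p=2$, where the $2$-modular table is largest, one expects the richest list, matching the four graphs claimed.

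The bookkeeping to watch: matching the abstract graph names in the statement ($\hamSandwichWhite$, $\szunrecognizei$, $\fletchingWhite$, $\szunrecognizeii$, $\trianglePlusIsolatedExtra$, $\szunrecognizeiii$, $\notIsolatedVertexWhite$, $\brokenFrameIsolatediii$) to the actual vertex-labeled graphs on $\{2,3,5,17\}$ with white $=17$. I would fix the identification once: the vertices of $\hamSandwich$ are placed so that $17$ is the white (root) vertex $C$, $3$ is $D$, and $2,5$ are $A,B$, exactly as in the subsection's figure; then each named four-vertex graph corresponds to a specific subset of the edge set $\{2\text{-}5,\ 5\text{-}17,\ 17\text{-}3,\ 2\text{-}3,\ 2\text{-}17\}$ of $\pgc(\PSL(2,2^4))$, and removing an incident-to-$p$ edge set gives one of the named graphs. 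The routine part is the arithmetic with Brauer character values over cyclic subgroups; the only mild obstacle is making sure I have all $p$-modular irreducibles and that I haven't missed a realizable union of $B_\chi$'s — but this is a finite check, and since by \thref{GeneralizedModuleExtensionsCited} every $Y\subseteq\operatorname{IBr}_p$ is allowed, the graphs in the statement are exactly $\{\pgc(\PSL(2,2^4))\setminus\bigcup_{\chi\in Y}B_\chi : Y\subseteq\operatorname{IBr}_p(\PSL(2,2^4))\}$, which is what we enumerate. (All the needed Brauer character data is in \cite{GAP}, as used analogously in \cite{Suz}.)
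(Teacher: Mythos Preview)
Your approach is correct and essentially identical to the paper's: the paper's proof simply states that the result follows from \thref{GeneralizedModuleExtensionsCited} together with the Brauer tables of $\PSL(2,2^4)$, and then presents the fixed-point tables (Yes/No for each prime order, for each $\chi\in\operatorname{IBr}_p$) computed via \cite{GAP} and \cite[Lemma 6.2]{BrauerTableFP}. One small bookkeeping point: since the lemma assumes $N$ is nontrivial, you should take $Y\neq\varnothing$ when enumerating; this matters for $p=17$ (where every nontrivial $Y$ forces the single listed graph) but is harmless for $p=2,3,5$ since in each of those cases some $\chi$ has $B_\chi=\varnothing$.
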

\begin{proof}
    This follows from the Brauer tables of $\PSL(2, 2^4)$ and \thref{GeneralizedModuleExtensionsCited}. We examine the fixed point information of the representations in the tables below, where ``Yes" in column $x$ means ``An element of order $x$ has fixed points in the representation" and ``No" means ``An element of order $x$ does not have fixed points in the representation". The tables below are computed using \cite[Lemma 6.2]{BrauerTableFP} and \cite{GAP}, but by \thref{ConjugacyClassLanding}, they could also be computed manually.
    \begin{center} 
    \mbox{
    \begin{tabular}{||c c c c||} 
     \hline
     $\chi_i \in \operatorname{IBr}_{2}(\PSL(2,2^{4}))$ & 3 & 5 & 17 \\ [0.5ex] 
     \hline\hline
     $\chi_1$ & Yes & Yes & Yes \\ 
     \hline
     $\chi_2$\text{ through }$\chi_5$& No & No & No \\ 
     \hline
     $\chi_6$\text{ through }$\chi_{7}$& Yes & Yes & No \\ 
     \hline
     $\chi_8$\text{ through }$\chi_{11}$& Yes & No & No \\ 
     \hline
     $\chi_{12}$\text{ through }$\chi_{16}$& Yes & Yes & No \\ [1ex]
     \hline
    \end{tabular}
    \quad
    \begin{tabular}{||c c c c||} 
     \hline
     $\chi_i \in \operatorname{IBr}_{3}(\PSL(2,2^4))$ & 2 & 5 & 17\\ [0.5ex] 
     \hline\hline
     $\chi_1$ \text{ through } $\chi_{9}$ & Yes & Yes & Yes\\
     \hline
      $\chi_{10}$ & Yes & Yes & No\\
     \hline
      $\chi_{11}$\text{ through }$\chi_{12}$ & Yes & Yes & Yes\\[1ex]
     \hline
    \end{tabular}
    }
    \end{center}
    \begin{center}
    \mbox{
    \begin{tabular}{||c c c c||} 
     \hline
     $\chi_i \in \operatorname{IBr}_{5}(\PSL(2,2^4))$ & 2 & 3 & 17 \\ [0.5ex] 
     \hline\hline
     $\chi_1$ through $\chi_{9}$ & Yes & Yes & Yes \\ 
     \hline
     $\chi_{10}$ & Yes & Yes & No\\
     \hline
     $\chi_{11}$ & Yes & Yes & Yes\\   [1ex] 
     \hline
    \end{tabular}
    \quad
    \begin{tabular}{||c c c c||} 
     \hline
     $\chi_i \in \operatorname{IBr}_{17}(\PSL(2, 2^4))$ & 2 & 3 & 5 \\ [0.5ex] 
     \hline\hline
     $\chi_1$ \text{ through }$\chi_{9}$ & Yes & Yes & Yes \\ [1ex] 
     \hline
    \end{tabular}
    }\vspace{0.2cm}
    \end{center}
\end{proof}
\begin{lemma}\thlabel{Suzuki32PartialRuleOut}
    The graph $\SuzukiThirtyTwoElusiveGraphWhiteBadb$ where 17 is the white vertex is not realizable by a $\PSL(2, 2^4)$-solvable group.
\end{lemma}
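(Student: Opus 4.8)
The plan is to argue by contradiction. Suppose some $\PSL(2,2^4)$-solvable group $G$ satisfies $\pgc(G) \cong \SuzukiThirtyTwoElusiveGraphWhiteBadb$ with $17$ identified to the white root. Since this graph contains a triangle, $G$ is not solvable, hence strictly $\PSL(2,2^4)$-solvable; and since $\PSL(2,2^4)$ is a $K_4$-group occurring as a composition factor, $\pi(G) = \{2,3,5,17\}$. First I would note that, as $\PSL(2,2^4)$ appears as a subnormal section of $G$, its element of order $15$ lifts to one in $G$, so $3-5 \notin \pgc(G)$; hence $\pgc(G)$ is a subgraph of $\pgc(\PSL(2,2^4))$, which the computed figure shows to be $K_4$ with the edge $3-5$ removed. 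Combining this with the shape of the target rooted graph — a triangle with a single pendant whose root is a triangle-vertex not incident to the pendant — the three non-root vertices are exactly $\{2,3,5\}$, and the two $\pgc(G)$-edges joining the degree-$3$ vertex to the other two of these form a length-$2$ path contained in $\pgc(\PSL(2,2^4))[\{2,3,5\}] = \{2-3,\,2-5\}$. That path must therefore be $3-2-5$, so the degree-$3$ vertex is $2$, and $\pgc(G)$ is either $\{2-3,\,2-5,\,2-17,\,3-17\}$ or $\{2-3,\,2-5,\,2-17,\,5-17\}$; equivalently $\Gamma(G) = \{3-5,\,5-17\}$ or $\Gamma(G) = \{3-5,\,3-17\}$, and in both cases $2$ is an isolated vertex of $\Gamma(G)$ while $G$ has an element of order $85$ or of order $51$.

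Next I would bring in the generalized Fitting subgroup. Since $G$ is strictly $\PSL(2,2^4)$-solvable and (by \thref{psl16fact}) the Schur multiplier of $\PSL(2,2^4)$ is $1$, the layer is a nontrivial direct product $E(G) = T_1 \times \cdots \times T_k$ with each $T_i \cong \PSL(2,2^4)$. If $k \geq 2$, an involution of $T_1$ centralizes all of $T_2$, in particular an element of order $3$, producing an element of order $6$ — impossible, as $2$ is isolated in $\Gamma(G)$. So $k = 1$, and $G/C_G(E(G))$ is a subgroup of $\Aut(\PSL(2,2^4))$ containing the inner automorphism group $\cong \PSL(2,2^4)$; by \thref{subgroupsOfAutPSL216} it is $\PSL(2,2^4)$, $\PSL(2,2^4):C_2$, or $\Aut(\PSL(2,2^4))$. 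The latter two contain an element of order $6$ — this is visible from the computed graph $\pgc(\PSL(2,2^4):C_2)$, which lacks the edge $2-3$, and also follows from the fact that an involutory field automorphism of $\PSL(2,2^4)$ has centralizer $\cong \PSL(2,4) \cong A_5$ — which would lift to an element of order $6$ in $G$, again impossible. Hence $G/C_G(E(G)) \cong \PSL(2,2^4)$; comparing orders (using $E(G) \cap C_G(E(G)) = 1$) gives $G = E(G) \times C$, where $C := C_G(E(G))$, and $C$ has no components (a component of $C$ would be a component of $G$ inside $E(G) = T_1$ yet centralizing it), hence $C$ is solvable.

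Finally, for the direct product $G = \PSL(2,2^4) \times C$ one has $\Gamma(G) = \Gamma(\PSL(2,2^4)) \cup \Gamma(C) \cup \{\, r-s : r \in \{2,3,5,17\},\, s \in \pi(C)\,\}$, so if $C \neq 1$ then any prime in $\pi(C)$ produces an edge of $\Gamma(G)$ incident to $2$, contradicting the isolation of $2$. Therefore $C = 1$, $G \cong \PSL(2,2^4)$, and $\Gamma(G) = \{3-5\}$ — contradicting the conclusion of the first paragraph that $\Gamma(G)$ must also contain $3-17$ or $5-17$. I expect the two steps needing most care to be the combinatorial reduction in the first paragraph (verifying that exactly two labeled four-vertex graphs are rooted-isomorphic to $\SuzukiThirtyTwoElusiveGraphWhiteBadb$ and that $2$ is isolated in $\Gamma(G)$ in each) and the generalized-Fitting reduction in the second; the rest is bookkeeping. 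A more computational alternative — reducing via \thref{lemma212inPaper} to a subgroup $N.\PSL(2,2^4)$ with $N$ solvable, showing $N$ must be a $2$-group by \thref{frobprop} since the Sylow $2$-subgroups of $\PSL(2,2^4)$ are $\cong C_2^4$ and fail the Frobenius criterion, and then invoking \thref{psl16brauer} — pins down that subgroup's prime graph complement but not $G$ itself, so the structural route above seems preferable.
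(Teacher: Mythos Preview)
Your combinatorial reduction in the first paragraph is correct and matches the paper's: the only labeling of the target compatible with $\pgc(G)\subseteq\pgc(\PSL(2,2^4))$ puts $2$ at the degree-$3$ vertex, so $2$ is isolated in $\Gamma(G)$.

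The gap is in the second paragraph. From ``$G$ is strictly $\PSL(2,2^4)$-solvable and the Schur multiplier is $1$'' you cannot conclude $E(G)\neq 1$. A faithful irreducible module $V$ for $T=\PSL(2,2^4)$ gives a strictly $T$-solvable group $V\rtimes T$ with $E(V\rtimes T)=1$ (any component would lie in $C_G(V)=V$). The Schur multiplier being trivial only tells you that \emph{if} there are components then each is isomorphic to $T$; it does not produce one. Your aside that ``$C$ has no components, hence $C$ is solvable'' is likewise unjustified (no components does not imply solvable), though you never actually use solvability of $C$, so that slip is harmless.

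The gap is repairable: once you know $2$ is isolated in $\Gamma(G)$, the Gruenberg--Kegel/Williams theorem (nonsolvable case) gives a normal series $1\le N\le G_1\le G$ with $N$ a nilpotent $2$-group, $G_1/N$ nonabelian simple (hence $\cong T$), and $G/G_1$ a $2$-subgroup of $\operatorname{Out}(T)$. From here either route finishes. The paper takes exactly the ``computational alternative'' you set aside, but invokes \cite[Lemma~2.1.1]{2023REU} rather than \thref{lemma212inPaper}: the former gives $G\cong N.S$ with $\operatorname{Inn}(T)\le S\le\Aut(T)$ for $G$ itself, not merely a subgroup. Then $N$ is forced to be a $2$-group, \thref{psl16brauer} pins down $\pgc(N.T)$ as $\pgc(T)$, and the cases $S\supsetneq T$ are killed because $S$ then has $\PSL(2,2^4){:}C_2$ as a section, whose prime graph complement lacks $2\hbox{--}3$ and $2\hbox{--}5$. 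So your objection that the computational route ``pins down that subgroup's prime graph complement but not $G$ itself'' is answered by using the stronger structure lemma.
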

\begin{proof}
    For contradiction, suppose otherwise. By \cite[Lemma 4.4]{Suz}, it suffices to show that for a $\PSL(2, 2^4)$-solvable group $H$ such that $\pi(H) = \pi\left(\PSL(2, 2^4)\right)$, $\pgc(H)$ cannot realize $\SuzukiThirtyTwoElusiveGraphWhiteBadb$. By \cite[Lemma 2.1.1]{2023REU}, there is some solvable group $N$ and some $S$ such that $\operatorname{Inn}\left(\PSL(2, 2^4)\right)\leq S\leq \Aut\left(\PSL(2, 2^4)\right)$ and $H\cong N.S$. Because $\left|\operatorname{Out}\left(\PSL(2, 2^4)\right)\right| = 4$, there are 3 possibilities by \thref{subgroupsOfAutPSL216}: $S\cong \PSL(2, 2^4), S\cong \PSL(2, 2^4).C_2, S\cong\Aut\left(\PSL(2, 2^4)\right)$. We may consider the $K\leq H$ such that $K/N\cong \PSL(2, 2^4)$. The vertex labeled 2 must be degree 3 in $\pgc(H)$ because $\pgc(H)\subseteq\pgc\left(\PSL(2, 2^4)\right)$ and the vertex labeled 17 is not degree 3, so $17-2, 3-2, 5-2\in\pgc(H)$. Thus, $3$, 5, and 17 cannot divide $|N|$ by \cite[Lemma 5.11]{Suz}. We must then have that $N$ is a 2-group such that $\pgc(K)\cong\hamSandwichWhite$, as it is the only possible graph produced by a 2-group that is a supergraph of \SuzukiThirtyTwoElusiveGraphWhiteBadb by \thref{psl16brauer}. If $G\cong K$, we reach a contradiction to \thref{psl16brauer} and are done. If $G\cong K.C_2$, note that $G\cong N.\PSL(2, 2^4).C_2$. Because $\pgc(G) \cong \SuzukiThirtyTwoElusiveGraphWhiteBadb$, the edges $5-3, 17-p\not\in\pgc(G)$ for some $p\in\{2, 3\}$, but also note that the edge $3-2\not\in\pgc\left(\PSL(2, 2^4).C_2\right)$, which is a supergraph of $\pgc(G)$. Thus, $2-3, 17-p, 3-5\not\in\pgc(G)$ for some $p\in\{2, 3\}$, so $\pgc(G)\not\cong\SuzukiThirtyTwoElusiveGraphWhiteBadb$. If $G\cong N.\Aut\left(\PSL(2, 2^4)\right)$, note that $H\cong N.\PSL(2, 2^4).C_2$ is a subgroup of $G$, so $\pgc(G)\subseteq\pgc(H)$ and we reach a contradiction. 
\end{proof}
We are now ready to verify the criteria given in \ref{sec:PSL216General}. First make the assignments $a = 3, c = 5, d = 17$. By \thref{psl16fact} and the graph in the introduction to this subsection, $\pgc\left(\PSL(2, 2^4)\right)$ matches the graph given in the previous section, $\pi(T) = \pi\left(\Aut(T)\right)$, and Criteria 1 and 5 are satisfied. By \thref{psl16fpinfoactual}, Criterion 2 is satisfied; by \thref{psl16brauer} and \cite[Theorem 2.2]{Suz}, Criterion 3 is satisfied; and by \thref{Suzuki32PartialRuleOut}, Criterion 4 is satisfied. Thus, the classification result for the prime graph complements of $\PSL(2, 2^4)$-solvable groups is given by \thref{generalClassificationPSL216}.
\section{$T$-solvable groups with $T$ similar to $\PSL(2, 3^3)$ and $\PSL(2, 7^2)$}\label{sec:PSL249}
In this section, we classify the prime graph complements of $T$-solvable groups where $T$ satisfies the criteria given in Section \ref{sec:General}. We then apply the result to classify the prime graph complements of $\PSL(2, 3^3)$- and $\PSL(2, 7^2)$-solvable groups in Sections \ref{sec:PSL227New} and \ref{sec:PSL249new}, respectively.
\subsection{General $T$}\label{sec:General}
\begin{figure}[H]
    \centering
    \begin{minipage}{\textwidth}
        \centering
        \begin{tikzpicture}
            \coordinate (A) at (0, 1);   
            \coordinate (B) at (1, 1);   
            \coordinate (C) at (1, 0);   
            \coordinate (D) at (0, 0);   
            \draw (B) -- (C);
            \draw (C) -- (D);
            \draw (A) -- (D);
            \draw (A) -- (B);
            \draw (A) -- (C);
            \foreach \point in {A, B, C, D}
                \draw[draw, fill=white] (\point) circle [radius=0.2cm];
            \foreach \point/\name in {(A)/a, (B)/2, (C)/c, (D)/d}
                \node at \point {\scriptsize\name};
        \end{tikzpicture}
        \caption*{$\pgc(T)$}
    \end{minipage}%
\end{figure}
We establish soome notation specific for Section \ref{sec:PSL249}.
\begin{notation} \thlabel{rootedGraphIsomorphismPSL227}
    Throughout this section, given a group $G$ with $\pi(G) = \pi(\Aut(T))$ and a rooted graph $\Lambda$ on four vertices, it is understood that when we say $\pgc(G) \cong \Lambda$, we require the isomorphism to map the vertex $2$ to the root.
\end{notation}
\begin{notation}\thlabel{realizableDefPSL227}
    Given a set of rooted graphs $\mathcal{H}$ on four vertices, we say that $\mathcal{H}$ is \emph{realizable} if for each $\Lambda \in \mathcal{H}$, there exists a $T$-solvable $\pi(T)$-group $G$ such that $\pgc(G) \cong \Lambda$ in the sense of \thref{rootedGraphIsomorphismPSL227}.
\end{notation}
In this section, we classify the prime graphs of $T$-solvable groups where $T$ is a nonabelian simple group with $\pi(T) = \pi\left(\Aut(T)\right) = \{2, a, c, d\}$ such that $T$ has the prime graph listed above, and such that $T$ satisfies the following criteria:
\begin{itemize}
    \item[1.] The Schur multiplier of $T$ is 2.
    \item[2.] The fixed point information for $T$ matches \thref{psl249fpinfo1}.
    \item[3.] There exists an $R$ with $\operatorname{Inn}(T)\leq R\leq \Aut(T)$ such that $\pgc(R)$ realizes \SuzukiThirtyTwoElusiveGraph.
\end{itemize}
\begin{fact}\thlabel{psl249fpinfo1}
    \begin{tabular}[h]{|c|c|}
    \hline
        Fixed Point Information &  \\
        \hline
        $T$ & $[ [2,a, c, d]]$ \\
        \hline
        $2.T$ & $[[ 2, a, c, d], [ a, c, d] ]$ \\
        \hline
    \end{tabular}\\
    The first column of the table above contains all perfect central extensions of $T$. For each group $G$ in the first column, the second column contains a list of lists $L$. A list $\ell$ appears in $L$ if and only if there exists an irreducible complex representation $G \to \GL(n, \C)$ for which $\ell$ is the set of primes $p$ for which there exists an element of order $p$ in $G$ which fixes some vector in $\C^n \setminus \{0\}$.
\end{fact}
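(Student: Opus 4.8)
The plan is to establish the two rows of the table as a direct computation from the ordinary character tables of $T$ and $2.T$; in practice, and exactly as with the analogous facts of Section~\ref{sec:PSL216new}, the data is most cheaply read off from \cite{GAP}, but I will indicate how to verify it by hand. For the first column, note that since the Schur multiplier of $T$ is $2$ its Schur cover is $2.T$, and every perfect central extension of $T$ is a quotient of the Schur cover by a central subgroup; as $\mathbb{Z}/2$ has only the trivial subgroup and itself, the perfect central extensions of $T$ are precisely $2.T$ and $T$. For the second column one must decide, for each irreducible complex character $\chi$ of $G\in\{T,2.T\}$ and each prime $p\in\pi(T)=\{2,a,c,d\}$, whether some element $g$ of order $p$ fixes a nonzero vector, i.e.\ whether $\langle\chi|_{\langle g\rangle},1\rangle=\tfrac1p\sum_{k=0}^{p-1}\chi(g^{k})>0$. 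Writing $T\cong\PSL(2,q)$ and $2.T\cong\operatorname{SL}(2,q)$ for the relevant prime power $q=p_0^{f}$ (with $f\ge 2$), the ordinary character tables of $\operatorname{SL}(2,q)$ and $\PSL(2,q)$ are classical, so this is a finite check; I would organize it by the type of $g$ (nontrivial unipotent, split semisimple, nonsplit semisimple, involution) and the family of $\chi$ (trivial, Steinberg, principal series, discrete series, exceptional).

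For $T=\PSL(2,q)$ every nontrivial irreducible representation is faithful, so by Schur's lemma the involution of $T$, being noncentral, cannot act as $-\mathrm{Id}$; hence it has a nonzero fixed space in every irreducible representation, and $2$ lies in the list of every $\chi\in\operatorname{Irr}(T)$. For an odd prime $p\in\{a,c,d\}$ I would take $g$ to be a nontrivial unipotent element when $p$ is the defining characteristic of $q$, and a generator of the relevant cyclic $p$-subgroup of a maximal torus otherwise; in each case the character values along $\langle g\rangle$ are explicit (the Steinberg character vanishes on unipotents, principal series take the value $1$ there, discrete series the value $-1$, the exceptional characters values of the form $\tfrac12(\pm 1\pm\sqrt{\pm q})$, while the cuspidal characters vanish on regular split semisimple elements, etc.), and a short computation gives $\dim V^{g}>0$ in each case. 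For a unipotent $g$ against the discrete series and the exceptional characters this is exactly the step that uses $f\ge2$, since there $\dim V^{g}$ comes out as $q/p_0-1=p_0^{f-1}-1$ or a similar positive quantity. Hence the only list occurring for $T$ is $[2,a,c,d]$. Moreover every character of $2.T$ that is trivial on $Z=\{\pm I\}$ descends to $T$ (and the central involution $-I$ then acts trivially), so it too has list $[2,a,c,d]$; thus $[2,a,c,d]$ occurs for $2.T$ as well.

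It remains to treat the faithful characters of $2.T$, i.e.\ those nontrivial on $-I$. The unique involution of $\operatorname{SL}(2,q)$ is the central element $-I$, which in a faithful irreducible representation acts as $-\mathrm{Id}$ and hence has zero fixed space; so $2$ is absent from the list of every faithful $\chi$. For an odd prime $p\in\{a,c,d\}$ and $g\in\operatorname{SL}(2,q)$ of order $p$, the subgroup $\langle g\rangle$ has odd order and projects isomorphically onto $\langle\bar g\rangle\le T$, so the computations of the previous paragraph (none of which invoked the prime $2$) again give $\dim V^{g}>0$, whence the list of every faithful $\chi$ is exactly $[a,c,d]$. Since $\operatorname{SL}(2,q)$ does have faithful irreducible characters — for instance the faithful exceptional ones, of degree $(q\pm1)/2$ — the list $[a,c,d]$ does occur, and no other list does. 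This produces precisely the two lists recorded for $2.T$ in the table.

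The work is entirely routine but somewhat lengthy, ranging over roughly $q/2$ characters; the one place needing genuine attention is the interaction of unipotent elements with the discrete series and the exceptional characters, where positivity of $\dim V^{g}$ relies on $q$ being a proper prime power rather than a prime — consistent with this fact being invoked only for such $T$ — and one should double-check that no stray list slips in. As noted, for the concrete groups of Sections~\ref{sec:PSL227New} and~\ref{sec:PSL249new} the table can simply be obtained from the character tables in \cite{GAP}.
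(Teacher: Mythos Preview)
In the paper this ``Fact'' is not a proved statement at all: it is Criterion~2, a hypothesis imposed on the abstract group $T$ in Section~\ref{sec:General}. The paper offers no argument for it there; in the concrete applications (Sections~\ref{sec:PSL227New} and~\ref{sec:PSL249new}) it is simply verified by reading off the character tables in \cite{GAP} (Facts~\thref{psl227fpinfo} and~\thref{psl249fpinfosubsection}). So your proposal is not so much a different proof as a proof where the paper has none.

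That said, your route via the generic character tables of $\PSL(2,q)$ and $\operatorname{SL}(2,q)$ is sound and more informative than a GAP lookup. The argument that noncentral involutions must have a nonzero $+1$-eigenspace (else they would act as $-\mathrm{Id}$, hence be central by Schur) is clean, and your observation that the discrete-series fixed-point dimension at a unipotent element comes out to $p_0^{f-1}-1$, hence is positive precisely when $f\ge 2$, is exactly the structural reason the table here differs from the tables in Section~\ref{sec:PSLPrimesGeneral} (where $q$ is prime and the list $[2,a,c]$ appears). One small caution: your sentence ``the computations of the previous paragraph \dots\ again give $\dim V^{g}>0$'' for odd-order $g$ in faithful characters of $2.T$ is not literally a reuse of the $T$ computation, since the faithful irreducible characters of $\operatorname{SL}(2,q)$ are genuinely different from those of $\PSL(2,q)$; you do need to run through the faithful families (in particular the exceptional characters of degree $(q\pm1)/2$) separately in the $\operatorname{SL}(2,q)$ table. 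The outcome is as you claim, but the justification should invoke the $\operatorname{SL}(2,q)$ character values directly rather than appeal to the projection to $T$.
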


\begin{theorem}\thlabel{StructureofPSL249SolvableGraphs}
    Let $G$ be a strictly $T$-solvable group. Then $\pgc(G)$ satisfies both of the following:
    \begin{enumerate}
        \item Either $\{2, a,c, d\}$ is a union of connected components, or $\{a, c, d\}$ is a union of connected components.
        \item $\pgc(G) \setminus \{a, c, d\}$ is triangle-free and 3-colorable.
    \end{enumerate}
\end{theorem}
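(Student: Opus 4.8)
The plan is to follow the template of the proof of \thref{StructureofPSL16SolvableGraphs}, with the prime $2$ now playing the role of the single ``gateway'' vertex of $\pi(T)$, and with \thref{noworry2} supplying the structural dichotomy that the non-Frobenius hypothesis on the Sylow $2$-subgroup provided there. I begin with two preliminary reductions. First, by \thref{lemma212inPaper} (applicable since $\pi(T)=\pi(\Aut(T))$), $G$ has a subgroup $K_0\cong N.T$ with $N$ solvable and $\pi(K_0)=\pi(G)$; throughout I will use that for any $H\le G$, $\pgc(G)$ restricted to $\pi(H)$ is a subgraph of $\pgc(H)$. Second, since the Schur multiplier of $T$ is $2$, the only perfect central extensions of $T$ are $T$ and $2.T$; the primes $a,c,d$ are odd, hence coprime to the Schur multiplier, and by \thref{psl249fpinfo1} every complex irreducible representation of $T$ and of $2.T$ has, for each $r\in\{a,c,d\}$, an element of order $r$ with a nonzero fixed vector. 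Hence \thref{corollaryLemma} gives $r-p\notin\pgc(G)$ for all $r\in\{a,c,d\}$ and all $p\in\pi(G)\setminus\pi(T)$; I will refer to this as $(\star)$.

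Now \thref{noworry2} applies, and I would split according to whether $2-p\in\pgc(G)$ for some $p\in\pi(G)\setminus\pi(T)$. \emph{Case A: there is no such edge.} Then by $(\star)$ no edge of $\pgc(G)$ leaves $\pi(T)=\{2,a,c,d\}$, so $\{2,a,c,d\}$ is a union of connected components, which gives part (1). For part (2), the vertex $2$ has all of its neighbors inside $\{a,c,d\}$, so deleting $\{a,c,d\}$ leaves it isolated; thus $\pgc(G)\setminus\{a,c,d\}$ is the disjoint union of an isolated vertex with $\pgc(G)[\pi(G)\setminus\pi(T)]$, and the latter is a subgraph of $\pgc(N)$ (since $\pi(G)\setminus\pi(T)\subseteq\pi(N)$ and $N\trianglelefteq K_0\le G$), hence triangle-free and $3$-colorable by \thref{solvclass}. \emph{Case B: such an edge exists.} Then \thref{noworry2} gives a subgroup $K\le G$ with $K\cong L.2.T$ for $L$ solvable of odd order and $\pi(K)=\pi(G)$, and also $2-q\notin\pgc(G)$ for all $q\in\pi(T)$. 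Hence every edge of $\pgc(G)$ incident to $a$, $c$, or $d$ stays within $\{a,c,d\}$ (none reaches $2$ by the latter fact, none reaches $\pi(G)\setminus\pi(T)$ by $(\star)$), so $\{a,c,d\}$ is a union of connected components, giving part (1). For part (2), I would take $Z\le K$ to be the preimage, under $K\to K/L\cong 2.T$, of the center of $2.T$ (which has order $2$); then $Z\trianglelefteq K$ is an extension of the solvable group $L$ by $C_2$, hence solvable, $\pi(Z)=\{2\}\cup\pi(L)$, and $K/Z\cong T$. Since the vertex set $\{2\}\cup(\pi(G)\setminus\pi(T))$ of $\pgc(G)\setminus\{a,c,d\}$ is contained in $\pi(Z)$ and $Z\le K\le G$, we get that $\pgc(G)\setminus\{a,c,d\}$ is a subgraph of $\pgc(Z)$, which is triangle-free and $3$-colorable by \thref{solvclass}.

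The only step needing an idea rather than bookkeeping is part (2) in Case B: the prime $2$ cannot be separated from $\pi(T)$ by character theory here, but it \emph{can} be absorbed into a solvable \emph{normal} subgroup $Z$ of the subgroup $K\cong L.2.T$ by passing to the preimage of the central involution of $2.T$, after which the solvable classification finishes the argument. Everything else is routine: checking the hypotheses of \thref{corollaryLemma} against the data of \thref{psl249fpinfo1}, invoking the dichotomy of \thref{noworry2}, and using that $\pgc$ only gains edges under passing to subgroups. Note that the third hypothesis on $T$ (the existence of a group $R$ with $\operatorname{Inn}(T)\le R\le\Aut(T)$ whose prime graph complement realizes a particular four-vertex graph) is not used in proving this theorem; it is needed only for the subsequent full classification of realizable graphs.
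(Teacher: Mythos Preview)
Your proof is correct. Part (1) matches the paper's argument essentially verbatim: apply \thref{corollaryLemma} to rule out edges from $a,c,d$ to primes outside $\pi(T)$, then invoke \thref{noworry2} for the dichotomy on the vertex $2$.

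For part (2), however, the paper's argument is shorter and does not split into cases. It simply takes the subgroup $K\cong N.T$ from \thref{lemma212inPaper} and then the subgroup $J\le K$ with $J/N$ a Sylow $2$-subgroup of $T$; this $J$ is solvable, has $\{2\}\cup(\pi(G)\setminus\pi(T))\subseteq\pi(J)$, and so $\pgc(G)\setminus\{a,c,d\}$ embeds in $\pgc(J)$, which is triangle-free and $3$-colorable by \thref{solvclass}. This single move handles both of your Cases A and B at once. Your Case B idea of taking the preimage $Z$ of the central involution in $2.T$ is a nice observation, but it is strictly more work than the Sylow preimage argument, and your Case A detour through ``$2$ becomes isolated plus $\pgc(N)$'' is likewise subsumed by the same uniform step.
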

\begin{proof}
    We prove (1) first. By \cite[Lemma 2.1.2 and Corollary 2.2.6]{2023REU} and Criterion 2 for $T$, we have that there are no edges of the form $r-p$ for $r \in \pi(T) \setminus \{2\}$ and $p \in \pi(G) \setminus \pi(T)$. If there is no edge $2-p$ for $p \in \pi(G) \setminus \pi(T)$, then $\{2, a, c, d\}$ is a union of connected components, and we are done. So assume such an edge exists. By \thref{isolate2psl211}, $2-q \notin \pgc(G)$ for all $q \in \{a, c, d\}$, meaning $\{a, c, d\}$ is a union of connected components.

    For (2), by \thref{lemma212inPaper}, there exists a subgroup $K \leq G$ and a solvable group $N$ with $K \cong N.T$ such that $\pi(K) = \pi(G)$. There exists a $J \leq K$ such that $J/N$ is isomorphic to a Sylow $2$-subgroup of $T$. Then, $J$ is solvable, so $\pgc(J)$ is triangle-free and 3-colorable by \cite{2015REU}. Thus, (2) is satisfied.

    

\end{proof}


We say that a graph $\Xi$ on four vertices is realized by a $T$-solvable group if there exists a group $G$ with $\pi(G) = \pi(T)$ such that $\pgc(G)$ is isomorphic to $\Xi$.
\begin{proposition}\thlabel{factPSL249}
    All subgraphs of \hamSandwich are realizable by a $T$-solvable group.
\end{proposition}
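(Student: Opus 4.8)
The plan is to enumerate, up to isomorphism, the four-vertex subgraphs of \hamSandwich, which equals $\pgc(T)$, and to exhibit for each one a $T$-solvable group with prime set exactly $\pi(T)$ that realizes it. Label the vertices of $\pgc(T)$ so that $a,c$ are the two vertices of degree $3$ and $2,d$ are the nonadjacent pair; thus \hamSandwich is $K_4$ with the edge $2$–$d$ deleted, and its only triangles are $\{2,a,c\}$ and $\{a,c,d\}$. A short inspection of the (finitely many) $4$-edge and $3$-edge subgraphs shows that every four-vertex subgraph of \hamSandwich is either triangle-free --- and then automatically $3$-colorable, since a triangle-free graph on four vertices has no odd cycle --- or else isomorphic to one of: $K_3$ together with one isolated vertex; the paw \SuzukiThirtyTwoElusiveGraph (a triangle with one pendant edge); or \hamSandwich itself.

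For the triangle-free subgraphs I would apply \thref{solvclass}: each such graph is $3$-colorable and triangle-free, hence is the prime graph complement of a solvable group, and the construction behind that theorem (\cite{2015REU}) allows the four vertex-primes to be prescribed to be the primes of $\pi(T)$ --- the only divisibility conditions that arise there, of the shape ``a cyclic $p$-group acts Frobeniusly on a $q$-group,'' can always be met by enlarging the relevant elementary abelian $q$-group. Since solvable groups are $T$-solvable, this disposes of the triangle-free case.

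For \hamSandwich itself take $G=T$, which is strictly $T$-solvable with $\pi(G)=\pi(T)$ and $\pgc(T)=$ \hamSandwich by hypothesis. For the paw take $G=R$ as granted by Criterion 3, so that $\pgc(R)$ is the paw; here $R$ has $\operatorname{Inn}(T)\cong T$ as a composition factor and $R/\operatorname{Inn}(T)\le\operatorname{Out}(T)$ is solvable (Schreier's conjecture), so $R$ is $T$-solvable, and $\pi(T)=\pi(\operatorname{Inn}(T))\subseteq\pi(R)\subseteq\pi(\Aut(T))=\pi(T)$ forces $\pi(R)=\pi(T)$. For $K_3$ plus an isolated vertex take $G=2.T$, the perfect central extension provided by Criterion 1; it is strictly $T$-solvable with $\pi(2.T)=\pi(T)$. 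The nontrivial central element of $2.T$ has order $2$ and commutes with everything, so in $\pg(2.T)$ the vertex $2$ is joined to each of $a,c,d$. On the other hand, since the central kernel has order $2$, cyclic subgroups of odd order correspond bijectively and order-preservingly under $2.T\to T$ (by coprimality and Schur--Zassenhaus on the preimage), so $\pg(2.T)$ and $\pg(T)$ induce the same subgraph on $\{a,c,d\}$, and that subgraph is edgeless because $\pgc(T)[\{a,c,d\}]$ is a triangle. Hence $\pg(2.T)$ is the star centered at $2$, i.e. $\pgc(2.T)$ is the triangle on $\{a,c,d\}$ together with the isolated vertex $2$, as needed.

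The one step that genuinely carries the load is the $2.T$ computation: it requires combining the central-involution remark with the observation that passing to the double cover leaves the odd part of the prime graph unchanged. Everything else is bookkeeping --- the finite case split of the subgraphs of \hamSandwich, and the checks that $T$, $R$, $2.T$, and the solvable groups are $T$-solvable with prime set exactly $\pi(T)$. The only other place a reader might want more detail is the assertion that the \cite{2015REU} realization can be arranged to use precisely the four primes of $\pi(T)$; the elementary abelian remark above is what makes this hold, and I would spell it out if needed.
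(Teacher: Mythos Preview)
Your proof is correct and follows essentially the same approach as the paper: split into the triangle-free subgraphs (handled by solvable groups via \cite{2015REU}), $\hamSandwich$ realized by $T$, the paw realized by the $R\leq\Aut(T)$ from Criterion~3, and the triangle plus isolated vertex realized by $2.T$. You supply more detail than the paper does---in particular, your verification that $\pgc(2.T)$ is exactly the triangle on $\{a,c,d\}$ with $2$ isolated, and your remark that the \cite{2015REU} construction can be arranged on the specific primes of $\pi(T)$---while the paper simply asserts these (and additionally notes $C_d\times T$ as an alternative for the last case).
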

\begin{proof}
    Any triangle free and 3-colorable graph can be realized by some solvable group via methods in \cite{2015REU}. There are three remaining graphs to check:
    \begin{itemize}
        \item Any triangle free and 3-colorable graph can be realized via some solvable group by methods in \cite{2015REU}.
        \item The graph \SuzukiThirtyTwoElusiveGraph is realized by the subgroup of $\Aut(T)$ guaranteed by Criterion 3.
        \item The graph \hamSandwich is realized by $T$.
        \item The graph \trianglePlusIsolated can be realized via the central extension $2.T$, or by $C_d\times T$.
    \end{itemize}
\end{proof}

\begin{theorem}\thlabel{generalThm}
Given a graph $\Xi$, we have that $\Xi$ is isomorphic to the prime graph complement of some $T$-solvable group if and only if one of the following is true: 
\begin{enumerate}
    \item $\Xi$ is triangle-free and 3-colorable.
    \item $\Xi$ is 3-colorable and contains a subset $X = \{w, x, y, z\} \subseteq V(\Xi)$ with $\Xi[X] \cong \notIsolatedVertexWhiteBad$ where $w$ is the white vertex; for all $p\in \pi(G)\setminus \pi(T)$ such that $p-q\in\pgc(G)$ and $q\in\pi(T)$, we have that $q = w$; and that in some 3-coloring of $\Xi$ the closed neighborhood $N(X) \setminus X$ shares one color. Also, the only triangle of $\Xi$ is $\Xi[x, y, z]$. 
    \item $\Xi$ is 3-colorable and contains a subset $X = \{w, x, y, z\} \subseteq V(\Xi)$ such that $N(X) = X$, and $\Xi[X]\cong\trianglePlusIsolated$, $\Xi[X]\cong\SuzukiThirtyTwoElusiveGraph$, or $\Xi[X]\cong\hamSandwich$. Furthermore, all triangles are contained in $\Xi[X]$.
\end{enumerate}
\end{theorem}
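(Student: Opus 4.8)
The plan is to prove the two directions of the biconditional separately, modeled on the proof of \thref{generalClassificationPSL216} but with $\pgc(T)$ equal to the graph \hamSandwich\ and with the central involution of $2.T$ (which exists by Criterion 1) playing the organizing role. For the forward direction, let $G$ be $T$-solvable with $\pgc(G)\cong\Xi$. If $G$ is solvable, $\Xi$ is triangle-free and $3$-colorable by \thref{solvclass}, so (1) holds. Assume instead $G$ is strictly $T$-solvable. By \thref{StructureofPSL249SolvableGraphs}, one of $\{a,c,d\}$, $\{2,a,c,d\}$ is a union of connected components of $\pgc(G)$, and $\pgc(G)\setminus\{a,c,d\}$ is triangle-free and $3$-colorable; also $\pgc(G)[\pi(T)]$ is a subgraph of $\pgc(T)=$ \hamSandwich. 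In either case $\{a,c,d\}$ sends no edge outside $\pi(T)$, and every triangle of $\pgc(G)$ meets $\{a,c,d\}$, so every triangle of $\pgc(G)$ lies inside $\pi(T)$; combining these facts, $\pgc(G)$ is $3$-colorable. If $\pgc(G)[\pi(T)]$ is triangle-free, then $\pgc(G)$ is triangle-free, so (1) holds.

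Otherwise $\pgc(G)[\pi(T)]$ contains a triangle, and as a triangle-containing subgraph of \hamSandwich\ it is isomorphic to one of \trianglePlusIsolated, \SuzukiThirtyTwoElusiveGraph, or \hamSandwich. If $\{2,a,c,d\}$ is a union of components, set $X=\pi(T)$: then $N(X)=X$ and all triangles lie in $\Xi[X]$, so (3) holds. If $\{2,a,c,d\}$ is not a union of components, then (since one of the two sets is) $\{a,c,d\}$ is a union of components and $2$ has a neighbor outside $\pi(T)$; hence $2$ is isolated inside $\pi(T)$, the triangle forces $\pgc(G)[\{a,c,d\}]$ to be complete, and $\pgc(G)[\pi(T)]$ is isomorphic to \notIsolatedVertexWhiteBad\ with $2$ the isolated (white) vertex. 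Put $X=\pi(T)$, $w=2$: every edge from $X$ to the outside emanates from $w$ (as $\{a,c,d\}$ is a union of components), the only triangle of $\Xi$ is $\Xi[\{a,c,d\}]$ by the above, and a $3$-coloring in which $N(X)\setminus X$ is monochromatic is furnished by \cite[Lemma 2.3.5]{2023REU} applied at the vertex $2$; so (2) holds.

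For the converse, (1) is realized by a solvable group via \thref{solvclass}. For (2), apply \thref{NewPGCFromOld} with $F=E=2.T$ and $L=1$. One checks that $\pgc(2.T)$ is isomorphic to \trianglePlusIsolated\ with the prime $2$ isolated: the central involution of $2.T$ is joined to $a,c,d$ in the prime graph, while odd-order elements of $2.T$ project isomorphically onto $T$ so that no edge among $a,c,d$ is created; hence the hypothesized rooted isomorphism $\varphi\colon\Xi[X]\to\pgc(2.T)$ sends $w$ to $2$. For each $v\in\Xi\setminus X$, which by hypothesis is adjacent within $X$ only (possibly) to $w$, take the irreducible representation of $2.T$ with fixed-point set $\{a,c,d\}$ if $v\sim w$ and the one with fixed-point set $\{2,a,c,d\}$ otherwise; both are supplied by \thref{psl249fpinfo1}, and they realize the edges to $X$ as required. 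Triangle-freeness of $\Xi\setminus X$ and the needed $\{\mathcal{O},\mathcal{D},\mathcal{I}\}$-coloring with $N(X)\setminus X$ monochromatic are exactly the remaining hypotheses of (2), and the clause $N[\pi(L)]\subseteq\pi(T)$ is vacuous since $L=1$; so \thref{NewPGCFromOld} produces a $T$-solvable $G=2.T\ltimes N$ with $\pgc(G)\cong\Xi$. For (3), since $N(X)=X$ the graph $\Xi$ is the disjoint union of $\Xi[X]$ and $\Xi[V(\Xi)\setminus X]$; the latter is triangle-free and $3$-colorable, hence $\pgc(H_1)$ for some solvable $H_1$ whose prime set avoids $\pi(T)$ by \thref{solvclass}, while $\Xi[X]$ is $\pgc(H_2)$ for a $T$-solvable $\pi(T)$-group $H_2$ — namely $T$, the group $R$ from Criterion 3, or $2.T$ (equivalently $C_d\times T$) according to the isomorphism type of $\Xi[X]$ — by \thref{factPSL249}; then $G=H_1\times H_2$ is $T$-solvable with $\pgc(G)=\pgc(H_1)\sqcup\pgc(H_2)\cong\Xi$.

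The step I expect to be the main obstacle is the backward direction of case (2): it hinges on recognizing $2.T$ as the right base group $E$ in \thref{NewPGCFromOld} and verifying that $\pgc(2.T)$ is precisely \trianglePlusIsolated\ with $2$ (not $d$) isolated — a fact not among Criteria 1--3 but forced by the presence of the central involution — and on matching the two rows of \thref{psl249fpinfo1} to the two kinds of vertices outside $X$; the alternative choice $E=C_d\times T$ does not work here, since $T$ itself has only the single irreducible representation whose fixed-point set is all of $\{2,a,c,d\}$. A lesser technical point is the monochromatic-coloring clause in the forward direction, which relies on the Frobenius-digraph analysis behind \cite[Lemma 2.3.5]{2023REU}.
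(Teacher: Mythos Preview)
Your proposal is correct and follows essentially the same approach as the paper's proof. The only differences are organizational: you argue the forward direction via the component dichotomy of \thref{StructureofPSL249SolvableGraphs} directly, whereas the paper splits on whether some edge $2-p$ with $p\notin\pi(T)$ exists (an equivalent case split); and for the backward direction of (2) you invoke \thref{NewPGCFromOld} with $L=1$, while the paper applies \thref{236Improved} directly to $F=2.T$---but with $L=1$ these are the same construction.
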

\begin{proof}
We will first prove the forwards direction. Let $G$ be a $T$-solvable group. If $G$ is not strictly $T$-solvable, $G$ must be solvable. Then, $\pgc(G)$ satisfies (1) by \cite{2015REU} so consider the case where $G$ is strictly $T$-solvable. If there exists an edge in $\pgc(T)$, then by \cite[Lemma 2.1.1]{2023REU}, there exists a solvable normal subgroup $N\unlhd G$ such that, for some $S$ with $\operatorname{Inn}(T)\leq S\leq \Aut(T)$, $G\cong N.S$. We now consider two cases: The case in which there is some $p\in\pi(G)\setminus\pi(T)$ such that $2-p\in\pgc(G)$ and the case where no such $p$ exists.

We first consider the case where there is some $p\in\pi(G)\setminus\pi(T)$ such that $2-p\in\pgc(G)$. By \thref{noworry2}, $\pgc(G)[\pi(T))]\subseteq\notIsolatedVertexWhiteBad$ where $2$ is the white vertex. We can see that $\pgc(G)[\pi(T)]$ is 3-colorable because $\pgc(G)[\pi(T)]\subseteq\hamSandwich$, which is 3-colorable. By \cite[Lemma 2.3.5]{2023REU}, $\pgc(G)$ is 3-colorable and all triangles are contained in $\pgc(G)[\pi(T)]$. As such, if there is no triangle in $\pgc(G)[\pi(T)]$, $\pgc(G)$ satisfies (1).


If there is a triangle, label the vertices $d, c, a$ with $x, y, z$; label $2$ with $w$; and let $X = \{w, x, y, z\}$. Because $\pgc(G)[\pi(T)]$ is isomorphic to a subgraph of $\notIsolatedVertexWhiteBad$ and there exists a triangle in $\pgc(G)[\pi(T)]$, we have $\pgc(G)[\pi(T)]\cong\notIsolatedVertexWhiteBad$. By \thref{StructureofPSL249SolvableGraphs} (1), $x-p\in\pgc(G), p\in\pi(T)$ implies $p = w$. Let $R\leq S$ such that $R/N$ is a Sylow $2$-subgroup of $S$. Then $R$ is solvable and $\pgc(R)$ is 3-colorable by \cite{2015REU}, so $\pgc(G)[\pi(N)\cup\{b\}]$ is 3-colorable. Thus, by the fact that $\pgc(G)[\pi(T)]\cong\notIsolatedVertexWhiteBad$, $\pgc(G)[\pi(T)]$ is 3-colorable. Then, by \cite[Lemma 2.3.5]{2023REU} and \thref{StructureofPSL249SolvableGraphs}, we have that there is a three-coloring of $\pgc(G)$ such that all vertices of $N(X) \setminus X$ share one color, and all triangles are contained in $\pgc(G)[\pi(T)]$. Thus, $\pgc(G)\setminus X$ is triangle free, so (2) is satisfied.

We next consider the case where there is no $p\in\pi(G)\setminus\pi(T)$ such that $2-p\in\pgc(G)$. By the proof of \thref{StructureofPSL249SolvableGraphs}, $\{2, a, c, d\}$ is the union of connected components. In particular, we have $N(\pi(T)) = \pi(T)$. By \cite[Lemma 2.3.5]{2023REU} and the fact that $\pgc(G)[\pi(T)]\subseteq\pi(T)$, which is a 3-colorable graph, $\pgc(G)$ satisfies (1) if $\pgc(G)[\pi(T)]$ contains no triangles so we may assume that $\pgc(G)[\pi(T)]$ contains a triangle. Let $X = \pi(T)$. By \thref{factPSL249}, we have $\pgc(G)[\pi(T)]$ is isomorphic to one of $\trianglePlusIsolated, \SuzukiThirtyTwoElusiveGraph$, or $\hamSandwich$. $\pgc(G)[\pi(T)]$ is 3-colorable, so by \cite[Lemma 2.3.5]{2023REU}, $\pgc(G)$ is 3-colorable and all triangles are contained in $\pgc(G)[\pi(T)]$. By the proof of \thref{StructureofPSL249SolvableGraphs}, $N(X) = X$. Thus, $\pgc(G)$ satisfies (3). 


We will now prove the backwards direction. To do this, we will split into cases. Let $\Xi$ be a graph that satisfies one of the criteria listed in the statement. If the graph $\Xi$ satisfies (1), then there exists a solvable group $G$ such that $\pgc(G)\cong\Xi$ by \cite{2015REU}. 

Now suppose that $\Xi$ satisfies (2). Color $\Xi$ with the colors $\mathcal{O}, \mathcal{I}, \mathcal{D}$ such that all of $N(X)\setminus X$ is colored $\mathcal{I}$, $w$ is colored $\mathcal{O}$, $x$ is colored $\mathcal{O}$, $y$ is colored $\mathcal{I}$, and $z$ is colored $\mathcal{D}$. Assign $w$ to $2, x$ to $d$, $y$ to $a$, and $z$ to $c$ to obtain a graph isomorphism from $\pgc(C_2.T)$ to $\Xi[X]$ (from \thref{factPSL249}). We know that there are irreducible representations of $C_2.T$ such that elements of orders $a, c,$ and $d$ have fixed points while elements of order $2$ have no fixed points on the representation by \thref{psl249fpinfo1}. Then, by \thref{236Improved}, $\Xi$ is the prime graph complement of a $T$-solvable group.


Finally, if the graph $\Xi$ satisfies (3), there exists a $T$-solvable group $G_0$ such that $\pgc(G_0)\cong \Xi[X]$ by \thref{factPSL249}. By the methods in \cite[Theorem 2.8]{2015REU}, there exists a solvable group $N$ such that $(|G_0|, |N|) = 1$ and $\pgc(N)\cong\Xi\setminus X$. Then, define $G = G_0\times N$. We have $\pgc(G)\cong \Xi$.
\end{proof}
\subsection{$\PSL(2, 3^3)$}\label{sec:PSL227New}
\begin{figure}[H]
    \centering
    \begin{minipage}{.5\textwidth}
        \centering
        \begin{tikzpicture}
            \coordinate (A) at (0, 1);   
            \coordinate (B) at (1, 1);   
            \coordinate (C) at (1, 0);   
            \coordinate (D) at (0, 0);   
            \draw (B) -- (C);
            \draw (C) -- (D);
            \draw (A) -- (D);
            \draw (A) -- (B);
            \draw (A) -- (C);
            \foreach \point in {A, B, C, D}
                \draw[draw, fill=white] (\point) circle [radius=0.2cm];
            \foreach \point/\name in {(A)/3, (B)/2, (C)/13, (D)/7}
                \node at \point {\scriptsize\name};
        \end{tikzpicture}
        \caption*{$\pgc(\PSL(2,3^3))$}
    \end{minipage}%
    \begin{minipage}{.5\textwidth}
        \centering
        \begin{tikzpicture}
            \coordinate (A) at (0, 1);   
            \coordinate (B) at (1, 1);   
            \coordinate (C) at (1, 0);   
            \coordinate (D) at (0, 0);   
            \draw (C) -- (D);
            \draw (A) -- (D);
            \draw (A) -- (C);
            \foreach \point in {A, B, C, D}
                \draw[draw, fill=white] (\point) circle [radius=0.2cm];
            \foreach \point/\name in {(A)/3, (B)/2, (C)/13, (D)/7}
                \node at \point {\scriptsize\name};
        \end{tikzpicture}
        \caption*{$\pgc(\Aut(\PSL(2,3^3)))$}
    \end{minipage}%
\end{figure}
We classify the prime graph complements of $\PSL(2, 3^3)$-solvable groups in this section. The order of the group $\PSL(2, 3^3)$ is $9828 = 2^2 \cdot3^3\cdot7\cdot13$ and its automorphism group has order $58968 = 2^3 \cdot3^4\cdot7\cdot13$ by \cite{GAP}. Let $a = 3, c = 7, d = 13$. Notice $\pi\left(\PSL(2, 3^3)\right) = \pi\left(\Aut\left(\PSL(2, 3^3)\right)\right)$. The Schur multiplier of $\PSL(2, 3^3)$ is 2. The fixed point information for $\PSL(2, 3^3)$ matches the table in \thref{psl227fpinfo} listed below.

\begin{fact}\thlabel{psl227fpinfo}
    \begin{tabular}[h]{|c|c|}
    \hline
        Fixed Point Information &  \\
        \hline
        $\PSL(2,3^3)$ & $[ [ 2, 3, 7, 13 ] ]$ \\
        \hline
        $2.\PSL(2,3^3)$ & $[ [ 2, 3, 7, 13 ], [ 3, 7, 13 ] ]$ \\
        \hline
    \end{tabular}\\
    The list refers to all element orders where elements have fixed points in some representation. Then the multiple lists refers to the fact that different irreducible representations have different fixed points.
\end{fact}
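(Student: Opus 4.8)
The plan is to verify the table directly from ordinary character tables. Since the Schur multiplier of $T=\PSL(2,3^3)$ is $2$ (and $q=27\neq 9$), the only perfect central extensions of $T$ are $T$ itself and its covering group $\operatorname{SL}(2,3^3)=2.T$, which accounts for the two rows. The governing identity is that, for a group $G$, a complex irreducible character $\chi$ of $G$, and an element $g\in G$ of prime order $p$, the representation affording $\chi$ has a nonzero $g$-fixed vector if and only if $\frac1p\sum_{x\in\langle g\rangle}\chi(x)=\langle \chi|_{\langle g\rangle},1\rangle>0$; as this inner product is a nonnegative integer, positivity is the same as nonvanishing. So the whole statement reduces to evaluating a short list of class-sum averages from the (classical) character tables of $\PSL(2,3^3)$ and $\operatorname{SL}(2,3^3)$. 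First I would record these two tables, grouped into the usual families (trivial, Steinberg, principal series, discrete series, and for $\operatorname{SL}(2,3^3)$ the faithful characters on which the central involution acts by $-1$), and then run through the four primes $2,3,7,13$; this simultaneously confirms Criterion 2 of Section \ref{sec:General} for this $T$.

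For $p=7$ and $p=13$ the Sylow subgroups of either group are cyclic of prime order, since $7\mid q+1$ and $13\mid q-1$ with $q=27$; hence \thref{ConjugacyClassLanding} tells us how the order-$p$ elements of a cyclic Sylow subgroup distribute among the $G$-classes, and together with the values of the principal- and discrete-series characters on semisimple elements (sums of roots of unity) one checks that in every irreducible of $\PSL(2,3^3)$, and in every irreducible of $\operatorname{SL}(2,3^3)$, some element of order $7$ and some element of order $13$ has a fixed vector. For $p=2$ each group has a single class of order-$2$ elements -- in $\operatorname{SL}(2,3^3)$ this is the central involution $-I$ -- so one only needs $\chi(1)+\chi(g)>0$: this always holds in $\PSL(2,3^3)$, and it holds in $\operatorname{SL}(2,3^3)$ exactly on the characters inflated from $T$, failing on the faithful ones where $-I$ acts by $-1$ and fixes nothing. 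That is precisely why the $2.T$ row contains a second list omitting $2$. For $p=3$ the Sylow $3$-subgroup is elementary abelian of order $27$, not cyclic, so \thref{ConjugacyClassLanding} does not apply; instead one uses that each group has only two classes of order-$3$ elements (the nontrivial unipotents) and checks the small unipotent character values by hand (for instance $\frac13(\operatorname{St}(1)+\operatorname{St}(u)+\operatorname{St}(u^2))=q/3=9>0$ for the Steinberg character, which vanishes on non-semisimple elements), concluding that $3$ occurs in every list.

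Putting the cases together: for $T$ every complex irreducible admits fixed vectors for all of $2,3,7,13$, so its only list is $[2,3,7,13]$; for $2.T$ the characters inflated from $T$ give $[2,3,7,13]$ while the faithful ones give $[3,7,13]$, which is the claimed table. The main obstacle is the bookkeeping for the faithful characters of $\operatorname{SL}(2,3^3)$: one must verify that for each of them the order-$3$, order-$7$ and order-$13$ class-sum averages are \emph{strictly} positive rather than merely nonnegative. For orders $7$ and $13$ this is clean via \thref{ConjugacyClassLanding} and root-of-unity sums, but for order $3$ one has to treat the two unipotent classes of $\operatorname{SL}(2,3^3)$ directly, using the explicit Gauss-sum-type values of the cuspidal characters. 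Alternatively, and as the paper does for all such data, one simply carries out this finite computation on the library character tables of $\PSL(2,3^3)$ and $\operatorname{SL}(2,3^3)$ in \cite{GAP}, which settles \thref{psl227fpinfo} at once.
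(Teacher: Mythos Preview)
Your proposal is correct, and it is considerably more detailed than what the paper actually does. In the paper this statement is a \emph{Fact} with no proof at all: the fixed point data are simply recorded, and (as for the parallel Facts \ref{psl16fpinfoactual}, \ref{psl211fpinfo}, etc.) the implicit justification is a direct GAP computation on the library character tables, as described in Appendix~\ref{AppendixB}. You instead outline a by-hand verification from the generic character tables of $\PSL(2,q)$ and $\operatorname{SL}(2,q)$, treating each prime $2,3,7,13$ separately, invoking \thref{ConjugacyClassLanding} for the cyclic Sylow cases $p=7,13$, the centrality of $-I$ in $\operatorname{SL}(2,27)$ for $p=2$, and explicit unipotent values for $p=3$. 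This buys conceptual transparency and independence from software, and it is exactly the style of argument the paper later uses for the generic family in Section~\ref{sec:GeneralPSL}; the paper's GAP route buys brevity and avoids the bookkeeping you flag as the main obstacle (the Gauss-sum values of the faithful cuspidal characters on unipotents). You yourself note at the end that the GAP computation is an alternative, which is precisely what the paper relies on.
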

Notice $\pgc\left(\operatorname{PGL}(2, 3^3)\right)$ realizes $\SuzukiThirtyTwoElusiveGraph$. By \cite[Page 48]{FSGBook}, $\operatorname{PGL}(2, 3^3)$ is isomorphic to a $\PSL(2, 3^3)$-solvable subgroup of $\Aut\left(\PSL(2, 3^3)\right)$.
Thus, by the above facts, $\PSL(2, 3^3)$ satisfies the criteria given in Section \ref{sec:General}. Then, \thref{generalThm} with $T = \PSL(2, 3^3)$ completely classifies the prime graph complements of $\PSL(2, 3^3)$-solvable groups.
\subsection{$\PSL(2, 7^2)$}\label{sec:PSL249new}
\begin{figure}[H]
    \centering
    \begin{minipage}{.5\textwidth}
        \centering
        \begin{tikzpicture}
            \coordinate (A) at (0, 1);   
            \coordinate (B) at (1, 1);   
            \coordinate (C) at (1, 0);   
            \coordinate (D) at (0, 0);   
            \draw (B) -- (C);
            \draw (C) -- (D);
            \draw (A) -- (D);
            \draw (A) -- (B);
            \draw (A) -- (C);
            \foreach \point in {A, B, C, D}
                \draw[draw, fill=white] (\point) circle [radius=0.2cm];
            \foreach \point/\name in {(A)/7, (B)/2, (C)/5, (D)/3}
                \node at \point {\scriptsize\name};
        \end{tikzpicture}
        \caption*{$\pgc(\PSL(2,7^2))$}
    \end{minipage}%
    \begin{minipage}{.5\textwidth}
        \centering
        \begin{tikzpicture}
            \coordinate (A) at (0, 1);   
            \coordinate (B) at (1, 1);   
            \coordinate (C) at (1, 0);   
            \coordinate (D) at (0, 0);
            \draw (C) -- (D);
            \draw (A) -- (D);
            \draw (A) -- (C);
            \foreach \point in {A, B, C, D}
                \draw[draw, fill=white] (\point) circle [radius=0.2cm];
            \foreach \point/\name in {(A)/7, (B)/2, (C)/5, (D)/3}
                \node at \point {\scriptsize\name};
        \end{tikzpicture}
        \caption*{$\pgc(\Aut(\PSL(2,7^2)))$}
    \end{minipage}
\end{figure}

In this section, we classify the prime graphs of $\PSL(2, 7^2)$-solvable groups. The group $\PSL(2, 7^2)$ has order $58800 = 2^4 \cdot3\cdot5^2\cdot 7^2$ and its automorphism group has order $235200 = 2^6\cdot3\cdot5^2\cdot7^2$ by \cite{GAP}. Assign $a = 7, c = 5$, and $d = 3$. The Schur multiplier of $\PSL(2,7^2)$ is 2, and the fixed point information of $\PSL(2, 7^2)$ is given in \thref{psl249fpinfosubsection}.
\begin{fact}\thlabel{psl249fpinfosubsection}
    \begin{tabular}[h]{|c|c|}
    \hline
        Fixed Point Information &  \\
        \hline
        $\PSL(2,7^2)$ & $[ [2,3,5,7]]$ \\
        \hline
        $2.\PSL(2,7^2)$ & $[[ 2, 3, 5, 7 ], [ 3, 5, 7 ] ]$ \\
        \hline
    \end{tabular}\\
    The first column of the table above contains all perfect central extensions of $\PSL(2, 7^2)$. For each group $G$ in the first column, the second column contains a list of lists $L$. A list $\ell$ appears in $L$ if and only if there exists a representation $G \to \Aut(\C^n)$ for which $\ell$ is the set of primes $p$ for which there exists an element of order $p$ in $G$ which fixes some vector in $\C^n \setminus \{0\}$.
\end{fact}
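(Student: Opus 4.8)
The plan is to verify the table directly from the (classical, generic) character table of $\operatorname{SL}(2,q)$ specialized to $q = 49$. Since the Schur multiplier of $\PSL(2,7^2)$ is $2$, the perfect central extensions are exactly $\PSL(2,7^2)$ and $2.\PSL(2,7^2) \cong \operatorname{SL}(2,7^2)$, so these are the only two groups appearing in the first column. For a cyclic subgroup $\langle g\rangle$ of prime order $p$, the dimension of the fixed space of $g$ in a representation with character $\chi$ is $\frac{1}{p}\sum_{i=0}^{p-1}\chi(g^i)$, so the entire problem reduces to deciding, for each irreducible $\chi$ and each $p \in \{2,3,5,7\}$, whether this averaged character sum is positive for some element $g$ of order $p$. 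First I would record the relevant element structure of $\operatorname{SL}(2,49)$: writing $q-1 = 48 = 2^4\cdot 3$, $q+1 = 50 = 2\cdot 5^2$, and $q = 7^2$, the elements of order $2$ and $3$ lie in the split torus, those of order $5$ in the nonsplit torus, and those of order $7$ are unipotent (the defining characteristic).

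Next I would split the irreducible characters of $\operatorname{SL}(2,49)$ into the standard families — trivial, Steinberg (degree $q$), principal series (degree $q+1$), discrete series (degree $q-1$), and the four exceptional characters of degrees $(q\pm 1)/2$ — and sort them by central character: those on which $-I$ acts trivially descend to $\PSL(2,49)$, while the rest are faithful. For the large-degree families the character values on nontrivial semisimple classes are bounded by $2$ and on unipotent classes by $1$, so $\frac{1}{p}\big(\chi(1) + \sum_{i=1}^{p-1}\chi(g^i)\big)$ is manifestly positive for every odd $p$; indeed the explicit principal/discrete series formulas give fixed dimension $\tfrac{q-1}{p}$ or $\tfrac{q-1}{p}+2$ on a split-torus element of order $p\mid q-1$, and the analogous values on the nonsplit torus for $p=5$, all positive. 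The distribution of the powers $g,\dots,g^{p-1}$ among conjugacy classes needed for these sums is immediate from the torus/Weyl structure, and in the case $p=3$ (where the Sylow $3$-subgroup is cyclic of order $3$) is given directly by \thref{ConjugacyClassLanding}.

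The decisive point for the prime $2$ is that in any faithful representation the unique involution $-I$ acts as the scalar $-1$, so its fixed space has dimension $\frac{1}{2}(\chi(1) - \chi(1)) = 0$; since $-I$ is the only element of order $2$ in $\operatorname{SL}(2,q)$ for odd $q$, the prime $2$ is forced out of the list for every faithful character, which is precisely why the $2.\PSL(2,7^2)$ row contains $[3,5,7]$ alongside the descended list $[2,3,5,7]$. For the characters descending to $\PSL(2,49)$ the same averaging over an involution in the split torus yields a positive fixed dimension, so those always retain the prime $2$, giving $[2,3,5,7]$.

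The main obstacle is the prime $p=7$ on the small exceptional characters, where the crude bound $|\chi|\le \sqrt q + 1$ fails to beat $\chi(1)$ after subtracting $(p-1)$ terms, so one must use the exact Gauss-sum values of these characters on the two unipotent classes, of the form $\tfrac{1}{2}(\pm 1 \pm \sqrt{\pm q})$ with $\sqrt q = 7$. The six nontrivial powers of an order-$7$ unipotent split three-and-three between the two unipotent classes according to quadratic residues modulo $7$, so the irrational or imaginary parts cancel and one obtains integer fixed dimensions such as $\frac{1}{7}(24 - 3) = 3$ (faithful, degree $(q-1)/2$) and $\frac{1}{7}(25 + 3) = 4$ (descended, degree $(q+1)/2$), both positive. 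Collating across all families then shows that exactly one fixed-point list, $[2,3,5,7]$, occurs for $\PSL(2,7^2)$, and exactly two, $[2,3,5,7]$ and $[3,5,7]$, occur for $2.\PSL(2,7^2)$; the exhaustiveness claim (that no further list arises because no character omits an odd prime) is supplied by the positivity computations above, and this small-representation casework at $p=7$ is where the real work lies.
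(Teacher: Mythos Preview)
Your approach is different from the paper's and more informative: the paper simply records this fact as output of a GAP computation (see Appendix~\ref{AppendixB}) without any hand verification, whereas you work directly from the generic character table of $\operatorname{SL}(2,q)$. That is a genuine improvement in explanatory content, and your overall structure --- sorting characters by central character, bounding the large-degree families trivially, and isolating the exceptional degree-$(q\pm 1)/2$ characters at the defining prime $p=7$ as the only nontrivial case --- is sound.

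There is, however, one concrete error in the unipotent analysis. You claim the six nontrivial powers $u,u^2,\ldots,u^6$ of an order-$7$ unipotent split three-and-three between the two unipotent classes ``according to quadratic residues modulo $7$.'' That would be correct over $\mathbb{F}_7$, but here $q=7^2$, and the class of $u^i = \left(\begin{smallmatrix}1 & ia\\ 0 & 1\end{smallmatrix}\right)$ depends on whether $i$ is a square in $\mathbb{F}_{49}^*$, not in $\mathbb{F}_7^*$. Since every element of $\mathbb{F}_7^*$ has order dividing $6\mid 24 = |(\mathbb{F}_{49}^*)^2|$, all of $1,\ldots,6$ are squares in $\mathbb{F}_{49}$, so in fact all six powers lie in the \emph{same} unipotent class. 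Your averaged values $\tfrac{1}{7}(24-3)=3$ and $\tfrac{1}{7}(25+3)=4$ are therefore wrong; the correct fixed dimensions on the degree-$24$ characters are $\tfrac{1}{7}(24+6\cdot 3)=6$ or $\tfrac{1}{7}(24+6\cdot(-4))=0$ depending on which class $u$ lies in. The conclusion survives because the statement only asks that \emph{some} element of order $7$ have fixed points, and one may choose $u$ in the favorable class; but you should correct the split claim and adjust the numbers accordingly.
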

The graph \SuzukiThirtyTwoElusiveGraph is realized by $\operatorname{PGL}(2, 7^2)$, which is isomorphic to a subgroup of $\Aut\left(\PSL(2, 7^2)\right)$ that contains $\operatorname{Inn}\left(\PSL(2, 7^2)\right)$ by \cite[Page 48]{FSGBook}.

Thus, $\PSL(2, 7^2)$ satisfies the criteria of \ref{sec:General}, so \thref{generalThm} with $T = \PSL(2, 7^2)$ completely classifies the prime graph complements of $\PSL(2, 7^2)$-solvable groups.
\section{$T$-solvable groups with $T$ similar to $\PSL(2, 11)$, $\PSL(2, 19)$, and $\PSL(2, 23)$}\label{sec:PSLPrimesGeneral}
In this section, we classify the prime graph complements of $T$-solvable groups where $T$ is as in Section \ref{sec:generalT211} and then use the main result of Section \ref{sec:generalT211} to classify the prime graph complements of $\PSL(2, 11)-, \PSL(2, 19)-$, and $\PSL(2, 23)$-solvable groups in Sections \ref{sec:PSL211new}, \ref{sec:PSL219}, and \ref{sec:PSL223new} respectively.
\subsection{General $T$}\label{sec:generalT211}
\begin{figure}[H]
    \centering
    \begin{minipage}{\textwidth}
        \centering
        \begin{tikzpicture}
            \coordinate (A) at (0, 1);   
            \coordinate (B) at (1, 1);   
            \coordinate (C) at (1, 0);   
            \coordinate (D) at (0, 0);   
            \draw (B) -- (C);
            \draw (C) -- (D);
            \draw (A) -- (D);
            \draw (A) -- (B);
            \draw (A) -- (C);
            \foreach \point in {A, B, C, D}
                \draw[draw, fill=white] (\point) circle [radius=0.2cm];
            \foreach \point/\name in {(A)/c, (B)/2, (C)/d, (D)/a}
                \node at \point {\scriptsize\name};
        \end{tikzpicture}
        \caption*{$\pgc(T)$}
    \end{minipage}%
\end{figure}
As in previous sections, we first introduce some important notation to be used throughout Section \ref{sec:PSLPrimesGeneral}.
\begin{notation} \thlabel{rootedGraphIsomorphismPSL211}
    Given a group $G$ with $\pi(G) = \pi(\Aut(T))$ and a rooted graph $\Lambda$ on four vertices, it is understood that when we say $\pgc(G) \cong \Lambda$, we require the isomorphism to map the vertex $d$ to the root.
\end{notation}
\begin{notation}\thlabel{realizableDefPSL211}
    Given a set of rooted graphs $\mathcal{H}$ on four vertices, we say that $\mathcal{H}$ is \emph{realizable} if for each $\Lambda \in \mathcal{H}$, there exists a $T$-solvable $\pi(T)$-group $G$ such that $\pgc(G) \cong \Lambda$ in the sense of \thref{rootedGraphIsomorphismPSL211}.
\end{notation}
In this section, we classify the prime graph complements of $T$-solvable groups such that $T$ is a nonabelian simple group such that $\pgc(T)$ is as above, $\pi(T) = \pi(\Aut(T))$, and that satisfies the criteria listed below:
\begin{itemize}
    \item[1.] The Schur multiplier of $T$ is 2.
    \item[2.] The fixed point information for $T$ is as in \thref{psl223fpinfogeneral}.
    \item[3.] The graph \szunrecognizei is realizable by $T\ltimes P$ where $d$ is the white vertex and $P$ is a $p$-group, $p\in\{a, c\}$.
    \item[4.] The graph \SuzukiThirtyTwoElusiveGraphWhiteBadb where $d$ is the white vertex is not realizable by a $T$-solvable group.
\end{itemize}

\begin{fact}\thlabel{psl223fpinfogeneral}
    \begin{tabular}[h]{|c|c|}
    \hline
        Fixed Point Information &  \\
        \hline
        $T$ & $[[2, a, c], [2, a,  c, d]]$ \\
        \hline 
        $2.T$ & $[[2, a, c], [2, a, c, d], [a, c], [a, c, d]]$ \\
        \hline
    \end{tabular}\\
    Each list refers to all prime element orders where elements of an order in a sublist have fixed points in some irreducible complex representation. The multiple lists refer to the fact that different irreducible representations have different fixed points. 
\end{fact}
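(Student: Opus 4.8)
Since the groups to which this Fact is applied are $T = \PSL(2,q)$ with $q$ an odd prime $\geq 11$ — so the Schur multiplier is $2$ and the only perfect central extensions are $T$ and $2.T \cong \operatorname{SL}(2,q)$ — the plan is to read the table directly off the classical character tables of $\operatorname{SL}(2,q)$ and $\PSL(2,q)$ (equivalently, to compute it in GAP, as the analogous Facts elsewhere in the paper are). The only tool needed is the elementary observation that an element $g$ of a finite group has a nonzero fixed vector in the module affording an irreducible complex character $\chi$ precisely when the trivial character of $\langle g\rangle$ occurs in $\chi\!\downarrow_{\langle g\rangle}$, i.e.\ when
\[
\frac{1}{\operatorname{o}(g)}\sum_{x\in\langle g\rangle}\chi(x)\;>\;0 .
\]
So for each irreducible $\chi$ of $T$ (resp.\ of $2.T$) and each prime $p\in\pi(T) = \{2,a,c,d\}$, one forms this average over a cyclic group generated by an order-$p$ element, records the set of $p$ for which it is positive, and then verifies that the list of \emph{distinct} sets so obtained is exactly the one tabulated.

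To make these averages concrete I would reduce them to finite sums of character-table entries. For every \emph{odd} $p\in\pi(T)$ the Sylow $p$-subgroups of $\PSL(2,q)$ and of $\operatorname{SL}(2,q)$ are cyclic — a Sylow subgroup of one of the two (cyclic) maximal tori, or, when $p\mid q$ and $q$ is prime, the unipotent subgroup of order $q$ — so \thref{ConjugacyClassLanding} determines exactly how the nontrivial elements of $\langle g\rangle$ split among the (few) classes of order-$p$ elements, and the average becomes a rational combination of finitely many values of $\chi$. For $p = 2$, where a Sylow $2$-subgroup of $\PSL(2,q)$ is only dihedral, I would argue directly instead: $\chi\!\downarrow_{\langle g\rangle}$ contains the trivial character iff $\chi(g)\neq -\chi(1)$, which is read off immediately at the (one or two) classes of involutions. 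With this in hand, the $T$-row is a short check over the $\sim q+4$ irreducibles of $\PSL(2,q)$ (trivial, Steinberg of degree $q$, the two of degree $(q\pm 1)/2$, the principal-series family of degree $q+1$, the discrete-series family of degree $q-1$), confirming that every profile is either $\{2,a,c\}$ or $\{2,a,c,d\}$.

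For $2.T\cong\operatorname{SL}(2,q)$ I would split the irreducibles into the non-faithful ones — which are exactly the pullbacks of the irreducibles of $\PSL(2,q)$ — and the faithful ones. Since an odd-order element of $T$ lifts to an element of the same order in $\operatorname{SL}(2,q)$ while the central involution $-I$ acts trivially on every non-faithful module, the non-faithful profiles reproduce $\{2,a,c\}$ and $\{2,a,c,d\}$ verbatim from the $T$-row; for the faithful irreducibles $-I$ acts as $-\operatorname{Id}$, so it has no fixed vector and $2$ never appears, and it remains only to evaluate the faithful character values at order-$a$, $c$, $d$ elements and confirm that the faithful profiles are exactly $\{a,c\}$ and $\{a,c,d\}$.

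The main obstacle here is bookkeeping, not ideas: one must keep track of the \emph{genuine} element orders in $\operatorname{SL}(2,q)$ (preimages of involutions of $T$ have order $4$, so ``order-$2$ elements of $2.T$'' really means only $-I$) and of whether $q\equiv 1$ or $q\equiv 3\pmod 4$, which alters the involution structure and the precise list of faithful characters and hence forces a small case distinction among $\PSL(2,11)$, $\PSL(2,19)$, and $\PSL(2,23)$ — or, as the authors do for the parallel Facts, a direct GAP computation. Once the lifting and element-order conventions are fixed, only finitely many character values need to be evaluated and compared.
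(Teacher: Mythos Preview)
Your proposal is correct in content, but you should be aware that in the paper this Fact is not proved at all: it is one of the \emph{criteria} that the abstract group $T$ is assumed to satisfy in Section~\ref{sec:generalT211}, and the verification for the concrete groups $\PSL(2,11)$, $\PSL(2,19)$, $\PSL(2,23)$ is done in the respective subsections simply by computing in GAP (see Facts~\ref{psl211fpinfo}, \ref{psl219fpinfo}, \ref{psl223fpinfo}, each of which ends with ``This information was found via \cite{GAP}'' or similar). So there is no paper proof to compare against beyond a machine computation.

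Your route---reading the fixed-point profiles directly off the classical Jordan--Schur character tables of $\PSL(2,q)$ and $\operatorname{SL}(2,q)$, using the averaging formula together with \thref{ConjugacyClassLanding} for the odd primes and a direct eigenvalue argument at the involutions, then splitting $\operatorname{Irr}(2.T)$ into non-faithful (inflated from $T$) and faithful characters---is a genuinely different and more transparent approach. It has the advantage of being uniform in $q$ and of explaining \emph{why} the two rows look the way they do (in particular, why the faithful $2.T$-profiles are obtained from the $T$-profiles by deleting $2$: the central involution $-I$ acts as $-\mathrm{Id}$ on every faithful module). The cost is exactly the bookkeeping you flag---tracking genuine element orders in $\operatorname{SL}(2,q)$ versus $\PSL(2,q)$ and the $q\bmod 4$ dichotomy---whereas the paper simply delegates this to GAP. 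Either approach is adequate; yours would make a nice replacement for the bare citation.
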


\begin{lemma}\thlabel{311DisconnectPSL223}
    Let $G$ be a $T$-solvable group. Then there are no edges $c-p$ or $a-p$ in $\pgc(G)$ for any $p\in \pi(G)\setminus \pi(T)$.
\end{lemma}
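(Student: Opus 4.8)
The plan is to read \thref{311DisconnectPSL223} off directly from \thref{corollaryLemma}, applied separately to the primes $a$ and $c$. Note that a Frobenius‑type argument via \thref{frobprop} or \thref{generalize222} cannot do the job here: for the groups we have in mind the Sylow $a$‑ and Sylow $c$‑subgroups are cyclic, hence they do satisfy the Frobenius criterion, so no restriction arises that way. The obstruction must instead come from fixed‑point (ordinary character) data, which is precisely what Criterion~2 packages. I would begin by taking $G$ to be strictly $T$-solvable — when $G$ is solvable, $\pgc(G)$ is already governed by the classification \thref{solvclass}, so this is the case that carries content. With $G$ strictly $T$-solvable and $\pi(T) = \pi(\Aut(T))$ in force throughout this section, \thref{corollaryLemma} (whose proof produces, via \thref{lemma212inPaper}, a subgroup $K \cong N.T$ with $N$ solvable and $\pi(K)=\pi(G)$) is directly applicable.

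Next I would check the hypotheses of \thref{corollaryLemma} for $r=a$ and for $r=c$. Both are odd primes: in $\pgc(T)$ the prime $2$ is a vertex distinct from $a$, $c$, $d$, so $a,c,d$ are exactly the odd prime divisors of $|T|$. By Criterion~1 the Schur multiplier of $T$ is $2$, so $a$ and $c$ are coprime to it, and the perfect central extensions of $T$ are precisely $T$ and $2.T$ (every perfect central extension is a quotient of the universal one, which here is $2.T$). By Criterion~2, i.e.\ \thref{psl223fpinfogeneral}, the prime $a$ occurs in every list recorded for $T$ (namely $[2,a,c]$ and $[2,a,c,d]$) and in every list recorded for $2.T$ (namely $[2,a,c]$, $[2,a,c,d]$, $[a,c]$, $[a,c,d]$); hence in each complex irreducible representation of $T$ and of $2.T$ some element of order $a$ fixes a nonzero vector. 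The same holds verbatim with $a$ replaced by $c$. Applying \thref{corollaryLemma} once with $r=a$ and once with $r=c$ then gives $a-p\notin\pgc(G)$ and $c-p\notin\pgc(G)$ for every $p\in\pi(G)\setminus\pi(T)$, which is the assertion.

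I do not expect any genuine obstacle: all of the substance sits in the fixed‑point tables of \thref{psl223fpinfogeneral}, which have been arranged precisely so that $a$ and $c$ are ``universal'' fixed‑point primes across all perfect central extensions of $T$. The only points needing a word of care are the routine reduction to the strictly $T$-solvable case and the observation, immediate from Criterion~1, that $T$ and $2.T$ exhaust the perfect central extensions one must inspect.
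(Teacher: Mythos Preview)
Your proposal is correct and takes essentially the same approach as the paper: invoke \thref{corollaryLemma} for the odd primes $a$ and $c$, using Criterion~1 (Schur multiplier equal to $2$) and the fixed-point data of \thref{psl223fpinfogeneral} to verify the hypotheses. The paper's proof is a one-line appeal to exactly these two references; you have simply unpacked the verification more carefully, including the observation that $T$ and $2.T$ exhaust the perfect central extensions to be checked.
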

\begin{proof}
    By \thref{psl223fpinfogeneral}, we find that the primes $a$ and $c$ satisfy \thref{corollaryLemma}.
\end{proof}
\begin{lemma}
    For any $T$-solvable group $G$, if $2-p\in\pgc(G)$ for any $p\in\pi(G)\setminus\pi(T)$, then $2-q\not\in\pgc(G)$ for any $q\in\pi(T)$.
\end{lemma}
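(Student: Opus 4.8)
The plan is to reduce to the case where $G \cong N.T$ with $N$ solvable, using \thref{lemma212inPaper} (which applies since $\pi(T) = \pi(\Aut(T))$ by hypothesis on $T$), and then to invoke \thref{isolate2psl211}. Indeed, $T$ satisfies the two bulleted hypotheses of \thref{isolate2psl211}: $\pi(T) = \pi(\Aut(T))$ holds by assumption, and a perfect central extension $2.T$ exists because the Schur multiplier of $T$ is $2$ by Criterion 1. Hence \thref{isolate2psl211}(2) gives exactly the conclusion: if $2-p \in \pgc(G)$ for some $p \in \pi(G)\setminus\pi(T)$, then $2-q \notin \pgc(G)$ for all $q \in \pi(T)$.

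More explicitly, first I would handle the trivial case: if $G$ is not strictly $T$-solvable, then $G$ is solvable, so $\pgc(G)$ is triangle-free and $3$-colorable by \thref{solvclass}; in that setting $\pgc(G)[\pi(T)]$ could still have edges, but actually the cleanest route is to note that the hypothesis ``$2-p\in\pgc(G)$ for some $p\in\pi(G)\setminus\pi(T)$" forces $\pi(G)\neq\pi(T)$, so $G$ has a composition factor isomorphic to $T$ only in the strictly $T$-solvable case; if $G$ is solvable the statement is about a solvable group and one simply checks it directly from \thref{solvclass}, or one observes the claim is vacuous if $\pi(G) = \pi(T)$. So assume $G$ is strictly $T$-solvable. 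Apply \thref{lemma212inPaper} to obtain $K \leq G$ with $K \cong N.T$, $N$ solvable, and $\pi(K) = \pi(G)$. The edge $2-p$ with $p \in \pi(G)\setminus\pi(T) = \pi(K)\setminus\pi(T)$ lies in $\pgc(K)$ as well (since $\pgc(K)$ and $\pgc(G)$ have the same vertex set and $K \leq G$ forces $\pgc(G) \subseteq \pgc(K)$). Now $K$ satisfies the hypotheses of \thref{isolate2psl211}, so $2-q \notin \pgc(K)$ for all $q \in \pi(T)$. Since $\pgc(G) \subseteq \pgc(K)$, we conclude $2-q \notin \pgc(G)$ for all $q \in \pi(T)$, as desired.

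I do not expect any serious obstacle here: this lemma is essentially a direct corollary of \thref{isolate2psl211} once one checks the two hypotheses of that lemma hold for $T$ (namely $\pi(T)=\pi(\Aut(T))$, given, and existence of $2.T$, given by Criterion 1). The only mild subtlety is bookkeeping the reduction from $G$ to the subgroup $K$ and noting that passing to a subgroup with the same prime set only removes edges from the prime graph (equivalently, only adds edges to the complement is false — it removes vertices' adjacencies, so $\pgc$ gains edges); either way, an edge absent from $\pgc(K)$ forces the corresponding absence — wait, this needs care. The correct statement: $K \leq G$ implies $\pg(K) \subseteq \pg(G)$ (fewer elements means fewer orders), hence $\pgc(G) \subseteq \pgc(K)$; so a non-edge of $\pgc(K)$, i.e.\ an edge of $\pg(K)$, is an edge of $\pg(G)$, i.e.\ a non-edge of $\pgc(G)$. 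Thus ``$2-q\notin\pgc(K)$ for all $q\in\pi(T)$'' yields ``$2-q\notin\pgc(G)$ for all $q\in\pi(T)$.'' This is exactly what we want, so the argument goes through cleanly.
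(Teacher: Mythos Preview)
Your proposal is correct and takes essentially the same approach as the paper: the paper's proof is the two-line observation that $T$ has a perfect central extension $2.T$ (by Criterion 1) and then applies \thref{isolate2psl211} directly. Your reduction to a subgroup $K\cong N.T$ via \thref{lemma212inPaper} is unnecessary, since \thref{isolate2psl211} already applies to an arbitrary strictly $T$-solvable $G$ (its proof performs that reduction internally); but the extra bookkeeping does no harm.
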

\begin{proof}
    $T$ has a perfect central extension $2.T$. The lemma follows from applying \thref{isolate2psl211}.
\end{proof}
\begin{lemma}\thlabel{psl2113color}
    For any $T$-solvable group, $G$, one of the two conditions applies:
    \begin{itemize}
        \item $\pgc(G)$ is triangle free and 3-colorable.
        \item There exists a 3-coloring of $\pgc(G)$ such that all neighbors of $2$ and $d$ not in $\pi(T)$ are the same color.
    \end{itemize}
\end{lemma}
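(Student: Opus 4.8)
The plan is to split on whether $G$ is solvable. If $G$ is solvable, then by \thref{solvclass} the graph $\pgc(G)$ is triangle-free and $3$-colorable, so the first alternative of the statement holds and we are done. Hence assume for the rest that $G$ is strictly $T$-solvable, and aim to produce a $3$-coloring witnessing the second alternative. First I would record the cheap structural facts: by \thref{311DisconnectPSL223} neither $a$ nor $c$ has a neighbor in $\pi(G)\setminus\pi(T)$, so the only vertices of $\pi(T)$ that can be adjacent to $\pi(G)\setminus\pi(T)$ are $2$ and $d$; and $\pgc(G)[\pi(T)]$, being a subgraph of $\pgc(T)$, is $3$-colorable. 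Now split on whether $2$ has a neighbor outside $\pi(T)$.

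In the first case, $2$ has no neighbor in $\pi(G)\setminus\pi(T)$. Then every edge of $\pgc(G)$ leaving $\pi(T)$ is incident to $d$, so since $\pgc(G)[\pi(T)]$ is $3$-colorable, \cite[Lemma 2.3.5]{2023REU} yields a $3$-coloring of $\pgc(G)$ in which all vertices of $N(d)\setminus\pi(T)$ receive a common color (and every triangle lies inside $\pi(T)$). As $N(2)\setminus\pi(T)=\varnothing$, the whole of $(N(2)\cup N(d))\setminus\pi(T)$ is monochromatic, which is exactly what the second alternative asks for.

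In the second case, $2$ has a neighbor $p\in\pi(G)\setminus\pi(T)$. By the lemma immediately preceding this one (which relies on \thref{isolate2psl211}), $2$ is then adjacent to no vertex of $\pi(T)$; in particular $2\not\sim d$. By \thref{isolate2psl211}(1) there is $K\le G$ with $K\cong L.2.T$, $L$ solvable of odd order, and $\pi(K)=\pi(G)$; since $K\le G$ we have $\pg(K)\subseteq\pg(G)$, hence $\pgc(G)\subseteq\pgc(K)$ on the common vertex set, so it suffices to color $\pgc(K)$ (and the same hypotheses, including $2\not\sim\pi(T)$, hold for $K$). Writing $S=\pi(K)\setminus\pi(T)=\pi(L)$, the key observation is that $\pgc(K)[S\cup\{2\}]$, $\pgc(K)[S\cup\{d\}]$ and $\pgc(K)[S\cup\{2,d\}]$ are each contained in $\pgc$ of a solvable subgroup of $K$ — the preimages in $K$ of a Sylow $2$-subgroup of $2.T$, of a Sylow $d$-subgroup of $2.T$, and of the product of a Sylow $d$-subgroup with the central involution of $2.T$, respectively — and are therefore triangle-free and $3$-colorable by \thref{solvclass}. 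In particular $N(2)\cap S$ and $N(d)\cap S$ are independent sets of $\pgc(K)$. I would then build the required coloring of $\pgc(K)$ out of a $3$-coloring of the triangle-free graph $\pgc(K)[S\cup\{2,d\}]$, extending last to $\{a,c\}$, which are attached only to $d$ and to each other.

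The hard part will be the final step of the second case: a generic $3$-coloring of $\pgc(K)[S\cup\{2,d\}]$ need not make the union $(N(2)\cup N(d))\cap S$ monochromatic, and this can fail unless there are no edges between $N(2)\cap S$ and $N(d)\cap S$, which is not forced by triangle-freeness of the subgraphs above. To get it I expect to have to return to the structure of $K=L.2.T$: the generalized-quaternion Sylow $2$-subgroup extracted in the proof of \thref{isolate2psl211}, the fact that all involutions of $K$ are conjugate into $L\langle t\rangle$ for a fixed involution $t$, and a comparison of how $t$ and an element of order $d$ act on the solvable radical $L$. The expected conclusion of that analysis is that once $2$ has a neighbor outside $\pi(T)$, either $d$ has none — in which case the second case reduces, like the first, to \cite[Lemma 2.3.5]{2023REU} applied with $2$ as the distinguished vertex — or the outside neighbors of $d$ are non-adjacent to those of $2$, and in either situation a $3$-coloring with $(N(2)\cup N(d))\setminus\pi(T)$ monochromatic can be assembled, coloring $a$ and $c$ against $d$ at the end.
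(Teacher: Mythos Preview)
Your setup and the first case are fine and match the paper. The gap is exactly where you flag it: in the second case you never establish that $(N(2)\cup N(d))\setminus\pi(T)$ can be made monochromatic, and your ``expected conclusion'' is speculation, not argument. Triangle-freeness of $\pgc(K)[S\cup\{2,d\}]$ alone does not force the external neighbors of $2$ and of $d$ to avoid edges between them, and your proposed analysis via conjugacy of involutions and comparison of actions on $L$ is not carried out.

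The paper closes this gap with a different device: it sets $N=L.2$ (the solvable radical of $K$) and invokes the Frobenius digraph orientation \cite[Lemma 2.3.4]{2023REU}. That lemma says that if $p\in\pi(K)\setminus\pi(T)$ is adjacent in $\pgc(K)$ to $d$ (or to $2$), then for every $q\in\pi(K)\setminus\pi(T)$ with $p-q\in\pgc(K)$ one has $q\to p$ in $\overrightarrow{\Gamma}(N)$. This immediately forces $(N(2)\cup N(d))\setminus\pi(T)$ to be independent: an edge between two such vertices would have to be oriented both ways. One then colors all these vertices $\mathcal{I}$, colors the remaining vertices of $\pi(K)\setminus\pi(T)$ by their in/out-degree pattern in $\overrightarrow{\Gamma}(N)$ (sinks $\mathcal{I}$, sources $\mathcal{O}$, intermediate $\mathcal{D}$), and colors $\pi(T)$ by hand ($d\mapsto\mathcal{O}$, $2\mapsto\mathcal{D}$, $a\mapsto\mathcal{I}$, $c\mapsto\mathcal{D}$). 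Validity reduces to ruling out a directed $3$-path in $\overrightarrow{\Gamma}(N)$, which is \cite[Corollary 2.7]{2015REU}. So the missing idea is the Frobenius digraph of the solvable radical, not an ad hoc analysis of Sylow $2$-structure.
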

\begin{proof}
    We consider two cases: The case where $G$ is solvable and the case where $G$ is strictly $T$-solvable. In the first case, the first item is satisfied by \cite{2015REU}. In the second case, we consider two subcases: The case where $2-p\not\in\pgc(G)$ for all $p\in\pi(G)\setminus\pi(T)$ and the case where $2-p\in\pgc(G)$ for some $p\in\pi(G)\setminus\pi(T)$. First, note that if there exists any $K\leq G$ with $\pi(K) = \pi(G)$ such that $\pgc(K)$ satisfies the second property, $\pgc(G)$ must also satisfy the second property as $\pgc(G)\subseteq\pgc(K)$.
    
    In the first subcase, notice $T$ is a nonabelian simple group, $\pi(T) = \pi(\Aut(T))$, $G$ is strictly $T$-solvable, and $\pgc(T)$ is 3-colorable. The second condition follows from \cite[Lemma 2.3.5]{2023REU}.
    
    Now we consider the case where $2-p \in \pgc(G)$ for some $p \in \pi(G) \setminus \pi(T)$. Because $\pi(T) = \pi(\Aut(T))$ and there exists a perfect central extension $2.T$ of $T$ by \thref{psl223fpinfogeneral}, we may apply \thref{noworry2} to get $K\leq G, \pi(K) = \pi(G)$ with $K\cong L.2.T$ where $2.T$ is a perfect central extension of $T$ and $L$ is a solvable group of odd order. Let $N = L.2$. Now we will label the vertices in $N(\{2, d\})$ by considering the Frobenius Digraph (\cite[Definition 2.3.3]{2023REU}) of $N$. Color the vertices of $\pgc(K)[\pi(K)\setminus\pi(T)]$ as follows. By \cite[Lemma 2.3.4]{2023REU}, for any $p, q\in\pi(K)\setminus\pi(T)$ with $p-q, d-p\in\pgc(K)$ or $2-p\in\pgc(K)$, $q\to p\in\overrightarrow{\Gamma}(N)$. Thus, color all such vertices with $\mathcal{I}$ (we cannot have $p-q, d-p\in\pgc(K)$ and $2-q\in\pgc(K)$, or else we would get $q\to p, p\to q\in\overrightarrow{\Gamma}(N)$ a contradiction to the definition of Frobenius digraph). Color the remaining vertices of $\pi(K)\setminus\pi(T)$ as follows. If a vertex has nonzero in-degree but zero out-degree in $\overrightarrow{\Gamma}(N)$ color it $\mathcal{I}$. If a vertex has nonzero in-degree and nonzero out-degree color it $\mathcal{D}$. If a vertex has zero in-degree and nonzero out-degree color it $\mathcal{O}$. Label any isolated vertices with the color $\mathcal{O}$. Notice that all vertices in $\pi(G)\setminus\pi(T)$ adjacent to $p\in\pi(T)$ have color $\mathcal{I}$. We finish our coloring of $\pgc(K)$ by coloring $d$ with $\mathcal{O}$, and the remaining three vertices as such: $2$ as $\mathcal{D}$, $a$ as $\mathcal{I}$, and $c$ as $\mathcal{D}$. If this were not a valid 3-coloring, we would have two neighbors $p, q$ with the same color, which would mean that they would both have to be colored $\mathcal{D}$. This means that there would be a directed 3-path between primes $p, q, r, s$ in Frobenius digraph of $N$ with $p, q, r, s\not\in\pi(T)$ by \cite[Lemma 2.3.4]{2023REU}. By the fact that $N$ is solvable, this contradicts \cite[Corollary 2.7]{2015REU}, so this 3-coloring of $\pgc(K)$ must be valid. Notice that all vertices in $\pi(K)\setminus\pi(T)$ adjacent to $d$ or $2$ are colored $\mathcal{I}$. Thus, in both subcases of the case where $G$ is strictly $T$-solvable, the second condition is satisfied.
\end{proof}
\begin{lemma}\thlabel{TriangleJailII}
    Let $G$ be a $T$-solvable group. Then all triangles in $\pgc(G)$ are contained in $\pgc(G)[\pi(T)]$.
\end{lemma}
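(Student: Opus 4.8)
The plan is to argue by contradiction, disposing of the easy cases first and then invoking the three lemmas just established in this subsection. Suppose $\pgc(G)$ contains a triangle $\Delta$; I want to show $\Delta\subseteq\pi(T)$. If $G$ is not strictly $T$-solvable it is solvable, so $\pgc(G)$ is triangle-free by \thref{solvclass} and there is nothing to prove. Hence assume $G$ is strictly $T$-solvable, and suppose toward a contradiction that $\Delta$ contains some prime $p\in\pi(G)\setminus\pi(T)$.

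First I would confine $\Delta$. By \thref{311DisconnectPSL223}, $p$ is adjacent in $\pgc(G)$ to neither $a$ nor $c$; since $p$ is adjacent to both of the other vertices of $\Delta$, this forces $\Delta\cap\{a,c\}=\varnothing$, so $\Delta\subseteq\{2,d\}\cup(\pi(G)\setminus\pi(T))$. Next I would rule out the case that $\Delta$ lies entirely in $\pi(G)\setminus\pi(T)$: by \thref{lemma212inPaper} there is $K\cong N.T\le G$ with $N$ solvable and $\pi(K)=\pi(G)$, and since $N\unlhd K$ we get $\pgc(G)\subseteq\pgc(K)$ and $\pgc(K)[\pi(N)]\subseteq\pgc(N)$, the latter being triangle-free by \thref{solvclass}; as $\pi(G)\setminus\pi(T)=\pi(K)\setminus\pi(T)\subseteq\pi(N)$, no triangle of $\pgc(G)$ is supported on $\pi(G)\setminus\pi(T)$. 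So $\Delta$ contains $2$ or $d$.

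The remainder is a short case check driven by \thref{psl2113color}. Since $\pgc(G)$ is not triangle-free, that lemma gives a proper $3$-coloring of $\pgc(G)$ in which all neighbors of $2$ lying outside $\pi(T)$ and all neighbors of $d$ lying outside $\pi(T)$ receive one common color. If $\Delta=\{2,d,p\}$, then $2-p\in\pgc(G)$ with $p\in\pi(G)\setminus\pi(T)$, so part (2) of \thref{isolate2psl211} gives $2-d\notin\pgc(G)$, contradicting that $\{2,d\}$ is an edge of $\Delta$. Otherwise $\Delta$ meets $\{2,d\}$ in exactly one vertex $v$, and since $\Delta$ also avoids $a$ and $c$, the remaining two vertices $x,y$ of $\Delta$ both lie in $\pi(G)\setminus\pi(T)$; but then $x$ and $y$ are neighbors of $v\in\{2,d\}$ outside $\pi(T)$, so they receive the same color, contradicting that $\{x,y\}$ is an edge. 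Every case is impossible, so $\Delta\subseteq\pi(T)$.

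The main obstacle I anticipate is purely organizational: making the case split on which of $2,d$ and how many vertices outside $\pi(T)$ appear in $\Delta$ visibly exhaustive, and in particular excluding the case where $\Delta$ lies wholly in $\pi(G)\setminus\pi(T)$. That last exclusion is exactly what the reduction to $K\cong N.T$ and the solvability of $N$ handle; once it is in place, the proof is just bookkeeping with \thref{311DisconnectPSL223}, \thref{isolate2psl211}, and \thref{psl2113color}, requiring no new computation.
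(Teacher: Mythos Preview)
Your argument is correct, but it diverges from the paper's in how it handles the cases where the triangle has more than one vertex outside $\pi(T)$. The paper passes to $K\cong N.T$ and invokes an external result (\cite[Lemma 5.7]{Suz}) to conclude immediately that any triangle of $\pgc(K)$ not contained in $\pi(T)$ has exactly one vertex in $\pi(N)$; this collapses the case analysis to the single configuration $\{2,d,p\}$, which is dispatched by \thref{isolate2psl211} just as you do. You instead treat the ``all three outside'' and ``two outside, one inside'' cases separately: the former by reducing to $\pgc(N)$ and \thref{solvclass}, the latter by the $3$-coloring from \thref{psl2113color}, noting that the two outside vertices would be neighbors of $2$ or $d$ and hence receive the same color. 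Your route is slightly longer but more self-contained, relying only on lemmas already proved in this section rather than on the external \cite{Suz} result; the paper's route is shorter but imports more machinery. Both reach the same endpoint through \thref{311DisconnectPSL223} and \thref{isolate2psl211}.
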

\begin{proof}
    For contradiction, suppose otherwise. Then there exists a triangle in $\pgc(G)$ which is not contained in $\pgc(G)[\pi(T)]$. This means $\pgc(G)$ is not triangle free, so by the main result of \cite{2015REU}, $G$ must be strictly $T$-solvable. Then, there exists $K\leq G$ such that $K\cong N.T$ for solvable $N$ with $\pi(K) = \pi(G)$ by \thref{lemma212inPaper}. Because $\pgc(G)\subseteq\pgc(K)$, it suffices to show the result for $K$. Let $A$ be a triangle of $\pgc(K)$ not contained in $\pgc(K)[\pi(T)]$. By \cite[Lemma 5.7]{Suz} and the fact that $A$ is not contained in $\pgc(K)[\pi(T)]$, $|V(A)\cap \pi(N)| = 1$. Call the single vertex in $V(A)\cap\pi(N)$ $p$. By \thref{311DisconnectPSL223}, we must have $2-p\in\pgc(K), d-p\in\pgc(K)$, and $2-d\in\pgc(K)$. Because $2-p\in\pgc(K)$ for $p\not\in\pi(T), 2-d\notin\pgc(K)$ by \thref{noworry2}, a contradiction. 
\end{proof}
\begin{lemma}\thlabel{WhichGraphsCanExistPSL223}
    Let $G$ be a $T$-solvable group such that $\pgc(G)$ contains a triangle. Then $\pgc(G)[\pi(T)]$ is isomorphic to one of the following, where the white vertex is $d$:
    \begin{itemize}
        \item $\pgc(G)[\pi(T)] \cong \hamSandwichWhite$, \szunrecognizei, or \notIsolatedVertexWhite.
    \end{itemize}
\end{lemma}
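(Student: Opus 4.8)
The plan is to reduce to the case that $G$ is strictly $T$-solvable, then enumerate the possibilities for $\Lambda:=\pgc(G)[\pi(T)]$ as a triangle-containing subgraph of $\pgc(T)=\hamSandwich$ with the root $d$ kept fixed, and finally discard the one spurious possibility using Criterion 4. Since $\pgc(G)$ contains a triangle, $G$ is not solvable (by \thref{solvclass} the complement of a solvable group is triangle-free), so $G$ is strictly $T$-solvable; hence $\Lambda$ is a subgraph of $\pgc(T)=\hamSandwich$, i.e.\ of $K_4$ with the single non-edge $\{2,a\}$ (the inclusion $\pgc(H)[\pi(T)]\subseteq\pgc(T)$ for strictly $T$-solvable $H$ being the standard fact used throughout, cf.\ the proof of \thref{StructureofPSL249SolvableGraphs}). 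By \thref{TriangleJailII} every triangle of $\pgc(G)$ in fact lies in $\Lambda$, so $\Lambda$ itself contains a triangle.

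Next I would run the enumeration. The only triangles of $\pgc(T)=\hamSandwich$ are $\{2,c,d\}$ and $\{a,c,d\}$, both containing the edge $\{c,d\}$ and the root $d$, and the transposition $(2\ a)$ of $\pi(T)$ is an automorphism of $\pgc(T)$ fixing $c$ and $d$, so it suffices to list triangle-containing subgraphs up to this symmetry. Such a subgraph is obtained by deleting at most two of the five edges while retaining all three edges of some triangle; going through the few cases shows that, as a rooted graph with root $d$, $\Lambda$ is exactly one of: $\hamSandwich$ itself, i.e.\ \hamSandwichWhite\ (nothing deleted); \szunrecognizei\ (delete the single edge $\{2,c\}$, or its image $\{c,a\}$, so that $d$ is the degree-three attachment vertex of a triangle-plus-pendant); \notIsolatedVertexWhite\ (delete $\{2,c\}$ and $\{2,d\}$, or the symmetric pair, leaving a triangle and an isolated vertex); or \SuzukiThirtyTwoElusiveGraphWhiteBadb\ (delete the single edge $\{2,d\}$, or its image $\{a,d\}$, so that $d$ is a degree-two vertex of a triangle-plus-pendant). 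Hence $\Lambda$ is one of the three graphs in the statement unless $\Lambda$ is isomorphic to \SuzukiThirtyTwoElusiveGraphWhiteBadb\ with $d$ the white vertex.

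To exclude that last case I would argue by contradiction using Criterion 4. Suppose $\Lambda$ is \SuzukiThirtyTwoElusiveGraphWhiteBadb, and let $\{p,d\}$ (with $p\in\{2,a\}$) be the unique edge of $\hamSandwich$ not in $\Lambda$; then $G$ has an element of order $pd$. Every composition factor of a $T$-solvable group is cyclic or isomorphic to $T$, and each such group satisfies the Hall condition $D_{\pi(T)}$ ($T$ is its own unique Hall $\pi(T)$-subgroup; cyclic groups are $D_\pi$ for every $\pi$); since $D_\pi$ is inherited by extensions, $G$ is a $D_{\pi(T)}$-group, so it has a Hall $\pi(T)$-subgroup $M$. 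A short induction on $|G|$ — split on the type of a minimal normal subgroup $N$ of $G$, using that a normal $\pi(T)$-subgroup lies in every Hall $\pi(T)$-subgroup (so $N\le M$ when $N$ is a power of $T$) — shows that $M$ is again $T$-solvable, with $\pi(M)=\pi(T)$ since $|T|\mid|M|$. As $M\le G$, we have $\Lambda\subseteq\pgc(M)$, so $\pgc(M)$ contains a triangle and $M$ is strictly $T$-solvable, whence $\pgc(M)\subseteq\pgc(T)=\hamSandwich$; but $\hamSandwich$ has exactly one edge, $\{p,d\}$, beyond $\Lambda$, and that edge is absent from $\pgc(M)$ because $M$ contains an element of order $pd$ (a generator of such a cyclic group lies in some Hall $\pi(T)$-subgroup, and all of these are conjugate to $M$). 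Therefore $\pgc(M)=\Lambda$, exhibiting a $T$-solvable $\pi(T)$-group whose prime graph complement is \SuzukiThirtyTwoElusiveGraphWhiteBadb\ with $d$ the white vertex, contradicting Criterion 4. Hence $\Lambda$ is one of \hamSandwichWhite, \szunrecognizei, or \notIsolatedVertexWhite.

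The step I expect to be the main obstacle is this last one: Criterion 4 forbids a four-vertex rooted graph only as the \emph{entire} prime graph complement of a $T$-solvable $\pi(T)$-group, whereas $\Lambda$ is a priori merely an induced subgraph of $\pgc(G)$ and $G$ may involve primes outside $\pi(T)$, so one must actually produce an honest $\pi(T)$-group that still realizes \SuzukiThirtyTwoElusiveGraphWhiteBadb. Passing to a Hall $\pi(T)$-subgroup accomplishes this, but it hinges on the $D_{\pi(T)}$-property of $T$-solvable groups, on the fact (worth recording as its own lemma) that Hall $\pi(T)$-subgroups of $T$-solvable groups are again $T$-solvable, and — should the direct Hall argument need shoring up — on \thref{lemma212inPaper}. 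The enumeration in the middle step is elementary but must be carried out with the root held fixed, since \hamSandwichWhite, \szunrecognizei, \notIsolatedVertexWhite, and \SuzukiThirtyTwoElusiveGraphWhiteBadb\ are pairwise non-isomorphic as rooted graphs even where their underlying unrooted graphs coincide.
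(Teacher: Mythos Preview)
Your proposal is correct and follows the same approach as the paper: use \thref{TriangleJailII} to place the triangle inside $\pgc(G)[\pi(T)]$, enumerate the four triangle-containing rooted subgraphs of $\pgc(T)$, and eliminate \SuzukiThirtyTwoElusiveGraphWhiteBadb\ via Criterion~4. The only difference is that where you explicitly build a Hall $\pi(T)$-subgroup $M$ and argue $\pgc(M)=\Lambda$ using the $D_{\pi(T)}$-property, the paper simply invokes ``the proof of \cite[Theorem 4.4]{Suz}'', which packages exactly this Hall-subgroup reduction (existence of a $T$-solvable Hall $\pi(T)$-subgroup $H\le G$ with $\pgc(H)=\pgc(G)[\pi(T)]$) as a citable result.
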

\begin{proof}
 If $\pgc(G)$ contains a triangle, by \thref{TriangleJailII}, that triangle is contained in $\pi(T)$.  We must have $\pgc(G)[\pi(T)]\subseteq\pgc(T)$, so we list the subgraphs of $\pgc(T)$ which contain a triangle: \hamSandwichWhite, \szunrecognizei, \notIsolatedVertexWhite, \SuzukiThirtyTwoElusiveGraphWhiteBadb. \SuzukiThirtyTwoElusiveGraphWhiteBadb is not realizable by a $T$-solvable group by Criterion 4 at the beginning of this section, so by the proof of \cite[Theorem 4.4]{Suz} $\pgc(G)[\pi(T)]$ is isomorphic to one of \hamSandwichWhite, \szunrecognizei, or \notIsolatedVertexWhite. 
\end{proof}

\begin{theorem}\thlabel{TPSL211General}
     A graph $\Xi$ is isomorphic to the prime graph complement of some $T$-solvable group if and only if one of the following is true: 
\begin{enumerate}
    \item $\Xi$ is triangle-free and 3-colorable.
    \item $\Xi$ contains a subset $X = \{w, x, y, z\} \subseteq V(\Xi)$ with white vertex $z$ such that $N(X)\setminus X$ share one color in some 3-coloring of $\Xi$ and all triangles of $\Xi$ are contained in $\Xi[X]$. In addition, one of the following hold:
    \begin{itemize}
        \item[a.] All edges in $\Xi$ between $p\in X$ and $q\in\Xi\setminus X$ have $p = z$ and $\Xi[X]\cong \hamSandwichWhite$, \szunrecognizei, or \notIsolatedVertexWhite.
        \item[b.] All edges in $\Xi$ between $p\in X$ and $q\in\Xi\setminus X$ have $p = z$ or $p = x$ and $\Xi[X]\cong$\notIsolatedVertexWhiteGrey where $x$ is the isolated vertex.
    \end{itemize}
\end{enumerate}
\end{theorem}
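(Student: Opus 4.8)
The proof divides into a forward and a backward direction, following the template of \thref{generalClassificationPSL216} and \thref{generalThm}. The structural lemmas \thref{psl2113color}, \thref{TriangleJailII}, \thref{311DisconnectPSL223}, \thref{noworry2}, and \thref{WhichGraphsCanExistPSL223} carry the forward direction, while \thref{236Improved} and \thref{NewPGCFromOld} carry the backward direction, with \thref{psl223fpinfogeneral} supplying the representations they require.

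\textbf{Forward direction.} Let $G$ be $T$-solvable. If $G$ is solvable, $\pgc(G)$ is triangle-free and $3$-colorable by \cite{2015REU}, so (1) holds; thus assume $G$ is strictly $T$-solvable. By \thref{psl2113color}, $\pgc(G)$ is $3$-colorable in all cases, so if $\pgc(G)$ is triangle-free we again land in (1). Otherwise $\pgc(G)$ contains a triangle; put $X = \pi(T)$ and $z = d$. By \thref{TriangleJailII} every triangle of $\pgc(G)$ lies in $\pgc(G)[\pi(T)] = \Xi[X]$, and by \thref{311DisconnectPSL223} the only vertices of $X$ adjacent to a vertex outside $X$ are $2$ and $d$, so $N(X)\setminus X$ is exactly the set of neighbors of $2$ and of $d$ lying outside $\pi(T)$; then \thref{psl2113color} provides a $3$-coloring of $\pgc(G)$ giving all of $N(X)\setminus X$ one color. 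By \thref{WhichGraphsCanExistPSL223} (which invokes Criterion 4 to exclude $\SuzukiThirtyTwoElusiveGraphWhiteBadb$), $\Xi[X]$ is isomorphic to $\hamSandwichWhite$, $\szunrecognizei$, or $\notIsolatedVertexWhite$ with $d$ white. If no $p\in\pi(G)\setminus\pi(T)$ satisfies $2-p\in\pgc(G)$, then every edge from $X$ to $\Xi\setminus X$ has $X$-endpoint $d = z$, giving case (2a). If such a $p$ exists, then $2-q\notin\pgc(G)$ for all $q\in\pi(T)$ by \thref{noworry2}, so $2$ is isolated in $\Xi[X]$; of the three possible graphs only $\notIsolatedVertexWhite$ has an isolated vertex, so $\Xi[X]\cong\notIsolatedVertexWhiteGrey$ with $x := 2$ the isolated vertex, and every edge from $X$ to $\Xi\setminus X$ has $X$-endpoint in $\{x,z\}$, which is case (2b).

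\textbf{Backward direction.} Case (1) is \cite{2015REU}. For (2), $z$ corresponds to $d$; in each subcase we first choose a $T$-solvable ``seed'' group with prime set $\pi(T)$ realizing $\Xi[X]$ rooted at $d$, then apply \thref{236Improved} (if the seed has no extra solvable part) or \thref{NewPGCFromOld}. In case (2a): for $\hamSandwichWhite$ take $E = T$, since $\pgc(T)$ is $K_4$ with the edge $2-a$ deleted and $d$ has degree $3$; for $\szunrecognizei$ take $E = T\ltimes P$ as granted by Criterion 3; for $\notIsolatedVertexWhite$ take $E = C_a\times T$, whose complement is the triangle on $\{2,c,d\}$ plus the isolated vertex $a$ (identify the isolated vertex of $\Xi[X]$ with $a$). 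In all three, $\pi(L)\subseteq\{a,c\}$, so the hypothesis ``$N[\pi(L)]\subseteq\pi(T)$ in all prime graph complements of strictly $T$-solvable groups'' of \thref{NewPGCFromOld} is exactly \thref{311DisconnectPSL223}; the monochromatic $3$-coloring of $N(X)\setminus X$ is given by (2); and since every $v\in\Xi\setminus X$ has its only possible $X$-neighbor at $z = d$, the representation of $T$ required is the one with fixed-point prime set $[2,a,c]$ from \thref{psl223fpinfogeneral} (or the trivial one, with set $[2,a,c,d]$, when $v$ has no $X$-neighbor). In case (2b), take $F = 2.T$, the perfect central extension from Criterion 1: its unique involution is central, so $2$ is joined to every other prime in $\Gamma(2.T)$ and is isolated in $\pgc(2.T)$, while $\pgc(2.T)$ agrees with $\pgc(T)$ on the odd primes; hence $\pgc(2.T)\cong\notIsolatedVertexWhiteGrey$ with $2$ isolated and $d$ white. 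Apply \thref{236Improved} with $X = \pi(2.T)$ and the isomorphism $x\mapsto 2$, $z\mapsto d$: for $v\in\Xi\setminus X$ the set $\{u\in X : u-v\in\Xi\}$ is one of $\emptyset,\{2\},\{d\},\{2,d\}$, and \thref{psl223fpinfogeneral} provides complex irreducible representations of $2.T$ with fixed-point prime sets $[2,a,c,d]$, $[a,c,d]$, $[2,a,c]$, $[a,c]$ respectively, in each of which exactly the required primes act without fixed points. This yields a $T$-solvable $G$ with $\pgc(G)\cong\Xi$.

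\textbf{Main obstacle.} The delicate step is organizing the forward direction: one must confirm that \thref{psl2113color} produces a single $3$-coloring making $N(X)\setminus X$ monochromatic (using \thref{311DisconnectPSL223} to see that $N(X)\setminus X$ consists only of outside-neighbors of $2$ and $d$), and that the dichotomy (2a)/(2b) is governed exactly by whether $2$ has an outside neighbor, with Criterion 4 essential in \thref{WhichGraphsCanExistPSL223}. On the backward side the one genuine check is that \thref{psl223fpinfogeneral} supplies precisely the right representations, most notably the representation of $2.T$ with fixed-point set $[a,c]$ needed in (2b) to simultaneously suppress the edges from $2$ and from $d$ to a common outside vertex.
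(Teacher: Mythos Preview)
Your proof is correct and follows essentially the same route as the paper's own argument: the forward direction via \thref{psl2113color}, \thref{TriangleJailII}, \thref{311DisconnectPSL223}, \thref{noworry2}, and \thref{WhichGraphsCanExistPSL223}, and the backward direction via \thref{236Improved} and \thref{NewPGCFromOld} together with the representation data of \thref{psl223fpinfogeneral}. One small point where your version is actually cleaner: for case (2a) with $\Xi[X]\cong\notIsolatedVertexWhite$ you take the seed $E=C_a\times T$, which indeed has $\pgc(E)$ equal to the triangle on $\{2,c,d\}$ together with $a$ isolated; the paper writes $C_c\times T$ here, but since $c$ is the degree-$3$ vertex of $\pgc(T)$, isolating $c$ leaves only the path $2\!-\!d\!-\!a$, so your choice is the correct one.
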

\begin{proof}
    We first prove the forward direction. Let $G$ be a $T$-solvable group. If $\pgc(G)$ is triangle-free and 3-colorable, (1) is satisfied so consider the case where $\pgc(G)$ is not triangle-free and 3-colorable. We know that $\pgc(G)$ is 3-colorable by \thref{psl2113color}, so $\pgc(G)$ must contain a triangle. All triangles of $\pgc(G)$ are contained in $\pgc(G)[\pi(T)]$ by \thref{TriangleJailII}. Assign $X$ to $\pi(T)$ by assigning $w = a, x = 2, y = c, z = d$. Applying \thref{psl2113color}, there exists a 3-coloring of $\Xi = \pgc(G)$ such that $N(X)\setminus X$ shares one color. By \thref{psl223fpinfo}, there exist no edges of the form $c-p$ or $a-p$ in $\pgc(G)$ for $p\in\pi(G)\setminus\pi(T)$. By \thref{WhichGraphsCanExistPSL223}, $\pgc(G)[\pi(T)]\cong \hamSandwichWhite$, \szunrecognizei, or \notIsolatedVertexWhite.  If $\pgc(G)[\pi(T)]\cong \hamSandwichWhite$ or \szunrecognizei, all edges in $\pgc(G)$ between $p\in X$ and $q\in\pi(G)\setminus X$ have $p = d$, or else we would reach a contradiction to \thref{noworry2}. Thus, this case satisfies (2a).
    
     If $\pgc(G)[\pi(T)]\cong$\notIsolatedVertexWhite, we have 2 cases: The case where $b$ has no neighbors and the case where $b$ has neighbors.  The first case satisfies (2a) and the second case satisfies (2b). Thus, the forward direction is proved.
     
    We now turn to the backward direction. Let $\Xi$ be a graph that satisfies (1). Then, by \cite{2015REU}, there exists a solvable group $G$ such that $\pgc(G)\cong\Xi$, so $G$ is $T$-solvable. Now consider the case where $\Xi$ satisfies (2a).  If $\Xi[X]\cong\hamSandwichWhite$, there exists a $T$-solvable group $G$ such that $\pgc(G)\cong\Xi$ by \thref{psl223fpinfogeneral} and \thref{236Improved}. If $\Xi[X]\cong\notIsolatedVertexWhite$, let $E = C_c\times T$. For every $T$-solvable group $H$, for $p\in\pi(H)\setminus\pi(T)$, the edge $c-p\not\in\pgc(H)$ by \thref{311DisconnectPSL223}. Then, by \thref{psl223fpinfogeneral} we may apply \thref{NewPGCFromOld} to $E$ to get a $T$-solvable group $G$ such that $\pgc(G)\cong \Xi$. Now suppose $\Xi[X]\cong \szunrecognizei$. Note $\Xi\setminus X$ is triangle-free. There exists a strictly $T$-solvable group $M$ of the form $M\cong T\ltimes P$ for $P$ a $p$-group, $p\in\{a, c\}$ such that $\pgc(M)\cong \szunrecognizei$ where $d$ is the white vertex by Criterion 4. By \thref{psl223fpinfogeneral}, \thref{311DisconnectPSL223}, and our assumptions on $\Xi$; we may apply \thref{NewPGCFromOld} to find a $T$-solvable group $G$ such that $\pgc(G)\cong \Xi$. Now let $\Xi$ be a graph that satisfies (2b). Let the group $E$ be $2.T$ and apply \thref{236Improved} with \thref{psl223fpinfogeneral}. Thus, there exists a $T$-solvable group $G$ such that $\pgc(G)\cong\Xi$.
\end{proof}
\subsection{$\PSL(2,11)$}\label{sec:PSL211new}
\begin{figure}[H]
    \centering
    \begin{minipage}{.5\textwidth}
        \centering
        \begin{tikzpicture}
            \coordinate (A) at (0, 1);   
            \coordinate (B) at (1, 1);   
            \coordinate (C) at (1, 0);   
            \coordinate (D) at (0, 0);   
            \draw (B) -- (C);
            \draw (C) -- (D);
            \draw (A) -- (D);
            \draw (A) -- (B);
            \draw (A) -- (C);
            \foreach \point in {A, B, C, D}
                \draw[draw, fill=white] (\point) circle [radius=0.2cm];
            \foreach \point/\name in {(A)/5, (B)/2, (C)/11, (D)/3}
                \node at \point {\scriptsize\name};
        \end{tikzpicture}
        \caption*{$\pgc(\PSL(2,11))$}
    \end{minipage}%
    \begin{minipage}{.5\textwidth}
        \centering
        \begin{tikzpicture}
            \coordinate (A) at (0, 1);   
            \coordinate (B) at (1, 1);   
            \coordinate (C) at (1, 0);   
            \coordinate (D) at (0, 0);   
            \draw (B) -- (C);
            \draw (C) -- (D);
            \draw (A) -- (D);
            \draw (A) -- (C);
            \foreach \point in {A, B, C, D}
                \draw[draw, fill=white] (\point) circle [radius=0.2cm];
            \foreach \point/\name in {(A)/5, (B)/2, (C)/11, (D)/3}
                \node at \point {\scriptsize\name};
        \end{tikzpicture}
        \caption*{$\pgc(\Aut(\PSL(2,11)))$}
    \end{minipage}%
\end{figure}

We classify the prime graph complements of $\PSL(2, 11)$-solvable groups in this section. The group $\PSL(2, 11)$ has order $660 = 2^2 \cdot3\cdot5\cdot11$ and its automorphism group has order $1320 = 2^3\cdot3\cdot5\cdot11$. The Schur multiplier of $\PSL(2,11)$ is 2 and $\pi(\PSL(2,11))=\pi(\Aut(\PSL(2,11)))$ by \cite{ATLAS} and \cite{GAP}. Let $d = 11, a = 5, c = 3$. Then \thref{psl211fpinfo} matches Criterion 2 given in \ref{sec:generalT211}.

\begin{fact}\thlabel{psl211fpinfo}
    \begin{tabular}[h]{|c|c|}
    \hline
        Fixed Point Information &  \\
        \hline
        $\PSL(2,11)$ & $[[2,3,5], [2,3,5,11]]$ \\
        \hline
        $2.\PSL(2,11)$ & $[[2,3,5], [2,3,5,11], [3,5], [3,5,11]]$ \\
        \hline
    \end{tabular}\\
    The list refers to all element orders where elements have fixed points in some representation. Then the multiple lists refers to the fact that different irreducible representations have different fixed points. This information was found via \cite{GAP}.
\end{fact}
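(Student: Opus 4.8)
The plan is to verify the table by a direct character-theoretic computation (this is essentially what \cite{GAP} carries out internally, but it can also be done by hand). The key observation is that if $V$ is a complex module affording the character $\chi$ and $g$ has prime order $p$, then the dimension of the fixed space $V^{\langle g\rangle}$ equals $\tfrac1p\sum_{x\in\langle g\rangle}\chi(x)$, which is the multiplicity of the trivial character of $\langle g\rangle$ in $V|_{\langle g\rangle}$ and hence a nonnegative integer; so $g$ fixes a nonzero vector of $V$ if and only if this average is strictly positive. Therefore, for each irreducible character $\chi$ of $\PSL(2,11)$ and of $2.\PSL(2,11)$ and each $p\in\{2,3,5,11\}$, it suffices to evaluate $\tfrac1p\sum_{x\in\langle g\rangle}\chi(x)$ for one element $g$ of order $p$ and record when it is positive; the set of distinct resulting prime-sets will then be exactly the second column of the table.

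First I would assemble the ordinary character tables of $\PSL(2,11)$ (eight conjugacy classes; irreducible degrees $1,5,5,10,10,11,12,12$) and of $2.\PSL(2,11)$ (fifteen classes; the eight characters inflated from $\PSL(2,11)$ together with seven faithful ones, of degrees $6,6,10,10,10,12,12$), which are standard and available, e.g., in \cite{ATLAS}. For $p\in\{3,5,11\}$ the Sylow $p$-subgroups of both groups are cyclic of order $p$, so \thref{ConjugacyClassLanding} applies: if there are $n_p$ conjugacy classes of elements of order $p$ with representatives $g_1,\dots,g_{n_p}$, then
\[
\dim V^{\langle g\rangle}=\frac1p\left(\chi(1)+\frac{p-1}{n_p}\sum_{i=1}^{n_p}\chi(g_i)\right),
\]
which reduces each computation to $\chi(1)$, a few class values, and the integer $n_p$ (here $n_3=1$, $n_5=2$, $n_{11}=2$ in both groups). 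The prime $p=2$ is even easier: $\PSL(2,11)$ has a single class of involutions (whose preimage in $2.\PSL(2,11)$ consists of elements of order $4$), while the unique involution of $2.\PSL(2,11)$ is the central element $z$, so $\dim V^{\langle z\rangle}=\chi(1)$ for an inflated character and $\dim V^{\langle z\rangle}=\tfrac12(\chi(1)-\chi(1))=0$ for a faithful one.

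Running through these finitely many cases, the pattern I expect to see is the following. In every irreducible representation of $\PSL(2,11)$ the primes $2,3,5$ keep a fixed vector, while an element of order $11$ loses its fixed vector precisely in the discrete-series characters of degree $10$ and the exceptional characters of degree $5$ — these yield the list $[2,3,5]$ — and keeps one in the trivial, Steinberg, and principal-series characters, yielding $[2,3,5,11]$. Passing to $2.\PSL(2,11)$, the inflated characters reproduce these two lists, and the faithful characters behave the same way except that $2$ is dropped (since $z$ acts as $-\operatorname{id}$): the faithful discrete-series characters (degree $10$) give $[3,5]$, while the faithful principal-series characters (degree $12$) and the faithful exceptional characters (degree $6$) give $[3,5,11]$. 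This matches the stated table. I do not anticipate a real obstacle; the only points requiring care are determining how the $p-1$ nontrivial powers of an order-$p$ element distribute among conjugacy classes (handled by \thref{ConjugacyClassLanding}) and keeping the faithful-versus-inflated and $2.\PSL(2,11)$-versus-$\PSL(2,11)$ distinctions straight for $p=2$ and $p=3$ — the rest is bookkeeping and arithmetic.
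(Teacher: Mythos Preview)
Your proposal is correct and is essentially the same approach as the paper's: the paper simply records this as a fact ``found via \cite{GAP}'', and what you describe is precisely the character-theoretic computation that GAP performs, spelled out by hand using the fixed-space formula $\dim V^{\langle g\rangle}=\tfrac1p\sum_{x\in\langle g\rangle}\chi(x)$ together with \thref{ConjugacyClassLanding}. Your breakdown of which characters yield which prime-sets (discrete-series and exceptional characters dropping $11$; faithful characters of $2.\PSL(2,11)$ dropping $2$ because the central involution acts as $-\mathrm{id}$) is accurate.
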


\begin{lemma}\thlabel{psl211brauer}
    Let $p \in \pi(\PSL(2,11))$, let $N$ be a nontrivial $p$-group, and let $G \cong N.\PSL(2,11)$. Then we have the following restrictions on $\pgc(G)$, where 11 is the white vertex and the other vertices are as in the diagram at the beginning of this subsection:
     \begin{itemize}
        \item If $p  = 2$ then $\pgc(G)$ is \notIsolatedVertexWhite or \szunrecognizeiii.
        \item If $p = 3$ then $\pgc(G)$ is \szunrecognizei or \trianglePlusIsolatedExtra.
         \item If $p = 5$ then $\pgc(G)$ is \szunrecognizeii or \fletchingWhite.
        \item If $p= 11$ then $\pgc(G)$ is \brokenFrameIsolatediii.
    \end{itemize}
\end{lemma}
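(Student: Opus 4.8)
The plan is to argue exactly as in the proof of \thref{psl16brauer}: the possibilities for $\pgc(G)$ are governed by \thref{GeneralizedModuleExtensionsCited}, so it suffices, for each $p\in\{2,3,5,11\}$, to record the fixed-point data of the $p$-modular irreducible representations of $\PSL(2,11)$. Concretely, for $\chi\in\operatorname{IBr}_p(\PSL(2,11))$ the set $B_\chi$ consists of those edges $p-q$ for which $\tfrac1q\sum_{x\in\langle g\rangle}\chi(x)>0$ for an element $g$ of order $q$, and by \thref{GeneralizedModuleExtensionsCited} the realizable graphs are exactly the graphs $\pgc(\PSL(2,11))\setminus\bigcup_{\chi\in Y}B_\chi$ over subsets $Y\subseteq\operatorname{IBr}_p(\PSL(2,11))$; since $N$ is nontrivial, $Y$ may be taken nonempty (e.g.\ $Y$ contains the $T$-composition factors of $N$, or, directly: a coprime fixed-point count up a chief series of $G$ inside $N$ shows $C_N(\tilde g)\neq 1$ for a lift of any $g\in T$ of order $q\neq p$ that acts with fixed points on every such chief factor). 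Note that every edge of $B_\chi$ is incident to $p$, so in every case $\pgc(G)$ differs from $\pgc(\PSL(2,11))$ only at the vertex $p$, and deletions of the non-edge $2-3$ are vacuous.

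I would first dispose of the defining characteristic $p=11$, which is the only conceptual point. The irreducible $11$-Brauer modules of $\PSL(2,11)$ are the even symmetric powers $\operatorname{Sym}^{2k}(V)$, $0\le k\le 5$, of the natural $2$-dimensional module $V$ of $\operatorname{SL}(2,11)$, and the monomial $v_1^{k}v_2^{k}$ is a zero-weight vector fixed by every semisimple element. Since all elements of order $2$, $3$, or $5$ in $\PSL(2,11)$ are semisimple, $B_\chi=\{11-2,\,11-3,\,11-5\}$ for every $\chi\in\operatorname{IBr}_{11}(\PSL(2,11))$; as $Y\neq\varnothing$, all three edges at $11$ are deleted and $\pgc(G)\cong\brokenFrameIsolatediii$.

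For $p\in\{2,3,5\}$ I would take the (small: at most six irreducibles in each case) $p$-modular Brauer table of $\PSL(2,11)$ from \cite{GAP} and display, in the tabular format of \thref{psl16brauer}, whether an element of each order $q\in\pi(\PSL(2,11))\setminus\{p\}$ has a nonzero fixed vector in each Brauer module; these numbers are given by \cite[Lemma 6.2]{BrauerTableFP}, and by \thref{ConjugacyClassLanding} could also be computed by hand. From the table one reads off that $2-5$ lies in every $B_\chi$ when $p=2$, that $3-5$ lies in every $B_\chi$ when $p=3$, and that both $5-2$ and $5-3$ lie in every $B_\chi$ when $p=5$, whereas the remaining edge at $p$ --- namely $2-11$, $3-11$, or $5-11$ --- lies in $B_\chi$ precisely for those $\chi$ (among them the trivial module) on which order-$11$ elements have fixed points. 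Forming all unions $\bigcup_{\chi\in Y}B_\chi$ and subtracting them from $\pgc(\PSL(2,11))$ then yields: for $p=2$, the graph \notIsolatedVertexWhite{} (delete $2-5$ and $2-11$; realized, e.g., by $C_2\times\PSL(2,11)$) and the graph \szunrecognizeiii{} (delete $2-5$ only); for $p=3$, the graphs \trianglePlusIsolatedExtra{} and \szunrecognizei{}; and for $p=5$, the graphs \szunrecognizeii{} and \fletchingWhite{}. The main obstacle is just the bookkeeping in this last step: one must check that no union of the edge-sets $B_\chi$ collapses (after discarding the vacuous deletion of $2-3$) to anything other than one of the two graphs listed for each $p$, and that both genuinely occur --- the ``large'' one from the trivial module and the ``small'' one from a suitable nontrivial $p$-Brauer module. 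The Brauer-table computations themselves are routine once the tables are in hand.
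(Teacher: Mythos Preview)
Your proposal is correct and follows essentially the same approach as the paper: invoke \thref{GeneralizedModuleExtensionsCited} and then, for each $p\in\{2,3,5,11\}$, tabulate the fixed-point data of $\operatorname{IBr}_p(\PSL(2,11))$ to read off the possible graphs (the paper simply displays the GAP-computed tables and cites \cite[Lemma~6.2]{BrauerTableFP} and \thref{ConjugacyClassLanding}). Your conceptual treatment of $p=11$ via the even symmetric powers of the natural $\operatorname{SL}(2,11)$-module, and your explicit remark that $N$ nontrivial forces $Y\neq\varnothing$, are small elaborations the paper leaves implicit, but the argument and conclusion are the same.
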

\begin{proof}
    The proof follows from \thref{GeneralizedModuleExtensionsCited} and the Brauer tables of $\PSL(2, 11)$. The tables below were computed using the Brauer tables of $\PSL(2, 11)$ accessed through GAP \cite{GAP} and \cite[Lemma 6.2]{BrauerTableFP}. They could also  be computed manually using \thref{ConjugacyClassLanding}.

    \begin{center} 
    \mbox{
    \begin{tabular}{||c c c c||} 
     \hline
     $\chi_i \in \operatorname{IBr}_{2}(\PSL(2,11))$ & 3 & 5 & 11 \\ [0.5ex] 
     \hline\hline
     $\chi_1$ & Yes & Yes & Yes \\ 
     \hline
     $\chi_2$ through $\chi_4$  & Yes & Yes & No\\
     \hline
     $\chi_5$ through $\chi_6$ & Yes & Yes & Yes\\
     \hline
    \end{tabular}
    \quad
    \begin{tabular}{||c c c c||} 
     \hline
     $\chi_i \in \operatorname{IBr}_{3}(\PSL(2,11))$ & 2 & 5 & 11\\ [0.5ex] 
     \hline\hline
     $\chi_1$ & Yes & Yes & Yes\\[1ex]
     \hline
     $\chi_2$ through $\chi_4$ & Yes & Yes & No\\[1ex]
     \hline
     $\chi_5$ through $\chi_6$ & Yes & Yes & Yes\\[1ex]
     \hline
    \end{tabular}
    }
    \end{center}
    \begin{center}
    \mbox{
    \begin{tabular}{||c c c c||} 
     \hline
     $\chi_i \in \operatorname{IBr}_{5}(\PSL(2,11))$ & 2 & 3 & 11 \\ [0.5ex] 
     \hline\hline
     $\chi_1$ & Yes & Yes & Yes \\ [1ex] 
     \hline
     $\chi_2$ through $\chi_{5}$ & Yes & Yes & No \\ [1ex] 
     \hline
     $\chi_6$ & Yes & Yes & Yes \\ [1ex] 
     \hline
    \end{tabular}
    \quad
    \begin{tabular}{||c c c c||} 
     \hline
     $\chi_i \in \operatorname{IBr}_{11}(\PSL(2,11))$ & 2 & 3 & 5\\ [0.5ex] 
     \hline\hline
     $\chi_1$ through $\chi_6$ & Yes & Yes & Yes\\[1ex] 
     \hline
    \end{tabular}
    }\vspace{0.2cm}
    \end{center}
\end{proof}
\begin{corollary}\thlabel{cri3satisfiedPSL211}
    There exists a group $T\ltimes P$ with $\pgc(T\ltimes P) = \szunrecognizei$ such that $P$ is a 3-group.
\end{corollary}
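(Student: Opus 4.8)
The plan is to apply \thref{GeneralizedModuleExtensionsCited} with $T = \PSL(2,11)$ and $p = 3$, selecting a single Brauer character whose associated edge set deletes exactly the edge $3-5$ from $\pgc(\PSL(2,11))$. Concretely, from the Brauer table for $\operatorname{IBr}_3(\PSL(2,11))$ recorded in the proof of \thref{psl211brauer}, there are Brauer characters $\chi$ (for instance $\chi_2,\chi_3,\chi_4$) for which elements of order $2$ and of order $5$ have fixed points but elements of order $11$ do not. Fix such a $\chi$ and let $B_\chi$ be the edge set of \thref{GeneralizedModuleExtensionsCited}. Every edge in $B_\chi$ is incident to the vertex $3$, and the only edges of $\pgc(\PSL(2,11))$ incident to $3$ are $3-5$ and $3-11$; since order-$11$ elements have no fixed points we get $3-11 \notin B_\chi$, whereas order-$5$ elements do, so $3-5 \in B_\chi$. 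Thus, taking $Y = \{\chi\}$, \thref{GeneralizedModuleExtensionsCited} gives a nontrivial $3$-group $N$ and a group $N.\PSL(2,11)$ with $\pgc(N.\PSL(2,11)) = \pgc(\PSL(2,11)) \setminus \{3-5\}$.

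Next I would check that $\pgc(\PSL(2,11)) \setminus \{3-5\}$ equals $\szunrecognizei$ as rooted graphs, where $d = 11$ corresponds to the white (root) vertex: after deleting $3-5$, the vertex $11$ is still adjacent to each of $2,3,5$, the vertex $3$ has become pendant on $11$, and $\{11,2,5\}$ is the unique triangle --- which is precisely $\szunrecognizei$ rooted at $11$.

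Finally I would note that the extension can be taken to split: the $3$-group furnished by \thref{GeneralizedModuleExtensionsCited} is the module affording $\chi$ realized over a finite field of characteristic $3$, i.e.\ an elementary abelian $3$-group on which $\PSL(2,11)$ acts linearly, so $N.\PSL(2,11) = \PSL(2,11) \ltimes N$; equivalently one may quote \cite[Theorem 2.2]{Suz} as was done for $\PSL(2,2^4)$. Taking $P = N$ yields the desired group $T \ltimes P$ with $\pgc(T\ltimes P) = \szunrecognizei$ and $P$ a $3$-group. The argument is routine bookkeeping; the only place needing attention is ensuring the chosen Brauer character deletes $3-5$ but not $3-11$, so that the realized graph is $\szunrecognizei$ rather than $\trianglePlusIsolatedExtra$, and verifying the rooted-graph isomorphism.
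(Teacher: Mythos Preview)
Your proposal is correct and takes essentially the same approach as the paper. The paper's proof simply cites \cite[Theorem 2.2]{Suz} applied to $\chi_4\in\operatorname{IBr}_3(\PSL(2,11))$; you arrive at the same construction via \thref{GeneralizedModuleExtensionsCited} and then note (correctly) that the resulting module extension splits, which is exactly what \cite[Theorem 2.2]{Suz} provides directly.
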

\begin{proof}
    This follows from \cite[Theorem 2.2]{Suz} applied to the representation corresponding to $\chi_4\in\operatorname{IBr}_3(\PSL(2, 11))$ (see \thref{psl211brauer}).
\end{proof}
\begin{lemma}\thlabel{nokitespsl211}
    There is no $\PSL(2,11)$-solvable group that realizes \SuzukiThirtyTwoElusiveGraphWhiteBadb where 11 corresponds to the white vertex.
\end{lemma}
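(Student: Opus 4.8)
The plan is to run the argument of \thref{Suzuki32PartialRuleOut} essentially verbatim; the situation for $\PSL(2,11)$ is in fact easier, since $|\operatorname{Out}(\PSL(2,11))|=2$. First I would assume for contradiction that some $\PSL(2,11)$-solvable group realizes $\SuzukiThirtyTwoElusiveGraphWhiteBadb$ with $11$ at the root. Since that graph contains a triangle, no solvable group realizes it by \cite{2015REU}, so by \cite[Lemma 4.4]{Suz} it suffices to rule out a $\PSL(2,11)$-solvable group $G$ with $\pi(G)=\pi(\PSL(2,11))=\{2,3,5,11\}$ and $\pgc(G)\cong\SuzukiThirtyTwoElusiveGraphWhiteBadb$ (root $\mapsto 11$). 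By \cite[Lemma 2.1.1]{2023REU}, write $G\cong N.S$ with $N$ solvable and $\operatorname{Inn}(\PSL(2,11))\leq S\leq\Aut(\PSL(2,11))$; since $|\operatorname{Out}(\PSL(2,11))|=2$, either $S\cong\PSL(2,11)$ or $S\cong\Aut(\PSL(2,11))$. Let $K\leq G$ be the preimage of $\operatorname{Inn}(\PSL(2,11))$, so $K\cong N.\PSL(2,11)$; since passing to a subgroup, or to a quotient with the same set of prime divisors, can only enlarge the prime graph complement, we get $\pgc(G)\subseteq\pgc(K)\subseteq\pgc(\PSL(2,11))$ on the common vertex set $\{2,3,5,11\}$.

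Next I would pin down the prime $5$. Because $\pgc(G)$ is a spanning subgraph of $\pgc(\PSL(2,11))$, whose only degree-$3$ vertices are $5$ and $11$, every degree-$3$ vertex of $\pgc(G)$ lies in $\{5,11\}$. In $\SuzukiThirtyTwoElusiveGraphWhiteBadb$ the root has degree $2$ while another vertex has degree $3$; as the root is identified with $11$, the degree-$3$ vertex of $\pgc(G)$ must be $5$, so $5-2,\,5-3,\,5-11\in\pgc(G)\subseteq\pgc(K)$. Exactly as in the proof of \thref{Suzuki32PartialRuleOut}, \cite[Lemma 5.11]{Suz} then forces each of $2,3,11$ not to divide $|N|$, i.e.\ $N$ is a $5$-group.

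Finally I would split on whether $N$ is trivial. If $N\neq 1$, then $K\cong N.\PSL(2,11)$ with $N$ a nontrivial $5$-group, so by \thref{psl211brauer} (case $p=5$) $\pgc(K)$ is $\szunrecognizeii$ or $\fletchingWhite$, both of which are triangle-free; hence $\pgc(G)\subseteq\pgc(K)$ is triangle-free, contradicting the triangle in $\SuzukiThirtyTwoElusiveGraphWhiteBadb$. If $N=1$, then $G$ is $\PSL(2,11)$ or $\Aut(\PSL(2,11))$, and the figures at the start of this subsection show that in both $\pgc(\PSL(2,11))$ and $\pgc(\Aut(\PSL(2,11)))$ the vertex $11$ has degree $3$, so neither is isomorphic to $\SuzukiThirtyTwoElusiveGraphWhiteBadb$ as a rooted graph rooted at $11$. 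Either way we reach a contradiction. The only delicate points are invoking \cite[Lemma 4.4]{Suz} correctly to reduce to $\pi(G)=\pi(\PSL(2,11))$ and carefully respecting the rooted-graph convention: the whole argument rests on the fact that $11$ has degree $2$ in the target graph while the unique degree-$3$ vertex of $\pgc(\PSL(2,11))$ other than the root is $5$.
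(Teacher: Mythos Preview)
Your overall strategy is sound and close to the paper's, but there is a real gap at the step where you conclude that $N$ must be a $5$-group by invoking \cite[Lemma 5.11]{Suz} ``exactly as in the proof of \thref{Suzuki32PartialRuleOut}.'' The two situations are not analogous in the relevant respect: in the $\PSL(2,2^4)$ argument the degree-$3$ vertex is $2$, and the Sylow $2$-subgroups of $\PSL(2,2^4)$ are $(C_2)^4$, which do not satisfy the Frobenius criterion---it is precisely this non-Frobenius property (as in \thref{frobprop}/\thref{generalize222}) that underlies the applicability of \cite[Lemma 5.11]{Suz} there. For $\PSL(2,11)$ the degree-$3$ vertex is $5$, but the Sylow $5$-subgroups of $\PSL(2,11)$ are cyclic of order $5$ and hence \emph{do} satisfy the Frobenius criterion, so you have not justified why $2,3,11\nmid|N|$. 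That the paper's own proof of this lemma avoids \cite[Lemma 5.11]{Suz} entirely is further evidence that it does not apply here.

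The paper closes this gap with a chief-series argument on $H\cong N.\PSL(2,11)$: if $N\neq 1$, pass to the top quotient $H/N_{n-1}\cong K_p.\PSL(2,11)$ with $K_p$ a nontrivial $p$-group for some $p\in\{2,3,5,11\}$; by \thref{psl211brauer}, in every such prime graph complement at least one edge $5{-}x$ with $x\in\{2,3,11\}$ is missing, and since $\pgc(G)\subseteq\pgc(H/N_{n-1})$ this contradicts $5$ having degree $3$ in $\pgc(G)$. This handles all primes uniformly and makes your separate ``$N$ a nontrivial $5$-group'' case unnecessary. Your treatment of the case $N=1$ via the degree of $11$ is correct; note that the paper dispatches the case $S\cong\Aut(\PSL(2,11))$ for \emph{arbitrary} $N$ by the even simpler observation that $5$ already has degree $2$ in $\pgc(\Aut(\PSL(2,11)))$.
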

\begin{proof}
    For contradiction, suppose there exists a $\PSL(2, 11)$-solvable group $G$ such that \\
    $\pgc(G)[\pi(\PSL(2, 11))]$ is \SuzukiThirtyTwoElusiveGraphWhiteBadb. Because $\pgc(G)[\pi(\PSL(2, 11))]\subseteq\pgc(\PSL(2, 11))$, the degree three vertex must be 5. Then, $G$ must be strictly $\PSL(2, 11)$-solvable, or we would reach a contradiction to the main result of \cite{2015REU}. By \cite[Theorem 4.4]{Suz}, there exists a $H\leq G$ with $\pi(H) = \pi(\PSL(2, 11))$ and $\pgc(H) = \pgc(G)[\pi(H)]$. Then $H\cong N.S$ where $N$ is solvable by \cite[Lemma 2.1.1]{2023REU} and $S$ is such that $\operatorname{Inn}(\PSL(2, 11))\leq S\leq \Aut(\PSL(2, 11))$. First suppose $S\cong\operatorname{Inn}(\PSL(2, 11))\cong\PSL(2, 11)$.  Take a chief series $1 = N_1\unlhd N_2\unlhd\cdots\unlhd N_n\unlhd H$ of $H$ such that $H/N_n\cong\PSL(2, 11)$, guaranteed by \cite[Lemma 5.8]{Suz}. Consider the group $R = H/N_{n-1}$, which is isomorphic to a group $K_p.\PSL(2, 11)$ for some $p$-group with $p\in\pi(\PSL(2, 11))$, also by \cite[Lemma 5.8]{Suz}. If $N$ is nontrivial, $K_p$ must be nontrivial. By \thref{GeneralizedModuleExtensionsCited} and \thref{psl211brauer}, notice that every $q$-group extension for $q \in \{2,3,5,11\}$ removes at least one edge of the form $5-p$ for $p \in \{2,3,11\}$. Therefore, $K_p$ must be trivial, so $H = \PSL(2, 11)$, which contradicts $\pgc(H)\cong\pgc(G)\cong\SuzukiThirtyTwoElusiveGraphWhiteBadb$. Then, we must have $\operatorname{Inn}(\PSL(2, 11)) < S\leq \Aut(\PSL(2, 11))$. Because $[\operatorname{Inn}(\PSL(2, 11)):\Aut(\PSL(2, 11))] = 2$, this means $S \cong \Aut(\PSL(2, 11))$, so $\pgc(G)\cong \pgc(H)\subseteq\pgc(\Aut(\PSL(2, 11)))$. $5$ is a degree 3 vertex in $\pgc(G)$ but not in  $\pgc(\Aut(\PSL(2, 11)))$, so we reach a contradiction in all cases.
\end{proof}

By the prime graph complements given at the beginning of this section, the fact that $\pi(\PSL(2, 11)) = \pi(\Aut(\PSL(2, 11)))$, \thref{psl211fpinfo}, \thref{cri3satisfiedPSL211}, and \thref{nokitespsl211}, the criteria listed at the beginning of \ref{sec:generalT211} are satisfied, so \thref{TPSL211General} with $T = \PSL(2, 11)$ gives a complete classification of the prime graph complements of $\PSL(2, 11)$-solvable groups.
\subsection{$\PSL(2, 19)$}\label{sec:PSL219}
\begin{figure}[H]
    \centering
    \begin{minipage}{.5\textwidth}
        \centering
        \begin{tikzpicture}
            \coordinate (A) at (0, 1);   
            \coordinate (B) at (1, 1);   
            \coordinate (C) at (1, 0);   
            \coordinate (D) at (0, 0);   
            \draw (B) -- (C);
            \draw (C) -- (D);
            \draw (A) -- (D);
            \draw (A) -- (B);
            \draw (A) -- (C);
            \foreach \point in {A, B, C, D}
                \draw[draw, fill=white] (\point) circle [radius=0.2cm];
            \foreach \point/\name in {(A)/3, (B)/2, (C)/19, (D)/5}
                \node at \point {\scriptsize\name};
        \end{tikzpicture}
        \caption*{$\pgc(\PSL(2,19))$}
    \end{minipage}%
    \begin{minipage}{.5\textwidth}
        \centering
        \begin{tikzpicture}
            \coordinate (A) at (0, 1);   
            \coordinate (B) at (1, 1);   
            \coordinate (C) at (1, 0);   
            \coordinate (D) at (0, 0);   
            \draw (B) -- (C);
            \draw (C) -- (D);
            \draw (A) -- (D);
            \draw (A) -- (C);
            \foreach \point in {A, B, C, D}
                \draw[draw, fill=white] (\point) circle [radius=0.2cm];
            \foreach \point/\name in {(A)/3, (B)/2, (C)/19, (D)/5}
                \node at \point {\scriptsize\name};
        \end{tikzpicture}
        \caption*{$\pgc(\Aut(\PSL(2,19)))$}
    \end{minipage}%
\end{figure}
In this section, we classify the prime graph complements of $\PSL(2, 19)$-solvable groups. The group $\PSL(2, 19)$ has order $3420 = 2^2\cdot3^3\cdot5\cdot19$ and its automorphism group has order $6840 = 2^3\cdot3^3\cdot5\cdot19$. The Schur multiplier of $\PSL(2,19)$ is 2 and $\pi(\PSL(2,19))=\pi(\Aut(\PSL(2,19)))$. Let $d = 19, a = 5, c = 3$. Then \thref{psl219fpinfo} matches Criterion 2 given in \ref{sec:generalT211}.

\begin{fact}\thlabel{psl219fpinfo}
    \begin{tabular}[h]{|c|c|}
    \hline
        Fixed Point Information &  \\
        \hline
        $\PSL(2,19)$ & $[[2,3,5], [2,3,5,19]]$ \\
        \hline 
        $2.\PSL(2,19)$ & $[[2,3,5], [2,3,5,19], [3,5], [3,5,19]]$ \\
        \hline
    \end{tabular}\\
    Each list refers to all prime element orders where elements of an order in a sublist have fixed points in some representation. The multiple lists refer to the fact that different irreducible representations have different fixed points. This list was obtained using \cite{GAP}.
\end{fact}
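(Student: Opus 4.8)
The quickest route, which is the one the authors take, is to read the data off the ordinary character-table library of \cite{GAP}; here I sketch the elementary computation behind it. As already noted, the Schur multiplier of $\PSL(2,19)$ is $2$, so the perfect central extensions of $T=\PSL(2,19)$ are exactly $T$ and $2.T=\operatorname{SL}(2,19)$, and the table has two rows, each a finite check over $\operatorname{Irr}(G)$ for $G\in\{T,2.T\}$. For $\chi\in\operatorname{Irr}(G)$ and a prime $\ell\in\{2,3,5,19\}$, an element $g$ of order $\ell$ fixes a nonzero vector in the representation affording $\chi$ if and only if $\langle\chi|_{\langle g\rangle},1\rangle=\tfrac{1}{\ell}\sum_{j=0}^{\ell-1}\chi(g^{j})>0$; the plan is to evaluate this for all $\chi$ and all such $\ell$, record the set of primes for which it is positive, and list the distinct sets.

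The computation is short thanks to \thref{ConjugacyClassLanding}. For $\ell\in\{5,19\}$ the Sylow $\ell$-subgroup of $G$ is cyclic of order $\ell$ and $\PSL(2,19)$ (hence $2.T$) has exactly $n=2$ conjugacy classes of elements of each of these orders, so the nonidentity powers of $g$ split evenly, $\tfrac{\ell-1}{2}$ into each class, and $\sum_{j=1}^{\ell-1}\chi(g^{j})=\tfrac{\ell-1}{2}\bigl(\chi(g_{1})+\chi(g_{2})\bigr)$ for class representatives $g_{1},g_{2}$; for $\ell=3$ there is one class of order-$3$ elements with $g\sim g^{-1}$, so the sum is $2\chi(g)$; and for $\ell=2$ it is just $\chi(t)$. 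Feeding in the standard values of the $\PSL(2,q)$ and $\operatorname{SL}(2,q)$ characters on unipotent, split-semisimple and elliptic elements then gives every entry. Two structural remarks shorten the bookkeeping: an involution fixes a vector in every nontrivial irreducible representation of the simple group $T$, since otherwise it would map to the central scalar $-\mathrm{Id}$; while the unique involution $-\mathrm{Id}$ of $2.T$ acts as $-\mathrm{Id}$ in every faithful representation and trivially in every representation inflated from $T$, which settles whether $2$ lies in each list. A further direct check shows $\langle\chi|_{\langle g\rangle},1\rangle>0$ for all $\chi$ when $\ell\in\{3,5\}$ (the constant $\chi(1)/\ell$ is only ever reduced by a small bounded correction), so $3$ and $5$ lie in every list, whereas $19$ drops out precisely for the cuspidal characters of degree $q-1$ and the two further characters of $T$ of degree $(q-1)/2$, on which the unipotent contribution cancels $\chi(1)/19$ exactly.

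Collecting these, $T=\PSL(2,19)$ yields only $[2,3,5]$ and $[2,3,5,19]$, and $2.T$ yields those two plus $[3,5]$ and $[3,5,19]$ from its faithful characters, which is the stated table. Nothing here is deep; the only place to be careful is the power-map bookkeeping that \thref{ConjugacyClassLanding} organizes, together with the characters that vanish on entire classes (Steinberg on unipotents, cuspidal characters on split-semisimple elements) and the irrationalities $\sqrt{\pm19}$ in the exceptional characters — which is exactly why, in practice, one simply reads the numbers off \cite{GAP}. Finally, this is the input that, via \thref{corollaryLemma} and \thref{236Improved}, yields Criterion 2 of Section \ref{sec:generalT211} for $\PSL(2,19)$, and the same method run against the Brauer tables of $\PSL(2,19)$ (now through \thref{GeneralizedModuleExtensionsCited}) produces the analogue of \thref{psl211brauer} needed for Criterion 4.
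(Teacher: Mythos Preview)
Your proposal is correct. The paper offers no proof beyond citing \cite{GAP}; the statement is recorded as a computational Fact. You go further by sketching the hand computation from the generic $\PSL(2,q)$ and $\operatorname{SL}(2,q)$ character tables, using \thref{ConjugacyClassLanding} to organize the power-map sums. The structural observations you isolate---that an involution of the simple group $T$ must fix a nonzero vector in every nontrivial irreducible (else it would map to the central scalar $-\mathrm{Id}$), that the central involution of $2.T$ acts as $-\mathrm{Id}$ in every faithful irreducible, and that $19$ drops out precisely on the cuspidals of degree $q-1=18$ and the two exceptional characters of degree $(q-1)/2=9$---are all correct and recover the stated lists. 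Your size estimate for $3$ and $5$ is somewhat loose as written (for the degree-$9$ characters the correction term is not obviously dominated by $\chi(1)/\ell$), but you already flag this and defer to GAP for the exact values, and the conclusion matches. The closing remarks on Criteria~2 and~4 are downstream commentary on how the Fact is used rather than part of verifying it.
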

\begin{lemma}\thlabel{psl219Brauer}
    Let $p \in \pi(\PSL(2, 19))$, let $N$ be a nontrivial $p$-group, and let $G \cong N.\PSL(2, 19)$. Then we have the following restrictions on $\pgc(G)$, where the white vertex denotes 19 and the other vertices correspond to their positions in the figures at the start of this section:
    \begin{itemize}
        \item If $p  = 2$ then $\pgc(G)$ is \notIsolatedVertexWhite or \szunrecognizeiii.
        \item If $p = 3$ then $\pgc(G)$ is \fletchingWhite or \szunrecognizeii.
         \item If $p = 5$ then $\pgc(G)$ is \szunrecognizei or \trianglePlusIsolatedExtra.
        \item If $p= 19$ then $\pgc(G)$ is \brokenFrameIsolatediii.
    \end{itemize}
\end{lemma}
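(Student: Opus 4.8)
The plan is to proceed exactly as in the proof of \thref{psl211brauer}: the statement is a consequence of \thref{GeneralizedModuleExtensionsCited} applied to $T=\PSL(2,19)$, together with the fixed-point data of the modular irreducibles of $\PSL(2,19)$ in characteristics $2$, $3$, $5$, and $19$. Concretely, for a nontrivial $p$-group $N$ and $G\cong N.\PSL(2,19)$, \thref{GeneralizedModuleExtensionsCited} gives $\pgc(G)=\pgc(\PSL(2,19))\setminus\bigcup_{\chi\in Y}B_\chi$ for some $Y\subseteq\operatorname{IBr}_p(\PSL(2,19))$, where $B_\chi$ records which edges $p\!-\!q$ are destroyed when $\chi$ is a constituent of $N$ (equivalently, which orders $q$ admit an element with a nonzero fixed point in the module affording $\chi$). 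Since $N$ is nontrivial, some chief factor of $G$ inside $N$ is a nontrivial elementary abelian $p$-group, so $Y$ is nonempty. Thus it suffices to (i) tabulate $B_\chi$ for each $p\in\{2,3,5,19\}$ and each $\chi\in\operatorname{IBr}_p(\PSL(2,19))$, and then (ii) enumerate the possible unions $\bigcup_{\chi\in Y}B_\chi$ over nonempty $Y$ and read off the resulting subgraphs of $\pgc(\PSL(2,19))$, using the labelling $a=5$, $c=3$, $d=19$, so that $\pgc(\PSL(2,19))$ is $K_4$ minus the edge $2\!-\!5$.

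For step (i), the first row of each table is the trivial Brauer character, for which every element fixes every vector, so $B_\chi$ consists of all edges of $\pgc(\PSL(2,19))$ incident to $p$; removing these isolates $p$ (this accounts for \notIsolatedVertexWhite when $p=2$, \szunrecognizeii when $p=3$, and \brokenFrameIsolatediii when $p=19$). The remaining rows I would compute from the Brauer tables of $\PSL(2,19)$ available through \cite{GAP} via \cite[Lemma 6.2]{BrauerTableFP}, exactly as in \thref{psl211brauer}; by \thref{ConjugacyClassLanding} these entries can alternatively be obtained by hand, since the Sylow subgroups for $\ell\in\{5,19\}$ are cyclic and those for $\ell\in\{2,3\}$ are metacyclic, so the averages $\tfrac{1}{q}\sum_{x\in\langle g\rangle}\chi(x)$ are pinned down by the character values on the few relevant classes. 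The case $p=19$ does not even need the tables: $19$ is the defining characteristic, so the nontrivial $19$-modular irreducibles of $\PSL(2,19)$ are the even symmetric powers $S^{k}$ of the natural $2$-dimensional module with $0<k\le 18$ even, and a semisimple element with eigenvalues $\zeta^{\pm1}$ on the natural module acts on $S^{k}$ with eigenvalues $\zeta^{k},\zeta^{k-2},\dots,\zeta^{-k}$, which for even $k$ always includes $\zeta^{0}=1$; hence every element of order $2$, $3$, or $5$ has a fixed vector in every such module, so $B_\chi=\{19\!-\!2,\,19\!-\!3,\,19\!-\!5\}$ for all nontrivial $\chi\in\operatorname{IBr}_{19}$, and $\pgc(G)$ is forced to be \brokenFrameIsolatediii.

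For step (ii), the enumeration is a short finite check for each $p$, identical in spirit to the one behind \thref{psl211brauer}: list the distinct sets $B_\chi$ as subsets of the five edges of $\pgc(\PSL(2,19))$, form all unions over nonempty index sets, and match each outcome against the four-vertex graphs in the statement. I expect the only real care is needed here. One must verify that no nontrivial $\ell$-modular irreducible has $B_\chi$ avoiding all edges of $\pgc(\PSL(2,19))$, so that the full graph $\pgc(\PSL(2,19))$ never occurs for nontrivial $N$; and that for $p\in\{2,3,5\}$ exactly the two listed graphs arise and nothing else — in particular, that every nontrivial $2$-modular irreducible destroys the edge $2\!-\!3$ (forcing \szunrecognizeiii or \notIsolatedVertexWhite), every nontrivial $3$-modular irreducible destroys both $2\!-\!3$ and $3\!-\!5$ (forcing \fletchingWhite or \szunrecognizeii), and every nontrivial $5$-modular irreducible destroys $3\!-\!5$ (forcing \szunrecognizei or \trianglePlusIsolatedExtra). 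This bookkeeping over the Brauer tables is the main obstacle; once the tables are in hand the argument is routine, its structure being already fixed by \thref{GeneralizedModuleExtensionsCited}.
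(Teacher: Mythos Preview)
Your approach is correct and essentially identical to the paper's: both invoke \thref{GeneralizedModuleExtensionsCited} together with the fixed-point data of $\operatorname{IBr}_p(\PSL(2,19))$ for $p\in\{2,3,5,19\}$, computed via \cite{GAP} and \cite[Lemma 6.2]{BrauerTableFP} (with \thref{ConjugacyClassLanding} as the manual alternative), and then read off the possible subgraphs. Your explicit symmetric-power argument for $p=19$ is a pleasant elementary addition not present in the paper, but the overall strategy is the same.
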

\begin{proof}
    Follows from the Brauer tables of $\PSL(2, 19)$ and \thref{GeneralizedModuleExtensionsCited}. The tables below are computed using \cite[Lemma 6.2]{BrauerTableFP} and \cite{GAP}, but could also be computed manually using \thref{ConjugacyClassLanding}.
    \begin{center} 
    \mbox{
    \begin{tabular}{||c c c c||} 
     \hline
     $\chi_i \in \operatorname{IBr}_{2}(\PSL(2,19))$ & 3 & 5 & 19 \\ [0.5ex] 
     \hline\hline
     $\chi_1$ & Yes & Yes & Yes \\ 
     \hline
     $\chi_2$\text{ through }$\chi_{5}$& Yes & Yes & No \\ 
     \hline
     $\chi_{6}$\text{ through }$\chi_{9}$& Yes & Yes & Yes \\ [1ex]
     \hline
    \end{tabular}
    \quad
    \begin{tabular}{||c c c c||} 
     \hline
     $\chi_i \in \operatorname{IBr}_{3}(\PSL(2,19))$ & 2 & 5 & 19\\ [0.5ex] 
     \hline\hline
     $\chi_1$ & Yes & Yes & Yes\\
     \hline
      $\chi_{2}$\text{ through } $\chi_7$ & Yes & Yes & No\\
     \hline
      $\chi_{8}$ & Yes & Yes & Yes\\[1ex]
     \hline
    \end{tabular}
    }
    \end{center}
    \begin{center}
    \mbox{
    \begin{tabular}{||c c c c||} 
     \hline
     $\chi_i \in \operatorname{IBr}_{5}(\PSL(2,19))$ & 2 & 3 & 19 \\ [0.5ex] 
     \hline\hline
     $\chi_1$& Yes & Yes & Yes \\ 
     \hline
     $\chi_{2}$ through $\chi_4$ & Yes & Yes & No\\
     \hline
     $\chi_{5}$ through $\chi_8$ & Yes & Yes & Yes\\   [1ex] 
     \hline
    \end{tabular}
    \quad
    \begin{tabular}{||c c c c||} 
     \hline
     $\chi_i \in \operatorname{IBr}_{19}(\PSL(2, 19))$ & 2 & 3 & 5 \\ [0.5ex] 
     \hline\hline
     $\chi_1$ \text{ through }$\chi_{10}$ & Yes & Yes & Yes \\ [1ex] 
     \hline
    \end{tabular}
    }\vspace{0.2cm}
    \end{center}
\end{proof}
We now prove that Criterion 3 listed in \ref{sec:generalT211} is satisfied.
\begin{corollary}\thlabel{criterion3PSL219}
   There exists a group $T\ltimes P$ with $\pgc(T\ltimes P)\cong\szunrecognizei$ where $P$ is a 5-group and $T = \PSL(2, 19)$.
\end{corollary}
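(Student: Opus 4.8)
The plan is to mirror the proof of \thref{cri3satisfiedPSL211} essentially verbatim, only changing the prime and the Brauer character. First I would read off, from the $5$-modular table in \thref{psl219Brauer}, a single Brauer character whose fixed-point pattern isolates exactly the edge we need to delete. We want $\pgc(T\ltimes P)\cong\szunrecognizei$ with $d = 19$ as the root; since $\pgc(\PSL(2,19))$ is $K_4$ on $\{2,3,5,19\}$ with the edge $2-5$ missing, deleting the single edge $3-5$ turns it into the ``paw'' $\szunrecognizei$, with $19$ (the degree-three vertex in the triangle) playing the role of the white/root vertex as demanded by \thref{rootedGraphIsomorphismPSL211}. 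In the notation of \thref{GeneralizedModuleExtensionsCited}, deleting precisely $\{5-3\}$ means we need a $\chi\in\operatorname{IBr}_5(\PSL(2,19))$ for which elements of order $3$ have a nonzero fixed space but elements of order $19$ do not; the $\operatorname{IBr}_5$ table in \thref{psl219Brauer} shows that $\chi_2$ (equally $\chi_3$ or $\chi_4$) has exactly this pattern, so $B_{\chi_2} = \{5-3\}$.

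Next I would invoke \cite[Theorem 2.2]{Suz} applied to the $5$-modular representation of $T = \PSL(2,19)$ affording $\chi_2$: this produces a split extension $T\ltimes P$ with $P$ an elementary abelian $5$-group such that $\pgc(T\ltimes P) = \pgc(T)\setminus B_{\chi_2}$. By the previous paragraph this equals $\pgc(\PSL(2,19))\setminus\{5-3\} \cong \szunrecognizei$, with $19$ mapped to the white vertex, which is exactly the assertion of the corollary. (Consistency with \thref{psl219Brauer}, which already records that some nontrivial $5$-group extension realizes $\szunrecognizei$, serves as a sanity check.)

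I do not anticipate any genuine obstacle here; the content is already contained in \thref{psl219Brauer} and \cite[Theorem 2.2]{Suz}. The only points requiring a little care are bookkeeping ones: first, that the rooted-graph isomorphism sends the white vertex to $d = 19$, which it does since $19$ is forced to be the degree-three vertex after deleting $3-5$; and second, that although deleting the other edge $2-3$ would also yield a paw, that edge is not incident to $5$ and hence cannot be removed by a $5$-modular character, so $\chi_2$ (removing $3-5$) is the correct choice and indeed does produce the picture drawn for $\szunrecognizei$.
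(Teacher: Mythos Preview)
Your proposal is correct and coincides with the paper's proof, which is the single line ``Apply \cite[Theorem 2.2]{Suz} to $\chi_2\in\operatorname{IBr}_5(\PSL(2,19))$.'' One tiny bookkeeping slip: since elements of order $2$ also have fixed points under $\chi_2$, strictly $B_{\chi_2}=\{5-2,\,5-3\}$ rather than $\{5-3\}$; but as $5-2\notin\pgc(T)$ this is immaterial.
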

\begin{proof}
Apply \cite[Theorem 2.2]{Suz} to $\chi_2\in\operatorname{IBr}_5(\PSL(2, 19))$.
\end{proof}

\begin{lemma}\thlabel{SuzukiBadCantExist219}
    There exists no $\PSL(2, 19)$-solvable group $G$ realizing \SuzukiThirtyTwoElusiveGraphWhiteBadb where 19 corresponds to the white vertex.
\end{lemma}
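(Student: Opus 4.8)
The plan is to follow the proof of Lemma \thref{nokitespsl211} essentially verbatim, with $\PSL(2,11)$ replaced by $\PSL(2,19)$ and with the distinguished degree-three prime now being $3$ (rather than the ``$a$''-prime). Suppose for contradiction that some $\PSL(2,19)$-solvable group realizes $\SuzukiThirtyTwoElusiveGraphWhiteBadb$ with $19$ the white vertex. Since this graph contains a triangle, such a group cannot be solvable by \cite{2015REU}, so it is strictly $\PSL(2,19)$-solvable; by \cite[Theorem 4.4]{Suz} we may replace it by a subgroup $H$ with $\pi(H)=\pi(\PSL(2,19))$ and $\pgc(H)\cong\SuzukiThirtyTwoElusiveGraphWhiteBadb$, and by \cite[Lemma 2.1.1]{2023REU} we may write $H\cong N.S$ with $N$ solvable and $\operatorname{Inn}(\PSL(2,19))\le S\le\Aut(\PSL(2,19))$.

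The key observation is that the degree-three vertex of $\SuzukiThirtyTwoElusiveGraphWhiteBadb$ must correspond to the prime $3$: it is adjacent to the white vertex $19$, and among the neighbours of $19$ in $\pgc(\PSL(2,19))$ (namely $2$, $3$, $5$) only $3$ has degree at least three in $\pgc(\PSL(2,19))$, while $\pgc(H)\subseteq\pgc(\PSL(2,19))$. Hence $3$ is adjacent to each of $2$, $5$, $19$ in $\pgc(H)$. Now split on $S$. If $\operatorname{Inn}(\PSL(2,19))<S$, then since $|\operatorname{Out}(\PSL(2,19))|=2$ we get $S\cong\Aut(\PSL(2,19))$, so $H$ has $\Aut(\PSL(2,19))$ as a quotient and therefore $\pgc(H)\subseteq\pgc(\Aut(\PSL(2,19)))$; but $3$ has degree one in the latter graph, contradicting the previous sentence.

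In the remaining case $S\cong\operatorname{Inn}(\PSL(2,19))\cong\PSL(2,19)$ I would run the chief-series argument. If $N=1$ then $H=\PSL(2,19)$ and $\pgc(H)=\pgc(\PSL(2,19))\not\cong\SuzukiThirtyTwoElusiveGraphWhiteBadb$, a contradiction; so assume $N\neq 1$. By \cite[Lemma 5.8]{Suz} choose a chief series of $H$ with $H/N\cong\PSL(2,19)$, and let $R$ be the quotient of $H$ by the second term from the top, so that $R\cong K_p.\PSL(2,19)$ for a nontrivial elementary abelian $p$-group $K_p$ with $p\in\pi(\PSL(2,19))$. Since $R$ is a quotient of $H$ and $\pi(R)=\pi(H)$, we have $\pgc(H)\subseteq\pgc(R)$; and by \thref{GeneralizedModuleExtensionsCited} together with \thref{psl219Brauer}, $\pgc(R)$ is one of the seven graphs listed in \thref{psl219Brauer}, in every one of which the vertex $3$ has degree at most two. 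This contradicts $3$ having degree three in $\pgc(H)$, completing the argument. The only points requiring a short check are the identification of $3$ as the forced degree-three vertex and the inspection of \thref{psl219Brauer} confirming that every nontrivial $p$-group extension of $\PSL(2,19)$ deletes an edge incident to $3$; neither is a genuine obstacle, so this lemma should go through routinely by analogy with Lemma \thref{nokitespsl211}.
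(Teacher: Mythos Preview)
Your proposal is correct and follows essentially the same approach as the paper's own proof: reduce to a $\pi(\PSL(2,19))$-group $H\cong N.S$, pin down $3$ as the degree-three vertex, and eliminate the cases via the chief-series quotient $K_p.\PSL(2,19)$ and \thref{psl219Brauer}. The paper organizes the case split slightly differently (it passes to $K\cong N.\PSL(2,19)$ in both cases and treats $N=1$ at the end), but the content is the same.

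One small correction: you claim that $3$ has degree one in $\pgc(\Aut(\PSL(2,19)))$, but in fact $3$ is adjacent to both $5$ and $19$ there, so it has degree two. This does not affect your argument, since you only need that the degree is strictly less than three to obtain the contradiction; just fix the numerical claim.
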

\begin{proof}
    For contradiction, suppose otherwise. Then there exists a $\PSL(2, 19)$-solvable group $G$ such that $\pgc(G)[\pi(\PSL(2, 19))]$ is \SuzukiThirtyTwoElusiveGraphWhiteBadb. If $G$ is solvable we reach an immediate contradiction to the main result of \cite{2015REU}, so assume $G$ is not solvable. By \cite[Lemma 4.4]{Suz}, there exists a $\PSL(2, 19)$-solvable group $H$ such that $\pi(H) = \pi(\PSL(2, 19))$ and $\pgc(H)\cong\pgc(G)$. Thus, we only need to consider the case where $\pgc(H)\cong \SuzukiThirtyTwoElusiveGraphWhiteBadb$. Note that 3 must be the degree 3 vertex for $\pgc(H)$ to be a subgraph of $\pgc(\PSL(2, 19))$. By \cite[Lemma 2.1.1]{2023REU}, there exists a solvable group $N\unlhd H$ such that $H/N$ is isomorphic to a subgroup of $\Aut(\PSL(2, 19))$ that is either $\operatorname{Inn}(\PSL(2, 19))$ or $\Aut(\PSL(2, 19))$ because $|\operatorname{Out}(\PSL(2, 19))| = 2$. In either case, there exists a subgroup $K\unlhd H$ such that $K\cong N.\PSL(2, 19)$. Take a chief series of $K$: $1 = N_1\unlhd N_2\unlhd\cdots\unlhd N_n\unlhd G$ such that $G/N_n\cong\PSL(2, 19)$, guaranteed by \cite[Lemma 5.8]{Suz}. Consider the group $R = G/N_{n-1}$, which is isomorphic to a group $K_p.\PSL(2, 19)$ for some $p$-group with $p\in\pi(\PSL(2, 19))$. Notice that if $K_p = 1, K = 1$. We consider two cases: The case where $K_p \neq 1$ and the case where $K_p = 1$. In the first case, if $p$ is 2, we must be missing the 2-3 edge in $\pgc(K_p.\PSL(2, 19))$ by \thref{psl223Brauer}. This is a contradiction because $\pgc(K_p.\PSL(2, 19))$ is a supergraph of $\pgc(K)$, which is in turn a supergraph of $\pgc(H)\cong\pgc(G)$. If $p$ is 3, we are missing the $3-19$ edge in $\pgc(K_p.\PSL(2, 19))$ by \thref{psl223Brauer} and reach a contradiction in a similar manner. If $p$ is 5, $\pgc(K_p.\PSL(2, 19))$ is missing the $3-5, 2-5 $ edge by \thref{psl223Brauer}, and if $p = 19$ then $\pgc(K_p.\PSL(2, 19))$ is missing the $2-19, 3-19,$ and $5-19$ edges, so in both all subcases of case 1 we reach a contradiction. In case 2, notice $H \cong \PSL(2, 19)$ or $H\cong\Aut(\PSL(2, 19))$ as $|\operatorname{Out}(\PSL(2, 19))| = 2$, which is a contradiction as neither $\PSL(2, 19)$ nor $\Aut(\PSL(2, 19))$ realizes \SuzukiThirtyTwoElusiveGraphWhiteBadb.
\end{proof}
Thus, by \thref{criterion3PSL219}, \thref{SuzukiBadCantExist219}, \thref{psl219fpinfo}, the fact that the Schur multiplier of $\PSL(2, 19)$ is 2, the prime graphs of $\PSL(2, 19)$ and $\Aut(\PSL(2, 19))$ given at the start of this section, and the fact that $\pi(\PSL(2, 19)) = \pi(\Aut(\PSL(2, 19)))$, \thref{TPSL211General} with $T = \PSL(2, 19)$ completely classifies the prime graph complements of $\PSL(2, 19)$-solvable groups.

\subsection{$\PSL(2, 23)$}\label{sec:PSL223new}
\begin{figure}[H]
    \centering
    \begin{minipage}{.5\textwidth}
        \centering
        \begin{tikzpicture}
            \coordinate (A) at (0, 1);   
            \coordinate (B) at (1, 1);   
            \coordinate (C) at (1, 0);   
            \coordinate (D) at (0, 0);   
            \draw (B) -- (C);
            \draw (C) -- (D);
            \draw (A) -- (D);
            \draw (A) -- (B);
            \draw (A) -- (C);
            \foreach \point in {A, B, C, D}
                \draw[draw, fill=white] (\point) circle [radius=0.2cm];
            \foreach \point/\name in {(A)/11, (B)/2, (C)/23, (D)/3}
                \node at \point {\scriptsize\name};
        \end{tikzpicture}
        \caption*{$\pgc(\PSL(2,23))$}
    \end{minipage}%
    \begin{minipage}{.5\textwidth}
        \centering
        \begin{tikzpicture}
            \coordinate (A) at (0, 1);   
            \coordinate (B) at (1, 1);   
            \coordinate (C) at (1, 0);   
            \coordinate (D) at (0, 0);   
            \draw (B) -- (C);
            \draw (C) -- (D);
            \draw (A) -- (D);
            \draw (A) -- (C);
            \foreach \point in {A, B, C, D}
                \draw[draw, fill=white] (\point) circle [radius=0.2cm];
            \foreach \point/\name in {(A)/11, (B)/2, (C)/23, (D)/3}
                \node at \point {\scriptsize\name};
        \end{tikzpicture}
        \caption*{$\pgc(\Aut(\PSL(2,23)))$}
    \end{minipage}%
\end{figure}
We classify the prime graph complements of $\PSL(2, 23)$-solvable groups in this section. The group $\PSL(2, 23)$ has order $6072 = 2^3 \cdot3\cdot11\cdot23$ and its automorphism group has order $12144 = 2^4\cdot3\cdot11\cdot23$. The Schur multiplier of $\PSL(2,23)$ is 2 and $\pi(\PSL(2,23))=\pi(\Aut(\PSL(2,23)))$ by \cite{ATLAS} and \cite{GAP}. Let $d = 23, a = 11,$ and $c = 3$. Then \thref{psl223fpinfo} matches Criterion 2 given in \ref{sec:generalT211}.

\begin{fact}\thlabel{psl223fpinfo}
    \begin{tabular}[h]{|c|c|}
    \hline
        Fixed Point Information &  \\
        \hline
        $\PSL(2,23)$ & $[[2,3,11], [2,3,11, 23]]$ \\
        \hline 
        $2.\PSL(2,23)$ & $[[2,3,11], [2,3,11,23], [3,11], [3,11,23]]$ \\
        \hline
    \end{tabular}\\
    Each list refers to all prime element orders where elements of an order in a sublist have fixed points in some representation. The multiple lists refer to the fact that different irreducible representations have different fixed points. This list was obtained using \cite{GAP}.
\end{fact}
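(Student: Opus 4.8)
The statement records fixed-point data, so a proof is a bounded verification once the right character tables are in hand. I would organize everything around the standard criterion (the one behind \thref{ConjugacyClassLanding} and \cite[Lemma 6.2]{BrauerTableFP}): if $g$ has order $m$ in a group $G$ and $\rho$ is a complex representation affording $\chi$, then $\frac{1}{m}\sum_{k=0}^{m-1}\chi(g^{k})=\dim\operatorname{Fix}(\rho(g))$, which is a nonnegative integer that is positive exactly when $g$ fixes a nonzero vector. So for each perfect central extension $G$ of $\PSL(2,23)$ and each $\chi\in\operatorname{Irr}(G)$ I need to compute $S_{\chi}=\{\,p\in\pi(G):\exists\,g\in G,\ o(g)=p,\ \tfrac1p\sum_{k=0}^{p-1}\chi(g^{k})>0\,\}$, and then check that the distinct sets obtained over all $\chi$ are precisely those in the table. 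The first step is to pin down the groups: since the Schur multiplier of $\PSL(2,23)$ has order $2$, the only perfect central extensions are $\PSL(2,23)$ and $2.\PSL(2,23)\cong\operatorname{SL}(2,23)$, and $\pi(G)=\{2,3,11,23\}$ in both cases because the center is a $2$-group. I would take the ordinary tables from \cite{ATLAS}/\cite{GAP}, or reconstruct them from the classical description of $\operatorname{Irr}(\operatorname{SL}(2,q))$ for $q=23\equiv 3\pmod 4$: degree $1$, the Steinberg of degree $q=23$, principal series of degree $q+1=24$, discrete series of degree $q-1=22$, an exceptional pair of degree $(q-1)/2=11$, and an exceptional pair of degree $(q+1)/2=12$; the characters of degrees $1,23$, the degree-$11$ exceptional pair, and the members of the degree-$22$ and degree-$24$ series with $\alpha(-I)=\beta(-I)=1$ factor through $\PSL(2,23)$, while the degree-$12$ exceptional pair and the series members with value $-1$ on $-I$ are genuinely faithful on $\operatorname{SL}(2,23)$.

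The core of the argument is the element-of-prime-order bookkeeping. For $p\in\{3,11,23\}$ the Sylow $p$-subgroup of $G$ is cyclic of order $p$, so \thref{ConjugacyClassLanding} applies: if $G$ has $n_{p}$ classes of order-$p$ elements with representatives $c_{1},\dots,c_{n_{p}}$, then a fixed cyclic Sylow $\langle g\rangle$ meets each class in $(p-1)/n_{p}$ elements, whence $\tfrac1p\sum_{k=0}^{p-1}\chi(g^{k})=\tfrac1p\bigl(\chi(1)+\tfrac{p-1}{n_{p}}\textstyle\sum_{j}\chi(c_{j})\bigr)$, and positivity is read straight off the table. Here one must remember that for $p=23$ (the defining characteristic, $n_{23}=2$ because there are two unipotent classes) and for $p=3$ there is more than one class to account for, and that passing from $\operatorname{SL}(2,23)$ to $\PSL(2,23)$ changes both the class list and which characters are faithful. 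For $p=2$ the Sylow $2$-subgroup is dihedral of order $8$ (as $23\equiv 7\pmod 8$), so \thref{ConjugacyClassLanding} does not apply; instead use that there is a unique class of involutions $t$, for which $\tfrac12(\chi(1)+\chi(t))\ge 0$ with equality iff $\rho(t)=-\mathrm{Id}$. Simplicity of $\PSL(2,23)$ forbids a nontrivial central element in a faithful image, so $2\in S_{\chi}$ for every nontrivial $\chi$ of $\PSL(2,23)$; in $\operatorname{SL}(2,23)$ the unique involution is $-I$, which acts as $-\mathrm{Id}$ on exactly the faithful characters, so $2\in S_{\chi}$ there iff $\chi$ factors through $\PSL(2,23)$.

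Running the table through this scheme yields the asserted lists. In $\PSL(2,23)$, the trivial, the principal-series characters (degree $24$, value $1$ on unipotents), and the Steinberg (degree $23$, value $0$ on unipotents) give $S_{\chi}=\{2,3,11,23\}$, while the discrete-series characters (degree $22$, value $-1$ on unipotents) and the degree-$11$ exceptional pair give $\tfrac1{23}\bigl(\chi(1)+11\cdot(-1)+11\cdot(-1)\bigr)=0$ at $p=23$, hence $S_{\chi}=\{2,3,11\}$ — these are the two sets in the first row. In $\operatorname{SL}(2,23)$, the characters factoring through $\PSL(2,23)$ reproduce $\{2,3,11,23\}$ and $\{2,3,11\}$; among the faithful ones, the degree-$22$ discrete series loses both $2$ and (by the same vanishing) $23$, giving $\{3,11\}$, while the degree-$12$ exceptional pair (value $\tfrac{1\pm\sqrt{-23}}{2}$ on the two unipotent classes, so $\tfrac1{23}(12+11)=1>0$ at $p=23$) and the faithful degree-$24$ principal series lose only $2$, giving $\{3,11,23\}$ — exactly the four sets in the second row. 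I do not expect a genuine obstacle here; the only delicate points are the fusion of classes under $\operatorname{SL}(2,23)\to\PSL(2,23)$, the separate treatment of the non-cyclic Sylow $2$-subgroup, and the fact that $p\in S_{\chi}$ requires only \emph{some} class of order-$p$ elements to have a fixed point. Of course the entire table is also a short and reliable computation in \cite{GAP}, which is the route the authors take.
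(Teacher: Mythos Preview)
Your approach is correct and considerably more explicit than the paper's treatment. The paper presents this as a computational Fact with no argument beyond the line ``This list was obtained using \cite{GAP}.'' Your route---reading off the fixed-point dimensions from the known ordinary character table of $\operatorname{SL}(2,q)$ (degrees $1$, $q$, $q\pm 1$, $(q\pm 1)/2$) together with the averaging formula and \thref{ConjugacyClassLanding}---is a genuine by-hand verification that does not rely on computer algebra, and it explains \emph{why} the four sets arise: the $23$-entry is governed by whether the character value on unipotents is $+1$, $0$, or $-1$, while the $2$-entry is governed by faithfulness on the central involution $-I$ of $\operatorname{SL}(2,23)$.

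One small slip: you assert that ``for $p=3$ there is more than one class to account for.'' In fact there is exactly one conjugacy class of order-$3$ elements in both $\PSL(2,23)$ and $\operatorname{SL}(2,23)$: order-$3$ elements lie in the non-split torus (since $3\mid 24$ but $3\nmid 22$), the $3$-torsion there is cyclic of order $3$, and the two nontrivial elements are swapped by inversion, hence conjugate. This only simplifies your formula at $p=3$ to $\tfrac{1}{3}\bigl(\chi(1)+2\chi(b)\bigr)$ and does not affect the outcome. Everything else---the unipotent bookkeeping at $p=23$, the dihedral-versus-quaternion Sylow $2$-analysis, and the identification of which degree-$22$, $24$, $11$, $12$ characters are faithful---checks out.
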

\begin{lemma}\thlabel{psl223Brauer}
    Let $p \in \pi(\PSL(2, 23))$, let $N$ be a nontrivial $p$-group, and let $G \cong N.\PSL(2, 23)$. Then we have the following restrictions on $\pgc(G)$, where the white vertex denotes 23 and the other vertices are labeled as they are in the diagrams at the beginning of this section:
    \begin{itemize}
        \item If $p  = 2$ then $\pgc(G)$ is \notIsolatedVertexWhite or \szunrecognizeiii.
        \item If $p = 3$ then $\pgc(G)$ is \szunrecognizei or \trianglePlusIsolatedExtra.
         \item If $p = 11$ then $\pgc(G)$ is \szunrecognizeii or \fletchingWhite.
        \item If $p= 23$ then $\pgc(G)$ is \brokenFrameIsolatediii.
    \end{itemize}
\end{lemma}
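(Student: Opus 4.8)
The plan is to follow exactly the template of \thref{psl16brauer}, \thref{psl211brauer}, and \thref{psl219Brauer}, applying \thref{GeneralizedModuleExtensionsCited} to the $p$-modular Brauer character tables of $\PSL(2,23)$ for each $p\in\{2,3,11,23\}$. Recall that $\pgc(\PSL(2,23))$ is the five-edge graph drawn at the start of this subsection (the missing edge being $2-3$), that $23=d$ is the root, and that $23$ and $11=a$ are the degree-three vertices while $2$ and $3=c$ have degree two. By \thref{GeneralizedModuleExtensionsCited}, a rooted graph $\Lambda$ arises as $\pgc(G)$ for some $G\cong N.\PSL(2,23)$ with $N$ a $p$-group precisely when $\Lambda=\pgc(\PSL(2,23))\setminus\bigcup_{\chi\in Y}B_\chi$ for some $Y\subseteq\operatorname{IBr}_p(\PSL(2,23))$, where $B_\chi$ is the set of edges $p-q$ for which some element of order $q$ fixes a nonzero vector of the module affording $\chi$, i.e. $\frac{1}{q}\sum_{x\in\langle g\rangle}\chi(x)>0$ for $g$ of order $q$; the hypothesis that $N$ is nontrivial is what will force $Y$ to be nonempty and hence rule out the full graph $\pgc(\PSL(2,23))$ itself in the cases $p\ne 23$.

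First I would assemble, for each $p\in\{2,3,11,23\}$, one table recording for every $\chi\in\operatorname{IBr}_p(\PSL(2,23))$ and every $q\in\{2,3,11,23\}\setminus\{p\}$ whether an element of order $q$ has a fixed point in the $\chi$-module. These four tables are the substance of the argument; they are obtained from the Brauer tables accessed through \cite{GAP} together with \cite[Lemma 6.2]{BrauerTableFP}, and, as in the earlier lemmas, they can alternatively be produced by hand using \thref{ConjugacyClassLanding}, since the Sylow $q$-subgroup of $\PSL(2,23)$ is cyclic of order $q$ for each $q\in\{3,11,23\}$, so the relevant average is determined by how the Brauer character values distribute over the $q$-th roots of unity. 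Then, for each $p$, I would run over all unions $\bigcup_{\chi\in Y}B_\chi$, subtract them from $\pgc(\PSL(2,23))$, and record only the rooted isomorphism type of the resulting four-vertex graph; because only two edges meet vertex $p$ (or three when $p=11$), this is a short finite check. One reads off the tables that for $p\in\{2,3,11\}$ every nonempty admissible $Y$ deletes at least one edge at $p$ (so the full graph never occurs) and that the deletions are nested, leaving exactly the two graphs listed for that $p$, both realized; and for $p=23$ every nontrivial $23$-module deletes all three edges at $23$, forcing $\pgc(G)\cong\brokenFrameIsolatediii$.

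The main obstacle is the correctness and completeness of the Brauer-table bookkeeping for $p=2$, where $\operatorname{IBr}_2(\PSL(2,23))$ is largest and the $2$-modular modules are least transparent; here I would rely on \cite{GAP} for the character values and cross-check the resulting fixed-point pattern against the ordinary information in \thref{psl223fpinfo}, using that each $2$-modular irreducible occurs as a composition factor of a reduction mod $2$ of an ordinary irreducible. The only remaining subtlety is confirming that no ``intermediate'' graph is realizable — for instance a triangle with a pendant at the wrong vertex, or a four-cycle or a path of length three at vertex $11$ — which amounts to checking that the corresponding fixed-point patterns do not occur for any single $\chi$ or any union, and this falls out immediately once the four tables are written down.
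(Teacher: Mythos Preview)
Your proposal is correct and follows essentially the same approach as the paper: apply \thref{GeneralizedModuleExtensionsCited} to the $p$-Brauer tables of $\PSL(2,23)$ for each $p\in\{2,3,11,23\}$, with the fixed-point information computed via \cite{GAP} and \cite[Lemma 6.2]{BrauerTableFP} (or by hand using \thref{ConjugacyClassLanding}). The paper's proof is terser --- it simply records the four fixed-point tables and invokes the theorem --- but your added explanation of why $N$ nontrivial forces $Y\neq\varnothing$ and hence excludes the full graph $\pgc(\PSL(2,23))$ is a helpful gloss on the same argument.
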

\begin{proof}
    Follows from the Brauer tables of $\PSL(2, 23)$ and \thref{GeneralizedModuleExtensionsCited}. The tables below are computed using \cite[Lemma 6.2]{BrauerTableFP} and \cite{GAP}, but could also be computed manually using \thref{ConjugacyClassLanding}.
    \begin{center} 
    \mbox{
    \begin{tabular}{||c c c c||} 
     \hline
     $\chi_i \in \operatorname{IBr}_{2}(\PSL(2,23))$ & 3 & 11 & 23 \\ [0.5ex] 
     \hline\hline
     $\chi_1$ & Yes & Yes & Yes \\ 
     \hline
     $\chi_2$\text{ through }$\chi_{4}$& Yes & Yes & No \\ 
     \hline
     $\chi_{5}$\text{ through }$\chi_{9}$& Yes & Yes & Yes \\ [1ex]
     \hline
    \end{tabular}
    \quad
    \begin{tabular}{||c c c c||} 
     \hline
     $\chi_i \in \operatorname{IBr}_{3}(\PSL(2,23))$ & 2 & 11 & 23\\ [0.5ex] 
     \hline\hline
     $\chi_1$ & Yes & Yes & Yes\\
     \hline
      $\chi_{2}$\text{ through } $\chi_5$ & Yes & Yes & No\\
     \hline
      $\chi_{6}$\text{ through }$\chi_{10}$ & Yes & Yes & Yes\\[1ex]
     \hline
    \end{tabular}
    }
    \end{center}
    \begin{center}
    \mbox{
    \begin{tabular}{||c c c c||} 
     \hline
     $\chi_i \in \operatorname{IBr}_{11}(\PSL(2,23))$ & 2 & 3 & 23 \\ [0.5ex] 
     \hline\hline
     $\chi_1$& Yes & Yes & Yes \\ 
     \hline
     $\chi_{2}$ through $\chi_8$ & Yes & Yes & No\\
     \hline
     $\chi_{9}$ & Yes & Yes & Yes\\   [1ex] 
     \hline
    \end{tabular}
    \quad
    \begin{tabular}{||c c c c||} 
     \hline
     $\chi_i \in \operatorname{IBr}_{23}(\PSL(2, 23))$ & 2 & 3 & 11 \\ [0.5ex] 
     \hline\hline
     $\chi_1$ \text{ through }$\chi_{12}$ & Yes & Yes & Yes \\ [1ex] 
     \hline
    \end{tabular}
    }\vspace{0.2cm}
    \end{center}
\end{proof}
We now prove that Criterion 3 listed in \ref{sec:generalT211} is satisfied.
\begin{corollary}\thlabel{criterion3PSL223}
   There exists a group $T\ltimes P$ with $\pgc(T\ltimes P)\cong\szunrecognizei$ where $P$ is a 3-group.
\end{corollary}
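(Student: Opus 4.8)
The plan is to realize $\szunrecognizei$ as $\pgc(T\ltimes P)$ for $T = \PSL(2,23)$ and a suitable $3$-group $P$, following the template of the proofs of \thref{cri3satisfiedPSL211} and \thref{criterion3PSL219}. First I would identify the target graph explicitly. Since $\pgc(\PSL(2,23)) \cong \hamSandwich$ (namely $K_4$ on $\{2,3,11,23\}$ with the single nonedge $2$-$3$), and the isomorphism in \thref{rootedGraphIsomorphismPSL211} must send $d = 23$ to the root, the graph $\szunrecognizei$ (triangle plus a pendant edge at the root) is obtained from $\pgc(\PSL(2,23))$ by keeping the three edges at $23$, keeping $2$-$11$, and deleting exactly the edge $3$-$11$; this leaves the triangle $2$-$11$-$23$ with pendant edge $3$-$23$.

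Next I would convert the required deletion into a condition on a single $3$-modular Brauer character via \thref{GeneralizedModuleExtensionsCited}: choosing $Y = \{\chi\}$ with $\chi \in \operatorname{IBr}_3(\PSL(2,23))$, the realized graph is $\pgc(\PSL(2,23)) \setminus B_\chi$, so it suffices to produce $\chi$ with $3$-$11 \in B_\chi$ and $3$-$23 \notin B_\chi$ (the requirement on $3$-$2$ is automatic, as $2$-$3$ is already a nonedge). Concretely, this asks for a $\chi$ in which some element of order $11$ has nonzero fixed space but no element of order $23$ does. The fixed-point table for $\operatorname{IBr}_3(\PSL(2,23))$ recorded in \thref{psl223Brauer} shows that $\chi_2, \dots, \chi_5$ all satisfy this (``Yes'' under $11$, ``No'' under $23$), so I would take $\chi = \chi_2$, which deletes precisely the edge $3$-$11$ and hence yields $\szunrecognizei$ rather than $\trianglePlusIsolatedExtra$ (the only other graph a nontrivial $3$-group extension can produce, by \thref{psl223Brauer}).

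Finally I would invoke \cite[Theorem 2.2]{Suz}, which upgrades the $N.T$-realization coming from a single Brauer character to a split realization $T\ltimes P$ with $P$ the underlying $3$-group of that character; applied to $\chi_2 \in \operatorname{IBr}_3(\PSL(2,23))$ it gives a group $\PSL(2,23)\ltimes P$ with $\pgc(\PSL(2,23)\ltimes P) \cong \szunrecognizei$ and $23$ at the root, as required. I do not anticipate a genuine obstacle: the only substantive input is the $3$-modular fixed-point data, which is already assembled in \thref{psl223Brauer}, and the rest is the bookkeeping verification that $\chi_2$ removes exactly the edge $3$-$11$ from $\pgc(\PSL(2,23))$.
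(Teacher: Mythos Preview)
Your proposal is correct and follows exactly the paper's approach: apply \cite[Theorem 2.2]{Suz} to $\chi_2\in\operatorname{IBr}_3(\PSL(2,23))$, whose fixed-point data (from \thref{psl223Brauer}) removes precisely the edge $3$-$11$ from $\pgc(\PSL(2,23))$ to yield $\szunrecognizei$. Your write-up simply makes explicit the bookkeeping that the paper's one-line proof leaves implicit.
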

\begin{proof}
Apply \cite[Theorem 2.2]{Suz} to $\chi_2\in\operatorname{IBr}_3(\PSL(2, 23))$.
\end{proof}

\begin{lemma}\thlabel{SuzukiBadCantExist223}
    There exists no $\PSL(2, 23)$-solvable group $G$ which realizes \SuzukiThirtyTwoElusiveGraphWhiteBadb where 23 corresponds to the white vertex.
\end{lemma}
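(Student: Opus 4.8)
The plan is to run the argument used for \thref{nokitespsl211} and \thref{SuzukiBadCantExist219} essentially verbatim, with $23$ in the role of the white (degree-two) vertex and $11$ in the role of the degree-three vertex. Suppose for contradiction that some $\PSL(2,23)$-solvable group $G$ has $\pgc(G)[\pi(\PSL(2,23))]\cong\SuzukiThirtyTwoElusiveGraphWhiteBadb$ with $23$ the white vertex. If $G$ were solvable this would contradict \thref{solvclass}, so $G$ is nonsolvable, and by \cite[Lemma 4.4]{Suz} there is a $\PSL(2,23)$-solvable group $H$ with $\pi(H)=\pi(\PSL(2,23))$ and $\pgc(H)\cong\SuzukiThirtyTwoElusiveGraphWhiteBadb$. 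Since $\pgc(H)\subseteq\pgc(\PSL(2,23))$, and the latter is $K_4$ with the single edge $2-3$ deleted, its only vertices of degree three are $11$ and $23$; as the white vertex $23$ has degree two in $\SuzukiThirtyTwoElusiveGraphWhiteBadb$, the degree-three vertex of $\pgc(H)$ must be $11$. In particular $\pgc(H)$ contains the three edges $2-11$, $3-11$, and $11-23$.

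Next I would expose the structure of $H$. By \cite[Lemma 2.1.1]{2023REU}, $H\cong N.S$ with $N$ solvable and $\operatorname{Inn}(\PSL(2,23))\leq S\leq\Aut(\PSL(2,23))$; since $|\operatorname{Out}(\PSL(2,23))|=2$, either $S\cong\PSL(2,23)$ or $S\cong\Aut(\PSL(2,23))$, and in both cases $H$ contains a subgroup $K\cong N.\PSL(2,23)$ with $\pi(K)=\pi(H)$, so $\pgc(H)\subseteq\pgc(K)$. Assuming $N\neq1$, a chief series of $K$ with $\PSL(2,23)$ on top (via \cite[Lemma 5.8]{Suz}) yields a quotient of $K$ isomorphic to $K_p.\PSL(2,23)$ for a nontrivial elementary abelian $p$-group $K_p$ with $p\in\{2,3,11,23\}$; since passing to larger extensions can only delete edges from the prime graph complement, $\pgc(H)\subseteq\pgc(K)\subseteq\pgc(K_p.\PSL(2,23))$.

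The key input is then \thref{psl223Brauer} (a consequence of \thref{GeneralizedModuleExtensionsCited} and the Brauer tables of $\PSL(2,23)$): for each $p\in\{2,3,11,23\}$ and each nontrivial $p$-group $K_p$, every graph $\pgc(K_p.\PSL(2,23))$ listed there is missing at least one edge incident to $11$ — the edge $p-11$ when $p\in\{2,3\}$ (the options being \szunrecognizeiii\ or \notIsolatedVertexWhite\ for $p=2$, and \szunrecognizei\ or \trianglePlusIsolatedExtra\ for $p=3$), both $2-11$ and $3-11$ when $p=11$, and $11-23$ when $p=23$. This contradicts the containment $\pgc(H)\subseteq\pgc(K_p.\PSL(2,23))$ together with the fact that $\pgc(H)$ contains $2-11$, $3-11$, and $11-23$. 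Hence $N=1$, so $H\cong\PSL(2,23)$ or $H\cong\Aut(\PSL(2,23))$; but $\pgc(\PSL(2,23))$ has two vertices of degree three, and $\pgc(\Aut(\PSL(2,23)))$ rooted at $23$ is \szunrecognizeiii\ (in which $23$, not $11$, is the degree-three vertex), so neither equals $\SuzukiThirtyTwoElusiveGraphWhiteBadb$ rooted at $23$ — the final contradiction.

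I do not expect a genuine obstacle: the proof is structurally identical to that of \thref{SuzukiBadCantExist219}. The only point that deserves a word of care is the monotonicity invoked in the second paragraph (adding lower elementary-abelian layers can only remove edges from the prime graph complement), which is already built into the framework of \cite{Suz} and \cite{2023REU}; the remaining work is bookkeeping against the tables in \thref{psl223Brauer}, and in the referee-facing version one could simply note that the argument is the one used for \thref{SuzukiBadCantExist219} with $23$ and $11$ in place of $19$ and $3$.
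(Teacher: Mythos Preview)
Your proposal is correct and follows essentially the same approach as the paper's own proof: reduce to a $\pi(\PSL(2,23))$-group $H$ via \cite[Lemma 4.4]{Suz}, pin $11$ as the degree-three vertex, pass to $K\cong N.\PSL(2,23)$ and a top chief factor $K_p.\PSL(2,23)$, and use \thref{psl223Brauer} to show each $p\in\{2,3,11,23\}$ forces the loss of an edge incident to $11$, leaving $N=1$ and hence $H\in\{\PSL(2,23),\Aut(\PSL(2,23))\}$, neither of which works. The only cosmetic difference is that the paper phrases the monotonicity step directly as ``$\pgc(K_p.\PSL(2,23))$ is a supergraph of $\pgc(K)$, which is a supergraph of $\pgc(H)$'' (quotients enlarge the complement), which is slightly cleaner than your ``adding lower layers'' language but amounts to the same containment you wrote.
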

\begin{proof}
    For contradiction, suppose otherwise. Then there exists a $\PSL(2, 23)$-solvable group $G$ such that $\pgc(G)[\pi(\PSL(2, 23))]$ is \SuzukiThirtyTwoElusiveGraphWhiteBadb. If $G$ is solvable we reach an immediate contradiction to the main result of \cite{2015REU}, so assume $G$ is not solvable. By \cite[Lemma 4.4]{Suz}, there exists a $\PSL(2, 23)$-solvable group $H$ such that $\pi(H) = \pi(\PSL(2, 23))$ and $\pgc(H)\cong\pgc(G)$. Thus, we only must consider the case where $\pgc(H)\cong \SuzukiThirtyTwoElusiveGraphWhiteBadb$. Note that 11 must be the degree 3 vertex for $\pgc(H)$ to be a subgraph of $\pgc(\PSL(2, 23))$. By \cite[Lemma 2.1.1]{2023REU}, there exists a solvable group $N\unlhd H$ such that $H/N$ is isomorphic to a subgroup of $\Aut(\PSL(2, 23))$ that is either $\operatorname{Inn}(\PSL(2, 23))$ or $\Aut(\PSL(2, 23))$ because $|\operatorname{Out}(\PSL(2, 23))| = 2$. In either case, there exists a subgroup $K\unlhd H$ such that $K\cong N.\PSL(2, 23)$. Take a chief series of $K$ $1 = N_1\unlhd N_2\unlhd\cdots\unlhd N_n\unlhd G$ such that $G/N_n\cong\PSL(2, 23)$, guaranteed by \cite[Lemma 5.8]{Suz}. Consider the group $R = K/N_{n-1}$, which is isomorphic to a group $K_p.\PSL(2, 23)$ for some $p$-group with $p\in\pi(\PSL(2, 23))$. Notice that if $K_p = 1, N = 1$, so consider two cases: The case where $K_p\neq 1$ and the case where $K_p = 1$. In the first case, if $p$ is 2,  $2-11\notin\pgc(K_p.\PSL(2, 23))$ by \thref{psl223Brauer}. This is a contradiction because $\pgc(K_p.\PSL(2, 23))$ is a supergraph of $\pgc(K)$, which is in turn a supergraph of $\pgc(H)\cong\pgc(G)$. If $p$ is 3, we are missing the $3-11$ edge in $\pgc(K_p.\PSL(2, 23))$ by \thref{psl223Brauer} and reach a contradiction in a similar manner. If $p$ is 11, $\pgc(K_p.\PSL(2, 23))$ is missing the $3-11$ and $2-11$ edges by \thref{psl223Brauer}, and if $p = 23$ then $\pgc(K_p.\PSL(2, 23))$ is missing the $23-11$ edge, so in all subcases of the first case we reach a contradiction. In the second case, notice $H \cong \PSL(2, 23)$ or $H\cong\Aut(\PSL(2, 23))$ as $|\operatorname{Out}(\PSL(2, 23))| = 2$, which is a contradiction as neither $\PSL(2, 23)$ nor $\Aut(\PSL(2, 23))$ realizes \SuzukiThirtyTwoElusiveGraphWhiteBadb.
\end{proof}
By \thref{criterion3PSL223}, \thref{SuzukiBadCantExist223}, \thref{psl223fpinfo}, the fact that the Schur multiplier of $\PSL(2, 23)$ is 2, the prime graphs of $\PSL(2, 23)$ and $\Aut(\PSL(2, 23))$ given at the start of this section, and the fact that $\pi(\PSL(2, 23)) = \pi(\Aut(\PSL(2, 23)))$, \thref{TPSL211General} with $T = \PSL(2, 23)$ completely classifies the prime graph complements of $\PSL(2, 23)$-solvable groups.

\section{$T$-solvable groups with $T$ similar to $\PSL(2, 5^2)$ and $\PSL(2, 3^4)$}\label{sec:PSL225}
In this section, we classify the prime graph complements of $T$-solvable groups for $T$ as in \ref{sec:PSL225General}. We then use the main result of \ref{sec:PSL225General} to classify the prime graph complements of $\PSL(2, 5^2)-$ and $\PSL(2, 3^4)$-solvable groups in subsections \ref{sec:PSL225new} and \ref{sec:PSL281new} respectively.
\subsection{General $T$}\label{sec:PSL225General}
\begin{figure}[H]
    \centering
    \begin{minipage}{.5\textwidth}
        \centering
        \begin{tikzpicture}
            \coordinate (A) at (0, 1);   
            \coordinate (B) at (1, 1);   
            \coordinate (C) at (1, 0);   
            \coordinate (D) at (0, 0);   
            \draw (B) -- (C);
            \draw (C) -- (D);
            \draw (A) -- (D);
            \draw (A) -- (B);
            \draw (A) -- (C);
            \foreach \point in {A, B, C, D}
                \draw[draw, fill=white] (\point) circle [radius=0.2cm];
            \foreach \point/\name in {(A)/a, (B)/2, (C)/d, (D)/c}
                \node at \point {\scriptsize\name};
        \end{tikzpicture}
        \caption*{$\pgc(T)$}
    \end{minipage}%
    \begin{minipage}{.5\textwidth}
        \centering
        \begin{tikzpicture}
            \coordinate (A) at (0, 1);   
            \coordinate (B) at (1, 1);   
            \coordinate (C) at (1, 0);   
            \coordinate (D) at (0, 0);
            \draw (C) -- (D);
            \draw (A) -- (D);
            \draw (A) -- (C);
            \foreach \point in {A, B, C, D}
                \draw[draw, fill=white] (\point) circle [radius=0.2cm];
            \foreach \point/\name in {(A)/a, (B)/2, (C)/d, (D)/c}
                \node at \point {\scriptsize\name};
        \end{tikzpicture}
        \caption*{$\pgc(\Aut(T))$}
    \end{minipage}
\end{figure}
In this section, we prove a classification result for all $T$-solvable groups where $T$ is a finite nonabelian simple group with the associated prime graph complements given above such that $\pi(T) = \pi(\Aut(T))$ and $T$ satisfies the criteria given below: 
\begin{enumerate}
    \item The Schur multiplier of $T$ is 2
    \item The fixed point information for $T$ matches \thref{fpInfoGenPSL225}
    \item The graph \szthirtytwoTrickyGraph is realized by $T\ltimes P$ for some $p$-group $P$ with $p\in\pi(T)$
    \item The Sylow-2 subgroups of $T$ are dihedral.
    \item The Sylow-$a$ subgroups of $T$ do not satisfy the Frobenius criterion
\end{enumerate}
\begin{remark}
     In the above criteria, we only require that \szthirtytwoTrickyGraph be realized by $T\ltimes P$ for some $p$-group $P$ with $p\in\pi(T)$, but in all the cases for which we use the general classification theorem at the end of this section \szthirtytwoTrickyGraph is realized by $T\ltimes P$ for some 2-group $P$.
\end{remark}
Fix some $T$ that has these properties for the remainder of the section. We do not make use of rooted graphs here and instead prove our classification result via an approach that looks at the triangles the induced subgraph corresponding to $\pgc(T)$ contains and the neighborhood of the vertex set of that subgraph, as this approach is more straightforward than the rooted graph approach given the fixed point information in \thref{fpInfoGenPSL225}.
\begin{fact}\thlabel{fpInfoGenPSL225}
 \begin{center}
    \begin{tabular}[h]{|c|c|}
    \hline
        Fixed Point Information &  \\
        \hline
        $2.T$ & $[ [ 2, a, c, d ], [ a, c ], [ a, c, d ] ]$ \\
        \hline
        $T$ & $[ [2, a, c, d]]$ \\
        \hline
    \end{tabular}
     \end{center}
\end{fact}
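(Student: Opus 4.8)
The statement records fixed-point data that, for the simple groups to which this section is ultimately applied---$T=\PSL(2,q)$ with $q\in\{5^2,3^4\}$---is to be read off the ordinary character tables of $T$ and of its perfect central extension $2.T$. By Criterion 1 the Schur multiplier is $2$, and since $q$ is odd and $\neq 9$ we have $2.T\cong\operatorname{SL}(2,q)$, whose generic character table is classical. For an irreducible character $\chi$ of a group $G$ and an element $g$ of order $p$, the dimension of the fixed space of $g$ in the associated representation is the nonnegative integer
\begin{equation}
    m(\chi,g)=\frac{1}{p}\sum_{j=0}^{p-1}\chi(g^{j}),
\end{equation}
so $p$ belongs to the fixed-point list of $\chi$ if and only if $m(\chi,g)>0$ for \emph{some} element $g$ of order $p$. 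The first step is to locate the prime-order classes in each group. In $\PSL(2,q)$ (with $q\equiv1\pmod4$, which holds for $25$ and $81$) the prime $a$ is the characteristic and is realized by unipotent elements, the primes $c$ and $2$ are realized inside the split torus of order $(q-1)/2$, and $d=(q+1)/2$ is realized inside the nonsplit torus; in $\operatorname{SL}(2,q)$ the picture is identical except that the only element of order $2$ is the central involution $-I$.

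For the $T$-row I would run through the five families of irreducibles of $\PSL(2,q)$---trivial, Steinberg (degree $q$), principal series (degree $q+1$), discrete series (degree $q-1$), and the two exceptional characters of degree $(q\pm1)/2$---and show $m(\chi,g)>0$ for each prime. The Steinberg character already displays the mechanism: with values $q$ at $1$, $0$ on unipotents, $1$ on split and $-1$ on nonsplit elements, one gets $m=q/a$ on unipotents, $(q-1)/c+1$ on split elements and $(q+1)/d-1$ on nonsplit elements, all positive because $a\mid q$, $c\mid q-1$ and $d\mid q+1$. The principal and discrete series are handled identically: their torus values are $\pm(\psi(g)+\psi^{-1}(g))$ for a torus character $\psi$ and their unipotent value is $\pm1$, so the sum telescopes via $\sum_{j}\psi(g^{j})\in\{0,p\}$ and $m(\chi,g)$ reduces to $(q\mp1)/p$ plus a nonnegative root-count. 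The exceptional pair is treated by their explicit unipotent values $(-1\pm\sqrt{\varepsilon q})/2$ (here $\varepsilon=(-1)^{(q-1)/2}=1$). Collecting these, every irreducible of $T$ fixes vectors under every prime, so the $T$-row is the single list $[2,a,c,d]$.

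For the $2.T=\operatorname{SL}(2,q)$ row I would split the irreducibles by the action of $Z=\langle-I\rangle$. Characters trivial on $Z$ are inflated from $\PSL(2,q)$ and inherit the full list $[2,a,c,d]$. For the faithful characters ($-I\mapsto-\operatorname{Id}$) the decisive point is that $-I$ is the unique involution of $\operatorname{SL}(2,q)$ (an order-$2$ matrix of determinant $1$ has eigenvalues $\pm1$ of product $1$, hence equals $-I$) and acts as $-\operatorname{Id}$, so $m(\chi,-I)=0$ and the prime $2$ is absent from every faithful list. The nonsplit prime $d=(q+1)/2$ produces the dichotomy: for an order-$d$ element $b$, the faithful principal and discrete series give $m(\chi,b)=(q+1)/d=2>0$ (the principal series vanishes on regular nonsplit elements, and the nontrivial powers of a discrete-series value $-(\beta+\beta^{-1})$ contribute $+2$ in total), whereas the two faithful exceptional characters of degree $(q-1)/2$ give
\begin{equation}
    m(\chi,b)=\frac{1}{d}\Big(\tfrac{q-1}{2}-(d-1)\Big)=0,
\end{equation}
since $\beta_0$ is trivial on the odd-order group $\langle b\rangle$ and $\chi(b^{j})=-1$ for $j\neq0$. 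It remains to check that $a$ and $c$ persist in \emph{all} faithful characters, which uses $a\mid q$, $c\mid(q-1)/2$, the $\pm1$ unipotent values, the vanishing of discrete-series-type characters on split-regular classes, and the existence clause. Assembling the three outcomes yields exactly the faithful lists $[a,c,d]$ and $[a,c]$ of \thref{fpInfoGenPSL225}.

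The main obstacle is the sign bookkeeping for the faithful characters: values on the unipotent, central, and central-translated classes carry twists by $\alpha(-I),\beta(-I)\in\{\pm1\}$ and, for the exceptional pair, anomalous terms $(-1\pm\sqrt{\varepsilon q})/2$. These twists are precisely what can collapse a multiplicity to $0$, so each evaluation must be paired with the observation that the fixed-point condition only requires \emph{some} order-$p$ element: whenever $m(\chi,g)=0$ on one class (say the ``square'' unipotent class, or a particular Galois-conjugate torus element) I must exhibit a second class of the same order on which $m>0$. Because only the shape $q\equiv1\pmod4$ occurs, the faithful exceptional pair is the degree-$(q-1)/2$ one while the degree-$(q+1)/2$ pair is inflated from $\PSL(2,q)$, so the whole verification is a single finite computation on the explicit tables---exactly what the output of \cite{GAP} records.
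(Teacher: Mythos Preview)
In the paper this Fact is not proved at all: it is stated in Section~\ref{sec:PSL225General} as Criterion~2, i.e.\ as a \emph{hypothesis} that the abstract group $T$ is assumed to satisfy, and then in Sections~\ref{sec:PSL225new} and~\ref{sec:PSL281new} the specific instances $T=\PSL(2,5^2)$ and $T=\PSL(2,3^4)$ are verified to meet this criterion by direct computation in \cite{GAP} (see \thref{psl225fpinfonew} and \thref{psl281fpinfo}). So the paper's ``proof'' is simply a table lookup.

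Your approach is genuinely different: you derive the fixed-point data from the generic character tables of $\PSL(2,q)$ and $\operatorname{SL}(2,q)$ for $q\equiv 1\pmod 4$, computing the multiplicity $m(\chi,g)$ family by family. This is more conceptual and, in spirit, parallels what the paper itself does in Section~\ref{sec:GeneralPSL} for $\PSL(2,2^f)$ using Jordan's tables \cite{PSLGenericTables}. The payoff of your route is that it would apply uniformly to any $\PSL(2,q)$ with $q$ an odd square and $q\equiv 1\pmod 4$ meeting the remaining criteria, not just the two cases the paper treats; the cost is that the bookkeeping for the faithful characters of $\operatorname{SL}(2,q)$ (in particular your assertion that the degree-$(q-1)/2$ faithful exceptional characters take the constant value $-1$ on all nontrivial powers of a nonsplit element of order $d$, and that $a$ and $c$ survive in \emph{every} faithful character) is only sketched and would need to be written out in full against an explicit source for the $\operatorname{SL}(2,q)$ table to be airtight. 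The paper sidesteps this entirely by outsourcing to GAP.
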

\begin{proposition}\thlabel{structurepslsubgroups}
    $T\cong \PSL(2, q)$ for some odd $q\geq 5$ and the Sylow-2 subgroups of $2.T$ are generalized quaternion.
\end{proposition}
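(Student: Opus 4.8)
The plan is to read off the isomorphism type of $T$ from Criteria $1$ and $4$ and then identify the covering group explicitly. First I would apply the Gorenstein--Walter classification of finite simple groups with dihedral Sylow $2$-subgroups: by Criterion $4$, $T$ is isomorphic either to $A_7$ or to $\PSL(2,q)$ for some odd $q$. Since $T$ is nonabelian simple and $\PSL(2,q)$ is simple precisely when $q \geq 4$, oddness forces $q \geq 5$. To exclude $A_7$ I would invoke Criterion $1$: the Schur multiplier of $A_7$ is $C_6$, not $C_2$, so $T \not\cong A_7$. This proves $T \cong \PSL(2,q)$ with $q$ odd, $q \geq 5$. (Incidentally, Criterion $1$ also forces $q \neq 9$, since the Schur multiplier of $\PSL(2,9) \cong A_6$ is $C_6$; I would record this for use in the second part.)

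For the Sylow $2$-subgroups of $2.T$, I would first note that since the Schur multiplier of $T = \PSL(2,q)$ equals $C_2$ (which holds for all odd $q \neq 9$, and $q \neq 9$ here), the unique perfect central extension of $T$ by $C_2$ is the Schur cover of $\PSL(2,q)$, namely $\operatorname{SL}(2,q)$; thus $2.T \cong \operatorname{SL}(2,q)$. It then suffices to show that $\operatorname{SL}(2,q)$ has generalized quaternion Sylow $2$-subgroups, a classical fact, and I would include the short argument for completeness. Let $Q$ be a Sylow $2$-subgroup of $2.T$ and let $Z \cong C_2$ be the kernel of $2.T \twoheadrightarrow T$. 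Then $Z \leq Z(2.T)\cap Q$, and $Q/Z$ is a Sylow $2$-subgroup of $T$, hence dihedral of order at least $4$ by Criterion $4$, so $Q$ is noncyclic of order at least $8$. Moreover $2.T \cong \operatorname{SL}(2,q)$ has a unique involution: an $A \in \operatorname{SL}(2,q)$ with $A^2 = I$ is diagonalizable over $\overline{\mathbb{F}}_q$ (as $q$ is odd), with both eigenvalues in $\{\pm 1\}$ and product $1$, forcing $A = \pm I$. A noncyclic finite $2$-group with a unique involution is generalized quaternion, which completes the proof.

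The deep input is the Gorenstein--Walter theorem, which carries essentially all the weight in pinning down $T$; everything else is routine. Two points that need a little care are (i) matching the paper's convention for "dihedral" so that the cited classification applies to Criterion $4$ (in particular whether the Klein four-group is permitted as the dihedral Sylow $2$-subgroup of $T$, which is the case for $q \equiv \pm 3 \pmod 8$), and (ii) the identification $2.T \cong \operatorname{SL}(2,q)$, which relies on the Schur multiplier of $T$ being exactly $C_2$ so that the perfect central extension of $T$ by $C_2$ is unique up to isomorphism.
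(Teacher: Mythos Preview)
Your proof is correct and follows essentially the same approach as the paper: both invoke the Gorenstein--Walter classification to reduce to $A_7$ or $\PSL(2,q)$ with $q$ odd, eliminate $A_7$ via its Schur multiplier, and then identify $2.T \cong \operatorname{SL}(2,q)$ to conclude that the Sylow $2$-subgroups are generalized quaternion. Where the paper simply cites Isaacs for this last fact, you supply the standard unique-involution argument and are more careful about the $q \neq 9$ exception, but the underlying strategy is the same.
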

\begin{proof}
    It follows from \cite[Theorem 2]{psla7} and the fact that the Schur multiplier of $A_7$ is not 2 that $T\cong \PSL(2, q)$ for some odd $q\geq 5$. By \cite[Page 206]{IsaacsFiniteGroups} the Sylow 2-subgroups of $2.T\cong \operatorname{SL}(2, q)$ for odd $q$ are generalized quaternion.
\end{proof}
\begin{proposition}\thlabel{psl225possible}
    All subgraphs of \hamSandwich are realizable by a $T$-solvable group.
\end{proposition}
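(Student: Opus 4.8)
The plan is to follow the pattern of the proofs of \thref{factPSL249} and \thref{factPSL16}. First I would classify, up to isomorphism, the subgraphs of \hamSandwich. Since \hamSandwich is $K_4$ with one edge deleted and so has five edges, a short case analysis organised by the number of edges shows there are exactly ten isomorphism types of subgraph: the empty graph, a single edge, two independent edges, $P_3\sqcup K_1$, $P_4$, the claw $K_{1,3}$, the cycle $C_4$, the triangle \trianglePlusIsolated (i.e.\ $K_3\sqcup K_1$), the paw \szthirtytwoTrickyGraph (which is isomorphic to \SuzukiThirtyTwoElusiveGraph), and \hamSandwich itself. Of these, exactly the last three contain a triangle.

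Each of the seven triangle-free types is bipartite, hence $3$-colourable, so by \thref{solvclass} it is the prime graph complement of a solvable group; a solvable group is $T$-solvable, so this settles those cases, exactly as the triangle-free clauses of \thref{factPSL249} and \thref{factPSL16} do. It remains to realize the three triangle-containing graphs. The graph \hamSandwich is $\pgc(T)$ by hypothesis, so it is realized by $T$. The paw \szthirtytwoTrickyGraph is realized by $T\ltimes P$ for some $p$-group $P$ with $p\in\pi(T)$, which is precisely Criterion 3. For \trianglePlusIsolated I would take $G=C_2\times T$; this is a strictly $T$-solvable group with $\pi(G)=\pi(T)$, since $2\in\pi(T)$. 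To identify $\pgc(G)$, note that the central involution of $C_2\times T$ is central and hence commutes with elements of orders $a$, $c$, $d$ (which exist by Cauchy's theorem), so $C_2\times T$ has elements of orders $2a$, $2c$, $2d$ and thus $2$ is joined to $a$, $c$, $d$ in $\pg(C_2\times T)$; while for distinct odd $q,q'\in\{a,c,d\}$ an element $(x,y)$ of order $qq'$ in $C_2\times T$ must have $|x|=1$, so $C_2\times T$ has an element of order $qq'$ if and only if $T$ does, i.e.\ if and only if $q-q'\notin\pgc(T)$. Since $\pgc(T)[\{a,c,d\}]$ is a triangle, it follows that $\pgc(C_2\times T)$ is a triangle on $\{a,c,d\}$ together with the isolated vertex $2$, i.e.\ a copy of \trianglePlusIsolated. (One could instead use the perfect central extension $2.T$, which exists by Criterion 1, and argue in the same way.)

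I do not anticipate a genuine obstacle: once \thref{solvclass} and Criterion 3 are available the statement is essentially bookkeeping, and — as in \thref{factPSL249} — it uses neither the fixed-point data (Criterion 2) nor Criteria 4 and 5, which are reserved for the classification theorem that follows. The only steps that need a little care are the elementary combinatorial claim that \hamSandwich has exactly these ten subgraph types with exactly three of them containing a triangle, and the routine computation of $\pgc(C_2\times T)$; both are straightforward.
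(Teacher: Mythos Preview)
The proposal is correct and follows essentially the same approach as the paper: handle the triangle-free subgraphs via \thref{solvclass}, and realize the three triangle-containing subgraphs by $T$, by $T\ltimes P$ from Criterion~3, and by $T\times C_2$ respectively. Your write-up is more detailed (you enumerate the ten isomorphism types and justify the computation of $\pgc(C_2\times T)$ explicitly), but the structure and the constructions are identical to the paper's.
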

\begin{proof}
    Any triangle free and 3-colorable graph can be realized by some solvable group via methods in \cite{2015REU}. There are three remaining graphs to check:
    \begin{itemize}
        \item The complete graph minus one edge \hamSandwich is realized by $T$.
        \item The triangle with one edge \szthirtytwoTrickyGraph is realized by Criterion 3.
        \item The triangle plus an isolated vertex \twoIsolatedpsl is realized by $T \times C_2$.
    \end{itemize}
\end{proof}
\begin{lemma}\thlabel{35disconnectpsl225}
    Let $G$ be a strictly $T$-solvable group, then the edges $a-p$ and $c-p \not \in \pgc(G)$ for any $p\in \pi(G)\setminus \pi(T)$.
\end{lemma}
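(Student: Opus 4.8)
The plan is to invoke Corollary~\thref{corollaryLemma} directly, exactly as in the proof of Lemma~\thref{311DisconnectPSL223}. First I would record the two facts that make the hypotheses of that corollary available for the primes $a$ and $c$: since the vertices of $\pgc(T)$ are four pairwise distinct primes, $a$ and $c$ are both odd; and by Criterion~1 the Schur multiplier of $T$ is $2$, so $a$ and $c$ are each coprime to it. Moreover, because the Schur multiplier is $2$, the only perfect central extensions of $T$ are $T$ itself and $2.T$.

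Next I would read off \thref{fpInfoGenPSL225}: the fixed-point list for $T$ is the single list $[2,a,c,d]$, and for $2.T$ it is the three lists $[2,a,c,d]$, $[a,c]$, and $[a,c,d]$. Both $a$ and $c$ occur in every one of these lists, so in every complex irreducible representation of a perfect central extension of $T$ some element of order $a$ fixes a nonzero vector, and likewise some element of order $c$ does. This is precisely the hypothesis of Corollary~\thref{corollaryLemma} with $r=a$ and with $r=c$ (the auxiliary subgroup $K\cong N.T$ with $\pi(K)=\pi(G)$ being furnished by Lemma~\thref{lemma212inPaper}, which applies since $G$ is strictly $T$-solvable and $\pi(T)=\pi(\Aut(T))$). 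Applying the corollary twice gives $a-p\notin\pgc(G)$ and $c-p\notin\pgc(G)$ for every $p\in\pi(G)\setminus\pi(T)$, which is the claim.

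I do not expect any genuine obstacle: this lemma is a bookkeeping application of machinery already set up in the excerpt, and the only points needing care are verifying that $a$ and $c$ really are odd (automatic from the shape of $\pgc(T)$) and coprime to the Schur multiplier (immediate from Criterion~1), together with checking that $a$ and $c$ appear in every fixed-point list of \thref{fpInfoGenPSL225}.
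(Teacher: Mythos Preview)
Your argument is correct, but it differs from the paper's treatment of the prime $a$. You handle both $a$ and $c$ uniformly via Corollary~\thref{corollaryLemma}, observing that $a$ and $c$ occur in every fixed-point list of \thref{fpInfoGenPSL225} and are odd and coprime to the Schur multiplier~$2$. The paper instead invokes the corollary only for $c$; for $a$ it appeals to Criterion~5 (the Sylow $a$-subgroups of $T$ do not satisfy the Frobenius criterion) together with \thref{frobprop}. Your route is cleaner and avoids using Criterion~5 at this point, while the paper's route makes explicit that the Frobenius-criterion hypothesis on $a$ is available---a fact used again later in the section (e.g.\ in \thref{psl225135edge}). Either argument suffices.
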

\begin{proof}
    By \thref{fpInfoGenPSL225}, $c$ satisfies \thref{corollaryLemma}. Furthermore, the Sylow $a$-subgroups of $T$ do not satisfy the Frobenius Criterion by Criterion 4, thus satisfying \cite[Proposition 2.2.2]{2023REU}.
\end{proof}

\begin{corollary}\thlabel{psl225135edge}
    For any $T$-solvable group, $G$, where $K \cong N.T$ is the subgroup granted by \thref{lemma212inPaper}, if $d$ divides $|N|$ then $a-d \notin \pgc(K), \pgc(G)$.
\end{corollary}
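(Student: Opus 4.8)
The plan is to obtain this directly from \thref{frobprop} together with Criterion 5. Since the statement already refers to the subgroup $K\cong N.T$ supplied by \thref{lemma212inPaper}, I would begin by noting that we may assume $G$ is strictly $T$-solvable (if $G$ is merely solvable there is no such $K$ and the statement is vacuous); fix such a $K\leq G$ with $N$ solvable and $\pi(K)=\pi(G)$, and record that the hypothesis $d\mid |N|$ is precisely the assertion $d\in\pi(N)$.

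Next I would invoke Criterion 5, which says the Sylow $a$-subgroups of $T$ do not satisfy the Frobenius criterion. Applying \thref{frobprop} to the group $K\cong N.T$ with the prime $r=a$ then yields $a-p\notin\pgc(K)$ for every $p\in\pi(N)$; in particular, since $d\in\pi(N)$, we get $a-d\notin\pgc(K)$. Finally, because $K\leq G$, any element of order $ad$ in $K$ also lies in $G$, so $\pg(K)\subseteq\pg(G)$; as $\pi(K)=\pi(G)$, passing to complements on the common vertex set gives $\pgc(G)\subseteq\pgc(K)$, and hence $a-d\notin\pgc(G)$ as well. This covers both assertions of the corollary.

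I do not expect any genuine obstacle here—this is essentially a one-line consequence of \thref{frobprop}. The only points needing a moment of care are the reduction to a group of the form $N.T$ (handled by \thref{lemma212inPaper}) and the bookkeeping observation that ``$d$ divides $|N|$'' places $d$ inside $\pi(N)$, not merely inside $\pi(K)$; this is exactly the input \thref{frobprop} consumes, and it is what allows us to delete the edge $a-d$ even though $a-d$ is an edge of $\pgc(T)$.
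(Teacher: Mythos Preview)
Your argument is correct and essentially identical to the paper's: both combine Criterion 5 (Sylow $a$-subgroups of $T$ fail the Frobenius Criterion) with the Frobenius-criterion lemma applied to $K\cong N.T$, then pass from $\pgc(K)$ to $\pgc(G)$ via $K\leq G$. The only cosmetic difference is that the paper cites its in-house generalization \thref{generalize222} rather than \thref{frobprop}, but since $K$ has the exact shape $N.T$, your choice of \thref{frobprop} is already sufficient.
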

\begin{proof}
    $N$ is solvable, so by \thref{generalize222} and Criterion 5, $a-d\not\in\pgc(K)$. $K\leq G$, so $\pgc(G)\subseteq\pgc(K)$.


    

\end{proof}

\begin{lemma} \thlabel{psl225triangles}
    Let $G$ be a $T$-solvable group. The only two triangles that can exist in $\pgc(G)$ are $\{2,a,d\}$ and $\{a, c,d\}$.
\end{lemma}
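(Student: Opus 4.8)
The plan is to reduce to a standard subgroup of the form $N.T$ and then simply read off the triangles of $\pgc(T)$. First I would dispose of the case where $G$ is not strictly $T$-solvable: then every composition factor of $G$ is cyclic, so $G$ is solvable, and by \thref{solvclass} $\pgc(G)$ is triangle-free, so the claim is vacuous. For strictly $T$-solvable $G$, I would apply \thref{lemma212inPaper} (legitimate since $\pi(T)=\pi(\Aut(T))$) to get a subgroup $K\cong N.T$ with $N$ solvable and $\pi(K)=\pi(G)$. Since $K\le G$, every element order of $K$ is an element order of $G$, so $\pg(K)\subseteq\pg(G)$ and hence $\pgc(G)\subseteq\pgc(K)$ on the common vertex set $\pi(K)=\pi(G)$; thus every triangle of $\pgc(G)$ is already a triangle of $\pgc(K)$, and it suffices to treat $K$, which is itself strictly $T$-solvable.

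Next I would show every triangle of $\pgc(K)$ lies inside $\pi(T)$. Since $\pi(K)=\pi(N)\cup\pi(T)$, any vertex of a triangle lying outside $\pi(T)$ must lie in $\pi(N)$. Suppose for contradiction that $\Delta$ is a triangle of $\pgc(K)$ not contained in $\pgc(K)[\pi(T)]$; then, exactly as in the proof of \thref{TriangleJailII}, \cite[Lemma 5.7]{Suz} forces $\Delta$ to have a single vertex $p\in\pi(N)$, with the other two vertices $x,y$ lying in $\pi(T)=\{2,a,c,d\}$ and $p-x$, $p-y$, $x-y\in\pgc(K)$. Now \thref{35disconnectpsl225} applied to $K$ rules out any edge $a-p$ or $c-p$ with $p\in\pi(K)\setminus\pi(T)$, so $\{x,y\}\subseteq\{2,d\}$, hence $\{x,y\}=\{2,d\}$ and in particular $2-p\in\pgc(K)$. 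But then, because the Schur multiplier of $T$ is $2$ (so $T$ has a perfect central extension $2.T$) and $\pi(T)=\pi(\Aut(T))$, \thref{isolate2psl211} gives $2-q\notin\pgc(K)$ for every $q\in\pi(T)$, contradicting $2-d=x-y\in\pgc(K)$.

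Finally, since $T\cong K/N$ is a quotient of $K$, every element order of $T$ occurs in $K$ (lift an element and pass to a suitable power inside its cyclic subgroup), so $\pg(T)\subseteq\pg(K)$ and thus $\pgc(K)[\pi(T)]\subseteq\pgc(T)$. The graph $\pgc(T)$ is \hamSandwich, i.e.\ $K_4$ on $\{2,a,c,d\}$ with only the edge $2-c$ missing, whose only triangles are $\{2,a,d\}$ and $\{a,c,d\}$; so any triangle of $\pgc(G)$ equals one of these two. (Both genuinely occur, already in $\pgc(T)$.)

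I do not anticipate a real obstacle: the argument is an assembly of \thref{solvclass}, \thref{lemma212inPaper}, \cite[Lemma 5.7]{Suz}, \thref{35disconnectpsl225}, and \thref{isolate2psl211}, all of which are already in hand, together with bookkeeping on which primes of $\pi(T)$ can be adjacent to a prime outside $\pi(T)$. The only slightly delicate point is ensuring that a ``mixed'' triangle has its unique outside-$\pi(T)$ vertex located in $\pi(N)$, and this is precisely what \cite[Lemma 5.7]{Suz} delivers once one observes $\pi(K)=\pi(N)\cup\pi(T)$.
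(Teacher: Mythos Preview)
Your proof is correct and follows essentially the same route as the paper: reduce (via \thref{lemma212inPaper}) to $K\cong N.T$, invoke \cite[Lemma 5.7]{Suz} (the paper cites it as Corollary 5.7) to force at most one triangle vertex outside $\pi(T)$, use \thref{35disconnectpsl225} to pin that mixed triangle to $\{2,d,p\}$, derive a contradiction from \thref{isolate2psl211}, and finish by reading off the two triangles of $\pgc(T)$. Your version is in fact slightly more careful than the paper's, since you explicitly dispose of the non-strictly-$T$-solvable case before invoking \thref{lemma212inPaper}.
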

\begin{proof}
     Let $G$ be a $T$-solvable group and let $K \cong N.T$ be the group given by \thref{lemma212inPaper}.
    By \cite[Corollary 5.7]{Suz}, every triangle of $\pgc(G)$ must have at least two vertices in $\pi(T)$. For contradiction, suppose there existed a triangle in $\pgc(G)$ with precisely 2 vertices in $\pi(T)$. From \thref{35disconnectpsl225} two of the vertices must be $d$ and $2$, so the edge $d-2$ must be contained in the triangle. However, by \thref{noworry2}, if $2-p\in\pgc(G)$, $d-2\not\in\pgc(G)$ for $p\in\pi(G)\setminus\pi(T)$, a contradiction. Thus, any triangle in $\pgc(G)$ is contained in $\pgc(G)[\pi(T)]$. Because $\pgc(G)[\pi(T)]\subseteq\pgc(T)$ and $\{2,a,d\}$ and $\{a, c,d\}$ are the only triangles contained in $\pgc(T)$, if a triangle exists in $\pgc(G)[\pi(T)]$, it must be one of $\{2,a,d\}$ or $\{a, c,d\}$.
\end{proof}

\begin{lemma}\thlabel{13sub2}
    Let $G$ be a $T$-solvable group such that $\pgc(G)$ contains at least one triangle, and there exists an edge $2-q$ for some $q \in \pi(G) \setminus \pi(T)$. Then for all $p \in \pi(G) \setminus \pi(T)$, if $d-p \in \pgc(G)$ then $2-p \in \pgc(G)$. 
\end{lemma}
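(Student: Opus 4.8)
The plan is to argue the contrapositive, after first reducing to a group of a very rigid shape. First, I would record the following reductions. Since $\pgc(G)$ contains a triangle it is not triangle-free, so by \thref{solvclass} the group $G$ is not solvable, hence strictly $T$-solvable. Because $2-q\in\pgc(G)$ for some $q\in\pi(G)\setminus\pi(T)$, \thref{noworry2} applies and yields both that $2-r\notin\pgc(G)$ for every $r\in\pi(T)$ and that $G$ has a subgroup $K\cong L.2.T$ with $L$ solvable of odd order and $\pi(K)=\pi(G)$; moreover the Sylow $2$-subgroup of the perfect central extension $2.T$ — and hence, $|L|$ being odd, of $K$ — is generalized quaternion by \thref{structurepslsubgroups}, so $K$ has a single conjugacy class of involutions. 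Now $2-r\notin\pgc(G)$ together with \thref{psl225triangles} forces the triangle of $\pgc(G)$ to be $\{a,c,d\}$; in particular $a-d\in\pgc(G)$, so arguing exactly as in the proof of \thref{psl225135edge} (via \thref{generalize222} and Criterion 5) we get $d\nmid|L|$, while $p\mid|L|$ since $p$ is an odd prime lying in $\pi(K)\setminus\pi(T)\subseteq\pi(L)\cup\{2\}$. Finally, by \thref{35disconnectpsl225}, inside $\pgc(G)$ the only vertices of $\pi(T)$ with neighbours in $\pi(G)\setminus\pi(T)$ are $2$ and $d$.

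The heart of the matter is that, for a group of the shape $L.2.T$, the implication that $d-p\in\pgc(\cdot)$ forces $2-p\in\pgc(\cdot)$ is already dictated by the character data. Indeed, the presence or absence of the edges $2-p$ and $d-p$ in $\pgc(K)$ is governed, through \thref{GeneralizedModuleExtensionsCited} (and, for a general solvable kernel, through \thref{236Improved} and \thref{NewPGCFromOld}), by the fixed-point patterns of the irreducible modules of $2.T$ in the relevant characteristics; and \thref{fpInfoGenPSL225} records that every such pattern in which elements of order $d$ have a fixed point also has elements of order $2$ with a fixed point. Translating, no module building-block can produce the edge $d-p$ without simultaneously producing $2-p$. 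Hence $\pgc(K)$ itself satisfies the desired implication.

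It remains to bridge from $K$ to $G$, which is where I expect the real work to lie. The easy direction of the subgroup relation $\pgc(G)\subseteq\pgc(K)$ only propagates the \emph{absence} of an edge downward, so to conclude $2-p\in\pgc(G)$ I would argue the contrapositive and transport a putative element of order $2p$ from $G$ into $K$ (or into a $T$-solvable section of $G$ of the shape handled above). Concretely: running the argument in the proof of \thref{noworry2} one shows that the Sylow $2$-subgroups of $G$ itself are generalized quaternion, so all involutions of $G$ are conjugate; conjugating, the involution occurring in an element of order $2p$ becomes the unique involution $\tau$ of $K$, and one then locates a $p$-element of $G$ centralizing $\tau$. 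Since in $2.T$ the $d$-elements already commute with the central involution, $C_G(\tau)$ then contains commuting elements of orders $2$, $d$, and $p$; the relevant $T$-solvable section of $C_G(\tau)$ is again of the form covered by the previous paragraph, so a configuration with no element of order $2p$ can admit none of order $dp$ either, contradicting $d-p\in\pgc(G)$. \textbf{The main obstacle} is precisely this transport step: pinning down the Sylow $2$-structure of $G$ (not merely of the subgroup handed to us by \thref{lemma212inPaper}) and the location of $p$-elements of $G$ relative to $K$ and to $C_G(\tau)$, so that the building-block dichotomy of \thref{fpInfoGenPSL225} can be brought to bear — this is exactly where Criteria $4$ and $5$ and the fine structure of $\PSL(2,q)$ and $\SL(2,q)$ are indispensable.
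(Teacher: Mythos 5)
Your skeleton matches the paper's: reduce via \thref{noworry2} to $K\cong L.2.T$, use the fixed-point data of $2.T$ to show that an element of order $2p$ forces one of order $dp$, and close the gap between $K$ and $G$ using the generalized quaternion Sylow $2$-subgroups and their unique central involution. But the step you call "the heart of the matter" is asserted rather than proved, and the lemmas you cite cannot deliver it. \thref{GeneralizedModuleExtensionsCited} concerns groups $N.T$ with $N$ a $p$-group and Brauer characters of $T$ (not of $2.T$), while \thref{236Improved} and \thref{NewPGCFromOld} are realizability \emph{constructions}; none of them says that for an arbitrary solvable kernel the edges of $\pgc(K)$ are "governed by the fixed-point patterns of the irreducible modules of $2.T$." To actually produce a $2.T$-module on which the $p$-part of the order-$2p$ element sits, the paper has to work: it passes to a Hall $\{2,a,c,d,p\}$-subgroup $H\cong L_{\{a,c,p\}}.2.T$ (via \cite[Theorem 4.4]{Suz} and the remark to \cite[Lemma 6.4]{Florez}), uses the hypothesis $d-p\in\pgc(G)$ together with the triangle $\{a,c,d\}$ to get an order-$d$ element acting Frobeniusly on $L_{\{a,c,p\}}$, hence nilpotency and a decomposition $C\times A\times P$, reduces to an elementary abelian chief factor $P_0$ of $P\rtimes 2.T$, and only then applies the complex-to-modular fixed-point transfer \cite[Lemma 2.1.7]{2023REU} together with \thref{fpInfoGenPSL225}. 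Note that this is the only place the hypothesis $d-p\in\pgc(G)$ is used; your sketch never invokes it in the module step, which is a sign the argument is incomplete.

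Two further points. First, your stated dichotomy is the wrong implication: the pattern $[a,c,d]$ in \thref{fpInfoGenPSL225} has $d$-fixed points but no $2$-fixed points, so it is false that "every pattern in which elements of order $d$ have a fixed point also has elements of order $2$ with a fixed point." What you need (and what is true) is the converse: the only pattern containing $2$ is $[2,a,c,d]$, which contains $d$, so a module with a $2$-fixed point has a $d$-fixed point. Your "translation" sentence states the correct conclusion, but the justification given does not support it. Second, in your transport step, "$C_G(\tau)$ contains commuting elements of orders $2$, $d$, and $p$" overreaches: $C_G(\tau)$ contains elements of order $d$ and of order $p$, but nothing yet makes those commute with each other — producing an element of order $dp$ is precisely the content of the module argument, so the centralizer detour does not avoid it. The paper instead closes the $K$-versus-$G$ gap directly: the Sylow $2$-subgroups of $G$ must satisfy the Frobenius criterion by \thref{generalize222}, hence are generalized quaternion by \cite[Theorem 3.1]{konrad}; their unique involution is central and already lies in the Sylow $2$-subgroup of $K$, so an element of order $2p$ in $G$ yields one in $K$.
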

\begin{proof}
    This follows similarly to \cite[Lemma 3.1.4]{2023REU}. First, notice that since there is a triangle contained in $\pgc(G)$ and all triangles are contained in $\pgc(G)[\pi(T)]$ by \thref{psl225triangles}, we may apply \cite[Lemma 2.1.1]{2023REU} to get $G \cong N.M$ for some solvable group $N$ and $\operatorname{Inn}(T) \leq M \leq \Aut(T)$. 
    $G\cong N.M$ and $T\cong\operatorname{Inn}(T)$, so take the subgroup $K\cong N.T$. Notice $\pi(K) = \pi(G)$. We may apply \thref{noworry2} to see that $K \cong L.E$ where $L$ is some solvable $2'$-group and $E \cong 2.T$, the perfect central extension.
    
    Notice from \thref{noworry2} that the 2 vertex is isolated from all other primes in $\pgc(K)[\pi(T)]$ therefore, from \thref{psl225triangles} the triangle that must exist is the $\{c,a,d\}$ triangle. Furthermore, $L$ must also be a $d'$-group because we would reach a contradiction from \thref{psl225135edge} on the existence of the $\{a, c,d\}$ triangle if $d$ divides $|L|$. We now take a Hall $\{2, a, c, d,p\}$ subgroup of $G$, which is possible by \cite[Theorem 4.4]{Suz}, and we can use the remark accompanying \cite[Lemma 6.4]{Florez} to say that this subgroup must be of the form $H \cong L_{\{a, c,p\}}.C_2.T$, where $L_{\{a, c,p\}}$ is a Hall $\{a,c,p\}$-group of $L$. Since the edges $d-p, d-c, d-a \in \pgc(G)\subseteq\pgc(K)$, $H$ must contain a subgroup of order $d$ acting Frobeniusly on $L_{\{a,c,p\}}$, therefore $L_{\{a,c,p\}}$ must be nilpotent. Thus $L_{\{a,c,p\}} \cong C\times A \times P$ for Sylow $c$-group $C$, Sylow $a$-group $A$, and Sylow $p$-group $P$. Because $C\times A$ is characteristic $L_{\{a,c,p\}}$ it is normal in $H$. Notice $H/(C \times A) \cong P.E$ and by the Schur-Zassenhaus theorem $P.E \cong P \rtimes E$.
    
    We now prove our result by contrapositive. Assume $2-p \notin \pgc(G)$, then there exists some element $x \in H$ with order $2p$ because $H$ is a Hall $\{2,a, c, d, p\}$-subgroup of $G$. Replacing $x$ with a conjugate of itself if necessary we may assume that $x^p$ modulo $C \times A$ is contained within $E$. Let $1 = K_0\unlhd K_1\unlhd\cdots\unlhd P.E$ be a Chief series of $P.E$. Let $n$ be the least index such that $x^2\not\in K_{n-1}, x^2\in K_n$. Call $K_n = P_0$. It is known that $x^2\in P\unlhd P.E$, so we may assume $P_0\leq P$. Notice $P_0/(K_{n-1})$ is a minimal normal subgroup of $(H/(A\times B))/K_{n-1}$. Thus, it suffices to assume that $P_0$ is a minimal normal subgroup of $H/(A\times B)$ and is thus elementary abelian. 
    Now $E$ acts on $P_0$ in such a way that $x^p$ fixes $x^2$ (modulo $C \times A$). In other words, we have an order $2$ element of $E$ fixing an order $p$ element of $P_0$. Therefore we may apply \cite[Lemma 2.1.7]{2023REU} and \thref{psl225possible}, to see that if an element of order $2$ has fixed points, an element of order $d$ will have fixed points, thus implying $d-p \notin \pgc(K)$.
    
    Now, we show that if $2-p \in \pgc(K)$ then $2-p \in \pgc(G)$. Recall $G\cong N.M$. Notice that $M\cong T.F$ for some $F\leq \Aut(T)/\operatorname{Inn}(T)$. $G\cong K.F\cong L.2.T.F$ for some $F\leq\Aut(T)/\operatorname{Inn}(T)$. $L$ is a 2'-group, so the Sylow 2-subgroups of $G$ are isomorphic to the Sylow 2-subgroups of $2.T.F$. We claim that these subgroups are generalized quaternion. To see this, let $Q$ be a Sylow 2-subgroup of $2.T.F$. Consider $2.T\cap Q$, a Sylow 2-subgroup of $2.T$. It follows from \thref{structurepslsubgroups} that $2.T\cap Q\leq Q$ is isomorphic to a generalized quaternion group, $Q_{2j}$. By the fact that $2-q\in\pgc(G)$ for $q\in\pi(G)\setminus\pi(T)$ and \thref{generalize222}, $Q$ must satisfy the Frobenius Criterion. By \cite[Theorem 3.1]{konrad}, the only groups satisfying the Frobenius Criterion which have generalized quaternion groups as subgroups are generalized quaternion groups, so $Q\cong Q_{2k}$ for some $k\geq j$.\\
    Suppose to the contrary that $2-p \in \pgc(K)$ but $2-p \notin \pgc(G)$ for some $p \notin \pi(T)$. Then, there exists an element $x$ of order $2p$ in $G$. Take a Hall $\{2,p\}$-subgroup of $G$, $H_G \cong P.Q_{2k}$ where $P$ is some $p$-group. Furthermore, notice a Hall $\{2,p\}$-subgroup of $K$, $H_K \cong P.Q_{2j}$ and $H_K \leq H_G$. Notice via methods similar to the above that $x^p \in Q_{2k}$ fixes $x^2\in P$, in other words we have an order $2$ element of $Q_{2k}$ that fixes an order $p$ element of $P$. But the order $2$ element in $Q_{2k}$ is unique and is contained in the center of this group, so $x^p \in Q_{2j}$ as well. Thus, there exists an element of order $2$ in $Q_{2j}$ fixing an element of order $p$ in $P$. This means that there exists an element of order $2p \in H_K$, so $2-p\not\in\pgc(K)$, a contradiction. Therefore we have shown that if $2-p \in \pgc(K)$ then $2-p \in \pgc(G)$, and thus the previous case is enough to prove the claim.
    \end{proof}
\begin{lemma}\thlabel{psl2253color}
    For any $T$-solvable group $G$, one of the two conditions applies:
    \begin{itemize}
        \item $\pgc(G)$ is triangle free and 3-colorable.
        \item There exists a 3-coloring of $\pgc(G)$ such that all neighbors of $2$ and $d$ in $\pi(G)\setminus\pi(T)$ are the same color.
    \end{itemize}
\end{lemma}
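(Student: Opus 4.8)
The plan is to mirror the structure of the analogous result \thref{psl2113color} from the earlier section, adapting it to the present fixed-point data. First I would dispose of the solvable case: if $G$ is solvable, then $\pgc(G)$ is triangle-free and $3$-colorable by \cite{2015REU}, so the first bullet holds. So assume $G$ is strictly $T$-solvable. As in \thref{psl2113color}, it suffices to find one subgroup $K\le G$ with $\pi(K)=\pi(G)$ whose complement satisfies the second bullet, since $\pgc(G)\subseteq\pgc(K)$ and any valid $3$-coloring of $\pgc(K)$ restricts to one of $\pgc(G)$ with the same color classes on the common vertex set. Then I would split into two subcases according to whether there is an edge $2-q\in\pgc(G)$ with $q\in\pi(G)\setminus\pi(T)$.

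In the subcase where no such edge exists, note that by \thref{35disconnectpsl225} there are also no edges $a-p$ or $c-p$ with $p\in\pi(G)\setminus\pi(T)$, so the only vertex of $\pi(T)$ that can have neighbors outside $\pi(T)$ is $d$. Since $\pgc(T)\subseteq\hamSandwich$ is $3$-colorable and $\pi(T)=\pi(\Aut(T))$, and $G$ is strictly $T$-solvable, \cite[Lemma 2.3.5]{2023REU} applies and yields a $3$-coloring of $\pgc(G)$ in which all vertices of $N(d)\setminus\pi(T)$ share a color; since $2$ has no neighbors outside $\pi(T)$ here, the second bullet follows at once.

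In the subcase where $2-q\in\pgc(G)$ for some $q\notin\pi(T)$, I would invoke \thref{noworry2} (applicable since $T$ has a perfect central extension $2.T$ by \thref{fpInfoGenPSL225} and $\pi(T)=\pi(\Aut(T))$) to obtain $K\le G$ with $\pi(K)=\pi(G)$, $K\cong L.2.T$ with $L$ solvable of odd order; and from \thref{noworry2}(2), the vertex $2$ is isolated from all of $\{a,c,d\}$ in $\pgc(K)[\pi(T)]$. Now I would build the $3$-coloring exactly as in \thref{psl2113color}: set $N=L.2$, use the Frobenius digraph $\overrightarrow{\Gamma}(N)$ of the solvable group $N$ together with \cite[Lemma 2.3.4]{2023REU} to color the vertices of $\pi(K)\setminus\pi(T)$ so that every vertex of $\pi(K)\setminus\pi(T)$ adjacent to a vertex of $\pi(T)$ receives color $\mathcal I$ (vertices with positive in-degree and zero out-degree get $\mathcal I$; positive in- and out-degree get $\mathcal D$; the rest, including isolated vertices, get $\mathcal O$), and then color the four vertices of $\pi(T)$: give $d$ the color $\mathcal O$, and among $\{2,a,c\}$ assign colors so that the induced subgraph $\pgc(K)[\pi(T)]$ (a subgraph of $\hamSandwich$ in which $2$ is isolated) is properly colored while keeping $a$ colored $\mathcal I$ — this is possible since with $2$ isolated the remaining triangle-free part on $\{a,c,d\}$ is easily $2$-colored and then $2$ can be given whichever of the three colors is free. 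Validity of the coloring on $\pi(K)\setminus\pi(T)$ follows as in \thref{psl2113color}: a monochromatic edge would force a directed $3$-path among four primes outside $\pi(T)$ in $\overrightarrow{\Gamma}(N)$, contradicting \cite[Corollary 2.7]{2015REU} since $N$ is solvable; and edges between $\pi(T)$ and its complement are safe because every such external vertex is colored $\mathcal I$ while $d$ is $\mathcal O$. All neighbors of $2$ and $d$ lying outside $\pi(T)$ are then colored $\mathcal I$, so the second bullet holds for $K$, hence for $G$.

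The main obstacle I anticipate is the bookkeeping in the second subcase: one must check that the chosen colors on $\{2,a,c,d\}$ are simultaneously compatible with the induced graph $\pgc(K)[\pi(T)]$ (which depends on which triangle, if any, survives — here necessarily $\{a,c,d\}$ by \thref{psl225triangles}, since $2$ is isolated) \emph{and} with the color $\mathcal I$ forced on all external neighbors of $d$; verifying no monochromatic edge arises between $\pi(T)$ and the outside reduces, exactly as in \thref{psl2113color}, to the observation that external neighbors are all $\mathcal I$ and $d\notin\mathcal I$, so this is routine once the internal coloring is fixed. I would also double-check that \thref{noworry2} indeed gives $L$ of odd order and $2$ central in $2.T$, which is precisely what isolates $2$ in $\pgc(K)[\pi(T)]$ and makes the only possible triangle $\{a,c,d\}$, keeping the internal coloring problem trivial.
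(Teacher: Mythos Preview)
Your plan is correct and, in the second subcase, takes a genuinely different route from the paper. The paper's proof splits into \emph{three} cases according to whether neither, exactly one, or both of $2$ and $d$ have neighbors in $\pi(G)\setminus\pi(T)$; in the third case it invokes \thref{13sub2} (which needs the fixed-point data of $2.T$ and the assumption that a triangle is present) to obtain $N(d)\setminus\pi(T)\subseteq N(2)\setminus\pi(T)$, and only then appeals to the proof of \cite[Lemma~2.3.5]{2023REU}. You instead collapse to two cases and, whenever $2$ has an external neighbor, transplant the explicit Frobenius-digraph coloring from \thref{psl2113color} verbatim. This bypasses \thref{13sub2} entirely for the present lemma (that lemma is still needed later, in the forward direction of \thref{PSL225Classification}, to verify condition~(3)), and it also sidesteps the triangle hypothesis that \thref{13sub2} carries, so your argument is slightly more robust here.

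One small point to tighten: in your Case~B you write that ``$2$ can be given whichever of the three colors is free,'' but since $2$ has external neighbors and those are all colored~$\mathcal I$, you must \emph{not} color $2$ with~$\mathcal I$. Since \thref{noworry2}(2) makes $2$ isolated inside $\pgc(K)[\pi(T)]$, simply fix $2\in\mathcal D$ (exactly as in \thref{psl2113color}); then the verification ``external neighbors are $\mathcal I$ while $d\in\mathcal O$'' that you wrote must be supplemented by ``and $2\in\mathcal D$.'' With that adjustment your bookkeeping goes through without change.
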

\begin{proof}
    Let $G$ be a $T$-solvable group. If $G$ is solvable then $G$ immediately satisfies the triangle free and 3-colorable condition by \cite{2015REU}. So instead assume that $G$ is strictly $T$-solvable. Recall that only $2$ and $d$ can connect to primes in $\pi(G) \setminus \pi(T)$ via \thref{35disconnectpsl225}, therefore, we break into cases based on the edges between $\pi(T)$ and $\pi(G) \setminus \pi(T)$. 
    
    \begin{itemize}
        \item Assume there exist no edges between 2 or $d$ and an element of $\pi(G) \setminus \pi(T)$. Recall that $G$ is strictly $T$-solvable. Thus, by \cite[Lemma 2.1.1]{2023REU}, there exists a solvable group $N$ and a group $S$ with $\operatorname{Inn}(T)\leq S\leq \Aut(T)$ such that $G\cong N.S$. Notice that $\pgc(G)[\pi(T)]\subseteq\pgc(T)$ which is three-colorable, $\pgc(G)[\pi(G)\setminus\pi(T)]\subseteq\pgc(N)$ which is three-colorable and triangle free by \cite{2015REU}, and $N[\pi(T)] = \pi(T)$, so the second statement holds.
        \item Assume that there exists an edge between exactly one of $2$ or $d$ and a prime in $\pi(G) \setminus \pi(T)$. Notice that $\pgc(T)$ is 3-colorable, and thus all possible subgraphs of $\pgc(T)$ are 3-colorable. Therefore, we may apply \cite[Lemma 2.3.5]{2023REU} to see that the second condition of the statement is satisfied.
        \item Finally, assume that there are edges between both $2$ and $d$ and primes in $\pi(G) \setminus \pi(T)$. If there exist $p, q\in \pi(T)$ such that $2-p, d-q\in\pgc(G)$ then by \thref{13sub2} $d-q, 2-q\in\pgc(G)$. Because of this, we may apply the proof of \cite[Lemma 2.3.5]{2023REU} to color $\pgc(G)$ such that all neighbors of $\pi(T)$ are colored with one color.
    \end{itemize}
    We have checked all possible cases, therefore we have shown that an arbitrary $T$-solvable group $G$ satisfies the claim.

\end{proof}

\begin{theorem}\thlabel{PSL225Classification}
Given a graph $\Xi$, we have that $\Xi$ is isomorphic to the prime graph complement of some $T$-solvable group if and only if one of the following is true: 
\begin{enumerate}
    \item $\Xi$ is triangle-free and 3-colorable.
    \item $\Xi$ contains a subset $X = \{w, x, y, z\} \subseteq V(\Xi)$ such that $N(X) = X$, and $\Xi[X]\cong\twoIsolatedpsl$, $\Xi[X]\cong\hamSandwich$, or $\Xi[X] \cong \szthirtytwoTrickyGraph$. Furthermore, $\Xi\setminus X$ is triangle-free and three colorable.
    \item $\Xi$ contains a subset $X = \{w, x, y, z\} \subseteq V(\Xi)$ such that in some 3-coloring of $\Xi$ the closed neighborhood $N(X) \setminus X$ share one color. Also, $\Xi$ contains exactly one triangle $\{x,y,z\}$ and $N(w) \cap \{x,y,z\} = \varnothing$. All edges incident to $x \text{ and } y$ are within the triangle and $N(z)\setminus\{x,y\} \subseteq N(w)$.
    \item $\Xi$ contains a subset $X = \{w, x, y, z\} \subseteq V(\Xi)$ such that in some 3-coloring of $\Xi$ the closed neighborhood $N(X) \setminus X$ share one color. Also, $\Xi$ contains exactly one triangle $\{x,y,z\}$ and $N(w) \cap \{x,y,z\} = \varnothing$. All edges incident to $w,x,y$ are contained in $\Xi[X]$.
\end{enumerate}
\end{theorem}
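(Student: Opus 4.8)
The plan is to prove both implications in the manner of \thref{generalThm} and \thref{TPSL211General}: the structural lemmas \thref{35disconnectpsl225}, \thref{psl225triangles}, \thref{13sub2}, \thref{noworry2}, \thref{psl2253color} and \thref{psl225135edge} drive the forward direction, while the module constructions \thref{236Improved} and \thref{NewPGCFromOld}, together with the realizability facts \thref{psl225possible} and \thref{fpInfoGenPSL225}, drive the backward direction.

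\textbf{Forward direction.} Let $G$ be $T$-solvable. If $G$ is solvable, $\pgc(G)$ is triangle-free and $3$-colorable by \cite{2015REU}, giving (1); so assume $G$ is strictly $T$-solvable. If $\pgc(G)$ is triangle-free it is $3$-colorable by \thref{psl2253color} and (1) holds; otherwise \thref{psl2253color} supplies a $3$-coloring in which $N(\{2,d\})\setminus\pi(T)$ is monochromatic, and by \thref{psl225triangles} every triangle of $\pgc(G)$ lies in $\pgc(G)[\pi(T)]$ and is $\{2,a,d\}$ or $\{a,c,d\}$. Put $X=\pi(T)$; by \thref{35disconnectpsl225} only $2$ and $d$ can have neighbours outside $\pi(T)$, and by \thref{noworry2}, if $2$ does, it is isolated in $\pgc(G)[\pi(T)]$. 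Now split on which of $2,d$ have outside neighbours. If neither does, $N(X)=X$, $\pgc(G)[\pi(T)]$ is a triangle-containing subgraph of $\hamSandwich$ — hence $\hamSandwich$, $\szthirtytwoTrickyGraph$, or $\twoIsolatedpsl$ — and $\pgc(G)\setminus X$, being contained in $\pgc$ of the solvable radical, is triangle-free and $3$-colorable: this is (2). If both do, then $2$ is isolated in $\pgc(G)[\pi(T)]$, the unique triangle is $\{a,c,d\}$, and \thref{13sub2} forces $N(d)\setminus\pi(T)\subseteq N(2)\setminus\pi(T)$; with $w=2$, $\{x,y\}=\{a,c\}$, $z=d$ this is (3). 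If exactly one does, a short case check — again invoking \thref{noworry2}, and excluding configurations such as a $\{2,a,d\}$-triangle together with $d$ connecting outward and $c$ nonisolated, by means of \thref{13sub2}, \thref{psl225135edge} and Criterion~5 — yields (3) or (4).

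\textbf{Backward direction, types (1)--(3).} Type (1) is \cite{2015REU}. For (2), \thref{psl225possible} gives a $T$-solvable $\pi(T)$-group $G_0$ with $\pgc(G_0)\cong\Xi[X]$, and the methods of \cite[Theorem 2.8]{2015REU} give a solvable $N$ with $\gcd(|G_0|,|N|)=1$ and $\pgc(N)\cong\Xi\setminus X$; then $\pgc(G_0\times N)\cong\Xi$. For (3), apply \thref{236Improved} with $F=2.T$: by \thref{structurepslsubgroups} every element of $2.T\cong\operatorname{SL}(2,q)$ commutes with the central involution, so $\pgc(2.T)\cong\twoIsolatedpsl$ with $2$ isolated, matching $\Xi[X]$ via a map sending $w\mapsto 2$ and $z\mapsto d$; and the three irreducible fixed-point lists of $2.T$ in \thref{fpInfoGenPSL225} — $[2,a,c,d]$, $[a,c,d]$, $[a,c]$ — realize outside neighbourhoods $\varnothing$, $\{2\}$, $\{2,d\}$. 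The hypothesis $N(z)\setminus\{x,y\}\subseteq N(w)$ is exactly what ensures the absent neighbourhood $\{d\}$ is never needed, so every vertex of $\Xi\setminus X$ can be assigned a representation, and the remaining hypotheses of \thref{236Improved} are built into the statement of (3).

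\textbf{Backward direction, type (4) — the main obstacle.} Here $z=d$ must acquire outside neighbours while the fourth vertex $w$ of $X$ is completely isolated, and the computation above shows the complex representations of $2.T$ alone cannot make an outside vertex adjacent to $d$ but not to $2$. I expect to realize these graphs by seeding instead with the group $T\ltimes P$ from Criterion~3 — whose ordinary representation theory (and that of a further product with $C_c$, or a further $2$-group extension used to fully isolate the fourth vertex) is richer than the data of \thref{fpInfoGenPSL225} — and then applying \thref{NewPGCFromOld}, whose side condition $N[\pi(L)]\subseteq\pi(T)$ on the solvable factor holds by \thref{35disconnectpsl225}. The delicate points are locating a seed with an ordinary irreducible representation in which order-$d$ elements act without fixed points while order-$2,a,c$ elements do not, and confirming that the resulting $\pgc$ has exactly the shape demanded by (4); the bookkeeping that excludes the ``orphan'' configurations in the forward direction is the other place where care is needed.
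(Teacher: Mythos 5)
Your overall architecture matches the paper's, and types (1)--(3) of the backward direction, together with the forward cases where $2$ has an outside neighbour or neither of $2,d$ does, are handled essentially as in the paper. However, there are two genuine gaps. The first is in the forward direction, in the case where $d$ has an outside neighbour but $2$ does not: the tools you cite do not dispatch it. \thref{13sub2} is inapplicable (its hypothesis is an edge $2-q$ with $q\notin\pi(T)$), and \thref{psl225135edge} with Criterion~5 only tell you that $d\nmid|N|$ when $a-d$ is present and that $a,c$ cannot see outside $\pi(T)$; none of this excludes, say, the configuration where $\pgc(G)[\pi(T)]$ is all of $\hamSandwich$ (two triangles) or is a $\{2,a,d\}$-triangle with a pendant edge at $c$, both of which fail every clause of the theorem and therefore must be ruled out. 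The missing ingredient is the paper's argument that an edge $d-q$ with $q\notin\pi(T)$ forces, via a Hall $\{2,a,c,d,q\}$-subgroup and \cite[Proposition 2.2.5]{2023REU}, a section $V.E$ with $E$ a \emph{proper} perfect central extension of $T$ --- since by \thref{fpInfoGenPSL225} every element of $T$ itself has fixed points in every complex irreducible representation, so $T$ alone cannot produce the edge $d-q$ --- and the resulting central involution isolates $2$ inside $\pgc(G)[\pi(T)]$, pinning the unique triangle down to $\{a,c,d\}$ and landing the graph in clause (4).

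The second gap is your treatment of type (4) in the backward direction, which you acknowledge is a plan rather than a proof, and which aims at the wrong target: you would need a complex irreducible representation of some seed group in which order-$d$ elements act without fixed points while order-$2$ elements do not, and \thref{fpInfoGenPSL225} shows no such representation of $T$ or $2.T$ exists; whether $T\ltimes P$ from Criterion~3 admits one is not established, and $\pgc(T\ltimes P)\cong\szthirtytwoTrickyGraph$ is in any case the wrong induced subgraph for clause (4), whose $\Xi[X]$ is a triangle plus an isolated vertex. The paper sidesteps all of this with a short trick: given $\Xi$ of type (4), form $\Xi'$ by adding the edge $w-v$ for every outside vertex $v$ with $z-v\in\Xi$; then $\Xi'$ satisfies clause (3), hence is realized by some $T$-solvable group $G$, and $\pgc(G\times C_2)\cong\Xi$ because the central involution of the direct factor deletes exactly the edges at the vertex corresponding to $w=2$. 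You should replace your Criterion~3/\thref{NewPGCFromOld} plan with this construction.
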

\begin{proof}
    We will start by proving the forward direction. Let $G$ be a $T$-solvable group. First, from \thref{psl2253color} $\pgc(G)$ must be 3-colorable. Therefore, if there are no triangles $\pgc(G)$ will satisfy condition 1. Assume $\pgc(G)$ contains at least one triangle. We consider three cases: The case where there exist no $r\in\pi(T)$ and $p\in\pi(G)\setminus\pi(T)$ with $r-p\in\pgc(G)$, the case where there exists $p\in\pi(G)\setminus\pi(T)$ with $2-p\in\pi(G)$, and the case where there exist $p\in\pi(G)\setminus\pi(T)$ with $d-p\in\pgc(G)$ but no $q\in\pi(G)\setminus\pi(T)$ with $2-q\in\pgc(G)$.
    
    Case 1: Choosing $X = \pi(T)$, (2) is satisfied by \thref{psl225triangles}, \thref{psl2253color} and the fact that $\pgc(G)$ contains a triangle.

    Case 2: Notice that $\{a, c, d\}$ forms a triangle such that $\{a, c, d\}\cap N(2) = \varnothing$ because $\pgc(G)[\pi(T)]$ contains a triangle by \thref{psl225triangles} and \thref{noworry2}. Let $w = 2, x = a, y = c, d = z$. $N(z)\setminus\{x, y\}\subseteq N(w)$ by \thref{35disconnectpsl225}. By \thref{psl2253color}, $N(X)\setminus X$ shares one color in some 3-coloring of $\pgc(G)$. Thus, (3) is satisfied.
    
    Case 3: Let $w = 2, x = a, y = c, d = z$. By assumption, all edges incident to $\{w, x, y\}$ are contained in $\pgc(G)[\pi(T)]$. By \thref{psl2253color}, $N(X)\setminus X$ shares one color in some 3-coloring of $\pgc(G)$. It only remains to show that $\{x, y, z\}$ is a triangle and $N(w)\cap\{x, y, z\}=\varnothing$. By assumption, there exists an odd prime $q\in\pi(G)\setminus\pi(T)$ such that $d-q\in\pgc(G)$. Consider a Hall $\{2,a, c, d, q\}$-subgroup of $G$, call it $H$ such that $\pgc(H)\cong\pgc(G)[\pi(T)\cup\{q\}]$ by \cite[Theorem 4.4]{Suz}. By \cite[Proposition 2.2.5]{2023REU}, there exists a section of $H$ of the form $V.E$ where $E$ is a perfect central extension of $T$ and $V$ is a nontrivial elementary abelian $q$-group. Then, we may write $E\cong B.T$. If $E\cong T$, we reach a contradiction by \thref{fpInfoGenPSL225}, \cite[Lemma 2.1.7]{2023REU}, and the fact that $d-q\in\pgc(G)$ for some $q\in\pi(G)\setminus\pi(T)$. We first consider the case where $B.T$ splits, and show that in all cases, we either derive a contradiction or satisfy a case. If $|B|$ is not coprime to $|T|$, notice $B\leq Z(E)$, so $B$ is abelian and thus nilpotent. Then, we may consider a section $P.T$ of $B.T$ for some nontrivial elementary abelian $p$-group $P$ where $p\in\{2,a, c, d\}$. If $p\in\{a, c, d\}$, notice that $P.T$ is still a split central extension of $T$, so because there exists an element of order $p$ in $Z(P.T)$ and $\pgc(G)[\pi(T)\cup\{q\}]\subseteq\pgc(P.T)$, $p-t\not\in\pgc(G)$ for any $t\in\pi(T)$, a contradiction to the triangle assumption if $p\neq 2$. If $p = 2$, then because there exists an element of order 2 in the center of $B.T$, $2-r\not\in\pgc(B.T)$ for $r\in\pi(T)$, so $N(2)\cap \{a, c, d\} = \varnothing$. Because there exists a triangle in $\pgc(G)$, by \thref{psl225triangles}, $\{a, c, d\}$ is a triangle. Thus, (4) is satisfied. This concludes the forward direction.
    
    For the backwards direction, if a graph satisfies (1), then by \cite{2015REU} there exists a solvable group that realizes it. Therefore, we only consider graphs that satisfy (2) through (4).
    
    Let $\Xi$ be a graph satisfying (2). By \thref{psl225possible}, there exists a $T$-solvable group, say $E$, such that $\pgc(E) \cong \Xi[X]$. Using methods from \cite{2015REU}, there exists a solvable group $N$ with order coprime to $|E|$ such that $\pgc(N) \cong \Xi \setminus X$. Notice that $\pgc(E \times N) \cong \Xi$ and that $E \times N$ is a $T$-solvable group. Thus $\Xi$ is the prime graph complement of some $T$-solvable group.
    
    Let $\Xi$ be a graph satisfying (3) and let $X = \{w, x, y, z\}$. Let $E \cong 2.T$ the perfect central extension. Define a graph isomorphism between $\pgc(E)$ and $\Xi[X]$ with the assignments $w\rightarrow 2$, $x \rightarrow c$, $y \rightarrow a$, and $z \rightarrow d$. Now, for each $v \in N(X) \setminus X$ one of the following holds: (1) $v$ is adjacent to $w$ but not $x$, $y$, and $z$, (2) $v$ is adjacent to $w$ and $z$ but not $x$ and $y$. Now refer to the table of fixed points of complex irreducible representations of $E$ in \thref{psl225possible}. If $v$ satisfies (1) select the representation where $a, c$ and $d$ have fixed points but $2$ does not. If $v$ satisfies (2) select the representation where $c$ and $a$ have fixed points but $2$ and $d$ do not. These representations satisfy the conditions for \thref{236Improved}, thus $\Xi$ is the prime graph complement of some $T$-solvable group.
    
    Finally consider the case where $\Xi$ satisfies (4). Take $\Xi'$ to be the graph with all the edges of $\Xi$ and such that if $z-v\in \Xi$ for $v \notin X$ then $w-v\in\Xi'$. Now $\Xi'$ satisfies (3). Therefore, there exists some $T$-solvable group $G$ such that $\pgc(G) \cong \Xi'$. Now take $\pgc(G \times C_2)$ which by construction is isomorphic to $\Xi$. Thus $\Xi$ is the prime graph complement of some $T$-solvable group. 
\end{proof}
\subsection{$\PSL(2, 5^2)$}\label{sec:PSL225new}
\begin{figure}[H]
    \centering
    \begin{minipage}{.5\textwidth}
        \centering
        \begin{tikzpicture}
            \coordinate (A) at (0, 1);   
            \coordinate (B) at (1, 1);   
            \coordinate (C) at (1, 0);   
            \coordinate (D) at (0, 0);   
            \draw (B) -- (C);
            \draw (C) -- (D);
            \draw (A) -- (D);
            \draw (A) -- (B);
            \draw (A) -- (C);
            \foreach \point in {A, B, C, D}
                \draw[draw, fill=white] (\point) circle [radius=0.2cm];
            \foreach \point/\name in {(A)/5, (B)/2, (C)/13, (D)/3}
                \node at \point {\scriptsize\name};
        \end{tikzpicture}
        \caption*{$\pgc(\PSL(2,5^2))$}
    \end{minipage}%
    \begin{minipage}{.5\textwidth}
        \centering
        \begin{tikzpicture}
            \coordinate (A) at (0, 1);   
            \coordinate (B) at (1, 1);   
            \coordinate (C) at (1, 0);   
            \coordinate (D) at (0, 0);
            \draw (C) -- (D);
            \draw (A) -- (D);
            \draw (A) -- (C);
            \foreach \point in {A, B, C, D}
                \draw[draw, fill=white] (\point) circle [radius=0.2cm];
            \foreach \point/\name in {(A)/5, (B)/2, (C)/13, (D)/3}
                \node at \point {\scriptsize\name};
        \end{tikzpicture}
        \caption*{$\pgc(\Aut(\PSL(2,5^2)))$}
    \end{minipage}
\end{figure}
$\PSL(2, 5^2)$ is a group of order $7800 = 2^3\cdot3\cdot5^2\cdot13$ and $\left|\Aut\left(\PSL(2, 5^2)\right)\right| = 31200 = 2^5\cdot3\cdot5^2\cdot13$. $\pi\left(\PSL(2, 5^2)\right) = \pi\left(\Aut\left(\PSL(2, 5^2)\right)\right)$, the Schur multiplier of $T$ is 2, the fixed point information for $\PSL(2, 5^2)$ is given in \thref{psl225fpinfonew}, the graph \SuzukiThirtyTwoElusiveGraph is realized via $\PSL(2,5^2) \ltimes_{\varphi} (\F_2)^{12} $, where $\varphi$ can be found in irreducible representations (see Appendix A for link to GitHub), the Sylow 5-subgroups of $T$ do not satisfy the Frobenius criterion, and it can be checked using \cite{GAP} that the Sylow $2$-subgroups of $T$ are dihedral. Making the assignments $a = 5, c = 3$, and $d = 13$, we may apply \thref{PSL225Classification} to get the classification result for $\PSL(2, 5^2)$-solvable groups.
\begin{fact}\thlabel{psl225fpinfonew}
        \begin{center}
    \begin{tabular}[h]{|c|c|}
    \hline
        Fixed Point Information &  \\
        \hline
        $2.\PSL(2,5^2)$ & $[ [ 2, 3, 5, 13 ], [ 3, 5 ], [ 3, 5, 13 ] ]$ \\
        \hline
        $\PSL(2,5^2)$ & $[ [ 2, 3, 5, 13 ]]$ \\
        \hline
    \end{tabular}
     \end{center}
\end{fact}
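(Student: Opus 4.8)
The statement is a finite computation about the ordinary character theory of $\PSL(2,5^2)$ and its cover, and the plan is to carry it out and to indicate how one may check it by hand. First note that, by Criterion 1 (the Schur multiplier of $\PSL(2,5^2)$ is $2$, and $5^2\neq 9$), the only perfect central extension of $T=\PSL(2,5^2)$ is $2.T\cong\operatorname{SL}(2,5^2)$, so the two rows of the table are governed by the two groups $\operatorname{SL}(2,5^2)$ and $\PSL(2,5^2)$ over the prime set $\{2,3,5,13\}$. Recall that an element $g$ of prime order $p$ has a nonzero fixed vector in a complex character $\chi$ exactly when the trivial character occurs in $\chi|_{\langle g\rangle}$, i.e.\ when $\tfrac1p\sum_{k=0}^{p-1}\chi(g^{k})>0$. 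For $p\in\{2,3,13\}$ the Sylow $p$-subgroups of both groups are cyclic, so up to conjugacy there is a single subgroup $\langle g\rangle$ to test per prime (with only finitely many classes of generators, whose distribution is pinned down by \thref{ConjugacyClassLanding}); for $p=5$ the Sylow subgroup is elementary abelian of rank $2$, but every element of order $5$ is unipotent and lies in one of exactly two conjugacy classes, both already met by a single $\langle g\rangle$, so again one evaluation per character suffices.

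Concretely, I would load \texttt{CharacterTable("L2(25)")} and \texttt{CharacterTable("2.L2(25)")} in \cite{GAP} and, for each irreducible $\chi$ and each $p\in\{2,3,5,13\}$, compute the inner product $\tfrac1p\sum_{k}\chi(g^{k})$ (equivalently, apply the fixed-point routine of \cite[Lemma 6.2]{BrauerTableFP} that is used elsewhere in the paper). Recording for each $\chi$ the set of primes on which this value is positive yields precisely two sets: $\{2,3,5,13\}$, which occurs for every irreducible of $\PSL(2,5^2)$ and for each non-faithful irreducible of $\operatorname{SL}(2,5^2)$, together with $\{3,5,13\}$ and $\{3,5\}$, which occur among the faithful irreducibles of $\operatorname{SL}(2,5^2)$. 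This is exactly the content of \thref{psl225fpinfonew}.

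As a conceptual cross-check, one can use the classical character table of $\PSL(2,q)$ --- the trivial character, the Steinberg character of degree $q$, the principal series of degree $q+1$, the discrete series of degree $q-1$, and (as $25\equiv1\pmod4$) two exceptional characters of degree $(q+1)/2$ --- and verify family by family that the sum $\sum_{k}\chi(g^{k})$ is positive for $p=2$ and $p=3$ (these $g$ lie in a split torus and $2,3\mid q-1$), for $p=5$ ($g$ unipotent, the defining characteristic), and for $p=13$ ($g$ in a non-split torus, $13\mid q+1$); for instance for Steinberg one gets $\tfrac12(q+1)=13$, $\tfrac13(q+2)=9$, $\tfrac15 q=5$, and $\tfrac1{13}(q-12)=1$. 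For $\operatorname{SL}(2,5^2)$ the faithful irreducibles are precisely those on which the unique element of order $2$, the central $-I$, acts as $-1$, so no order-$2$ element can fix a nonzero vector; a short computation from the $\operatorname{SL}(2,q)$ table then confirms that $3$ and $5$ always survive while $13$ survives for some but not all faithful characters. I expect no real obstacle: the only delicate point is the bookkeeping of which conjugacy class each power $g^{k}$ of an element of prime order falls into (e.g.\ which of the two unipotent classes the powers of an order-$5$ element meet), and this is handled automatically by GAP and in closed form by \thref{ConjugacyClassLanding}. Once \thref{psl225fpinfonew} is established, Criterion 2 of \ref{sec:PSL225General} holds, and together with the already-recorded Criteria 1, 3, 4, and 5 for $\PSL(2,5^2)$ this permits the application of \thref{PSL225Classification} with $T=\PSL(2,5^2)$.
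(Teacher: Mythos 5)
Your proposal is correct and takes essentially the same route as the paper: the fact is recorded as a GAP computation on the ordinary character tables of $\PSL(2,5^2)$ and $2.\PSL(2,5^2)\cong\operatorname{SL}(2,5^2)$, exactly as you describe, and your hand-verification via the generic $\PSL(2,q)$ character table is a sound cross-check. One small inaccuracy that does not affect the argument: the Sylow $2$-subgroups of $\PSL(2,5^2)$ are dihedral of order $8$ (and generalized quaternion in $\operatorname{SL}(2,5^2)$), not cyclic; what you actually need is only that each group has a single conjugacy class of involutions, which is true for both.
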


\subsection{$\PSL(2,3^4)$}\label{sec:PSL281new}
\begin{figure}[H]
    \centering
    \begin{minipage}{.5\textwidth}
        \centering
        \begin{tikzpicture}
            \coordinate (A) at (0, 1);   
            \coordinate (B) at (1, 1);   
            \coordinate (C) at (1, 0);   
            \coordinate (D) at (0, 0);   
            \draw (B) -- (C);
            \draw (C) -- (D);
            \draw (A) -- (D);
            \draw (A) -- (B);
            \draw (A) -- (C);
            \foreach \point in {A, B, C, D}
                \draw[draw, fill=white] (\point) circle [radius=0.2cm];
            \foreach \point/\name in {(A)/3, (B)/2, (C)/41, (D)/5}
                \node at \point {\scriptsize\name};
        \end{tikzpicture}
        \caption*{$\pgc(\PSL(2,3^4))$}
    \end{minipage}%
    \begin{minipage}{.5\textwidth}
        \centering
        \begin{tikzpicture}
            \coordinate (A) at (0, 1);   
            \coordinate (B) at (1, 1);   
            \coordinate (C) at (1, 0);   
            \coordinate (D) at (0, 0);
            \draw (C) -- (D);
            \draw (A) -- (D);
            \draw (A) -- (C);
            \foreach \point in {A, B, C, D}
                \draw[draw, fill=white] (\point) circle [radius=0.2cm];
            \foreach \point/\name in {(A)/3, (B)/2, (C)/41, (D)/5}
                \node at \point {\scriptsize\name};
        \end{tikzpicture}
        \caption*{$\pgc(\Aut(\PSL(2,3^4)))$}
    \end{minipage}
\end{figure}
$\PSL(2, 3^4)$ is a group of order $265680 = 2^4\cdot3^4\cdot5\cdot41$ and $\left|\Aut\left(\PSL(2, 3^4)\right)\right| = 2125440 = 2^7\cdot3^4\cdot5\cdot41$. $\pi(\PSL(2, 3^4)) = \pi(\Aut(\PSL(2, 3^4)))$, the Schur multiplier of $T$ is 2, the fixed point information for $\PSL(2, 3^4)$ is given in \thref{psl281fpinfo}, the graph \szthirtytwoTrickyGraph is realizable by a $\PSL(2, 3^4)$-solvable group by \thref{possibleGraphPSL281}, the Sylow 3-subgroups of $T$ do not satisfy the Frobenius criterion, and it can be checked using \cite{GAP} that the Sylow $2$-subgroups of $T$ are dihedral. Making the assignments $a = 3, c = 5$, and $d = 41$, we may apply \thref{PSL225Classification} to get the classification result for $\PSL(2, 3^4)$-solvable groups.

\begin{fact}\thlabel{psl281fpinfo}
    \begin{tabular}[h]{|c|c|}
    \hline
        Fixed Point Information &  \\
        \hline
        $\PSL(2,3^4)$ & $[ [ 2, 3, 5, 41 ] ]$ \\
        \hline
        $2.\PSL(2,3^4)$ & $[ [ 2, 3, 5, 41 ], [ 3, 5 ], [ 3, 5, 41 ] ]$ \\
        \hline
    \end{tabular}\\
    The list refers to all element orders where elements have fixed points in some representation. Then the multiple lists refers to the fact that different irreducible representations have different fixed points.
\end{fact}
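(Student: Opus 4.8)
The statement records the fixed-point behaviour, on the irreducible complex representations, of $\PSL(2,3^4)$ and of its non-trivial perfect central extension. Since the Schur multiplier of $\PSL(2,3^4)$ is $2$, the only perfect central extensions are $T=\PSL(2,3^4)$ itself and $2.T\cong\operatorname{SL}(2,3^4)$, so there are exactly two rows to verify. The plan is to read the data off the ordinary character tables of these two groups; these are the classical generic character tables of $\PSL(2,q)$ and $\operatorname{SL}(2,q)$ for odd $q$, and are also directly available from \cite{GAP}, exactly as in \cite{2023REU}.

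The dictionary is: an element $g$ of prime order $p$ acting via the representation with character $\chi$ fixes a nonzero vector if and only if the multiplicity $\tfrac1p\sum_{i=0}^{p-1}\chi(g^i)$ of the trivial character in the restriction of $\chi$ to $\langle g\rangle$ is positive, and since this is a non-negative integer it suffices to decide whether it vanishes. So for each of $T$ and $2.T$, each $\chi\in\operatorname{Irr}$, and each $p\in\pi(T)=\{2,3,5,41\}$, I would evaluate this sum on one cyclic subgroup of order $p$ (one representative suffices up to conjugacy, and \thref{ConjugacyClassLanding} describes how the non-trivial powers of a generator distribute among the conjugacy classes of elements of order $p$); recording the set of primes giving a positive value and then listing the distinct such sets over $\operatorname{Irr}(T)$ and over $\operatorname{Irr}(2.T)$ gives the two rows. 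For $T$, the trivial, Steinberg (degree $q$), principal-series (degree $q+1$), cuspidal (degree $q-1$) and the two exceptional (degree $(q+1)/2=41$) characters each fix vectors under elements of all four orders $2,3,5,41$ --- e.g.\ for Steinberg, which takes value $-1$ on regular elliptic elements, $\tfrac1{41}\bigl(q+40(-1)\bigr)=1>0$ disposes of $p=41$, and the split-torus and unipotent cases are similar --- so the only list for $T$ is $[\,[2,3,5,41]\,]$. For $2.T=\operatorname{SL}(2,3^4)$ the characters split into those inflated from $T$, on which the unique involution $-I$ acts trivially, so that order-$2$ elements (images of order-$4$ elements) still fix vectors and the list is again $[2,3,5,41]$; and the faithful characters, on which $-I$ acts as $-\operatorname{id}$, so $\tfrac12(\chi(I)+\chi(-I))=0$ and no order-$2$ element fixes a nonzero vector. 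Among the faithful characters, the faithful principal-series and cuspidal ones still have order-$41$ fixed points ($\tfrac1{41}(q+1)=2>0$ in both cases, giving $[3,5,41]$), while the two exceptional faithful characters of degree $(q-1)/2=40$ have no order-$41$ fixed points (giving $[3,5]$); together these are exactly the three lists recorded for $2.T$.

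The main obstacle is precisely this last claim about the degree-$40$ exceptional faithful characters: their values on elements of orders $3$, $5$, $41$ are not small integers but essentially Gauss-sum expressions, so the vanishing or positivity of the relevant restricted inner products is the one point not handled by crude degree-counting. One way to close the order-$41$ case without invoking a table is to note that the two characters $\eta^{+},\eta^{-}$ are Galois conjugate with $\eta^{+}+\eta^{-}=\theta$, the reducible degree-$(q-1)$ character induced from the order-$2$ character of the non-split torus; a short computation gives $\tfrac1{41}\sum_{i=0}^{40}\theta(t^i)=0$ for $t$ of order $41$, and since $\tfrac1{41}\sum_{i=0}^{40}\eta^{\pm}(t^i)$ are non-negative integers summing to $0$, each must be $0$. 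The orders $3$ and $5$ (unipotent and split-torus elements), and all orders on the inflated characters, reduce to elementary fixed-space bookkeeping --- large-degree characters have character values bounded in absolute value by the degree, so the averaged sums stay positive --- and in practice one simply confirms the complete table with \cite{GAP}.
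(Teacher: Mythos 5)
Your proposal is correct, but it takes a genuinely different route from the paper: the paper offers no mathematical derivation of this Fact at all --- it is recorded as machine-verified data, computed with \cite{GAP} via the code referenced in Appendix A (exactly as for the analogous fixed-point tables such as \thref{psl16fpinfoactual} and \thref{psl225fpinfonew}). You instead derive the table by hand from the generic ordinary character tables of $\PSL(2,q)$ and $\operatorname{SL}(2,q)$, evaluating $\frac{1}{p}\sum_{x\in\langle g\rangle}\chi(x)$ character family by character family. This is essentially the method the paper itself deploys only in Section \ref{sec:GeneralPSL} (the proof of \thref{generalDisconnect}), where generic tables are forced on the authors because GAP data is unavailable for arbitrary $\PSL(2,2^f)$; applying it to $q=3^4$ buys a proof that is independent of software, is checkable, and visibly generalizes to other $\PSL(2,q)$ with the same congruence behaviour (here $q\equiv 1 \pmod 4$, so the degree-$(q+1)/2$ characters factor through $\PSL$ and the degree-$(q-1)/2$ ones are the faithful exceptional characters, as you correctly use), whereas the paper's computation buys only the single data point but with less risk of a slip in the Gauss-sum cases. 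Your key steps check out: the Galois-conjugacy/averaging trick $\eta^{+}+\eta^{-}=\theta_{\beta_0}$ with $\frac{1}{41}\bigl(80+40\cdot(-2)\bigr)=0$ correctly kills the order-$41$ fixed points for the two degree-$40$ faithful characters, and for the faithful cuspidal characters $\theta_\beta$ with $\beta^2\neq 1$ the value $\frac{1}{41}(q+1)=2$ is right precisely because $\beta^2\neq1$ forces $\beta$ to be nontrivial on the order-$41$ subgroup of the nonsplit torus (the unique exception $\beta_0$ being exactly the one that produces $\eta^{\pm}$) --- a point worth making explicit, since it is the only place where the cuspidal computation could have degenerated to $0$.
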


\begin{lemma}\thlabel{possibleGraphPSL281}
    The graph \szthirtytwoTrickyGraph is realizable by a $\PSL(2, 3^4)$-solvable group.
\end{lemma}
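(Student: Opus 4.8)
The plan is to realize $\szthirtytwoTrickyGraph$ as $\pgc(\PSL(2,3^4)\ltimes P)$ for a suitable $2$-group $P$, in the same spirit as \thref{cri3satisfiedPSL211}, \thref{criterion3PSL219} and \thref{criterion3PSL223}. With the assignments $a=3$, $c=5$, $d=41$ fixed at the start of this subsection, $\pgc(\PSL(2,3^4))$ is the graph $\hamSandwich$ in which $2$ is the unique vertex not joined to $c=5$; comparing with $\szthirtytwoTrickyGraph$ (the paw whose triangle is $\{3,5,41\}$ and whose pendant edge is $2-41$) one sees that $\szthirtytwoTrickyGraph$ is precisely $\pgc(\PSL(2,3^4))$ with the single edge $2-3$ deleted. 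Hence, by \thref{GeneralizedModuleExtensionsCited}, it suffices to produce a prime $p\in\pi(\PSL(2,3^4))$ and a set $Y\subseteq\operatorname{IBr}_p(\PSL(2,3^4))$ with $\bigcup_{\chi\in Y}B_\chi=\{2-3\}$. Since every edge of $B_\chi$ is incident to $p$ and the only edge to be removed is $2-3$, we must have $p\in\{2,3\}$; following the Remark in Section~\ref{sec:PSL225General} I would take $p=2$ and look for a single $\chi\in\operatorname{IBr}_2(\PSL(2,3^4))$ with $B_\chi=\{2-3\}$.

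Unwinding the definition of $B_\chi$, this amounts to finding an irreducible $\F_2\PSL(2,3^4)$-module $V$ such that some element of order $3$ fixes a nonzero vector of $V$ (forcing $2-3\in B_\chi$) while no element of order $41$ fixes a nonzero vector (forcing $2-41\notin B_\chi$); the action of the order-$5$ elements is irrelevant, since $2-5$ is already a non-edge of $\pgc(\PSL(2,3^4))$, and $B_\chi$ contains no edge among $\{3,5,41\}$ because $2$ is the characteristic prime. I would locate such a $\chi$ in the $2$-modular character table of $\PSL(2,3^4)$ from \cite{GAP}, extracting the fixed-point dimensions via \cite[Lemma 6.2]{BrauerTableFP} (or by hand via \thref{ConjugacyClassLanding} on cyclic subgroups of orders $3$, $5$, $41$), and present the outcome as a short fixed-point table in the style of \thref{psl211brauer}. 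Structurally, the relevant module should be the cross-characteristic analogue of the $12$-dimensional $\F_2$-module used for $\PSL(2,5^2)$, namely a $2$-modular irreducible of dimension $(q-1)/2=40$: on it an element $t$ of order $41=(q+1)/2$ (lying in a non-split torus) has Brauer-character value $-1$ on each of its $40$ nontrivial powers, so $\tfrac{1}{41}\sum_{k=0}^{40}\chi(t^{k})=\tfrac{1}{41}(40-40)=0$ and $t$ has no nonzero fixed vector; while a unipotent $u$ of order $3$ satisfies $u^{2}\sim u$ in $\operatorname{SL}(2,3^4)$ (because $2$ is a square in $\F_{3^4}$, as $3^4\equiv 1\pmod 4$), so one checks $\tfrac{1}{3}(\chi(1)+\chi(u)+\chi(u^{2}))=\tfrac{1}{3}(40+2\chi(u))$ is a positive integer and $u$ does fix a nonzero vector. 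Feeding this $\chi$ into \cite[Theorem 2.2]{Suz} then produces $G=\PSL(2,3^4)\ltimes P$ with $P$ a $2$-group and $\pgc(G)=\pgc(\PSL(2,3^4))\setminus B_\chi=\szthirtytwoTrickyGraph$, which is the claim.

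\textbf{Main obstacle.} The delicate point is not deleting $2-3$ — a unipotent fixed vector is cheap — but ensuring that we do \emph{not} simultaneously delete $2-41$: we need the order-$41$ torus element to act on $V$ with no eigenvalue $1$, a condition that fails for many natural modules (for instance the ordinary Steinberg module of degree $81$, where $\tfrac{1}{41}(81-40)=1>0$). So the content of the lemma is exactly the identification of the correct cross-characteristic constituent, and I expect this to be carried out as a finite \cite{GAP} check parallel to \thref{psl211brauer}, \thref{psl219Brauer} and \thref{psl223Brauer}; should no single $\operatorname{IBr}_2$ character suffice, one would instead take $|Y|>1$, at the cost of re-checking that $\bigcup_{\chi\in Y}B_\chi$ still equals $\{2-3\}$ and in particular omits $2-41$.
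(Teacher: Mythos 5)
Your proposal matches the paper's proof: the paper likewise observes that $\szthirtytwoTrickyGraph$ is $\pgc(\PSL(2,3^4))$ with the edge $2-3$ deleted, exhibits (via \cite{GAP}) a character $\chi_2\in\operatorname{IBr}_2(\PSL(2,3^4))$ whose fixed-point profile is Yes/Yes/No on elements of orders $3$, $5$, $41$, and applies \cite[Theorem 2.2]{Suz} to obtain $\PSL(2,3^4)\ltimes P$ with $P$ a $2$-group. Your additional hand-identification of the $40$-dimensional cross-characteristic constituent and the order-$41$ fixed-point computation is a correct elaboration of the same argument.
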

\begin{proof}
    $\chi_2\in\operatorname{IBr}_2(\PSL(2, 3^4))$ has the following fixed point information, where ``Yes" denotes that an element of the order listed in the column fixes an element of order 2 in the corresponding representation:
    \begin{tabular}[h]{|c|c|c|c|}
    \hline
         & 3 & 5 & 41 \\
        \hline
        $\chi_2$ & Yes & Yes & No \\
        \hline
    \end{tabular}\\
    By the proof of \cite[Theorem 2.2]{Suz}, there exists a $\PSL(2, 3^4)$-solvable group $G = \PSL(2, 3^4)\ltimes P$ where $P$ is a 2-group such that $G$ realizes \szthirtytwoTrickyGraph.
\end{proof}

\section{$\PSL(2, 2^f)$ for Primes $f\geq 5$}\label{sec:GeneralPSL}
In this section, we prove general results about the family of $\PSL(2, 2^f)$-solvable groups where $f\geq 5$ is prime and $|\PSL(2, 2^f)|$ has exactly four prime divisors. Note $|\PSL(2, 2^f)| = 2^f(2^{2f}-1)$ by \cite{FSGBook}. Throughout this section, we prove results on the edges between $\pi(G)$ and $\pi(G)\setminus\pi(\PSL(2, 2^f))$ for a $\PSL(2, 2^f)$-solvable group $G$, results on the structure of prime graph complements of $\PSL(2, 2^f)$-solvable groups, and results on the realizability of certain four- and five- vertex graphs as the complements of $\PSL(2, 2^f)$-solvable groups.

\subsection{On Edges Between $\pi(T)$ and Other Primes}
In this subsection, we prove some basic number-theoretic results, results on the subgroup structure of $\PSL(2, 2^f)$, and results relating to edges of the form $t-s$ for $t\in\pi(\PSL(2, 2^f))$ and $s\in\pi(G)\setminus\pi(\PSL(2, 2^f))$ for some $\PSL(2, 2^f)$-solvable group $G$.
\begin{lemma}\thlabel{coprimeOfF}
    There exist no primes $f > 3$ such that $f$ divides $|\PSL(2, 2^f)| = 2^f(2^{2f}-1)$.
\end{lemma}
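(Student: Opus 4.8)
The plan is to reduce the claim to a single application of Fermat's little theorem. Fix a prime $f > 3$; in particular $f$ is odd, so $f \nmid 2^f$, and hence $f$ divides $|\PSL(2,2^f)| = 2^f(2^{2f}-1)$ if and only if $f \mid 2^{2f}-1$. So it suffices to show $f \nmid 2^{2f}-1$.

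Since $f$ is an odd prime it is coprime to $2$, so Fermat's little theorem gives $2^{f} \equiv 2 \pmod{f}$ (equivalently $2^{f-1}\equiv 1$). Squaring, $2^{2f} = (2^f)^2 \equiv 4 \pmod{f}$, and therefore
\[
2^{2f} - 1 \equiv 3 \pmod{f}.
\]
Thus $f \mid 2^{2f}-1$ would force $f \mid 3$, i.e. $f = 3$, contradicting $f > 3$. Hence $f \nmid 2^{2f}-1$, and combined with $f \nmid 2^f$ we conclude $f \nmid |\PSL(2,2^f)|$, as desired.

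There is essentially no obstacle here: the only thing to be careful about is the bookkeeping that $f$ being an odd prime is exactly what licenses both $f \nmid 2^f$ and the use of Fermat's little theorem, and that the congruence $2^{2f}-1 \equiv 3 \pmod f$ isolates $f = 3$ as the unique prime that could divide $2^{2f}-1$ in this way. (It is perhaps worth a one-line remark that $f=3$ genuinely is exceptional, since $3 \mid 2^{6}-1 = 63$, which is why the hypothesis $f>3$ cannot be dropped.)
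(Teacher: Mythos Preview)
Your proof is correct and takes essentially the same approach as the paper: both reduce to showing $f\nmid 2^{2f}-1$ via Fermat's little theorem, obtaining $2^{2f}-1\equiv 3\pmod f$ and concluding from $f>3$. The only cosmetic difference is that the paper applies Fermat to base $4$ directly ($4^f\equiv 4$), whereas you apply it to base $2$ and square.
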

\begin{proof}
    Let $f$ be a prime number such that $f>3$. Since $2^f$ is a power of 2 for all $f$, $2^f$ is not divisible by $f$ for any $f\geq 3$. Furthermore, by Fermat's Little Theorem, since $4 \nmid f$ for all $f >3$, we have $4^f \equiv 4$ mod $f$, hence $2^{2f} - 1 = 4^f - 1 \equiv 3$ mod $f$, so $2^{2f}-1 \not\equiv 0$ mod $f$ for all primes $f > 3$. Hence, there are no primes $f>3$ such that $f$ divides $2^f$ or $2^{2f}-1$, therefore there are no primes $f>3$ such that $f$ divides $2^f(2^{2f}-1) = |\PSL(2, 2^f)|$.
\end{proof}
\begin{fact}\thlabel{primesOfPSL}
    $|\PSL(2, 2^f)| = 2^f\cdot3\cdot p\cdot r$ for $p = \frac{2^f + 1}{3}, q= 2^f - 1$, $p$ and $q$ are prime.
\end{fact}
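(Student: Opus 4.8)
The plan is to read the factorization off $|\PSL(2,2^f)| = 2^f(2^{2f}-1) = 2^f(2^f-1)(2^f+1)$ (from \cite{FSGBook}) and then use the standing assumption of this section --- that $|\PSL(2,2^f)|$ has exactly four prime divisors --- to force the two non-obvious factors to be prime. First I record the congruences. Since $f$ is odd, $2^f \equiv (-1)^f \equiv -1 \pmod 3$, so $3 \mid 2^f+1$ while $2^f-1 \equiv 1 \pmod 3$; and since $f\geq 5$ is prime it is coprime to $6$, so $f \not\equiv 3 \pmod 6$, and as $2$ has multiplicative order $6$ modulo $9$ this gives $9 \nmid 2^f+1$. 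Hence $3$ exactly divides $2^f+1$, so $\tfrac{2^f+1}{3}$ is an integer coprime to $6$. Also $\gcd(2^f-1,2^f+1)$ divides $2$ and both numbers are odd, so $2^f-1$ and $2^f+1$ are coprime. Therefore $2$, $3$, $2^f-1$, $\tfrac{2^f+1}{3}$ are pairwise coprime and $|\PSL(2,2^f)| = 2^f\cdot 3\cdot(2^f-1)\cdot\tfrac{2^f+1}{3}$, so the ``$r$'' in the statement is $q = 2^f-1$.

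Next I invoke $|\pi(\PSL(2,2^f))| = 4$. Two of these primes are $2$ and $3$, so by the pairwise coprimality the product $(2^f-1)\cdot\tfrac{2^f+1}{3}$ contributes exactly two more, one from each factor --- both factors exceed $1$, since $f\geq 5$ gives $2^f-1\geq 31$ and $\tfrac{2^f+1}{3}\geq 11$. Hence each of $q = 2^f-1$ and $p = \tfrac{2^f+1}{3}$ is a prime power, say $q=\ell^a$ and $p=m^b$.

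It remains to rule out proper prime powers. For $q$: working mod $4$ (valid since $f\geq 2$) one gets $\ell\equiv 3\pmod 4$ and $a$ odd, and then $2^f = \ell^a+1 = (\ell+1)(\ell^{a-1}-\ell^{a-2}+\cdots-\ell+1)$ exhibits a second factor that is an odd integer greater than $1$ whenever $a\geq 3$, contradicting that $2^f$ is a power of $2$; hence $a=1$ and $q$ is prime. For $p=\tfrac{2^f+1}{3}=m^b$: a mod-$8$ computation forces $b$ odd, and the case $m=3$ gives $2^f=3^{b+1}-1$, whose right side (with $b+1$ even) factors into two powers of $2$ differing by $2$, forcing them to be $2$ and $4$ and hence $f=3$, against $f\geq 5$. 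The surviving case $m\neq 3$ with odd $b\geq 3$ is the heart of the matter, and I expect it to be the main obstacle: it amounts to showing that $2^f+1 = 3m^b$ has no solution with $b\geq 2$, equivalently that the base-$(-2)$ repunit $\tfrac{(-2)^f-1}{-3}$ of prime length $f$ is never a perfect power. This can be handled by appealing to known results on Nagell--Ljunggren-type equations $\tfrac{x^n-1}{x-1}=y^q$ specialized to $x=-2$, $n=f$ (or, more directly, to the known classification of finite simple $K_4$-groups, in which this statement already appears): as a consistency check, for the small primes $f\in\{5,7,11,13\}$ one observes $p\in\{11,43,683,2731\}$ is prime, while for $f\geq 17$ the bound $m<2^{f/3}$ forced by $b\geq 3$, together with the fact that every prime divisor of $2^f+1$ other than $3$ is $\equiv 1\pmod{2f}$, sharply restricts the candidate $m$. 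Granting that $p$ and $q$ are prime, the asserted identity $|\PSL(2,2^f)| = 2^f\cdot 3\cdot p\cdot q$ is immediate from the first paragraph.
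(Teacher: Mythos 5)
Your proposal is correct exactly to the extent that it falls back on the classification of simple $K_4$-groups, and that fallback is precisely the paper's entire proof: the paper simply cites \cite[Theorem 2]{K4Simple} (Bugeaud--Cao--Mignotte), which already contains the arithmetic conditions $2^f-1$ prime and $\tfrac{2^f+1}{3}$ prime, together with the observation that $2^f\,\|\,|\PSL(2,2^f)|$. So the route you ultimately take is the same as the paper's; the difference is that you try to rederive the number theory from the standing hypothesis $|\pi(\PSL(2,2^f))|=4$. The parts of that derivation you actually carry out are fine: $3\,\|\,2^f+1$ (via $f$ odd and $f\not\equiv 3\pmod 6$), the pairwise coprimality, the identification of $r$ with $q$ (a typo in the statement), and the proof that $2^f-1$ cannot be a proper prime power (the mod-$4$ reduction plus the factorization of $\ell^a+1$ for odd $a\ge 3$) are all correct.

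The genuine gap is the case you flag yourself: $2^f+1=3m^b$ with $b\ge 3$ odd and $m\ne 3$. Your closing remarks do not resolve it. The ``consistency check'' for $f\in\{5,7,11,13\}$ proves nothing for general $f$, and the observation that $m<2^{f/3}$ with every prime divisor of $m$ congruent to $1\pmod{2f}$ does not ``sharply restrict'' $m$ in any way that yields a contradiction --- there remain exponentially many candidates, and no elementary sieve of this kind is known to close the case. This is a Nagell--Ljunggren-type equation whose resolution (due essentially to Bugeaud and Mignotte, and built into \cite[Theorem 2]{K4Simple}) requires nontrivial machinery; it cannot be dismissed as a routine verification. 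If you intend the proof to rest on the citation, you should say so up front and drop the heuristic, since as written the paragraph reads as though an elementary argument is within reach when it is not; if you intend an elementary proof, this step is missing.
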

\begin{proof}
    This follows from \cite[Theorem 2]{K4Simple} and the fact that $2^f$ divides $|\PSL(2, 2^f)|$ but $2^{f + 1}$ does not.
\end{proof}
Throughout the rest of this section, we will refer to the prime $p$ such that $3p = 2^f + 1$ as $p$ and the prime $q = 2^f - 1$ as $q$.
\begin{lemma}\thlabel{oneEdgeMissingPSL}
    The edge $3 - p\not\in\pgc(\PSL(2, 2^f))$ for the prime $p$ such that $3p = 2^{f} + 1$.
\end{lemma}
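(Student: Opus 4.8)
The plan is to unwind the definition: since $\pgc$ denotes the prime graph \emph{complement}, the assertion $3-p\notin\pgc(\PSL(2,2^f))$ is equivalent to saying that $3-p$ is an edge of $\pg(\PSL(2,2^f))$, i.e. that $\PSL(2,2^f)$ contains an element of order $3p$. By \thref{primesOfPSL} we have $3p=2^f+1$, so it suffices to exhibit an element of order $2^f+1$ in $\PSL(2,2^f)$, and the whole proof reduces to identifying the non-split maximal torus.

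First I would recall the standard description of the cyclic subgroups of $\PSL(2,2^f)$. Since $2^f$ is even, $-1=1$ in characteristic $2$ and $\PSL(2,2^f)=\operatorname{SL}(2,2^f)=\operatorname{PGL}(2,2^f)$. Viewing $\mathbb{F}_{2^{2f}}$ as a two-dimensional vector space over $\mathbb{F}_{2^f}$, multiplication embeds $\mathbb{F}_{2^{2f}}^{\times}$ into $\operatorname{GL}(2,2^f)$, and the determinant of the operator corresponding to $x$ is exactly the field norm $N(x)\in\mathbb{F}_{2^f}^{\times}$. The norm map is surjective with kernel of order $\frac{2^{2f}-1}{2^f-1}=2^f+1$, so intersecting with $\operatorname{SL}(2,2^f)=\PSL(2,2^f)$ yields a cyclic subgroup $C$ of order $2^f+1$ (cyclic as a subgroup of the cyclic group $\mathbb{F}_{2^{2f}}^{\times}$). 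Alternatively this can be quoted directly from the classification of subgroups of $\PSL(2,q)$, e.g.\ \cite{FSGBook}. A generator $g$ of $C$ then has order exactly $2^f+1=3p$.

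It remains only to confirm that $3$ and $p$ are genuinely distinct primes, so that an element of order $3p$ really witnesses the edge $3-p$. Since $f\geq 5$ we have $2^f+1\geq 33$, hence $p=\frac{2^f+1}{3}\geq 11>3$; thus $p\neq 3$ and $3p$ is a product of two distinct primes. Therefore $g$ has order $3p$, so $\PSL(2,2^f)$ contains an element of order $3p$, which means $3-p\in\pg(\PSL(2,2^f))$, i.e. $3-p\notin\pgc(\PSL(2,2^f))$. I do not anticipate any real obstacle: the only substantive ingredient is the elementary fact that the non-split torus of $\PSL(2,2^f)$ is cyclic of order $2^f+1$, and the sole piece of bookkeeping not to overlook is the inequality $p>3$ guaranteeing that $3p$ decomposes into two distinct prime factors.
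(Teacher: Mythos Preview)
Your proof is correct and follows essentially the same idea as the paper: exhibit a cyclic subgroup of order $2^f+1=3p$ inside $\PSL(2,2^f)$ and take a generator. The only cosmetic difference is that you construct this cyclic group directly as the non-split torus via the norm map, whereas the paper quotes the dihedral subgroup $D_{2(2^f+1)}\leq\PSL(2,2^f)$ and reads off its (cyclic) Frobenius kernel.
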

\begin{proof}
    By \thref{primesOfPSL}, $3p = 2^f + 1$. By \cite[Theorem 2.1]{MaximalSubgroupPSL}, $D_{2(2^f + 1)}\leq \PSL(2, 2^f)$. Notice $D_{2(2^f + 1)}$ is a Frobenius group with complement $C_2$ and kernel $K$. Because $2\nmid3$ and $2\nmid p$, we must have $3, p$ divides $|K|$. Because the Frobenius kernel of a Frobenius group is nilpotent, there must exist an element of order $3p$ in $K$, so there exists an element of order $3p$ in $\PSL(2, 2^f)$.
\end{proof}

\begin{lemma}\thlabel{PSLGeneral2Sylow}
    All Sylow $2$-subgroups of $\PSL(2,r)$ for $r= 2^f$ are elementary abelian.
\end{lemma}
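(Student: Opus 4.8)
The statement is that the Sylow $2$-subgroups of $\PSL(2, 2^f)$ are elementary abelian. This is a well-known structural fact, and the cleanest route is to exhibit the Sylow $2$-subgroup explicitly inside $\PSL(2, 2^f)$ and observe it has exponent $2$. First I would recall that $|\PSL(2, 2^f)| = 2^f(2^{2f}-1)$ and that $\gcd(2, 2^f - 1) = 1$, so that a Sylow $2$-subgroup has order exactly $2^f$. Then the plan is to produce a concrete subgroup of that order: take the image in $\PSL(2, 2^f)$ of the group $U$ of upper unitriangular matrices $\left(\begin{smallmatrix} 1 & t \\ 0 & 1 \end{smallmatrix}\right)$ with $t \in \F_{2^f}$. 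Since $\det = 1$ these lie in $\SL(2, 2^f)$, and because $\F_{2^f}$ has characteristic $2$ the center $\{\pm I\}$ of $\SL(2, 2^f)$ is trivial, so $\PSL(2, 2^f) \cong \SL(2, 2^f) = \operatorname{GL}(2, 2^f)/(\text{scalars})$ and $U$ injects. The group $U$ is visibly isomorphic to the additive group $(\F_{2^f}, +)$ via $t \mapsto \left(\begin{smallmatrix} 1 & t \\ 0 & 1 \end{smallmatrix}\right)$, since $\left(\begin{smallmatrix} 1 & s \\ 0 & 1 \end{smallmatrix}\right)\left(\begin{smallmatrix} 1 & t \\ 0 & 1 \end{smallmatrix}\right) = \left(\begin{smallmatrix} 1 & s+t \\ 0 & 1 \end{smallmatrix}\right)$.

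The key steps, in order, are: (1) establish $|\PSL(2, 2^f)|_2 = 2^f$ using $|\PSL(2, 2^f)| = 2^f(2^{2f}-1)$ together with $2^{2f}-1$ being odd; (2) identify $\PSL(2, 2^f)$ with $\SL(2, 2^f)$ since scalars are trivial in characteristic $2$ (the only scalar with determinant $1$ is the identity, as $\F_{2^f}^\times$ has odd order $2^f - 1$); (3) note that $U \leq \SL(2, 2^f)$ has order $|\F_{2^f}| = 2^f$, hence is a Sylow $2$-subgroup; (4) observe $U \cong (\F_{2^f}, +) \cong (C_2)^f$ is elementary abelian; (5) conclude by Sylow's theorem that \emph{every} Sylow $2$-subgroup, being conjugate to $U$, is elementary abelian. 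Alternatively, one could cite a standard reference such as Huppert or Suzuki for the structure of $\operatorname{PSL}(2, q)$, but the explicit argument above is short and self-contained.

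I do not anticipate a genuine obstacle here; this is essentially bookkeeping about the standard Bruhat-type description of $\PSL(2, q)$. The only point requiring minor care is the identification $\PSL(2, 2^f) = \SL(2, 2^f)$: one must note that in characteristic $2$ we have $-1 = 1$, so the center $Z(\SL(2, 2^f)) = \{\lambda I : \lambda^2 = 1\} = \{I\}$, making the natural projection $\SL(2, 2^f) \to \PSL(2, 2^f)$ an isomorphism; equivalently, $\gcd(2, 2^f - 1) = 1$ forces $|Z| = 1$. With that in hand the unitriangular subgroup maps isomorphically and the rest is immediate. If one prefers to avoid the matrix computation entirely, the fact that a Sylow $2$-subgroup of $\operatorname{PSL}(2, q)$ for $q$ even is elementary abelian of order $q$ is recorded in the standard literature on $\operatorname{PSL}(2, q)$ (e.g. the Atlas or Dickson's classical description), which could simply be cited.
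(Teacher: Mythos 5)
Your proof is correct and follows essentially the same route as the paper: both first identify $\PSL(2,2^f)$ with $\operatorname{SL}(2,2^f)$ via the trivial center in characteristic $2$, and then use the fact that the Sylow $2$-subgroup of $\operatorname{SL}(2,2^f)$ is elementary abelian of order $2^f$. The only difference is that the paper cites \cite[Theorem 8.10]{Huppert} for this last fact, whereas you verify it directly by exhibiting the unitriangular subgroup $\cong (\F_{2^f},+)\cong (C_2)^f$, which is a perfectly good (and self-contained) substitute.
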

\begin{proof}
 Suppose $P = \PSL(2,r)$ where $r = 2^f$ for some arbitrary prime $f \geq 5$. Then, by the order formula for $\PSL$ groups given in \cite{FSGBook} and the order formula for special linear groups given in \cite[Lemma 6.2]{Huppert}, $|\PSL(2, r)| = \frac{\Pi_{i=0}^1 (r^2 - r^i)}{(r - 1)(2, r-1)} = \frac{(r^2 - r)(r^2 - 1)}{r - 1} = \frac{r(r - 1)(r^2 - 1)}{r - 1} = r(r^2 - 1) = 2^f(2^{2f} - 1) = |\operatorname{SL}(2, r)|$. Because $\PSL(2, r)$ is defined as a quotient of $\operatorname{SL}(2, r)$ and $|\PSL(2, r)| = |\operatorname{SL}(2, r)|$, $\PSL(2, r)\cong\operatorname{SL}(2, r)$. Then, by \cite[Theorem 8.10]{Huppert}, the Sylow 2-subgroups of $\operatorname{SL}(2, r)$ (and thus $\PSL(2, r)$) are elementary abelian.

\end{proof}
\begin{theorem}\thlabel{generalDisconnect}
    Let $G$ be a strictly $\PSL(2, 2^f)$-solvable group. Then, there exist no edges between $z\in\pi(\PSL(2, 2^f))$ and $n\in\pi(G)\setminus\pi(\Aut(\PSL(2, 2^f)))$.
\end{theorem}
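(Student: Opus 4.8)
The plan is to reduce $G$ to a subgroup of the form $M.T$ with $M$ solvable and $T=\PSL(2,2^f)$, and then to treat the even prime of $\pi(T)$ via the failure of the Frobenius criterion and the odd primes of $\pi(T)$ via a fixed-point argument on the irreducible complex representations of $T$. First I would record the relevant facts about $T=\PSL(2,2^f)$. By \thref{primesOfPSL}, $\pi(T)=\{2,3,p,q\}$ with $3p=2^f+1$ and $q=2^f-1$; since $f\geq 5$ we have $2^f\notin\{4,9\}$, so the Schur multiplier of $T$ is trivial (hence $T$ is its own universal perfect central extension), and by \thref{coprimeOfF} the field automorphism has order $f\notin\pi(T)$, so $\pi(\Aut(T))=\{2,3,p,q,f\}$. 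Also $\pgc(T)$ has an edge: by \thref{oneEdgeMissingPSL} and an elementary conjugacy-class count it is $K_4$ minus the edge $\{3,p\}$.

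Now fix $z\in\pi(T)$ and $n\in\pi(G)\setminus\pi(\Aut(T))$; it suffices to exhibit an element of order $zn$ in $G$. By the standard reduction for strictly $T$-solvable groups with $\pgc(T)$ non-complete (\cite[Lemma 2.1.1]{2023REU}), $G$ has a subgroup $K\cong M.S$ with $M$ solvable, $\operatorname{Inn}(T)\leq S\leq\Aut(T)$, and $\pi(K)=\pi(G)$; since $n\notin\pi(\Aut(T))\supseteq\pi(S)$ and $\pi(K)=\pi(M)\cup\pi(S)$, we get $n\in\pi(M)$. Passing to the preimage $H$ of $\operatorname{Inn}(T)$, which is a normal subgroup of $K$, yields $H\cong M.T$ with $M$ solvable, $n\in\pi(M)$, and $H\leq G$, so it is enough to find an element of order $zn$ in $H$.

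For $z=2$: by \thref{PSLGeneral2Sylow} the Sylow $2$-subgroups of $T$ are elementary abelian of order $2^f$, so for $f\geq 5$ they are neither cyclic, dihedral, Klein four, nor generalized quaternion, hence fail the Frobenius criterion; \thref{frobprop} applied to $H\cong M.T$ then gives $2-n\notin\pgc(H)$, i.e.\ an element of order $2n$ in $H$. For odd $z$ (so $z\in\{3,p,q\}$) this route is unavailable, since the Sylow $z$-subgroup of $T$ is cyclic of prime order, so instead I would run the mechanism underlying \thref{corollaryLemma} — whose proof uses only the structure $H\cong M.T$ with $M$ solvable, not the equality $\pi(T)=\pi(\Aut(T))$ — which reduces the claim to showing that in every irreducible complex representation of $T$ some element of order $z$ fixes a nonzero vector. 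To verify this I would use the generic character table of $\PSL(2,2^f)$: the trivial character; the Steinberg character of degree $2^f$ (value $1$ on a split torus and $-1$ on a nonsplit torus, away from the identity); the degree-$(2^f+1)$ principal-series characters (vanishing on the nonsplit torus); and the degree-$(2^f-1)$ cuspidal characters $\theta_\beta$ (vanishing on the split torus, equal to $-\beta-\beta^{-1}$ on the nonsplit torus). An element of order $3$ or $p$ generates the unique subgroup of that order inside a nonsplit torus $C_{2^f+1}$, and an element of order $q$ generates a split torus $C_{2^f-1}$; since $\frac{1}{o(g)}\sum_{x\in\langle g\rangle}\chi(x)$ depends only on $\langle g\rangle$, one evaluates it once per character family (splitting the cuspidal case according to whether $\beta$ is trivial on $\langle g\rangle$), obtaining geometric sums of roots of unity that come out to positive integers such as $2$, $3$, $p$, $(2^f-2)/3$, and $(2^f-5)/3$ — the last being positive exactly because $2^f\geq 32$, which is a second place where the hypothesis $f\geq 5$ enters.

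I expect the character computation for odd $z$ to be the bulk of the work: the evaluations are elementary, but there are several per prime, and the cuspidal characters require the trivial/nontrivial sub-cases for $\beta$. A smaller point to double check is that the reduction to $K\cong M.S$ with $\pi(K)=\pi(G)$ and a single $T$-chief factor is genuinely valid for strictly $T$-solvable $G$ when $\pi(T)\subsetneq\pi(\Aut(T))$ (the variant of \thref{lemma212inPaper} landing exactly on $N.T$ does assume that equality), and that \thref{corollaryLemma} applies to $H\cong M.T$ verbatim; both should go through, since the extra prime $f$ plays no role once we are inside $H$.
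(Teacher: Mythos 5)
Your proposal is correct and follows essentially the same route as the paper: the prime $2$ is handled via \thref{PSLGeneral2Sylow} together with the failure of the Frobenius criterion, and the odd primes $3$, $p$, $q$ are handled by computing $\frac{1}{o(g)}\sum_{x\in\langle g\rangle}\chi(x)$ for each family in the generic character table of $\PSL(2,2^f)$ and invoking \thref{corollaryLemma}. The only difference is that you make explicit the preliminary reduction to a subgroup $H\cong M.T$ via \cite[Lemma 2.1.1]{2023REU} (needed because $\pi(T)\neq\pi(\Aut(T))$ here), a step the paper's proof leaves implicit; this is a point in your favor rather than a divergence.
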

\begin{proof}
    Let the two odd primes dividing the order of $\PSL(2, 2^f)$ that are not 3 be called $p$ and $q$. 
    By \cite[Theorem 2]{K4Simple}, we have $2^f - 1 = q$ and $2^f+1 = 3p$. The Schur multiplier of $\PSL(2, 2^f)$ is trivial by \cite{Huppert}. In this proof, we will reference notation and variables defined in \cite{PSLGenericTables}, namely, $S, R, r$ and $t$.
    
    For a generator of a cyclic group of order $q=2^f - 1$, call it $R$, there are $q - 1$ elements of order $q$. By \cite{PSLGenericTables}, there are $\frac{q - 1}{2}$ conjugacy classes of elements of order $q$. Note that the Sylow $q$-groups of $G$ are cyclic 
    of order $q$. Let $\chi_i$ refer to the characters given in the $i$th column of the character table on \cite[Page 403]{PSLGenericTables} for $i=1,2,3,4$.
    Then, let some $g\in G$ be of order $q$. We may apply \thref{ConjugacyClassLanding} and the formula $m_i = \frac{1}{\operatorname{o}(g)}\sum_{k=1}^{\operatorname{o}(g)} \chi_i(g^k)$ for the dimension of the fixed point space of $g$ and get the following.
    \begin{align*}
        m_1 =  1\\
        m_2 =  \frac{1}{q}(2^f + q - 1) = 2\\
        m_3 =  \frac{1}{q}(2^f + 1 + 2r + r^2 + ... + r^{q - 1}).
    \end{align*}
    Because $r$ is a non-1 $q$th root of unity, we get:
    \begin{align*}
        m_3 = \frac{1}{q}(2^f + 1 - 2) = \frac{q}{q} = 1.\\
        m_4 =  \frac{1}{q}(2^f - 1) = q.
    \end{align*}
    
    Note that $m_1, m_2, m_3, m_4 > 0$. By \thref{corollaryLemma}, this means that for a $\PSL(2, 2^f)$-solvable group $H$ and $x\in\pi(H)\setminus\pi(\Aut(\PSL(2, 2^f)))$, there are no edges of the form $q-x$ in $\pgc(H)$.\\
    We have $2^f + 1 = 3p$. Then, the only powers of $S$ with order 3 are $p$ and $2p$. Because $S^{-p} = S^{2p}$, where $S$ is as described in \cite{PSLGenericTables}, there is one conjugacy class for elements of order 3. We may write, where $t$ is defined in \cite{PSLGenericTables}:
    \begin{align*}
        m_1 =  1\\
        m_2 =  \frac{1}{3}(2^f - 2) = \frac{1}{3}(2^f + 1 - 3) = p - 1\\
        m_3 =  \frac{1}{3}(2^f + 1) = p\\
        m_4 =  \frac{1}{3}(2^f - 1 - 2(t^{p} + (t^{p})^2)) = \frac{1}{3}(2^f + 1) = p
    \end{align*}
    Because $t^{p}$ is a non-1 third root of unity. We can see $m_1, m_2, m_3, m_4 > 0$, so by \thref{corollaryLemma}, for a $\PSL(2, 2^f)$-solvable group $H$ and $x\in\pi(H)\setminus\pi(\Aut(\PSL(2, 2^f)))$ there are no edges of the form $3-x$ in $\pgc(H)$. We now consider $p$. There are $p - 1$ powers of $S$ with order 3, $3p^{d - 1}, 2(3p^{d-1}),..., (p-1)(3p^{d-1})$, so there are $\frac{p - 1}{2}$ conjugacy classes of elements of order $p$ by \cite{PSLGenericTables}. Now let $g\in G$ be of order $p$. By \thref{ConjugacyClassLanding}, there are two non-identity elements of $\langle g\rangle$ in each class. Then, we may write:
   \begin{align*}
       m_1 =  1\\
       m_2 =  \frac{1}{p}(2^f - (p - 1)) = \frac{1}{p}(2^f + 1 - p) = \frac{1}{p}(3p - p) = 3 - 1 = 2\\
       m_3 =  \frac{1}{p}(2^f + 1) = 3\\
       m_4 =  \frac{1}{p}(2^f - 1 - 2(t^{3} + (t^{3})^2 + ... + (t^{3})^{p - 1})) = \frac{1}{p}(2^f - 1 + 2) = 3
   \end{align*}
   Because $t^{3}$ is a non-1 $p$th root of unity. Thus, by \thref{corollaryLemma}, for a $\PSL(2, 2^f)$-solvable group $H$ and $x\in\pi(H)\setminus\pi(\Aut(\PSL(2, 2^f)))$ there are no edges of the form $p-x$ in $\pgc(H)$. By \thref{PSLGeneral2Sylow}, the Sylow 2-subgroups of $\PSL(2, 2^f)$ are elementary abelian, so they do not satisfy the Frobenius criterion. By \cite[Proposition 2.2.2]{2023REU}, for a $\PSL(2, 2^f)$-solvable group $H$ and $x\in\pi(H)\setminus\pi(\PSL(2, 2^f))\subseteq \pi(H)\setminus\pi(\Aut(\PSL(2, 2^f)))$ there are no edges of the form $2-x$ in $\pgc(H)$. 
\end{proof}
\subsection{Graphs on Four Vertices}
In this section we present some results on the structure of $\pgc(\PSL(2, 2^f))$.
\begin{proposition}\thlabel{generalPSLPrimeGraphStructure}
    Let $r$ be such that $\PSL(2, r)$ is a $K_4$-group. Then $\pgc(\PSL(2, r))\cong \hamSandwich$.
\end{proposition}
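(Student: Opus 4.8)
The plan is to read the prime graph of $\PSL(2,r)$ directly off the classical description of its element orders and then take the complement. Write $r=s^n$ with $s$ prime and set $d=\gcd(2,r-1)$. The first step is to recall (from Dickson's description of the subgroups of $\PSL(2,r)$, or equivalently the generic conjugacy-class data, cf. \cite{Huppert}, \cite{FSGBook}, \cite{PSLGenericTables}) that every element of $\PSL(2,r)$ has order dividing one of $s$, $\tfrac{r-1}{d}$, or $\tfrac{r+1}{d}$, and that cyclic subgroups realizing each of $\tfrac{r-1}{d}$ and $\tfrac{r+1}{d}$ actually occur. From this I extract two facts at once: any element whose order is divisible by $s$ has order exactly $s$, so $s$ is an isolated vertex of $\pg(\PSL(2,r))$ and hence is joined to all other vertices of $\pgc(\PSL(2,r))$; and $s\nmid r^2-1$, so the remaining three primes of $\pi(\PSL(2,r))$ all divide $\tfrac{r-1}{d}\cdot\tfrac{r+1}{d}$.

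Next I would exploit coprimality: $\tfrac{r-1}{d}$ and $\tfrac{r+1}{d}$ are coprime (if $d=2$ they are consecutive integers; if $d=1$ then $r$ is even and $\gcd(r-1,r+1)=1$), and each is at least $2$ since $\PSL(2,r)$ simple forces $r\ge 4$. Hence $\pi(\PSL(2,r))\setminus\{s\}=\pi(\tfrac{r-1}{d})\sqcup\pi(\tfrac{r+1}{d})$ is a disjoint union with both parts nonempty, so after relabelling the three non-defining primes as $a,b,c$ we have $\{a,b\}\subseteq\pi(\tfrac{r-1}{d})$ and $\{c\}=\pi(\tfrac{r+1}{d})$ (the alternative $3$--$0$ split would force $\tfrac{r\mp 1}{d}=1$, i.e. $r\le 3$, contradicting simplicity and the $K_4$ hypothesis). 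Since $a\ne b$ and $a,b\mid\tfrac{r-1}{d}$, we get $ab\mid\tfrac{r-1}{d}$, so a cyclic subgroup of that order contains an element of order $ab$ and thus $a-b\in\pg(\PSL(2,r))$. Conversely $ac$ and $bc$ divide none of $s$, $\tfrac{r-1}{d}$, $\tfrac{r+1}{d}$ (using $c\nmid\tfrac{r-1}{d}$ and $a,b\nmid\tfrac{r+1}{d}$), so $c$ is adjacent to neither $a$ nor $b$, and $s$ was already isolated.

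Assembling these, $\pg(\PSL(2,r))$ has vertex set $\{s,a,b,c\}$ and exactly one edge $a-b$, with $s$ and $c$ isolated; its complement therefore has the five edges $\{s,a\},\{s,b\},\{s,c\},\{c,a\},\{c,b\}$, that is, $K_4$ with the single edge $a-b$ deleted, which is precisely \hamSandwich. I do not anticipate a real obstacle: the argument is essentially book-keeping. The one place requiring care is the appeal to the element-order structure of $\PSL(2,r)$ (so that the prime graph is correctly identified as an edge plus two isolated vertices) and the verification that the $K_4$ hypothesis genuinely produces the $2$--$1$ split of the non-defining primes rather than a $3$--$0$ split; both are handled by the small-value bounds above, and they can be cross-checked against the explicit examples ($r=2^4,3^3,7^2,11,19,23,5^2,3^4$) treated elsewhere in the paper.
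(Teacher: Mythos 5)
Your argument is correct, and it reaches the conclusion by a genuinely different route from the paper. The paper's proof is essentially a black-box count: it cites Khosravi's result that $\pg(\PSL(2,r))$ has exactly three connected components, each a clique, and then observes that partitioning four vertices into three cliques forces component sizes $2+1+1$, i.e.\ exactly one edge, so the complement is $K_4$ minus an edge. You instead derive the same edge set from first principles: Dickson's description of element orders (every order divides $s$, $\tfrac{r-1}{d}$, or $\tfrac{r+1}{d}$, with cyclic subgroups of the latter two orders present), the coprimality of $\tfrac{r-1}{d}$ and $\tfrac{r+1}{d}$, and the observation that the $K_4$ hypothesis forces a $2$--$1$ split of the three non-defining primes. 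Your bookkeeping is sound throughout — in particular the exclusion of the $3$--$0$ split, the disjointness of $\pi(\tfrac{r-1}{d})$ and $\pi(\tfrac{r+1}{d})$, and the isolation of the defining characteristic all check out. What each approach buys: the paper's is shorter but rests entirely on an external structural theorem about the components of $\pg(\PSL(2,q))$; yours is self-contained modulo the classical element-order facts and yields strictly more information, namely \emph{which} edge of $K_4$ is missing (the one joining the two primes that divide the same factor $\tfrac{r\pm 1}{d}$), which is the content the paper has to recover separately in \thref{oneEdgeMissingPSL} for the case $r=2^f$.
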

\begin{proof}
    By \cite[Remark 2.1]{pslGraphStructure}, $\Gamma(\PSL(2, r))$ has three connected components which are cliques. Clearly, $\Gamma(\PSL(2, r))$ does not contain a 3-clique or a 4-clique. We claim $\Gamma(\PSL(2, r))$ cannot contain two 2-cliques. For contradiction, suppose otherwise. Then, because $\PSL(2, r)$ is a $K_4$-group, each vertex of $\Gamma(\PSL(2, r))$ is contained in one of two connected components, a contradiction to the claim that $\Gamma(\PSL(2, r))$ has three connected components. There must exist an edge in $\Gamma(\PSL(2, r))$ or else $\Gamma(\PSL(2, r))$ would have four connected components, so $\Gamma(\PSL(2, r))$ contains exactly one edge. Thus, $\pgc(\PSL(2, r))\cong \hamSandwich$.
\end{proof}
\begin{proposition}\thlabel{PSLHAM}
    For any prime $f\geq 3$, $\pgc(\PSL(2, 2^f))$ is given by:
    \begin{figure}[H]
    \centering
    \begin{minipage}{.5\textwidth}
        \centering
        \begin{tikzpicture}
            \coordinate (A) at (0, 1);   
            \coordinate (D) at (1, 1);   
            \coordinate (C) at (1, 0);   
            \coordinate (B) at (0, 0);   
            \draw (B) -- (C);
            \draw (C) -- (D);
            \draw (A) -- (D);
            \draw (A) -- (B);
            \draw (A) -- (C);
            \foreach \point in {A, B, C, D}
                \draw[draw, fill=white] (\point) circle [radius=0.2cm];
            \foreach \point/\name in {(A)/2, (B)/3, (C)/$q$, (D)/$p$}
                \node at \point {\scriptsize\name};
        \end{tikzpicture}
        \caption*{$\pgc(\PSL(2,2^f))$}
    \end{minipage}%
\end{figure}
\end{proposition}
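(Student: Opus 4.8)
The plan is to assemble the statement from two results already established in this section. First, I would invoke \thref{primesOfPSL}: it gives $\pi(\PSL(2,2^f)) = \{2, 3, p, q\}$, a set of four distinct primes (with $3p = 2^f+1$ and $q = 2^f - 1$), so in particular $\PSL(2,2^f)$ is a $K_4$-group. This lets me apply \thref{generalPSLPrimeGraphStructure} with $r = 2^f$, which yields $\pgc(\PSL(2,2^f)) \cong \hamSandwich$ — that is, $\pgc(\PSL(2,2^f))$ is the complete graph on $\{2,3,p,q\}$ with exactly one edge deleted (equivalently, $\pg(\PSL(2,2^f))$ is a single edge together with two isolated vertices).

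The only thing left is to identify the missing edge. For this I would use \thref{oneEdgeMissingPSL}, which states $3-p \notin \pgc(\PSL(2,2^f))$. Since \thref{generalPSLPrimeGraphStructure} already guarantees that precisely one edge of the complete graph on $\{2,3,p,q\}$ is absent from $\pgc(\PSL(2,2^f))$, and $\{3,p\}$ is absent, that one missing edge must be $\{3,p\}$. Hence $\pgc(\PSL(2,2^f))$ has vertex set $\{2,3,p,q\}$ and edge set $\{2,3\},\{2,p\},\{2,q\},\{3,q\},\{p,q\}$, which is exactly the graph drawn in the statement once $2$ is placed at the vertex adjacent to all three others, $3$ and $p$ at the two endpoints of the non-edge, and $q$ completing the square.

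I do not expect a genuine obstacle here: the mathematical content is fully carried by \thref{generalPSLPrimeGraphStructure} and \thref{oneEdgeMissingPSL}, and the argument is a two-line combination of them. The only points that need a word of care are bookkeeping: that $p$ and $q$ are genuinely distinct from each other and from $2$ and $3$ (so that "delete the edge $\{3,p\}$" produces the pictured non-degenerate graph on four labelled vertices), which is part of \thref{primesOfPSL}; and matching the labelling conventions of the figure, which one reads off directly. If one wants to be scrupulous about the stated range $f \geq 3$, the case $f=3$ is anomalous (there $\PSL(2,8)$ is only a $K_3$-group), so in practice the argument applies whenever $\PSL(2,2^f)$ is a $K_4$-group, i.e. exactly when the hypothesis of \thref{generalPSLPrimeGraphStructure} is met.
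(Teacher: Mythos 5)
Your proposal is correct and is essentially identical to the paper's own proof, which likewise combines \thref{generalPSLPrimeGraphStructure} with \thref{oneEdgeMissingPSL} to pin down the unique missing edge as $3-p$. Your side remark about $f=3$ is a fair observation (there $\PSL(2,2^3)$ is only a $K_3$-group, so the statement's hypothesis should really be read as requiring $\PSL(2,2^f)$ to be a $K_4$-group), but it does not affect the argument in the cases the paper actually uses.
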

\begin{proof}
By \thref{generalPSLPrimeGraphStructure}, $\pgc(\PSL(2, 2^f))\cong \hamSandwich$. By \thref{oneEdgeMissingPSL}, $3-p\not\in\pgc(\PSL(2, 2^f))$.
\end{proof}
\begin{fact}
    The following graphs containing triangles are realizable by $\PSL(2, 2^f)$-solvable groups:
    \begin{itemize}
        \item[-] $\hamSandwich$ is realized by $\PSL(2, 2^f)$.
        \item[-] $\trianglePlusIsolated$ is realized by $\PSL(2, 2^f)\times C_3$.
    \end{itemize}
\end{fact}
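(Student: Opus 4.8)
The plan is to obtain the first claim immediately from \thref{PSLHAM} and to prove the second by a short computation with element orders in a direct product. For the first bullet, \thref{PSLHAM} asserts exactly that $\pgc(\PSL(2, 2^f))\cong\hamSandwich$ (with the one missing edge being $3-p$), so $\PSL(2, 2^f)$ itself realizes $\hamSandwich$; since $\PSL(2, 2^f)$ is trivially $\PSL(2, 2^f)$-solvable, that part is done.

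For the second bullet I would show $\pgc(\PSL(2, 2^f)\times C_3)\cong\trianglePlusIsolated$. First, $\pi(\PSL(2, 2^f)\times C_3)=\{2,3,p,q\}$ because $3$ already divides $|\PSL(2, 2^f)|$, so the vertex set does not change. Next, recall that the order of an element $(a,b)$ in a direct product is $\operatorname{lcm}(o(a),o(b))$, and use this in two ways. For two distinct primes $s,t\in\{2,p,q\}$: an element of $\PSL(2, 2^f)\times C_3$ of order $st$ must have second coordinate of order dividing $3$, and since $3\nmid st$ this forces the second coordinate to be trivial and the first coordinate to have order $st$; but by \thref{PSLHAM} the unique edge of $\Gamma(\PSL(2, 2^f))$ is $3-p$, so $\PSL(2, 2^f)$ has no element of order $st$. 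Hence there is no edge among $\{2,p,q\}$ in $\Gamma(\PSL(2, 2^f)\times C_3)$. On the other hand, for each $t\in\{2,p,q\}$ pick $x\in\PSL(2, 2^f)$ of order $t$ and a generator $y$ of $C_3$; then $(x,y)$ has order $3t$, so $3-t\in\Gamma(\PSL(2, 2^f)\times C_3)$.

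Combining these, $\Gamma(\PSL(2, 2^f)\times C_3)$ is precisely the star with center $3$ and leaves $2,p,q$, so its complement consists of the triangle on $\{2,p,q\}$ together with the isolated vertex $3$, i.e.\ $\trianglePlusIsolated$. Since the composition factors of $\PSL(2, 2^f)\times C_3$ are $\PSL(2, 2^f)$ and the cyclic group $C_3$, this group is $\PSL(2, 2^f)$-solvable, finishing the proof. The argument is entirely routine; the only point requiring a moment's care is checking that adjoining $C_3$ introduces no new edge among the primes other than $3$, which is exactly the $\operatorname{lcm}$ observation above, so I anticipate no genuine obstacle.
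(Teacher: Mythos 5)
Your proposal is correct and uses exactly the realizing groups the paper names ($\PSL(2,2^f)$ itself and $\PSL(2,2^f)\times C_3$); the paper states this as a Fact without written proof, and your lcm computation for the direct product, together with \thref{PSLHAM} and the identification of $3-p$ as the unique edge of $\Gamma(\PSL(2,2^f))$, supplies precisely the routine verification that is implicit there.
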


\subsection{Graphs on Five Vertices}
In this subsection, we prove results on the structure of $\pgc(\Aut(\PSL(2, 2^f)))$, results on the structure of the prime graph complements of $\PSL(2, 2^f)$-solvable groups, and results on the realizability of certain rooted five-vertex graphs.
\begin{theorem}\thlabel{Subgraph}
     $\pgc(\Aut(\PSL(2, 2^f)))$ is given by the figure below:
\end{theorem}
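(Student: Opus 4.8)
The plan is to first determine the prime graph $\pg(\Aut(\PSL(2,2^f)))$ explicitly and then pass to its graph-theoretic complement. Set $q=2^f-1$ and $p=\frac{2^f+1}{3}$ as in \thref{primesOfPSL}. By \thref{coprimeOfF} the prime $f$ does not divide $|\PSL(2,2^f)|$, and since $f$ is odd with $f\geq 5$ while $p$ and $q$ are far larger than $f$, we have $f\notin\{2,3,p,q\}$. In characteristic $2$ one has $\PGL(2,2^f)=\PSL(2,2^f)$, so $\Aut(\PSL(2,2^f))\cong\PSL(2,2^f)\rtimes C_f$, where $C_f=\langle\sigma\rangle$ is generated by the Frobenius field automorphism $x\mapsto x^2$. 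Hence $\pi(\Aut(\PSL(2,2^f)))=\{2,3,p,q,f\}$, and $\pgc(\Aut(\PSL(2,2^f)))$ is a graph on these five vertices, which is what the figure must display.

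The element orders inside the socle are already controlled by \thref{PSLHAM}: a Sylow $2$-subgroup of $\PSL(2,2^f)$ is elementary abelian, and the non-split and split tori are cyclic of orders $2^f+1=3p$ and $2^f-1=q$, so the orders of elements of $\PSL(2,2^f)$ are exactly $1,2,3,p,q,3p$, and the only edge of $\pg(\PSL(2,2^f))$ is $3-p$. It remains to identify the element orders of $\Aut(\PSL(2,2^f))$ that are divisible by $f$. Since $f^2\nmid|\Aut(\PSL(2,2^f))|$, the Sylow $f$-subgroups have order $f$ and are all conjugate; thus any element $h$ with $f\mid \mathrm{o}(h)$ has a power of order $f$, which generates a conjugate of $\langle\sigma\rangle$, and after replacing $h$ by a conjugate we may assume $\langle\sigma\rangle\leq\langle h\rangle$, so $h\in C_{\Aut(\PSL(2,2^f))}(\sigma)$. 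By the standard description of centralizers of field automorphisms, $C_{\PSL(2,2^f)}(\sigma)=\PSL(2,2)\cong S_3$, and since $\sigma$ fixes the prime subfield pointwise it centralizes this subgroup; an order count then gives $C_{\Aut(\PSL(2,2^f))}(\sigma)=\langle\sigma\rangle\times\PSL(2,2)$, a group of order $6f$ whose element orders are precisely $1,2,3,f,2f,3f$. Conversely $\PSL(2,2)$ does contain elements of orders $2$ and $3$ centralizing $\sigma$, so $2f$ and $3f$ genuinely occur as element orders.

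Combining the two analyses, the set of element orders of $\Aut(\PSL(2,2^f))$ is $\{1,2,3,p,q,3p,f,2f,3f\}$, so $\pg(\Aut(\PSL(2,2^f)))$ has exactly the three edges $3-p$, $2-f$, $3-f$. Taking the complement on the vertex set $\{2,3,p,q,f\}$, the only non-edges of $\pgc(\Aut(\PSL(2,2^f)))$ are $\{3,p\}$, $\{2,f\}$, $\{3,f\}$; thus $q$ is adjacent to all four other vertices, $2$ is adjacent to $3,p,q$, $3$ is adjacent to $2,q$, $p$ is adjacent to $2,q,f$, and $f$ is adjacent to $p,q$. This is the graph in the figure.

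The routine part of the argument is the socle computation, which is essentially \thref{PSLHAM}. The genuine work — and the main obstacle — is the outer-automorphism analysis: showing that every element of order divisible by $f$ is conjugate into $C_{\Aut(\PSL(2,2^f))}(\sigma)$, pinning down $C_{\Aut(\PSL(2,2^f))}(\sigma)=\langle\sigma\rangle\times\PSL(2,2)$, and thereby excluding the a priori possible orders $6$, $pf$, $qf$, $6f$, and $f^2$. If the generic character tables of \cite{PSLGenericTables} (together with \cite{K4Simple}) already record the conjugacy classes of $\Aut(\PSL(2,2^f))$, one may instead read the element orders off directly, which avoids the centralizer bookkeeping and is the cleanest route.
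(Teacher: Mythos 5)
Your proof is correct and follows essentially the same route as the paper: both use $\Aut(\PSL(2,2^f))\cong\PSL(2,2^f)\rtimes C_f$ together with the centralizer of the field automorphism to settle the edges at $f$, and the prime graph of the socle for the remaining four vertices. You are in fact slightly more careful than the paper, which asserts that the centralizer of an order-$f$ element is $\operatorname{SL}(2,2)$ (it is $\operatorname{SL}(2,2)\times C_f$, as you compute) and which delegates both the reduction of arbitrary elements of order divisible by $f$ into that centralizer and the induced-subgraph identification to cited facts, whereas you argue these steps directly by enumerating all element orders.
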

\begin{figure}[H]
    \centering
    \begin{minipage}{.5\textwidth}
        \centering
        \begin{tikzpicture}
            \coordinate (A) at (0, 1);   
            \coordinate (D) at (1, 1);   
            \coordinate (C) at (1, 0);   
            \coordinate (B) at (0, 0);   
            \coordinate (E) at (1.5, 0.5);
            \draw (B) -- (C);
            \draw (C) -- (D);
            \draw (A) -- (D);
            \draw (A) -- (B);
            \draw (A) -- (C);
            \draw (C) -- (E);
            \draw (D) -- (E);
            \foreach \point in {A, B, C, D, E}
                \draw[draw, fill=white] (\point) circle [radius=0.2cm];
            \foreach \point/\name in {(A)/2, (B)/3, (C)/$q$, (D)/$p$, (E)/$f$}
                \node at \point {\scriptsize\name};
        \end{tikzpicture}
        \caption*{$\pgc(\Aut(\PSL(2,2^f)))$}
    \end{minipage}%
\end{figure}
\begin{proof}
    By \cite[Theorem 3.2]{FSGBook} and \thref{coprimeOfF}, $\Aut(\PSL(2, 2^f))\cong\PSL(2, 2^f)\rtimes C_f$ and $f$ is coprime to $|\PSL(2, 2^f)|$. It is known that the centralizer in $\Aut(\PSL(2, 2^f))$ of an element of order $f$ is isomorphic to $\operatorname{SL}(2, 2)$. $|\operatorname{SL}(2, 2)| = 6$, so $2-f, 3-f\not\in\pgc(\Aut(\PSL(2, 2^f)))$ and $p-f, q-f\in\pgc(\Aut(\PSL(2, 2^f)))$.
    
    $\PSL(2, 2^f)\cong\operatorname{Inn}(\PSL(2, 2^f))\leq\Aut(\PSL(2, 2^f))$, so $\pgc(\Aut(\PSL(2, 2^f)))[\pi(\PSL(2, 2^f))]\subseteq\pgc(\PSL(2, 2^f))$. Let $a-b\in \pgc(\PSL(2, 2^f))$. Then, there exists no element of order $ab$ in $\PSL(2, 2^f)$, and thus no element of order $ab$ in $\operatorname{Inn}(\PSL(2, 2^f))$. $f$ is prime so $a, b\nmid f$. By \cite[Lemma 1.3]{Suz}, $a-b\in\pgc(\Aut(\PSL(2, 2^f)))$. Thus, $\pgc(\PSL(2, 2^f)) = \pgc(\Aut(\PSL(2, 2^f)))[\pi(\PSL(2, 2^f))]$, so by the fact that $2-f, 3-f\not\in\pgc(\Aut(\PSL(2, 2^f)))$ and $p-f, q-f\in\pgc(\Aut(\PSL(2, 2^f)))$, the claim follows.

\end{proof}
We now introduce some definitions:
\begin{notation} \thlabel{rootedGraphIsomorphismPSL22f}
    Given a group $G$ with $\pi(G) = \pi(\Aut(\PSL(2, 2^f))$ and a rooted graph $\Lambda$ on five vertices, it is understood that when we say $\pgc(G) \cong \Lambda$, we require the isomorphism to map the vertex $f$ to the root.
\end{notation}
\begin{notation}\thlabel{realizableDefPSL22f}
    Given a set of rooted graphs $\mathcal{H}$ on five vertices, we say that $\mathcal{H}$ is \emph{realizable} if for each $\Lambda \in \mathcal{H}$, there exists a $\PSL(2, 2^f)$-solvable $\pi(\Aut(\PSL(2, 2^f)))$-group $G$ such that $\pgc(G) \cong \Lambda$ in the sense of \thref{rootedGraphIsomorphismPSL22f}.
\end{notation}
\begin{fact}
     We list some subgraphs of $\pgc(\Aut(\PSL(2, 2^f)))$ containing a triangle and strictly $\PSL(2, 2^f)$-solvable groups that realize them:
    \begin{itemize}
        \item[-] \PSLhouse is realized by $\Aut(\PSL(2, 2^f))$.
        \item[-] \northStar is realized by $\Aut(\PSL(2, 2^f))\times C_3$.
        \item[-] \triangleWithShortTail is realized by $\Aut(\PSL(2, 2^f))\times C_2$.
        \item[-] \codart is realized by $\Aut(\PSL(2, 2^f))\times C_r$.
        \item[-] \happyFace is realized by $\Aut(\PSL(2, 2^f))\times C_6$.
        \item[-] \balloon is realized by $(\PSL(2,2^f) \times C_{p}) \rtimes C_f$ where $C_f$ acts on $\PSL(2,2^f)$ as in $\Aut(\PSL(2,2^f))$ and Frobeniusly on $C_p$.
    \end{itemize}
\end{fact}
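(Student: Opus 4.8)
The plan is to dispatch the six items by three routines: \PSLhouse is read off \thref{Subgraph}; the four direct-product graphs follow from a single ``isolation'' lemma for $\pgc(H\times C_m)$; and \balloon is obtained by computing $\Gamma$ of $(\PSL(2,2^f)\times C_p)\rtimes C_f$ directly from centralizers of $f$-elements. In every case each graph in the list has all five of its vertices named among $\{2,3,p,q,f\}$ and carries the rooting convention of \thref{rootedGraphIsomorphismPSL22f} (the distinguished vertex being $f$), so it suffices to produce the claimed strictly $\PSL(2,2^f)$-solvable group $G$, compute $\Gamma(G)$ on $\pi(G)=\{2,3,p,q,f\}$, and check that $\pgc(G)$ is the stated picture with $f$ occupying the distinguished vertex.

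For \PSLhouse there is nothing to do beyond \thref{Subgraph}, together with the observation that $\Aut(\PSL(2,2^f))=\PSL(2,2^f)\rtimes C_f$ is strictly $\PSL(2,2^f)$-solvable. For the remaining products I would first record the elementary fact that if $m=\ell_1\cdots\ell_k$ is squarefree with each $\ell_i$ dividing $|H|$, then $\pgc(H\times C_m)$ is obtained from $\pgc(H)$ by isolating the vertices $\ell_1,\dots,\ell_k$: for such an $\ell_i$ and any other $s\in\pi(H)=\pi(H\times C_m)$ the pair $(h,c)$ with $o(h)=s$, $o(c)=\ell_i$ has order $\ell_i s$, so $\ell_i$ is joined to everything in $\Gamma(H\times C_m)$; and for $s,t$ prime to $m$ an element of order $st$ exists in $H\times C_m$ precisely when it does in $H$. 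Taking $H=\Aut(\PSL(2,2^f))$, so $\pgc(H)=$ \PSLhouse, and $m=3,2,p,6$ (the factor in the \codart line being the cyclic group of order $p$), and isolating $\{3\}$, $\{2\}$, $\{p\}$, $\{2,3\}$ yields exactly \northStar, \triangleWithShortTail, \codart, and \happyFace. Each of these products is strictly $\PSL(2,2^f)$-solvable because its composition factors are $\PSL(2,2^f)$, $C_f$, and the prime cyclic factors of $C_m$.

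For \balloon, put $G=(\PSL(2,2^f)\times C_p)\rtimes C_f$, with $C_f$ acting on the first factor as in $\Aut(\PSL(2,2^f))$ and fixed-point-freely on $C_p$; I would compute $\Gamma(G)$ in three steps. First, since $\PSL(2,2^f)\times C_p$ is normal in $G$ of prime index $f$, every $f'$-element of $G$ lies in it, so the induced subgraph of $\Gamma(G)$ on $\{2,3,p,q\}$ agrees with that of $\Gamma(\PSL(2,2^f)\times C_p)$; by \thref{PSLHAM} the prime graph of $\PSL(2,2^f)$ has only the edge $3-p$, and the direct factor $C_p$ joins $p$ to each of $2,3,q$, so this induced subgraph is the star centred at $p$ — in particular $\pgc(G)$ induces a triangle on $\{2,3,q\}$. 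Second, $\PSL(2,2^f)$ is normal in $G$, so $\PSL(2,2^f)\langle x\rangle\cong\Aut(\PSL(2,2^f))$ (with $x$ a generator of $C_f$) is a subgroup of $G$, whence the edges $2-f$ and $3-f$ of $\Gamma(\Aut(\PSL(2,2^f)))$ (read off \thref{Subgraph}) lie in $\Gamma(G)$. Third, for $x$ of order $f$ one has $C_G(x)=C_{\PSL(2,2^f)}(x)\times\langle x\rangle\cong\operatorname{SL}(2,2)\times C_f$ of order $6f$ — using that a field automorphism of order $f$ has centralizer $\operatorname{SL}(2,2)$ in $\PSL(2,2^f)$, as in the proof of \thref{Subgraph}, and that $C_f$ acts fixed-point-freely on $C_p$ — so $C_G(x)$ has no element of order $p$ or $q$, and therefore $p-f,q-f\notin\Gamma(G)$. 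Putting these together, $\Gamma(G)$ has edge set $\{2-p,\,3-p,\,q-p,\,2-f,\,3-f\}$, so $\pgc(G)$ is the triangle on $\{2,3,q\}$ with the path $p-f-q$ attached at $q$ and $f$ its middle vertex, which is \balloon rooted at $f$.

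The step I expect to be the main obstacle is the third one in the \balloon argument: identifying $C_G(x)$ exactly enough to exclude the edges $p-f$ and $q-f$. The fixed-point-free action of $C_f$ on $C_p$ is precisely what removes the $p$-part of any would-be element of order $pf$, and the classical centralizer $\operatorname{SL}(2,2)$ of a field automorphism in $\PSL(2,2^f)$ removes the $q$-part; the rest is bookkeeping. A minor recurring point is the rooting: in each of the five non-trivial graphs one must check the isomorphism can be arranged to send $f$ to the white vertex, which is automatic since $f$ is pinned down by its degree and adjacencies in every case.
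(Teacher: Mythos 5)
Your proposal is correct and, since the paper states this as a Fact without proof, it simply supplies the routine verification for exactly the same realizing groups the paper lists: reading \PSLhouse{} off \thref{Subgraph}, isolating vertices via direct products with cyclic groups, and a centralizer computation (order $6f$, using the $\operatorname{SL}(2,2)$ centralizer of a field automorphism and the fixed-point-free action on $C_p$) for \balloon. Two small points worth recording: the paper's ``$C_r$'' in the \codart{} line must indeed be read as $C_p$ (isolating $q$ instead would destroy the triangle), and the Frobenius action of $C_f$ on $C_p$ exists because $3p=2^f+1\equiv 3\pmod f$ forces $p\equiv 1\pmod f$.
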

\begin{lemma}\thlabel{fDisconnect}
    Let $G$ be a $\PSL(2, 2^f)$-solvable group. If $G\cong N.S$ for some $\operatorname{Inn}(\PSL(2, 2^f))\leq S\leq \Aut(\PSL(2, 2^f))$ and $f$ divides $|N|$, then $f-p\not\in\pgc(G)$ for all $p\in\pi(\PSL(2, 2^f))$.
\end{lemma}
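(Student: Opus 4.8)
The plan is to reduce to a subgroup of the form $N.\PSL(2,2^f)$ and then re-use the disconnection arguments already established in the proof of \thref{generalDisconnect}. Write $T=\PSL(2,2^f)$ and let $K\le G$ be the full preimage of $\operatorname{Inn}(T)$ under the quotient map $G\to G/N\cong S$; thus $K\cong N.T$ with $N$ solvable. Since $K\le G$ we have $\pg(K)\subseteq\pg(G)$, i.e.\ $\pgc(G)\subseteq\pgc(K)$, and the vertex sets coincide because $f\in\pi(N)$ forces $\pi(K)=\pi(N)\cup\pi(T)=\pi(G)$. Hence it suffices to prove $f-r\notin\pgc(K)$ for every $r\in\pi(T)$; by \thref{primesOfPSL} we have $\pi(T)=\{2,3,p,q\}$ with $3p=2^f+1$ and $q=2^f-1$. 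By \thref{coprimeOfF}, $f\notin\pi(T)$, while $f\mid|N|$ by hypothesis, so $f\in\pi(K)\setminus\pi(T)$ — exactly the situation in which the earlier results apply to $K$. Note $K$ is itself of the form $N.T$, so we do not need \thref{lemma212inPaper}, whose hypothesis $\pi(T)=\pi(\Aut(T))$ fails here.

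For $r\in\{3,p,q\}$ I would invoke \thref{corollaryLemma}: $r$ is odd and coprime to the Schur multiplier of $T$, which is trivial, so $T$ is its own only perfect central extension; and the computation in the proof of \thref{generalDisconnect} shows that, for an element of order $r$, the fixed-space dimensions $m_1,m_2,m_3,m_4$ in the four families of irreducible complex characters of $\PSL(2,2^f)$ are all strictly positive, i.e.\ every irreducible complex representation of $T$ has an element of order $r$ with a nonzero fixed vector. Therefore \thref{corollaryLemma}, applied to $K\cong N.T$, gives $r-f\notin\pgc(K)$.

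For $r=2$ I would instead use the Frobenius-criterion route. By \thref{PSLGeneral2Sylow} the Sylow $2$-subgroups of $T$ are elementary abelian of rank $f\ge 5$, hence neither cyclic, dihedral, Klein-four, nor generalized quaternion, so they fail the Frobenius criterion. Applying \thref{frobprop} to $K\cong N.T$ then yields $2-t\notin\pgc(K)$ for every $t\in\pi(N)$, in particular $2-f\notin\pgc(K)$ since $f\mid|N|$. Combining the two cases, $f-r\notin\pgc(K)\supseteq\pgc(G)$ for all $r\in\pi(T)$, which is the claim.

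The substantive input is already done: the positivity of the $m_i$ for order-$3$, order-$p$, and order-$q$ elements is established in the proof of \thref{generalDisconnect}, and the Sylow-$2$ structure of $T$ in \thref{PSLGeneral2Sylow}. The only point needing care — and the part I would be most careful writing up — is the reduction in the first paragraph: one passes to the preimage $K$ of $\operatorname{Inn}(T)$ precisely because $\pi(T)\ne\pi(\Aut(T))$ blocks a direct use of \thref{lemma212inPaper}, and one must then observe that $f$ has moved into $\pi(N)$ and that \thref{corollaryLemma} and \thref{frobprop} are applicable to a group that is itself of the shape $N.T$. As an alternative to quoting \thref{corollaryLemma}, one could argue directly: \cite[Proposition 2.2.5]{2023REU} applied to $K$ with the prime $f$ yields a section $V.E$ with $E$ a perfect central extension of $T$ (hence $E=T$) and $V$ a nontrivial elementary abelian $f$-group; since $f\nmid|T|$, Schur--Zassenhaus gives $V.E\cong V\rtimes T$, and as $\operatorname{char}(V)=f$ is coprime to $|T|$ the relevant Brauer characters agree with ordinary ones, so the positivity of the $m_i$ forces each order-$r$ element ($r\in\{3,p,q\}$) to fix a nonzero vector of $V$, producing an element of order $rf$ in $G$.
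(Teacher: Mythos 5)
Your proposal is correct and follows essentially the same route as the paper: pass to the subgroup $K\cong N.\PSL(2,2^f)$, note that $f\in\pi(K)\setminus\pi(\PSL(2,2^f))$ by \thref{coprimeOfF}, and then rerun the argument of \thref{generalDisconnect} (the positivity of the fixed-space dimensions plus \thref{corollaryLemma} for the odd primes, and \thref{PSLGeneral2Sylow} plus the Frobenius criterion for $2$). The paper compresses all of this into a one-line citation of ``the proof of \thref{generalDisconnect},'' whereas you spell out the case split explicitly; the content is the same.
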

\begin{proof}
 Suppose $f$ divides $|N|$. Then,$G\cong N.\PSL(2, 2^f)$ or $G\cong N.\PSL(2, 2^f).C_f$ by \cite[Lemma 2.1.1]{2023REU} and \cite[Theorem 3.2]{FSGBook}. In either case, take a subgroup $K\leq G$ such that $K\cong N.\PSL(2, 2^f)$. By the proof of \thref{generalDisconnect} and the fact that $f$ is coprime to $|\PSL(2, 2^f)|$ \thref{coprimeOfF}, $f-p\not\in\pgc(G)$ for all $p\in\pi(\PSL(2, 2^f))$.
\end{proof}
\begin{lemma}\thlabel{triangleContainmentGeneral}
    Let $G$ be a $\PSL(2, 2^f)$-solvable group such that $\pgc(G)$ contains a triangle $A$. $A\subseteq\pgc(G)[\pi(\Aut(\PSL(2, 2^f)))]$.
\end{lemma}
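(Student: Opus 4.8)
The plan is to proceed by contradiction: suppose $A$ is a triangle of $\pgc(G)$ with at least one vertex outside $\pi(\Aut(\PSL(2,2^f)))$, equivalently (since $\pi(\Aut(\PSL(2,2^f))) = \pi(\PSL(2,2^f))\cup\{f\}$ and $f$ is coprime to $|\PSL(2,2^f)|$ by \thref{coprimeOfF}) with at least one vertex $s\in\pi(G)\setminus\pi(\Aut(\PSL(2,2^f)))$. First I would reduce to a convenient normal form: since $\pgc(G)$ is not triangle-free, $G$ is strictly $\PSL(2,2^f)$-solvable by the main result of \cite{2015REU}, so by \thref{lemma212inPaper} (using $\pi(\PSL(2,2^f)) = \pi(\Aut(\PSL(2,2^f)))$, which holds by \thref{coprimeOfF} and \thref{Subgraph}) there is a subgroup $K\cong N.\PSL(2,2^f)$ with $N$ solvable and $\pi(K) = \pi(G)$; since $\pgc(G)\subseteq\pgc(K)$ it suffices to prove the statement for $K$.

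Next I would invoke the structural result \cite[Lemma 5.7]{Suz} (as used in the proofs of \thref{psl225triangles} and \thref{TriangleJailII}): every triangle of $\pgc(K)$ has at most one vertex in $\pi(N)$, so our triangle $A$ has exactly one vertex in $\pi(N)$, call it $s$, and its other two vertices lie in $\pi(\PSL(2,2^f)) = \{2,3,p,q\}$. By \thref{generalDisconnect} applied to $K$, there are no edges in $\pgc(K)$ of the form $z-s$ for $z\in\pi(\PSL(2,2^f))$ and $s\in\pi(K)\setminus\pi(\Aut(\PSL(2,2^f)))$ — but the two edges of $A$ incident to $s$ are exactly of this form, which is the desired contradiction. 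This immediately gives $A\subseteq\pgc(K)[\pi(\Aut(\PSL(2,2^f)))]$, hence $A\subseteq\pgc(G)[\pi(\Aut(\PSL(2,2^f)))]$.

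The only subtlety — and the step I would be most careful about — is the reduction step: I need $s\notin\pi(\Aut(\PSL(2,2^f)))$, not merely $s\notin\pi(\PSL(2,2^f))$, in order to apply \thref{generalDisconnect}, whose conclusion is stated for primes outside $\pi(\Aut(\PSL(2,2^f)))$. This is automatic because $f\notin\pi(\PSL(2,2^f))$ forces $f\notin\pi(N)$ only if... actually it does not; so instead I would handle the possibility $s = f$ separately. If $s = f$, then $f\mid|N|$, and since $\pgc(K)[\pi(\PSL(2,2^f))]\subseteq\pgc(\PSL(2,2^f))$, by \thref{PSLHAM} the only triangle available among $\{2,3,p,q\}$-edges would need two vertices forming an edge of $\pgc(\PSL(2,2^f))$ together with $f$; but \thref{fDisconnect} shows $f-z\notin\pgc(K)$ for all $z\in\pi(\PSL(2,2^f))$ when $f\mid|N|$, so no such triangle exists. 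Thus in all cases we reach a contradiction, completing the proof.
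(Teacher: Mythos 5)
The core ideas here—killing edges $s-t$ for $t\in\pi(\PSL(2,2^f))$ via \thref{generalDisconnect}, and using triangle-freeness of the complement away from $\pi(\PSL(2,2^f))$—are the right ones and are essentially what the paper's proof uses. But your reduction step rests on a false claim: you assert $\pi(\PSL(2,2^f)) = \pi(\Aut(\PSL(2,2^f)))$, citing \thref{coprimeOfF} and \thref{Subgraph}. Those results show the opposite: by \thref{coprimeOfF} the prime $f$ does not divide $|\PSL(2,2^f)|$, while $\Aut(\PSL(2,2^f))\cong\PSL(2,2^f)\rtimes C_f$, so $f\in\pi(\Aut(\PSL(2,2^f)))\setminus\pi(\PSL(2,2^f))$; indeed $\pi(T)\neq\pi(\Aut(T))$ is the defining difficulty of this whole family. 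Consequently \thref{lemma212inPaper} does not apply---its hypothesis is precisely $\pi(T)=\pi(\Aut(T))$---and its conclusion fails here: for $G=\Aut(\PSL(2,2^f))$ there is no subgroup $K\cong N.\PSL(2,2^f)$ with $N$ solvable and $\pi(K)=\pi(G)$. The paper instead invokes \cite[Lemma 2.1.1]{2023REU} to write $G\cong N.S$ with $\operatorname{Inn}(\PSL(2,2^f))\leq S\leq\Aut(\PSL(2,2^f))$ and works with that decomposition.

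This is not merely a citation issue; it hides a case. You conclude that the two vertices of $A$ other than $s$ lie in $\pi(\PSL(2,2^f))$, but without the (false) equality $\pi(K)=\pi(G)$ one of them could be $f$, and \thref{generalDisconnect} says nothing about an edge $s-f$ since $f\notin\pi(\PSL(2,2^f))$. The repair is short: the two non-$s$ vertices cannot both equal $f$, and neither can lie in $\pi(G)\setminus\pi(\Aut(\PSL(2,2^f)))$ (else the triangle would live entirely outside $\pi(\PSL(2,2^f))$, contradicting the triangle-freeness there, i.e.\ \cite[Lemma 5.1]{Suz} as used in the paper's proof), so at least one of them lies in $\pi(\PSL(2,2^f))$ and \thref{generalDisconnect} kills its edge to $s$. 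Your closing paragraph's separate treatment of $s=f$ does not address this gap: $f\in\pi(\Aut(\PSL(2,2^f)))$, so $s=f$ is not an instance of your contradiction hypothesis; the unhandled situation is $f$ appearing as one of the \emph{other} two vertices of the triangle.
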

\begin{proof}
    By \cite{2015REU}, $G$ is a strictly $\PSL(2, 2^f)$-solvable group. $\pi(G)\setminus\pi(T)$ is triangle-free by \cite[Lemma 5.1]{Suz}, so an edge of the triangle is contained in $\pgc(G)[\pi(T)]$. By \cite[Lemma 2.1.1]{2023REU}, $G\cong N.S$ for a solvable group $N$ and a group $S$ such that $\operatorname{Inn}(\PSL(2, 2^f))\leq S\leq \Aut(\PSL(2, 2^f))$. $G$ satisfies the conditions of \cite[Lemma 5.4]{Suz} by \thref{generalDisconnect} and \thref{PSLGeneral2Sylow}. If $A\subseteq\pgc(G)[\pi(T)]$ the claim follows, so we may assume $|V(A)\cap\pi(T)|\leq 2$. By \cite[Corollary 5.6]{2023REU}, $|V(A)\cap\pi(N)| = \varnothing$. Thus, $V(A)\subseteq\pi(\Aut(\PSL(2, 2^f)))$, so $A\subseteq\pgc(G)[\pi(\Aut(\PSL(2, 2^f)))]$.
\end{proof}
\begin{lemma}\thlabel{oneEdgeMissingGeneralPSL}
    Let $G$ be a $\PSL(2, 2^f)$-solvable group such that $\pgc(G)$ contains a triangle. If there exists an edge between $f$ and some element of $\pi(T)$, $q-f\in\pgc(G)$ for the prime $q$ such that $q = 2^f - 1$.
\end{lemma}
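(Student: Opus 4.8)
The plan is to argue by contradiction, mimicking the structure of the proofs of \thref{SuzukiBadCantExist219} and \thref{SuzukiBadCantExist223} but now tracking which edge between $f$ and $\pi(T)$ can survive. Suppose $G$ is a $\PSL(2,2^f)$-solvable group, $\pgc(G)$ contains a triangle, there is an edge between $f$ and some prime of $\pi(T) = \{2,3,p,q\}$, but $q-f \notin \pgc(G)$. First I would pass to a convenient subgroup: since $\pgc(G)$ contains a triangle, $G$ is strictly $\PSL(2,2^f)$-solvable by \cite{2015REU}, and by \thref{lemma212inPaper} (using $\pi(T) = \pi(\Aut(T))$, which holds here) there is a subgroup $K \cong N.T$ with $\pi(K) = \pi(G)$; since $\pgc(G) \subseteq \pgc(K)$ it suffices to work with $K$, and more precisely with $K \cong N.S$ for $\operatorname{Inn}(T) \le S \le \Aut(T)$ via \cite[Lemma 2.1.1]{2023REU}, so $S \cong \PSL(2,2^f)$ or $S \cong \Aut(\PSL(2,2^f))$ as $\Aut(T)/\operatorname{Inn}(T) \cong C_f$ with $f$ prime.

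Next I would split on whether $f \mid |N|$. If $f \mid |N|$, then by \thref{fDisconnect} there is \emph{no} edge between $f$ and any prime of $\pi(T)$ at all, contradicting our hypothesis that such an edge exists. So $f \nmid |N|$, and hence any element of order $f$ in $G$ lies in (a complement isomorphic to) the $C_f$ at the top; in particular the edge between $f$ and $\pi(T)$ forces $S \cong \Aut(\PSL(2,2^f))$, so $\pgc(K) \subseteq \pgc(N.\Aut(\PSL(2,2^f)))$. Now I would use \thref{Subgraph}: in $\pgc(\Aut(\PSL(2,2^f)))$ the only edges incident to $f$ are $p-f$ and $q-f$, and the vertex $2$ and $3$ are \emph{not} adjacent to $f$. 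Since $\pgc(K) \subseteq \pgc(\Aut(\PSL(2,2^f)))$ on the common vertex set $\pi(T) \cup \{f\}$ — which needs a brief justification along the lines of the argument in \thref{Subgraph}, or one can cite it directly for the induced subgraph on $\pi(\Aut(T))$ — the only candidate edges between $f$ and $\pi(T)$ in $\pgc(K)$ are $p-f$ and $q-f$. By hypothesis at least one of these exists and $q-f$ does not, so $p-f \in \pgc(K)$ but $q-f \notin \pgc(K)$.

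The crux is then to rule out the configuration ``$p-f$ present, $q-f$ absent'' using the triangle. By \thref{triangleContainmentGeneral} the triangle of $\pgc(G)$ lies inside $\pgc(K)[\pi(\Aut(T))]$, and since $\pgc(\Aut(\PSL(2,2^f)))$ (by \thref{Subgraph}) has exactly the triangles $\{2,3,q\}$, $\{2,3,p\}$, $\{3,q,p\}$... wait — from the figure in \thref{Subgraph} the edges among $\pi(T)$ are $2-3$, $2-q$, $2-p$, $3-q$, $q-p$, missing only $3-p$, so the triangles are $\{2,3,q\}$ and $\{2,p,q\}$. In any surviving triangle the vertex $q$ must be adjacent to two of $\{2,3,p\}$; then the key step is to show that whenever $p-f \in \pgc(K)$, the structure of $N.\Aut(\PSL(2,2^f))$ forces $q-f \in \pgc(K)$ as well — this is where I expect the real work, and I would handle it by taking a Hall $\{p,q,f\}$- or Hall $\{p,q,f\}\cup\{2,3\}$-subgroup (legitimate by \cite[Theorem 4.4]{Suz}), passing to a section $V.E$ with $E$ a central extension of $T$ by a $\pi$-group and $V$ elementary abelian, and reading off fixed-point information from the generic character table computations in \thref{generalDisconnect} (where $m_1,m_2,m_3,m_4 > 0$ for elements of order $p$ \emph{and} of order $q$). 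Concretely, an element of order $f$ fixing an order-$p$ element in some representation should, by the matching fixed-point data for order $q$, also fix an order-$q$ element, giving $q-f \in \pgc(K)$ and the desired contradiction. The main obstacle is making this last fixed-point transfer precise in the generic ($f$ arbitrary prime $\ge 5$) setting, since we cannot appeal to an explicit small character table; I would lean on \thref{ConjugacyClassLanding} together with the explicit formulas for $m_i$ already derived in the proof of \thref{generalDisconnect} to carry it through uniformly in $f$.
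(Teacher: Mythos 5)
Your opening reductions are fine and match the paper: strict $T$-solvability from the triangle, the case $f\mid|N|$ eliminated by \thref{fDisconnect}, the forcing of $S\cong\Aut(\PSL(2,2^f))$, and the observation via \thref{Subgraph} that the only candidate edges at $f$ are $p-f$ and $q-f$, leaving you with the configuration ``$p-f\in\pgc(G)$, $q-f\notin\pgc(G)$.'' But the step you yourself flag as ``the crux'' is a genuine gap, and the mechanism you propose would not close it. You want to transfer fixed-point data from order-$p$ elements to order-$q$ elements using the computations in \thref{generalDisconnect}; those computations concern complex representations of $T=\PSL(2,2^f)$ itself and govern edges between $\pi(T)$ and primes dividing $|N|$. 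The edge $q-f$ is of a different nature: since $f\nmid|N|$, an element of order $qf$ must involve the outer $C_f$, so what matters is whether a field automorphism of order $f$ centralizes an element of order $q$ --- the quantities $m_i$ say nothing about this, and the sections $V.E$ with $E$ a central extension of $T$ that you invoke are the wrong objects (one would need representations of $T.C_f$, which is exactly the generic information the paper does not have). Moreover the implication you aim for ($p-f$ present forces $q-f$ present) is not how the contradiction is actually obtained.

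The paper's argument at this point is more elementary and purely group-theoretic. From $q-f\notin\pgc(G)$ there is an element of order $qf$ in $G$; since $f\nmid|N|$ and $\Aut(\PSL(2,2^f))$ has no element of order $qf$ by \thref{Subgraph}, this forces $q\in\pi(N)$ (\cite[Lemma 1.2]{Suz}). With $q-f$ absent, the vertex $f$ has at most one neighbor, so the triangle $A$ cannot contain $f$ and hence $A\subseteq\pgc(G)[\pi(T)]$. Now $\PSL(2,2^f)$ has a dihedral subgroup $D_{2(2^f+1)}$ of order $2\cdot 3p$, so its preimage is a \emph{solvable} subgroup $K$ with $N\unlhd K$, $K/N\cong D_{2(2^f+1)}$, and $\pi(K)\supseteq\{2,3,p\}\cup\{q\}=\pi(T)$. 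Since $\pgc(G)[\pi(T)]\subseteq\pgc(K)$, the triangle $A$ lies in the prime graph complement of a solvable group, contradicting \thref{solvclass}. Some idea of this kind --- converting the missing edge $q-f$ into $q\mid|N|$ and then trapping the triangle inside a solvable subgroup --- is what your proof is missing at exactly the point where the content lies.
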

\begin{proof}
    By \cite{2015REU}, $G$ is strictly $\PSL(2, 2^f)$-solvable. Let $A$ be a triangle of $\pgc(G)$. $A$ is contained in $\pgc(G)[\pi(\Aut(\PSL(2, 2^f)))]$ by \thref{triangleContainmentGeneral}. Thus, $G\cong N.S$ for a solvable group $N$ and some $S$ such that $\operatorname{Inn}(\PSL(2, 2^f))\leq S\leq \Aut(\PSL(2, 2^f))$. By \thref{generalDisconnect}, \cite[Theorem 3.2]{FSGBook}, and the fact that $f-t\in\pgc(G)$ for some $t\in\pi(\PSL(2, 2^f)); S = \Aut(\PSL(2, 2^f))$. $\pgc(G)[\pi(\Aut(\PSL(2, 2^f)))]\subseteq\pgc(\Aut(\PSL(2, 2^f)))$, so by \thref{Subgraph}, $q-f\in \pgc(G)$ or $p-f\in\pgc(G)$ for $p$ the prime such that $3p = 2^f + 1$. For contradiction, suppose $q-f\not\in\pgc(G)$. Then, by \thref{generalDisconnect}, $f\not\in\pi(N)$, so by \cite[Lemma 1.2]{Suz} $q\in\pi(N)$. $A\subseteq\pgc(G)[\pi(T)]$ by \thref{Subgraph} and the fact that $q-f\not\in\pgc(G)$. By \cite[Theorem 2.1]{MaximalSubgroupPSL}, $D_{2(2^f + 1)}\leq \Aut(\PSL(2, 2^f))$, so there exists a solvable group $K\leq G$ with $N\unlhd K$ and $K/N\cong D_{2(2^f +1 )}$. Then, $\{2, 3, p\}\subseteq\pi(K)$ and $q\in\pi(K)$, so $\pi(T)\subseteq\pi(K)$. Thus, $A\subseteq\pgc(G)[\pi(T)]\subseteq\pgc(K)[\pi(T)]$, so $\pgc(K)$ contains a triangle, a contradiction to \cite{2015REU}.
\end{proof}

We now prove certain rooted graphs on five vertices containing a triangle are not realizable by $\PSL(2, 2^f)$-solvable groups. We define three sets of rooted graphs on five vertices:
\begin{itemize}
    \item Let $\mathcal{F}$ be the set of rooted graphs on five vertices such that the root has degree greater than 2.
    \item Let $\mathcal{G}$ be the set of rooted graphs on five vertices that contains exactly the following elements:
    \begin{align*}
        \nobutt, \housenobrim\\
        \hamsandwichwitharm, \triangleIsolatedEdge\\
        \misshapenhouse, \badDart\\
        \bullcaseone, \zigZag, \hamSandwichWithOtherArm\\
        \trianglestick
    \end{align*}
\end{itemize}
\begin{lemma}\thlabel{FnotRealizable}
    Let $\Xi\in\mathcal{F}$. $\Xi$ is not realizable by a $\PSL(2, 2^f)$-solvable group.
\end{lemma}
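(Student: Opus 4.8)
The statement claims that if a rooted graph $\Xi$ on five vertices has its root of degree $> 2$, then $\Xi$ cannot be realized (in the rooted sense of \thref{rootedGraphIsomorphismPSL22f}) by a $\PSL(2, 2^f)$-solvable group. Recall that realizing $\Xi$ rootedly means there is a $\PSL(2,2^f)$-solvable $\pi(\Aut(\PSL(2,2^f)))$-group $G$ with $\pgc(G)\cong\Xi$ under an isomorphism sending the vertex $f$ to the root. So the content is: \emph{in any such $G$, the vertex $f$ has degree at most $2$ in $\pgc(G)$}. The natural approach is to bound the degree of $f$ directly by controlling which of the four primes $2,3,p,q$ of $\pi(\PSL(2,2^f))$ can be adjacent to $f$, together with the fact that $\pi(G)=\pi(\Aut(\PSL(2,2^f)))=\{2,3,p,q,f\}$, so $f$ has at most four potential neighbors and we must rule out all but two.

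\textbf{Step 1: Reduce to a group of the form $N.S$.} First I would invoke \cite[Lemma 2.1.1]{2023REU} (as used throughout this section, e.g.\ in \thref{fDisconnect} and \thref{triangleContainmentGeneral}) to write $G\cong N.S$ for a solvable $N$ and $\operatorname{Inn}(\PSL(2,2^f))\leq S\leq\Aut(\PSL(2,2^f))$; since $|\operatorname{Out}(\PSL(2,2^f))|=f$ is prime by \cite[Theorem 3.2]{FSGBook}, either $S\cong\PSL(2,2^f)$ or $S\cong\Aut(\PSL(2,2^f))$. In the former case $f$ divides $|N|$, and then \thref{fDisconnect} gives $f-p'\notin\pgc(G)$ for every $p'\in\pi(\PSL(2,2^f))$; hence $f$ is isolated, degree $0\le 2$, and we are done. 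So assume $S\cong\Aut(\PSL(2,2^f))$.

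\textbf{Step 2: Locate the element of order $f$ and compute its centralizer.} In the case $S\cong\Aut(\PSL(2,2^f))$, the Sylow $f$-subgroup of $G$ maps isomorphically (after conjugation) onto a Sylow $f$-subgroup of $S$, which is a $C_f$ complement as in \thref{Subgraph}. The key computation, exactly as in the proof of \thref{Subgraph}, is that the centralizer in $\Aut(\PSL(2,2^f))$ of an element of order $f$ is isomorphic to $\operatorname{SL}(2,2)$ of order $6$, so within $S$ an element of order $f$ commutes only with elements whose orders divide $6$; in particular $2-f$ and $3-f$ are non-edges of $\pgc(S)=\pgc(\Aut(\PSL(2,2^f)))$, and since $\pgc(G)\subseteq\pgc(S)$ (because $S$ is a quotient of $G$... more precisely because $\pgc(G)\subseteq\pgc(G/N)=\pgc(S)$ by \cite[Lemma 1.3]{Suz} or \cite[Lemma 1.2]{Suz} as used elsewhere in the section), we get $2-f\notin\pgc(G)$ and $3-f\notin\pgc(G)$. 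That leaves only $p$ and $q$ as possible neighbors of $f$, so $\deg_{\pgc(G)}(f)\le 2$, contradicting $\deg_{\Xi}(\mathrm{root})>2$.

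\textbf{The main obstacle.} The delicate point is handling the passage from $G$ to its quotient $S$ and making sure the edge containments go the right way: $\pgc(G)$ is a \emph{subgraph} of $\pgc(G/N)$ only after checking that $f$ is not a prime of $N$ (so that elements of order $f$ in $G$ genuinely come from $S$); but if $f\mid|N|$ we are already in the Step 1 case. So I would split cleanly on whether $f\in\pi(N)$: if yes, apply \thref{fDisconnect} (degree $0$); if no, then $\pgc(G)[\{2,3,p,q,f\}]$ has the same $f$-star as some subgraph of $\pgc(\Aut(\PSL(2,2^f)))$ because every element of order $f\cdot p'$ in $G$ projects to such an element in $S$ — here one uses $\gcd(f,|\PSL(2,2^f)|)=1$ from \thref{coprimeOfF} so that an element of order $f$ in $G$ maps to one of order $f$ in $S$, and \cite[Lemma 1.2]{Suz} to pull the order-$p'$ factor into $S$ as well. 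Once that bookkeeping is in place, the degree bound $\deg(f)\le 2$ is immediate and the lemma follows. I expect the write-up to be short, essentially a two-case argument citing \thref{fDisconnect}, \thref{coprimeOfF}, \thref{Subgraph}, and \cite[Lemma 1.2]{Suz}.
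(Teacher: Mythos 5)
Your proof is correct and follows essentially the same route as the paper: the paper's argument is simply that $\pgc(G)\subseteq\pgc(\Aut(\PSL(2,2^f)))$ and hence, by \thref{Subgraph}, the vertex $f$ has degree at most $2$, contradicting $\Xi\in\mathcal{F}$. Your write-up is just a more careful version of this, explicitly separating the case $f\in\pi(N)$ (handled by \thref{fDisconnect}) from the case $S\cong\Aut(\PSL(2,2^f))$, a distinction the paper's one-line proof glosses over.
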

\begin{proof}
    Suppose otherwise for contradiction. There exists a $\PSL(2, 2^f)$-solvable group which realizes $\Xi$, so $\pgc(G)\subseteq\pgc(\Aut(\PSL(2, 2^f)))$. By \thref{Subgraph}, $f$ has degree at most two, a contradiction.
\end{proof}
\begin{lemma}\thlabel{notRealizable1}
    Let $\Xi\in\left\{\nobutt, \housenobrim\right\}$. $\Xi$ cannot be realized by a $\PSL(2, 2^f)$-solvable group.
\end{lemma}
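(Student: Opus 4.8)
The plan is to argue by contradiction, closely following \thref{FnotRealizable} and \thref{oneEdgeMissingGeneralPSL}, with an edge count as the new ingredient. Suppose $G$ is a $\PSL(2,2^f)$-solvable group with $\pi(G)=\pi(\Aut(\PSL(2,2^f)))$ and $\pgc(G)\cong\Xi$, where $f$ is mapped to the root. Each of $\nobutt$ and $\housenobrim$ contains a triangle, so $G$ is strictly $\PSL(2,2^f)$-solvable by \cite{2015REU}; since $\pgc(\PSL(2,2^f))$ has an edge (\thref{PSLHAM}), \cite[Lemma 2.1.1]{2023REU} gives $G\cong N.S$ with $N$ solvable and $\operatorname{Inn}(\PSL(2,2^f))\leq S\leq\Aut(\PSL(2,2^f))$.

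Next I would pin down $S=\Aut(\PSL(2,2^f))$ and hence $\pgc(G)\subseteq\pgc(\Aut(\PSL(2,2^f)))$. In both graphs the root has degree $2$ and both of its neighbours are non-root vertices, hence correspond to primes in $\pi(\PSL(2,2^f))$; thus $f-t\in\pgc(G)$ for some $t\in\pi(\PSL(2,2^f))$. If $S=\operatorname{Inn}(\PSL(2,2^f))$, then by \thref{coprimeOfF} we have $f\nmid|S|$, so $f\mid|N|$, and \thref{fDisconnect} gives $f-t\notin\pgc(G)$, a contradiction; since $|\operatorname{Out}(\PSL(2,2^f))|=f$ is prime by \cite[Theorem 3.2]{FSGBook} and \thref{coprimeOfF}, it follows that $S=\Aut(\PSL(2,2^f))$. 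As in the proof of \thref{oneEdgeMissingGeneralPSL}, this yields $\pgc(G)\subseteq\pgc(\Aut(\PSL(2,2^f)))$. This reduction is the step I expect to require the most care, essentially because one must both exclude $S=\operatorname{Inn}(\PSL(2,2^f))$ and invoke that passing to the quotient $\Aut(\PSL(2,2^f))$ transfers all element orders; fortunately this is exactly the argument already carried out for \thref{oneEdgeMissingGeneralPSL}.

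The contradiction then comes from counting. By \thref{Subgraph}, $\pgc(\Aut(\PSL(2,2^f)))$ has exactly seven edges and degree sequence $(4,3,3,2,2)$, while a direct count shows $\nobutt$ has seven edges and degree sequence $(4,4,2,2,2)$ and $\housenobrim$ has seven edges and degree sequence $(3,3,3,3,2)$. Since $\pgc(G)$ and $\pgc(\Aut(\PSL(2,2^f)))$ have the same number of edges and $\pgc(G)\subseteq\pgc(\Aut(\PSL(2,2^f)))$, we get $\pgc(G)=\pgc(\Aut(\PSL(2,2^f)))$, so $\Xi\cong\pgc(\Aut(\PSL(2,2^f)))$; but the degree sequences of $\Xi$ and $\pgc(\Aut(\PSL(2,2^f)))$ differ, a contradiction. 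This last part is routine, and in the write-up I would simply display the three degree sequences rather than recompute them vertex-by-vertex.
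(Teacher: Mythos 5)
Your proof is correct, and it reaches the contradiction by a different route than the paper's. Both arguments share the same reduction: the root has degree two, so $f$ is joined to a prime of $\pi(\PSL(2,2^f))$, which together with \thref{coprimeOfF} and \thref{fDisconnect} forces $S=\Aut(\PSL(2,2^f))$ and hence $\pgc(G)\subseteq\pgc(\Aut(\PSL(2,2^f)))$ --- a step you spell out explicitly and the paper leaves largely implicit when it invokes \thref{Subgraph}. From there the paper argues locally: the root's two neighbours must be $p$ and $q$, and in either graph both of the remaining vertices are adjacent to both neighbours of the root, so $3-p$ would be an edge of $\pgc(G)$, contradicting \thref{oneEdgeMissingPSL} via the section $\PSL(2,2^f)$ of $G$. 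You instead argue globally: $\pgc(\Aut(\PSL(2,2^f)))$ and $\Xi$ each have seven edges, so the containment forces $\pgc(G)=\pgc(\Aut(\PSL(2,2^f)))$, and the degree sequence $(4,3,3,2,2)$ disagrees with $(4,4,2,2,2)$ and with $(3,3,3,3,2)$. Your counting argument is somewhat more robust --- it disposes in one stroke of any seven-edge rooted graph not isomorphic to $\pgc(\Aut(\PSL(2,2^f)))$ --- whereas the paper's version isolates the concrete group-theoretic obstruction, the element of order $3p$ coming from the dihedral subgroup $D_{2(2^f+1)}$. Either argument is complete.
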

\begin{proof}
    For contradiction, suppose otherwise. Then there exists a $\PSL(2, 2^f)$-solvable group $G$ realizing $\Xi$. $G$ must be strictly $\PSL(2, 2^f)$-solvable by \cite{2015REU} because there exist triangles in $\pgc(G)$. Then, the two vertices with edges to the root in $\pgc(G)$ must be $q$ and $p$ by \thref{Subgraph}. As such, the remaining non-root vertices without edges to the root must be 2 and 3, so $3-p\in\pgc(G)$. This is a contradiction because $3-p\notin\pgc(\PSL(2, 2^f))$ and because $\PSL(2, 2^f)$ is a section of $G$, $\pgc(G)[\pi(\PSL(2, 2^f))]\subseteq\pgc(\PSL(2, 2^f))$.
\end{proof}
\begin{lemma}\thlabel{notRealizable2}
    Let $\Xi = \hamsandwichwitharm$ (case (1)) or $\Xi = \triangleIsolatedEdge$ (case (2)). $\Xi$ cannot be realized by a $\PSL(2, 2^f)$-solvable group.
\end{lemma}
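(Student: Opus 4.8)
The plan is to treat both graphs by a single argument, since in each of \hamsandwichwitharm and \triangleIsolatedEdge the root $f$ has degree exactly $1$, its unique neighbour lies among $\{2,3,p,q\}=\pi(\PSL(2,2^f))$, and the graph contains a triangle. First I would suppose for contradiction that a $\PSL(2,2^f)$-solvable group $G$ with $\pi(G)=\pi(\Aut(\PSL(2,2^f)))=\{2,3,p,q,f\}$ has $\pgc(G)\cong\Xi$ via an isomorphism carrying the root of $\Xi$ to $f$. Since $\Xi$ contains a triangle, $\pgc(G)$ is not triangle-free, so $G$ is strictly $\PSL(2,2^f)$-solvable by \cite{2015REU}; then \cite[Lemma 2.1.1]{2023REU} gives $G\cong N.S$ with $N$ solvable and $\operatorname{Inn}(\PSL(2,2^f))\le S\le\Aut(\PSL(2,2^f))$, and in particular $\PSL(2,2^f)$ is a section of $G$, whence $\pgc(G)[\{2,3,p,q\}]\subseteq\pgc(\PSL(2,2^f))$; by \thref{PSLHAM} this forces the edge $3-p$ to be absent from $\pgc(G)$.

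Next I would pin down the unique neighbour $v$ of $f$ using the shape of $\Xi$ together with \thref{PSLHAM}. In case (1), where $\Xi=\hamsandwichwitharm$, the induced subgraph on the four non-root vertices has five edges, so $\pgc(G)[\{2,3,p,q\}]$ has five edges and avoids $3-p$; hence it equals $\pgc(\PSL(2,2^f))\cong\hamSandwich$, whose only degree-$2$ vertices are $3$ and $p$; since the arm of \hamsandwichwitharm is attached to a degree-$2$ vertex of that subgraph, $v\in\{3,p\}$. In case (2), where $\Xi=\triangleIsolatedEdge$, the induced subgraph on the four non-root vertices is a triangle together with an isolated vertex; because this triangle cannot contain the edge $3-p$, its isolated vertex must be $3$ or $p$, and that isolated vertex is precisely the one carrying the arm, so again $v\in\{3,p\}$.

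Finally I would invoke \thref{oneEdgeMissingGeneralPSL}: $\pgc(G)$ contains a triangle and there is an edge from $f$ to a prime of $\pi(\PSL(2,2^f))$, so $q-f\in\pgc(G)$; since $f$ has degree $1$ in $\pgc(G)\cong\Xi$, its unique neighbour is $q$, i.e.\ $v=q$. This contradicts $v\in\{3,p\}$, so no such $G$ exists, and both cases are finished.

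The proof is essentially bookkeeping once the structural inputs (\thref{PSLHAM}, \thref{oneEdgeMissingGeneralPSL}, \cite[Lemma 2.1.1]{2023REU}, \cite{2015REU}) are in hand, so the main point of attention is twofold: (a) the preliminary reduction — justifying that we may assume $\pi(G)=\pi(\Aut(\PSL(2,2^f)))$ with the root mapped to $f$, so that ``the neighbour of $f$ lies in $\pi(\PSL(2,2^f))$'' is legitimate and \thref{oneEdgeMissingGeneralPSL} genuinely applies; and (b) reading off from the two pictures exactly which vertex is $f$'s neighbour and whether it has degree $2$ (case 1) or is isolated (case 2) inside the induced four-vertex subgraph. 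I expect a misreading of the graphs, rather than any genuinely hard step, to be the only thing that could sink the argument.
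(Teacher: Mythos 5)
Your proof is correct, and for case (1) it is essentially the paper's argument: both identify the neighbour of $f$ by matching degrees inside the four-vertex induced subgraph against $\pgc(\PSL(2,2^f))\cong\hamSandwich$ (the paper locates the degree-$3$ vertices $2$ and $q$ away from $f$, you locate the degree-$2$ vertices $3$ and $p$ at the arm — same bookkeeping), and both then collide this with \thref{oneEdgeMissingGeneralPSL}, which forces the unique neighbour of $f$ to be $q$. For case (2) you take a genuinely different and somewhat cleaner route. The paper argues group-theoretically: it uses \thref{oneEdgeMissingGeneralPSL} to place $q$ in $\pi(N)$, then invokes the dihedral subgroup $D_{2(2^f+1)}\leq\PSL(2,2^f)$ to build a solvable subgroup $K$ with $\pi(K)=\{2,3,p,q\}$ whose prime graph complement would contain the triangle, contradicting \cite{2015REU}. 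You instead stay entirely at the level of the graph: since $3-p\notin\pgc(\PSL(2,2^f))$ by \thref{PSLHAM}, the triangle on three of the four non-root vertices must be $\{2,3,q\}$ or $\{2,p,q\}$, so the vertex carrying the arm (the one isolated from the triangle) is $3$ or $p$, while \thref{oneEdgeMissingGeneralPSL} forces it to be $q$ — the same contradiction as in case (1). This unifies the two cases under one argument and dispenses with the dihedral-subgroup input; the paper's version, on the other hand, illustrates the reusable technique of manufacturing a solvable section that swallows a triangle. Your preliminary reduction (assuming $\pi(G)=\pi(\Aut(\PSL(2,2^f)))$ with the root sent to $f$) is exactly what \thref{realizableDefPSL22f} and \thref{rootedGraphIsomorphismPSL22f} build into the meaning of ``realizable,'' so no gap there.
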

\begin{proof}
    Towards a contradiction, suppose otherwise. By \cite{2015REU}, because $\Xi$ is not triangle-free, there exists a strictly $\PSL(2, 2^f)$-solvable group $G$ that realizes $\Xi$. Then by \thref{oneEdgeMissingGeneralPSL}, the neighbor of $f$ in $\pgc(G)$ is $q$. In case (1), note $\pgc(G)[\pi(\PSL(2, 2^f))]\subseteq\pgc(\PSL(2, 2^f))$, so the two degree 3 vertices of $\pgc(G)$ not adjacent to $f$ must be $2$ and $q$. This implies $q$ is both adjacent to $f$ and not adjacent to $f$, a contradiction. In case (2), by \cite[Lemma 2.1.1]{2023REU}, $G\cong N.S$ for a solvable $N$ and some $S$ with $\PSL(2, 2^f)\cong\operatorname{Inn}(\PSL(2, 2^f))\leq S\leq\Aut(\PSL(2, 2^f))$. By \cite[Table 5]{LargeGroupsPSL}, $D_{2(2^f+1)}\leq \PSL(2, 2^f)$, so there exists a solvable $K$ such that $K/N\cong D_{2(2^f+1)}$. Thus, $\pi(K) = \{2, 3, p, q\}$ and $\pgc(G)[\pi(\PSL(2, 2^f))]\subseteq\pgc(K)$. Then, $\pgc(K)$ contains a triangle, a contradiction to \cite{2015REU}.
\end{proof}
\begin{lemma}\thlabel{notRealizable3}
   Let $\Xi = \misshapenhouse$ (case (1)) or $\Xi = \badDart$ (case (2)). $\Xi$ cannot be realized by a $\PSL(2, 2^f)$-solvable group. 
\end{lemma}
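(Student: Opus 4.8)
The plan is to argue by contradiction, in the spirit of \thref{notRealizable1} and \thref{notRealizable2}. Suppose $G$ is a $\PSL(2,2^f)$-solvable group realizing $\Xi$ in the sense of \thref{rootedGraphIsomorphismPSL22f}, so that $\pi(G) = \pi(\Aut(\PSL(2,2^f))) = \{2,3,p,q,f\}$; since $\Xi$ contains a triangle, $G$ is strictly $\PSL(2,2^f)$-solvable by \cite{2015REU}, and by \cite[Lemma 2.1.1]{2023REU} we may write $G \cong N.S$ with $N$ solvable and $\operatorname{Inn}(\PSL(2,2^f)) \leq S \leq \Aut(\PSL(2,2^f))$. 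First I would show $f \nmid |N|$: otherwise \thref{fDisconnect} gives $f - z \notin \pgc(G)$ for every $z \in \pi(\PSL(2,2^f))$, which is impossible because the root of $\Xi$ (identified with $f$) has degree $2$ and both of its neighbors lie in $\pi(G)\setminus\{f\} = \pi(\PSL(2,2^f))$. Hence $f$ divides $|S|$, and since $|\operatorname{Out}(\PSL(2,2^f))| = f$ this forces $S = \Aut(\PSL(2,2^f))$; thus $\Aut(\PSL(2,2^f)) = G/N$ is a section of $G$ with $\pi(\Aut(\PSL(2,2^f))) = \pi(G)$, so $\pgc(G) \subseteq \pgc(\Aut(\PSL(2,2^f)))$.

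The next step is to pin down the relevant triangle. By \thref{Subgraph}, in $\pgc(\Aut(\PSL(2,2^f)))$ the vertex $f$ is adjacent only to $p$ and $q$, so the root of $\Xi$, having degree exactly $2$, must be adjacent precisely to $p$ and $q$; consequently the two non-root vertices not adjacent to the root are $2$ and $3$. In case (1) the unique triangle of $\Xi$ consists of those two vertices together with one vertex adjacent to the root, hence has vertex set $\{2,3,p\}$ or $\{2,3,q\}$; in case (2) the unique triangle of $\Xi$ not containing the root consists of one of $\{2,3\}$ together with both $p$ and $q$, hence has vertex set $\{2,p,q\}$ or $\{3,p,q\}$. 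If this triangle contains both $3$ and $p$, then $3-p \in \pgc(G)$, contradicting that $\PSL(2,2^f)$ is a section of $G$ (so $\pgc(G)[\pi(\PSL(2,2^f))] \subseteq \pgc(\PSL(2,2^f))$) together with \thref{PSLHAM}, which says $3-p \notin \pgc(\PSL(2,2^f))$. Hence the triangle of $\pgc(G)$ has vertex set $\{2,3,q\}$ in case (1) and $\{2,p,q\}$ in case (2); write $\pi_0$ for this set of three primes.

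To finish, I would take a Hall $\pi_0$-subgroup $H$ of $G$: by \cite[Theorem 4.4]{Suz} such a subgroup exists and satisfies $\pgc(H)\cong\pgc(G)[\pi_0]$, which by the previous step is a triangle. Since $H$ is a subgroup of the $\PSL(2,2^f)$-solvable group $G$ it is itself $\PSL(2,2^f)$-solvable, and as $\pi(H)=\pi_0$ has only three primes while $\pi(\PSL(2,2^f))$ has four, $\PSL(2,2^f)$ cannot occur as a composition factor of $H$; hence $H$ is solvable, and then \cite{2015REU} forces $\pgc(H)$ to be triangle-free, a contradiction. The one genuinely delicate point is that the surviving triangle $\{2,3,q\}$ (resp.\ $\{2,p,q\}$) is a real triangle of $\pgc(\PSL(2,2^f))$ — indeed it is realized by $\PSL(2,2^f)$ itself — so it cannot be excluded by the containment $\pgc(G)[\pi(\PSL(2,2^f))]\subseteq\pgc(\PSL(2,2^f))$ alone; the leverage instead comes from the fact that $G$, unlike the simple group $\PSL(2,2^f)$, possesses Hall $\pi_0$-subgroups, which are necessarily solvable and hence triangle-free. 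Everything else is routine bookkeeping with \thref{Subgraph}, \thref{fDisconnect}, and \thref{PSLHAM}.
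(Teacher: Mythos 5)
Your setup is sound and matches the paper's (implicit) reasoning: forcing $f\nmid|N|$ via \thref{fDisconnect}, concluding $S=\Aut(\PSL(2,2^f))$ so that $\pgc(G)\subseteq\pgc(\Aut(\PSL(2,2^f)))$, identifying the root's neighbors as $\{p,q\}$ via \thref{Subgraph}, and using $3-p\notin\pgc(G)$ to pin the off-root triangle down to $\{2,3,q\}$ in case (1) and $\{2,p,q\}$ in case (2) is all correct. The final step, however, has a genuine gap. You invoke \cite[Theorem 4.4]{Suz} to produce a Hall $\pi_0$-subgroup for $\pi_0=\{2,3,q\}$ or $\{2,p,q\}$, but that result (as used everywhere in this paper) only supplies Hall $\pi_0$-subgroups when $\pi_0\supseteq\pi(T)$; your $\pi_0$ omits one prime of $\pi(T)$. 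Worse, such a Hall subgroup generally does not exist: for $f=5$, $|\PSL(2,32)|=2^5\cdot3\cdot11\cdot31$ and its largest proper subgroup (the Borel $2^5{:}31$) has order $992$, so there is no subgroup of order $2^5\cdot3\cdot31$ or $2^5\cdot11\cdot31$, hence no Hall $\{2,3,q\}$- or $\{2,p,q\}$-subgroup of $G$ either (its image modulo $N$ would intersect $\operatorname{Inn}(\PSL(2,2^f))$ in one). Your own ``delicate point'' is the tell: since $\{2,3,q\}$ and $\{2,p,q\}$ are genuine triangles of $\pgc(\PSL(2,2^f))$ itself, any argument that uses only ``this triangle spans three primes, hence lives in a solvable Hall subgroup'' would prove $\pgc(\PSL(2,2^f))$ triangle-free, which is false. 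The claim that $G$, unlike $\PSL(2,2^f)$, possesses such Hall subgroups is unjustified and backwards.

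The leverage has to come from somewhere else, namely from which primes divide $|N|$. The paper's proof looks at the edge of $\pgc(\Aut(\PSL(2,2^f)))$ that is \emph{missing} from $\Xi$: in case (1) this is $p-q$, in case (2) it is $3-q$ (with the labeling you derived, $2$ is the degree-3 black vertex adjacent to both $p$ and $q$, and in case (2) also to $3$). Since $G$ then contains an element of order $pq$ (resp.\ $3q$) while $G/N\cong\Aut(\PSL(2,2^f))$ does not, \cite[Lemma 1.2]{Suz} forces one endpoint of that missing edge, say $a$, to lie in $\pi(N)$. Then \cite[Lemma 5.9]{Suz} (exploiting that the Sylow $2$-subgroups of $\PSL(2,2^f)$ are elementary abelian of rank $f\geq5$ by \thref{PSLGeneral2Sylow}, hence fail the Frobenius criterion) gives $2-a\notin\pgc(G)$, contradicting the shape of $\Xi$, in which $2$ is adjacent to both endpoints of the missing edge. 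If you replace your Hall-subgroup step with this argument, the rest of your write-up goes through.
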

\begin{proof}
    Suppose otherwise for contradiction. Then there exists a strictly $\PSL(2, 2^f)$-solvable group $G$ that realizes $\Xi$. $G\cong N.S$ for solvable $N$ and $S$ such that $\operatorname{Inn}(\PSL(2, 2^f))\leq S\leq \Aut(\PSL(2, 2^f))$ by \cite[Lemma 2.1.1]{2023REU}. By \cite[Lemma 1.2]{Suz}, at least one of $p, q\in\pi(N)$ for (1) and at least one of $q, 3\in\pi(N)$ for (2). Let $a\in\pi(N)\cap\{p,q\}$ for (1) and $a\in\pi(N)\cap\{3, q\}$ for $(2)$. By \cite[Lemma 5.9]{Suz}, $2-a\not\in\pgc(G)$ for (1) and (2), contradicting the assumption that $G$ realizes $\Xi$.
\end{proof}
\begin{lemma}\thlabel{notRealizable4}
   For any $\Xi\in\left\{\bullcaseone, \zigZag, \hamSandwichWithOtherArm\right\}$, $\Xi$ cannot be realized with a $\PSL(2, 2^f)$-solvable group.  
\end{lemma}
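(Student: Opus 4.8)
The plan is to argue by contradiction, treating the three graphs uniformly. Suppose a $\PSL(2, 2^f)$-solvable group $G$ realizes $\Xi \in \{\bullcaseone, \zigZag, \hamSandwichWithOtherArm\}$; then $\pi(G) = \pi(\Aut(\PSL(2, 2^f)))$ and $\pgc(G) \cong \Xi$ with the vertex $f$ mapped to the root. Each of the three graphs contains a triangle, so $G$ is strictly $\PSL(2, 2^f)$-solvable by \cite{2015REU}, and by \cite[Lemma 2.1.1]{2023REU} we may write $G \cong N.S$ with $N$ solvable and $\operatorname{Inn}(\PSL(2, 2^f)) \leq S \leq \Aut(\PSL(2, 2^f))$; since $|\operatorname{Out}(\PSL(2, 2^f))| = f$ is prime, $S \cong \PSL(2, 2^f)$ or $S = \Aut(\PSL(2, 2^f))$.

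First I would locate the root. In each of the three graphs $f$ has degree exactly $1$ in $\pgc(G)$. If $f$ divided $|N|$ --- which is forced when $S \cong \PSL(2,2^f)$, by \thref{coprimeOfF} --- then \thref{fDisconnect} would make $f$ adjacent to no prime of $\pi(\PSL(2,2^f))$ and hence isolated in $\pgc(G)$, a contradiction; thus $S = \Aut(\PSL(2,2^f))$ and $f \nmid |N|$. Since $\pgc(G)$ contains a triangle and $f$ is joined to a prime of $\pi(\PSL(2,2^f))$, \thref{oneEdgeMissingGeneralPSL} gives $q - f \in \pgc(G)$, so the unique neighbour of $f$ is $q$; in particular $f-p \notin \pgc(G)$, i.e.\ $G$ has an element $x$ of order $fp$. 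Now $G/N \cong \Aut(\PSL(2,2^f))$ has no element of order $fp$ (the pair $\{p,f\}$ is an edge of $\pgc(\Aut(\PSL(2,2^f)))$ by \thref{Subgraph}), so $|xN|$ is a proper divisor of $fp$, and it cannot be $1$ or $p$ since that would place an element of order $f$ in $N$; thus $|xN| = f$ and $x^f \in N$ has order $p$, so $p \mid |N|$. By \cite[Lemma 5.9]{Suz} this forces $2 - p \notin \pgc(G)$.

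The contradiction then comes from the combinatorics of each graph. Since $\PSL(2,2^f)$ is a section of $G$, the induced subgraph $\pgc(G)[\pi(\PSL(2,2^f))]$ is contained in $\pgc(\PSL(2,2^f))$, which by \thref{PSLHAM} is the complete graph on $\{2,3,p,q\}$ with the single edge $\{3,p\}$ deleted; hence $\{3,p\}$ is a non-edge of $\pgc(G)$, and $q$ is the vertex to which the root is attached. For $\zigZag$ the three vertices of $\pi(\PSL(2,2^f))$ other than $q$ induce a triangle in $\Xi$, so $\{3,p\}$ would be an edge of $\pgc(G)$ --- contradiction. For $\hamSandwichWithOtherArm$ and $\bullcaseone$, once the root's neighbour is known to be $q$, the pair $\{3,p\}$ must be the unique non-edge of the induced four-vertex subgraph that does not meet $q$ (the subgraph being $K_4$ minus one edge, respectively minus two edges one of which meets $q$); this pins down the remaining labels and forces $2$ to be adjacent to $p$ in $\pgc(G)$, contradicting $2-p\notin\pgc(G)$.

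The crux I expect is the implication $f-p\notin\pgc(G)\Rightarrow p\mid|N|$: it is the only place where group structure rather than pure combinatorics is used, and it relies on first eliminating $S\cong\PSL(2,2^f)$ and $f\mid|N|$ via \thref{fDisconnect}, together with the exact edge set of $\pgc(\Aut(\PSL(2,2^f)))$ from \thref{Subgraph}. The vertex-labelling for the three graphs afterwards is routine casework.
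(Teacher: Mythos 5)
Your proposal is correct and follows essentially the same route as the paper: it locates the root's unique neighbour as $q$ via \thref{oneEdgeMissingGeneralPSL}, uses \thref{fDisconnect} to rule out $f\mid|N|$, deduces $p\in\pi(N)$ (you re-derive this from the element of order $fp$ and the quotient $G/N$, where the paper simply cites \cite[Lemma 1.2]{Suz}), and then invokes \cite[Lemma 5.9]{Suz} to get $2-p\notin\pgc(G)$ before finishing with the same labelling casework (the paper phrases the endgame as ``$2$ is the degree-$3$ vertex not adjacent to $f$,'' which is equivalent to your identification of the non-edge $3-p$). No gaps.
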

\begin{proof}
    For contradiction, suppose otherwise. Then there exists a $\PSL(2, 2^f)$-solvable group $G$ such that $G\cong N.S$ for solvable $N$ and $\operatorname{Inn}(\PSL(2, 2^f))\leq S\leq \Aut(\PSL(2, 2^f))$. By \thref{oneEdgeMissingGeneralPSL}, $q-f\in\pgc(G)$. $2$ must be the degree 3 vertex not adjacent to $f$ by \thref{Subgraph}. Because $f-p\not\in\pgc(G)$ and $f$ is not isolated by \thref{fDisconnect}, $p\in\pi(N)$ by \cite[Lemma 1.2]{Suz}, so $2-p\not\in\pgc(G)$ by \cite[Lemma 5.9]{Suz}, a contradiction.
\end{proof}
\begin{lemma}\thlabel{notRealizable5}
    The graph $\Xi = \trianglestick$ cannot be realized by a $\PSL(2, 2^f)$-solvable group.
\end{lemma}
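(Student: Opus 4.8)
The plan is to reduce the statement, in one step, to the triangle‑freeness of prime graph complements of solvable groups. Suppose for contradiction that $\trianglestick$ is realized by a $\PSL(2,2^f)$-solvable group, and fix such a group $G$ with $\pgc(G)\cong\trianglestick$, the isomorphism sending the root to the prime $f$. Since $\trianglestick$ contains a triangle, $G$ is not solvable by \cite{2015REU}, hence is strictly $\PSL(2,2^f)$-solvable; as $\trianglestick$ has five vertices and every strictly $\PSL(2,2^f)$-solvable group satisfies $\pi(G)\supseteq\pi(\PSL(2,2^f))=\{2,3,p,q\}$ by \thref{primesOfPSL}, we conclude $\pi(G)=\{2,3,p,q,f\}$.

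The shape of $\trianglestick$ does the rest. The root lies on a triangle whose two remaining vertices are exactly the two neighbors of the root; under the isomorphism these correspond to distinct primes $v_1,v_2\in\pi(G)\setminus\{f\}=\{2,3,p,q\}$, and the subgraph of $\pgc(G)$ induced on $\{v_1,v_2,f\}$ is a triangle. Equivalently, $G$ has no element of order $v_1v_2$, $v_1f$, or $v_2f$.

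I would then apply \cite[Theorem 4.4]{Suz} to obtain a Hall $\{v_1,v_2,f\}$-subgroup $H$ of $G$. Since $H\le G$, each of the three edges of $\pgc(G)[\{v_1,v_2,f\}]$ survives in $\pgc(H)$, so $\pgc(H)$ is a triangle. On the other hand $\pi(H)\subseteq\{v_1,v_2,f\}$ cannot contain all of $\{2,3,p,q\}$ — it contains at most the two PSL primes $v_1,v_2$, since $f\notin\pi(\PSL(2,2^f))$ by \thref{coprimeOfF} — so $|\PSL(2,2^f)|=2^f\cdot 3\cdot p\cdot q$ does not divide $|H|$; hence $\PSL(2,2^f)$ is not a composition factor of $H$, and as a subgroup of a $\PSL(2,2^f)$-solvable group is again $\PSL(2,2^f)$-solvable, $H$ is solvable. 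But then $\pgc(H)$ is triangle-free by \thref{solvclass}, a contradiction, and this completes the argument.

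The only points needing a sentence are standard: that $\PSL(2,2^f)$-solvability passes to subgroups (intersect a composition series of $G$ with $H$), and that \cite[Theorem 4.4]{Suz} indeed yields Hall $\pi_0$-subgroups whose prime graph complement is the induced subgraph — a tool used repeatedly in this section and throughout the paper. I do not expect a real obstacle here: once the root's two neighbors are recognized as PSL primes, the graph $\trianglestick$ forces a triangle inside a Hall subgroup that is necessarily solvable, which is impossible.
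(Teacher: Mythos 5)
There is a genuine gap, and in fact the argument proves something false. Your reduction rests on extracting a Hall $\{v_1,v_2,f\}$-subgroup $H$ with $\pgc(H)=\pgc(G)[\{v_1,v_2,f\}]$ via \cite[Theorem 4.4]{Suz}. But that theorem, as it is used everywhere in this paper, produces Hall $\pi_0$-subgroups only for prime sets $\pi_0$ containing $\pi(T)$ (every invocation here has $\pi_0\supseteq\pi(\PSL(2,2^f))=\{2,3,p,q\}$); for a nonsolvable group Hall subgroups for arbitrary prime sets simply need not exist. Here the two neighbours of the root are forced to be $p$ and $q$ (by \thref{Subgraph} and \thref{oneEdgeMissingGeneralPSL}), so you would need a Hall $\{p,q,f\}$-subgroup, whose intersection with $\PSL(2,2^f)$ would be a subgroup of order $pq$ --- and by Dickson's classification of subgroups of $\PSL(2,2^f)$ no subgroup of order $pq=\tfrac{(2^f+1)(2^f-1)}{3}$ exists. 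The decisive sanity check: your argument uses nothing about $\trianglestick$ beyond the presence of the triangle $\{p,q,f\}$, so it would equally rule out every graph containing that triangle; yet $\Aut(\PSL(2,2^f))$ itself realizes \PSLhouse, which contains exactly this triangle (\thref{Subgraph}). So the triangle through $f$ cannot be the obstruction.

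The actual obstruction in the paper lies in the part of the graph you discarded: the isolated ``stick'' edge, which must be $2-3$ once the triangle is identified as $\{p,q,f\}$. The paper's proof writes $G\cong N.S$, shows that at least two of $2,3,p,q$ divide $|N|$, rules out $p,q\in\pi(N)$ by producing a solvable subgroup whose prime graph complement would contain a triangle, and then concludes $2,3\in\pi(N)$, which forces $2-3\notin\pgc(G)$ by \cite[Lemma 5.9]{Suz} --- contradicting the stick edge. Your proof needs to be replaced by an argument of this kind; the triangle-plus-Hall-subgroup route is not repairable.
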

\begin{proof}
        Towards a contradiction, suppose otherwise. There exists a strictly $\PSL(2, 2^f)$-solvable group $G$ realizing $\Xi$ such that $G\cong N.S$ for solvable $N$ and $\operatorname{Inn}(\PSL(2, 2^f))\leq S\leq \Aut(\PSL(2, 2^f))$ by \cite[Lemma 2.1.1]{2023REU} and \cite{2015REU}. Notice that $2-3\in\pgc(G)$. At least two of $2, 3, p, q$ must divide $|N|$ by \cite[Lemma 1.2]{Suz}. If $a\in\{p,q\}$ divides $|N|$, there exists a solvable subgroup $K$ such that $K/N$ is isomorphic to a strict $\{f\}\cup\{p,q\}\setminus\{a\}$-subgroup by \cite[Lemma 5.5]{Suz} and we reach a contradiction to the main result of \cite{2015REU}. Then, $3$ divides $|N|$, but $2-3\not\in\pgc(G)$ by \cite[Lemma 5.9]{Suz}, a contradiction.
\end{proof}
Our results can be condensed into the following theorem:
\begin{theorem}
    Let $G$ be a $\PSL(2, 2^f)$-solvable group. Then, there exists no labeled graph isomorphism such that $\pgc(G)[\pi(\Aut(\PSL(2, 2^f)))]$ is isomorphic to $\Xi$ for any $\Xi\in \mathcal{F}\cup\mathcal{G}$.
\end{theorem}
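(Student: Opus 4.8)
The plan is to read the theorem off from Lemmas \thref{FnotRealizable}, \thref{notRealizable1}, \thref{notRealizable2}, \thref{notRealizable3}, \thref{notRealizable4}, and \thref{notRealizable5}, after a short reduction from the induced-subgraph statement to the realizability statement those lemmas are phrased in. First I would check that the hypothesis classes of these six lemmas exhaust $\mathcal{F}\cup\mathcal{G}$: the family $\mathcal{F}$ is \thref{FnotRealizable}; the pair $\left\{\nobutt,\housenobrim\right\}$ is \thref{notRealizable1}; the pair $\left\{\hamsandwichwitharm,\triangleIsolatedEdge\right\}$ is \thref{notRealizable2}; the pair $\left\{\misshapenhouse,\badDart\right\}$ is \thref{notRealizable3}; the triple $\left\{\bullcaseone,\zigZag,\hamSandwichWithOtherArm\right\}$ is \thref{notRealizable4}; and $\trianglestick$ is \thref{notRealizable5}. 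Combining these, no $\Xi\in\mathcal{F}\cup\mathcal{G}$ is realizable by a $\PSL(2,2^f)$-solvable group in the sense of \thref{realizableDefPSL22f}.

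It then remains to pass from ``not realizable'' to the statement about induced subgraphs. I would argue by contradiction: suppose $G$ is $\PSL(2,2^f)$-solvable and there is a labeled isomorphism (sending the vertex $f$ to the root, as in \thref{rootedGraphIsomorphismPSL22f}) $\pgc(G)[\pi(\Aut(\PSL(2,2^f)))]\cong\Xi$ for some $\Xi\in\mathcal{F}\cup\mathcal{G}$; in particular $\pi(\Aut(\PSL(2,2^f)))\subseteq\pi(G)$. By \thref{Subgraph} and \thref{primesOfPSL}, $\pi(\Aut(\PSL(2,2^f)))=\{2,3,p,q,f\}$ has exactly five elements. If $\pi(G)=\{2,3,p,q,f\}$, put $H=G$; otherwise take a Hall $\{2,3,p,q,f\}$-subgroup $H\leq G$, which exists by the Hall-subgroup results for these $\PSL$-solvable groups (\cite[Theorem 4.4]{Suz}) and satisfies $\pgc(H)=\pgc(G)[\{2,3,p,q,f\}]$. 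In either case $H$ is $\PSL(2,2^f)$-solvable, $\pi(H)=\pi(\Aut(\PSL(2,2^f)))$, and $\pgc(H)\cong\Xi$ as rooted graphs, so $\Xi$ is realizable by a $\PSL(2,2^f)$-solvable group, contradicting the previous paragraph.

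All of the real content is carried by the six cited lemmas, so the theorem is essentially a bookkeeping consolidation; the only place that needs a little care is the reduction. Concretely, one must confirm that a Hall $\{2,3,p,q,f\}$-subgroup of a $\PSL(2,2^f)$-solvable group exists and that its prime graph complement is exactly the induced subgraph of $\pgc(G)$ on those five primes, so that the rooted structure (with $f$ at the root) transports correctly. This is the same style of Hall-subgroup argument already used repeatedly earlier in the paper, and it introduces no new computation.
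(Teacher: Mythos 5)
Your proposal is correct and follows essentially the same route as the paper: pass to a Hall $\pi(\Aut(\PSL(2,2^f)))$-subgroup $H$ via \cite[Theorem 4.4]{Suz} so that $\pgc(H)=\pgc(G)[\pi(\Aut(\PSL(2,2^f)))]$, then invoke \thref{FnotRealizable} through \thref{notRealizable5} to rule out every $\Xi\in\mathcal{F}\cup\mathcal{G}$. The only cosmetic difference is that you spell out the case $\pi(G)=\pi(\Aut(\PSL(2,2^f)))$ separately, which the paper subsumes into the Hall-subgroup step.
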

\begin{proof}
    For contradiction, suppose otherwise. Then, there exists a $\PSL(2, 2^f)$-solvable group $G$ such that $\pgc(G)[\pi(\PSL(2, 2^f))]\cong\Xi$ by a labeled graph isomorphism for some $\Xi\in\mathcal{F}\cup\mathcal{G}$. By \cite[Theorem 4.4]{Suz}, there exists a Hall$-\pi(\Aut(\PSL(2, 2^f)))$ subgroup $H\leq G$ such that $\pgc(H)\cong\pgc(G)[\pi(\Aut(\PSL(2, 2^f)))]\cong \Xi$ (note that the proof of \cite[Theorem 4.4]{Suz} actually proves $\pgc(H) = \pgc(G)[\pi(H)])$), so $H$ realizes $\Xi$. By \thref{FnotRealizable}, \thref{notRealizable1}, \thref{notRealizable2}, \thref{notRealizable3}, \thref{notRealizable4}, and \thref{notRealizable5}, this is a contradiction.
\end{proof}
\begin{remark}
    There are a few rooted graphs that must still be either realized or ruled out to completely classify $\PSL(2, 2^f)-$solvable groups for prime $f\geq 5$ that we could not classify as the necessary information on modular representations of arbitrary $\PSL(2, 2^f)$ is unavailable. We have listed the graphs below:
    \begin{center}
    \dart, \cricket, \triangleWithShortTail\\
    \bowtiebruh, \triangleWithTwoTails,     \triangleWithTail, \bull, 
    \emptyhouse
\end{center}
Of these, we conjecture that only the graphs in the first row are realizable by $\PSL(2, 2^f)-$solvable groups for prime $f\geq 5$. We draw this conjecture from \cite[Theorem 7.21]{Suz} and the results throughout this section that showed $\pgc(\PSL(2, 2^f))$ tends to behave similarly to $\pgc(\PSL(2, 2^5))$ for prime $f\geq 5$.
\end{remark}


\section{Outlook}
This section outlines some areas for future work.
A natural place to start would be with classifying the prime graph complements of $\PSL(2, 13)$-solvable groups. A more ambitious goal would be to finish classifying the prime graph complements of $\PSL(2, q)$-solvable groups for $q\geq 5$ a prime power of 2. An even more ambitious project would be to finish the classification of prime graph complements of $\PSL(2, q)$-solvable groups for all $q$ for which $\PSL(2, q)$ is a $K_4$ group, which would also complete the classification of single $K_4$-solvable groups. After that, one could begin the classification of $K_5$-solvable groups. A different direction would be classifying the prime graph complements of all strictly $T$-solvable groups where $T$ is a nonabelian simple group that has already been classified. Another possible direction would be to classify the prime graph complements of $\mathcal{T}$-solvable groups where $\mathcal{T}$ is a set of distinct nonabelian simple groups. Finally, one could prove or find a counterexample to the following conjecture, which would be useful for classifying the prime graph complements of $\PSL(2, q)$-solvable groups in conjunction with the main results of this paper:
\begin{conjecture}\label{conjectureLast}
    Suppose $\PSL(2, q)$ and $\PSL(2, p)$ are $K_4-$groups. Suppose one of the following holds:
    \begin{itemize}
        \item $q, p$ are prime and $q, p\equiv 1(\operatorname{mod} 12)$.
        \item $q, p$ are prime and $q, p\not\equiv 1(\operatorname{mod} 12)$.
        \item $p = 2^f, q = 2^g$ for primes $f, g\geq 5$.
        \item $p = 3^f, q = 3^g$ such that $f, g\neq 4$.
    \end{itemize}
    Then $\Xi\cong\pgc(G)$ for some $G$ a $\PSL(2, q)$-solvable group if and only if $\Xi\cong\pgc(H)$ for some $H$ a $\PSL(2, p)$-solvable group.
\end{conjecture}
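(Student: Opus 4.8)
The plan is to reduce the conjecture to the \emph{uniformity}, within each of the four families, of the hypotheses of the general classification theorems proved above. Each of \thref{generalClassificationPSL216}, \thref{generalThm}, \thref{TPSL211General}, and \thref{PSL225Classification} describes the set of graphs realizable as prime graph complements of $T$-solvable groups \emph{purely} in terms of a short list of invariants of $T$: the labelled isomorphism type of $\pgc(T)$, the pair consisting of $\pi(T)=\pi(\Aut(T))$ and $\mathrm{Out}(T)$ (which together fix $\pgc(\Aut(T))$), the Schur multiplier of $T$, the fixed point information of $T$ and of its perfect central extensions in the sense of \thref{psl16fpinfo}, \thref{psl249fpinfo1}, \thref{psl223fpinfogeneral}, \thref{fpInfoGenPSL225}, the Frobenius-criterion status of the relevant Sylow subgroups, and the realizability status of a \emph{fixed finite} list of rooted four-vertex graphs (such as \szunrecognizei, \SuzukiThirtyTwoElusiveGraphWhiteBadb, and \szthirtytwoTrickyGraph). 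Consequently, if all $\PSL(2,q)$ with $q$ in one of the four families satisfy \emph{one fixed} collection of the Criteria of the corresponding general section, then every member of the family realizes exactly the same prime graph complements, and the asserted ``iff'' is immediate by applying that general theorem to both $\PSL(2,q)$ and $\PSL(2,p)$. Thus the entire content of the conjecture is a family-by-family uniform verification of those Criteria.

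First I would pin down the prime graph shapes uniformly. For $q$ prime, the residue of $q$ modulo $12$ controls whether $3$ divides $q-1$ or $q+1$ and whether the Sylow $2$-subgroup of $\PSL(2,q)$ is dihedral of order $q-1$ or $q+1$; combined with the classical description of $\pg(\PSL(2,q))$ (\cite{Williams}) and $|\mathrm{Out}(\PSL(2,q))|=2$, this forces $\pgc(T)$ and $\pgc(\Aut(T))$ to have the same labelled shape (under the appropriate assignment of $a,c,d$) throughout each prime family, so the whole family falls under Section \ref{sec:generalT211}. For the two families $q=r^f$ (with $r=2$, $f\geq 5$ prime, or $r=3$, $f\neq 4$) the graph shapes, $\mathrm{Out}$, and Sylow structure are governed by the arguments already assembled in \thref{primesOfPSL}, \thref{oneEdgeMissingPSL}, \thref{PSLGeneral2Sylow}, \thref{generalPSLPrimeGraphStructure}, \thref{Subgraph}, \thref{structurepslsubgroups} and their $r=3$ analogues, while the Schur multiplier ($1$ for $r=2$, $2$ for $r=3$, $q\geq 5$) and the equality $\pi(T)=\pi(\Aut(T))$ come from standard tables. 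This part is essentially bookkeeping.

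Next comes the fixed point information, and this is where the real difficulty lies. On the ordinary side the dimension of the fixed space of an element $g$ of order $s$ in an irreducible character $\chi$ is $\tfrac{1}{s}\sum_{x\in\langle g\rangle}\chi(x)$, which by \thref{ConjugacyClassLanding} depends only on the values of $\chi$ on the few conjugacy classes of $s$-power order; feeding in the generic character tables of $\PSL(2,q)$ and $\mathrm{SL}(2,q)$ (\cite{PSLGenericTables}) one evaluates these sums as explicit rational functions of $q$ and checks, uniformly in the family, that the resulting sets of primes are always among the lists in the relevant fixed-point fact---exactly the computation already carried out generically in the proof of \thref{generalDisconnect}. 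The modular side is the crux, and I expect it to be the main obstacle: the Criterion asserting that \szunrecognizei\ is realizable by some $T\ltimes P$ and the Criterion asserting that \SuzukiThirtyTwoElusiveGraphWhiteBadb\ is \emph{not} realizable require, uniformly in $q$, control of the sets $B_\chi$ of \thref{GeneralizedModuleExtensionsCited} for $\chi\in\mathrm{IBr}_\ell(T)$ in \emph{every} characteristic $\ell$ dividing $|T|$. The $\ell$-modular irreducibles of $\PSL(2,q)$ are classically known (defining characteristic from the natural $\mathrm{SL}(2,q)$-modules and Steinberg's tensor product theorem; cross characteristic from the Brauer trees of the principal and non-principal blocks), so one can in principle read off generic Brauer character values on $s$-power elements and hence generic $B_\chi$; the remaining task is to package these into the two uniform statements---the ``kite'' \SuzukiThirtyTwoElusiveGraphWhiteBadb\ never occurs, the triangle-with-pendant-edge \szunrecognizei\ always does---mimicking the individual arguments of \thref{nokitespsl211}, \thref{SuzukiBadCantExist219}, \thref{SuzukiBadCantExist223}, \thref{cri3satisfiedPSL211}, \thref{criterion3PSL219}, \thref{criterion3PSL223}. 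This is precisely the step at which the paper currently falls back on GAP tables for individual $q$, and for the $q=2^f$ family it is also why the classification there is only nearly complete; so the conjecture for that family additionally requires resolving, uniformly in $f$, the realizability of the eight leftover rooted graphs \dart, \cricket, \triangleWithShortTail, \bowtiebruh, \triangleWithTwoTails, \triangleWithTail, \bull, \emptyhouse\ listed in the final Remark. Once the Criteria are verified uniformly within a family, the conjecture follows with no further argument.
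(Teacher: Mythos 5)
This statement is presented in the paper as Conjecture~\ref{conjectureLast}, with no proof: the paper explicitly lists ``prove or find a counterexample to the following conjecture'' as future work, and states that the conjecture was merely \emph{informed} by the cases computed in the paper. So there is no proof of the paper's to compare yours against, and your submission would have to stand entirely on its own. It does not: what you have written is a reduction strategy together with an honest inventory of the open steps, not a proof. Your own text concedes that the crux --- uniform, generic control of the sets $B_\chi$ of \thref{GeneralizedModuleExtensionsCited} for $\chi\in\operatorname{IBr}_\ell(T)$ in every characteristic $\ell\mid|T|$, needed both to realize \szunrecognizei\ (or \szthirtytwoTrickyGraph) and to exclude \SuzukiThirtyTwoElusiveGraphWhiteBadb\ uniformly in $q$ --- is ``precisely the step at which the paper currently falls back on GAP tables for individual $q$,'' and that for the $2^f$ family one must additionally resolve the eight leftover rooted graphs of the final Remark. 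A proof cannot defer its hardest step to ``in principle read off generic Brauer character values''; until those computations are actually carried out and shown to give the same answer across each family, the conjecture remains exactly as open as the paper leaves it.

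There is also a gap in the reduction itself as you state it. You assert that uniformity of the listed invariants within a family makes the equivalence ``immediate by applying that general theorem to both $\PSL(2,q)$ and $\PSL(2,p)$,'' but this presupposes that every member of a given family satisfies the hypotheses of \emph{one and the same} general section, and that is not established even at the level of ordinary characters. For the prime family with $q\equiv 1\ (\operatorname{mod}\ 12)$ the paper verifies \emph{no} instance (it could not even complete $\PSL(2,13)$ for computational reasons), so there is no template general theorem you can point to for that family; your claim that ``the whole family falls under Section~\ref{sec:generalT211}'' is unsupported for these residues, since the fixed-point patterns of $2.T$ in \thref{psl223fpinfogeneral} versus \thref{psl249fpinfo1} versus \thref{fpInfoGenPSL225} genuinely differ between the worked examples and you have not shown which pattern the $\equiv 1$ family follows. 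Likewise, for $q=3^f$ you would need to prove that all $f\neq 4$ share the fixed-point data of $\PSL(2,3^3)$ (Section~\ref{sec:General}) rather than that of $\PSL(2,3^4)$ (Section~\ref{sec:PSL225General}); the paper's two $3$-power examples land in \emph{different} general sections, so this uniformity is exactly what must be proved, not assumed. In short: the skeleton of your plan is reasonable and matches how the authors evidently arrived at the conjecture, but every load-bearing step is either deferred or asserted without verification, so this is not a proof.
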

The groups classified in this paper helped form this conjecture, along with the observations that most of the classification results depended most heavily on the representation information of the groups $T$ and the observation that the groups which have representation information available in \cite{GAP} fall roughly into the four classes outlined above.
\begin{appendices}
\section{} \label{AppendixB}
The code used to find the structure of subgroups of groups and to calculate fixed points of representations can be found at \url{https://github.com/gabriel-roca/2024-K4-Groups/tree/main}. An explanation of the methods used to to calculate fixed points of representations via Brauer tables (and some of the more complicated ordinary character tables) can be found in the appendices of \cite{Suz}.
\end{appendices}
\section{Acknowledgements}
This research was conducted under NSF-REU grant DMS-2150205 and NSA grant H98230-24-1-0042 under the mentorship of the first author. We would like to thank Gavin Pettigrew, Saskia Solotko, and especially Lixin Zheng from the 2023 Texas State REU Team for their help, and the fellow students at the Texas State University 2024 REU for their support. We are also indebted to Texas State University for access to its facilities, and to the NSF for funding this research. We would also like to thank Derek Holt and Caroline Lassueur for answering questions of ours. Finally, we would like to thank Alexander Hulpke for some help with GAP.

\end{document}